\def\mhline{\noalign{\ifnum0=`}\fi\hrule height 4\arrayrulewidth \futurelet
   \@tempa\oxhline}
\def\oxhline{\ifx\@tempa\hline\vskip \doublerulesep\fi
      \ifnum0=`{\fi}}
\numberwithin{equation}{section}
\newtheorem{theorem}{Theorem}[section]
\newtheorem{proposition}[theorem]{Proposition}
\newtheorem{conjecture}[theorem]{Conjecture}
\newtheorem{corollary}[theorem]{Corollary}
\newtheorem{lemma}[theorem]{Lemma}
\theoremstyle{definition}
\newtheorem{remark}[theorem]{Remark}
\newtheorem{example}[theorem]{Example}
\newtheorem{definition}[theorem]{Definition}
\renewcommand{\eqref}[1]{{\rm (\ref{#1})}}
\def\endproof{\hfill$\square$\medskip}
\def\ZZ{\mathbb{Z}}
\def\CC{\mathbb{C}}
\def\RR{\mathbb{R}}
\def\kk{\Bbbk}
\def\ii{\mathbf{i}}
\def\jj{\mathbf{j}}
\begin{document}

\title{Littlewood-Richardson coefficients for reflection groups}

\author{Arkady Berenstein}
\address{\noindent Department of Mathematics, University of Oregon,
Eugene, OR 97403, USA} \email{arkadiy@math.uoregon.edu}

\author{Edward Richmond}
\address{Department of Mathematics, University of British Columbia, Vancouver, BC V6T 1Z2, Canada}
\email{erichmond@math.ubc.ca}

\thanks{This work was partially supported by the NSF grant DMS-0800247}

\begin{abstract} In this paper we explicitly compute all Littlewood-Richardson coefficients for semisimple and Kac-Moody groups $G$,
that is, the structure constants (also known as the {\it Schubert structure constants}) of the
cohomology algebra $H^*(G/P,\CC)$, where $P$ is a parabolic subgroup of $G$. These coefficients are
of importance in enumerative geometry, algebraic combinatorics and representation theory. Our
formula for the Littlewood-Richardson coefficients is purely combinatorial and is given in terms of
the Cartan matrix and the Weyl group of $G$. However, if some off-diagonal entries of the Cartan
matrix are $0$ or $-1$, the formula may contain negative summands. On the other hand, if the Cartan
matrix satisfies $a_{ij}a_{ji}\ge 4$ for all $i,j$, then each summand in our formula is nonnegative
that implies nonnegativity of all Littlewood-Richardson coefficients. We extend this and other
results to the structure coefficients of the $T$-equivariant cohomology of flag varieties $G/P$ and
Bott-Samelson varieties $\Gamma_\ii(G)$.
\end{abstract}

\maketitle

\tableofcontents

\section{Introduction}

The goal of this paper is to explicitly compute the Littlewood-Richardson coefficients which are
structure constants of the cohomology algebra $H^*(G/B,\CC)$ for the flag variety $G/B$ of an
arbitrary semisimple or Kac-Moody group. More precisely, let $W$ be the Weyl group of $G$ and let
$\sigma_w\in H^*(G/B,\CC)$ denote the  Schubert cocycle corresponding to an element $w\in W$.  Then
the Littlewood-Richardson coefficients $c_{u,v}^w\in \ZZ_{\ge 0}$, $u,v,w\in W$ are defined as the
structure constants of the cup product in $H^*(G/B,\CC)$ with respect to the basis $\{\sigma_w,w\in
W\}$:
\begin{equation}
\label{eq:cup GB}
\sigma_u\cup \sigma_v=\sum_{w\in W} c_{u,v}^w \sigma_w\ .
\end{equation}

The study of Littlewood-Richardson coefficients $c_{u,v}^w$ has a long history and is an essential
part of Schubert calculus.  In enumerative geometry, these coefficients are realized as the
cardinality of triple intersections of certain Schubert varieties in general position.  From this
point of view, there have been several formulas for $c_{u,v}^w$ in using transversality and
degeneration techniques \cite{BelKum,BeSo02,Co09A,Knut1,Purb1,Richm,Richm2,Va06}.   In algebraic
combinatorics, the numbers $c_{u,v}^w$ for special $u,v,w$ can be determined via puzzles or
counting problems using Young tableaux \cite{Fu97, Fult,Knut,Kogan,Lenart,TY06}. Other
combinatorial approaches for computing $c_{u,v}^w$ include coinvariant algebras with Schubert
polynomial bases (\cite{BeSo98,BiHa95, FiKi96, LaSc82, PoSts09}) or recursions over the Weyl groups
(\cite{Bi99, KK86}).  While there have been many interesting formulas and algorithms for computing
these numbers, they are mostly limited to special cases of reductive Lie groups $G$.

To the best of our knowledge, the only non-recursive formula for  Littlewood-Richardson
coefficients was obtained by H.~Duan in a remarkable paper \cite{Duan2005}  and the equivariant
generalization was later obtained by M.~Willems in \cite{wi06}. The major issue here is that both
Duan's and Willems formulas contain a large number of summands, including several negative terms.
For instance, if $G=\widehat{SL_2}$, $u=v=(s_1s_2)^2$, $w=u^2$, then Duan's formula for $c_{u,v}^w$
has about 17,000 summands, but our formula \eqref{eq:cuvw} contains only 19 summands (see Remark
\ref{rem:Duan} for details). However, a comparison with Duan's and Willems approaches was very
productive and resulted in a discovery of a combinatorial formula for {\it Bott-Samelson} numbers,
i.e., structure constants of the (equivariant) cohomology algebra of Bott-Samelson varieties
(Theorem \ref{th:cuvw refined T BS}).

\begin{remark} In fact, the Littlewood-Richardson coefficients for partial flag varieties $G/P$,
where $P\supset B$ (e.g., for Grassmannians) is a parabolic subgroup of $G$ are determined by the
respective coefficients for $G/B$ because the pullback $H^*(G/P,\CC)\hookrightarrow H^*(G/B,\CC)$ of
canonical projection $G/B\to G/P$ turns $H^*(G/P,\CC)$ into the subalgebra of  $H^*(G/B,\CC)$ spanned
by a part of the Schubert basis.

\end{remark}

\begin{remark} In some of the above mentioned papers and several other papers the coefficients
$c_{u,v}^w$ have been referred  to as {\it Schubert structure constants}. However, we believe that
the ``Littlewood-Richardson" terminology for these constants is justified historically (it is used
e.g., in
\cite{BeSo98,Bert99,bkt04,Co09A,Kir07,Knut,KnutY04,Kog01,Kreim10,LapMor08,Mih06,molev04,molev09,PoSts09,Va06})
and by a number of purely  mathematical reasons. First, the classical Jacobi-Trudy formula for
Schubert classes in Grassmannians $Gr_k(\CC^n)$ (see e.g., \cite{bkt04}) implies that the structure
constants of $H^*(Gr_k(\CC^n),\CC)$ are the classical Littlewood-Richardson coefficients (in fact,
\cite[Theorem 2]{bkt04} asserts  that the structure constants for maximal isotropic Grassmannians
are also given by Littlewood-Richardson-Stembridge rule). Second, based on results of Klyachko on
Horn inequalities and follow-up work (see e.g., \cite{BelKum,BS00,Kly98,KTW04,Res07}),  if
$V_\lambda,V_\mu,V_\nu$ are simple $G$-modules and $c_{\lambda,\mu}^\nu$ denotes the multiplicity
$\dim_\CC Hom_G(V_\nu,V_\lambda\otimes V_\mu)$, then the set of triples $(\lambda,\mu,\nu)$  such
that  $c_{\lambda,\mu}^\nu \ne 0$ is determined (up to saturation) in terms of the set of all
triples $(u,v,w)\in W^3$ such that $c_{u,v}^w\ne 0$. Another similarity (and complementarity) of
these coefficients is that, when $\dim~G<\infty$,  the representation ring $R(G)$ (the carrier of
the ``genuine" Littlewood-Richardson coefficients $c_{\lambda,\mu}^\nu$) is the invariant algebra
$\CC[T]^W$, where $T$ is the maximal torus of $G$ and the cohomology algebra $H^*(G/B,\CC)$ is the
coinvariant algebra $\CC[Lie(T)]_W$ of the Lie algebra  $Lie(T)$.

\end{remark}

We compute $c_{u,v}^w$ in terms of the $W$-action on the root lattice of $G$. Our main formula
\eqref{eq:cuvw} is very different from Duan's and, in particular, if the Cartan matrix satisfies
$a_{ij}a_{ji}\ge 4$ for all $i,j$, then all summands in  \eqref{eq:cuvw} turn out to be nonnegative
which implies  $c_{u,v}^w\ge 0$ for all relevant $u,v,w$ (Theorem \ref{th:nonneg}). However, if the
Cartan matrix $A$ of $G$ has entries $a_{ij}\in \{0,-1\}$, the right hand side of \eqref{eq:cuvw}
may contain negative summands. Nevertheless, we believe that the negative terms can be effectively
canceled (see Remark \ref{rem:cuvw+} for details). We discuss positivity in greater details in
Theorem \ref{th:nonneg} and Conjectures \ref{conj:positive puvw(t)} and \ref{conj:fully
commutative}.

\bigskip
\noindent\textbf{Acknowledgments}. The authors thank David Anderson,  Prakash Belkale, Misha Kapovich, Shrawan Kumar for stimulating discussions. We are grateful to Harry Tamvakis for historical insights. Special thanks are due to Vadim Vologodsky for explaining equivariant homology theories.

\section{Definitions and main results}

\label{sect:Definitions and main results}

Let $G$ be a Kac-Moody group and let $A=(a_{ij})$ be the $I\times I$ Cartan matrix of $G$ (where
$I$ denotes the indexing set, i.e., the set of vertices of the Dynkin diagram).
The Weyl group $W$ of $G$ is generated by  simple reflections $s_i$, $i\in I$ that act on the {\it root space} $V=\bigoplus\limits_{i\in I} \CC\cdot  \alpha_i$ by:
\begin{equation}
\label{eq:reflection}
s_i(\alpha_j)=\alpha_j-a_{ij} \alpha_i \end{equation}
for $i,j\in I$.

\smallskip

\begin{definition}
\label{def:admissible} Let $m$ be a positive integer and let $\ii\in I^m$. For each subset
$M=\{m_1<\cdots<m_r\}$ of the interval $[m]:=\{1,2,\ldots,m\}$ denote by $\ii_M$ the subsequence
$(i_{m_1},\ldots,i_{m_r})\in I^r$ of $\ii$.  We say that a sequence $\ii=(i_1,\ldots,i_m)\in I^m$
is {\it reduced} if the element $w=w_\ii:=s_{i_1}\cdots s_{i_m}\in W$ is shortest possible and
define its {\it Coxeter length} $\ell(w):=m$.  We say that a sequence $\ii$ is {\it admissible} if
$i_k\ne i_{k+1}$ for all $j\in[m-1]$ (clearly, every reduced sequence is admissible).
Given $w\in W$, denote by $R(w)$ the set of all {\it reduced words} of $w$, i.e, all $\ii\in I^{\ell(w)}$ such that $w_\ii=w$.
\end{definition}

\begin{definition}
\label{def:bounded} Let $m\ge 0$ and let  $L,M$ be subsets of $[m]$ such that $|L|+|M|=m$. We say
that a bijection
$$\varphi:L\, \widetilde \to\, [m]\setminus M$$
is {\it bounded} if $\varphi(\ell)<\ell$ for each $\ell\in L$. Given a sequence
$\ii=(i_1,\ldots,i_m)\in I^m$, we say that a bounded bijection $\varphi:L\ \widetilde \to\
[m]\setminus M$ is $\ii$-{\it admissible} if the sequence $\ii_{M\cup \varphi(L_{<\ell})}$ is
admissible for all $\ell\in L$, where we abbreviate $L_{<\ell}:=L\cap [\ell-1]$ (in particular
$L_{<\ell}=\emptyset$ if $\ell$ is the minimal element of $L$).
\end{definition}

\begin{example}
Let $I=\{1,2\}$ with $\ii=(1,2,1,2).$ If $L=M=\{3,4\},$ then $$\phi_1:(3,4)\rightarrow(2,1)\qquad\phi_2:(3,4)\rightarrow(1,2).$$
are both bounded bijections.  In particular, $\phi_1$ is $\ii$-admissible and $\phi_2$ is not $\ii$-admissible.  
\end{example}


For $j\in I$ denote by $\langle \cdot, \alpha_j^\vee\rangle$ the linear function on $V$ given by
$\langle \alpha_i,\alpha_j^\vee\rangle=a_{ij}$. For any sequence $\ii=(i_1,\ldots,i_m)\in I^m$ and
bijection $\varphi:L\ \widetilde \to\ [m]\setminus M$ we define the integer $p_{\varphi}$ by the
formula

\begin{equation}\label{eq:plambda1}
p_\varphi:=\prod_{\ell\in L} \langle
w_{\ii_{M(\ell)}}(-\alpha_{i_\ell}),\alpha_{i_{\varphi(\ell)}}^\vee \rangle\end{equation}

where $$M(\ell):=\{r\in M\cup\varphi(L_{<\ell})\ |\ \varphi(\ell)<r<\ell\}.$$ If
$M(\ell)=\emptyset$, then $w_{\ii_{M(\ell)}}=1$ and if $L=\emptyset$ then $p_\varphi=1$.  The
following is our main result (in which we implicitly use the  well-known fact that $c_{u,v}^w=0$
unless $\ell(w)=\ell(u)+\ell(v)$).

\begin{theorem}\label{th:cuvw} Let $G$ be a Kac-Moody group and $W=\langle s_i,i\in I\rangle$ be its Weyl group. Then for any $u,v,w\in W$ such that $\ell(w)=\ell(u)+\ell(v)$ and  any given $\ii\in R(w)$ one has:
\begin{equation}
\label{eq:cuvw}
c_{u,v}^w=\sum p_\varphi
\end{equation}
with the summation over all triples $({\bf u},{\bf v},\varphi)$, where
\begin{itemize}
\item ${\bf u},{\bf v}\subset [m]$ such that $\ii_{\bf u}\in R(u)$, $\ii_{\bf v}\in R(v)$ (hence $|{\bf u}\cap {\bf v}|+|{\bf u}\cup {\bf v}|=m$);
\item $\varphi:{\bf u}\cap {\bf v}\ \to\ [m]\setminus ({\bf u}\cup {\bf v})$ is an $\ii$-admissible bounded bijection.
\end{itemize}\end{theorem}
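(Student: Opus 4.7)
The plan is to reduce the computation of $c_{u,v}^w$ to a cup-product computation in the cohomology of the Bott-Samelson variety $\Gamma_\ii(G)$ attached to the given reduced word $\ii\in R(w)$. Since the Bott-Samelson map $\pi_\ii:\Gamma_\ii(G)\to G/B$ is a birational desingularization of the Schubert variety $\overline{BwB/B}$, the pullback $\pi_\ii^*:H^*(G/B,\CC)\to H^*(\Gamma_\ii(G),\CC)$ is an injective ring homomorphism on the subring generated by Schubert classes $\sigma_x$ with $x\le w$, and $\pi_\ii^*(\sigma_w)$ is the class of a point. Thus \eqref{eq:cuvw} will follow once I compute $\pi_\ii^*(\sigma_u)\cup\pi_\ii^*(\sigma_v)$ in $H^*(\Gamma_\ii(G),\CC)$ and read off the coefficient of the top class.

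First I would fix the canonical basis $\{\zeta_J\}_{J\subseteq[m]}$ of $H^*(\Gamma_\ii(G),\CC)$ arising from its iterated $\PP^1$-bundle structure, so that $\zeta_{[m]}$ is the top class. A standard transversality/push-pull argument identifies
\[
\pi_\ii^*(\sigma_u)=\sum_{{\bf u}\subseteq[m],\,\ii_{\bf u}\in R(u)}\zeta_{\bf u},
\]
and similarly for $\sigma_v$. The heart of the proof is then to establish a product rule in $H^*(\Gamma_\ii(G),\CC)$ of the form
\[
\zeta_{\bf u}\cup\zeta_{\bf v}=\sum_\varphi p_\varphi\,\zeta_{{\bf u}\cup{\bf v}\cup\varphi({\bf u}\cap{\bf v})},
\]
summed over $\ii$-admissible bounded bijections $\varphi:{\bf u}\cap{\bf v}\to[m]\setminus({\bf u}\cup{\bf v})$, with weights $p_\varphi$ exactly as in \eqref{eq:plambda1}. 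Substituting the subword expansions and extracting the coefficient of $\zeta_{[m]}$ then delivers \eqref{eq:cuvw}, because the subsets $J={\bf u}\cup{\bf v}\cup\varphi({\bf u}\cap{\bf v})$ that are all of $[m]$ are precisely the ones parametrized by the admissible triples.

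The main obstacle is the derivation of the Bott-Samelson product rule itself, in particular the emergence of the precise pairing $\langle w_{\ii_{M(\ell)}}(-\alpha_{i_\ell}),\alpha_{i_{\varphi(\ell)}}^\vee\rangle$ at each position $\ell\in{\bf u}\cap{\bf v}$. I expect to prove this rule by induction on $|{\bf u}\cap{\bf v}|$, handling one position at a time via the classical Chevalley/divisor formula on a single $\PP^1$-factor: the generator at position $\ell$ multiplied into a subset-class $\zeta_J$ rewrites as a sum over earlier positions $\varphi(\ell)<\ell$, where the partial Weyl-group element $w_{\ii_{M(\ell)}}$ records exactly those indices strictly between $\varphi(\ell)$ and $\ell$ whose fate in $J$ has already been determined (either by lying in ${\bf u}\cup{\bf v}$ or by being the image of an earlier $\varphi$-value), matching the definition of $M(\ell)$. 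The admissibility constraint on $\varphi$ is designed precisely so that each intermediate subword $\ii_{M\cup\varphi(L_{<\ell})}$ stays admissible and the Chevalley step is well-defined without hidden cancellations. Technically the cleanest route is probably to first establish the $T$-equivariant version of this product rule (where the Chevalley formula is sharper because one can separate contributions by $T$-weight) and then specialize the equivariant parameters to recover the ordinary statement \eqref{eq:cuvw}.
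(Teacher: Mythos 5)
Your overall architecture — pulling back along $\mu_\ii:\Gamma_\ii(G)\to G/B$, expanding $\mu_\ii^*(\sigma_u)$ and $\mu_\ii^*(\sigma_v)$ in the subword basis of $H^*(\Gamma_\ii)$, multiplying, and reading off the coefficient of the top class — is exactly the reduction the paper uses (via Proposition~\ref{pr:willems} and Theorem~\ref{the:cohomology of flags}), though the paper packages it algebraically through twisted group algebras and nil-Hecke duals rather than directly geometrically. That part is sound.

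However, there is a genuine gap in the key step. You propose a \emph{single} Bott-Samelson product rule
$\zeta_{\bf u}\cup\zeta_{\bf v}=\sum_\varphi p_\varphi\,\zeta_{{\bf u}\cup{\bf v}\cup\varphi({\bf u}\cap{\bf v})}$
with the sum running over $\ii$-\emph{admissible} bounded bijections, and you motivate the admissibility constraint as what makes the one-step Chevalley/divisor computation "well-defined." This is not what happens. The correct Bott-Samelson structure constant (the paper's Theorem~\ref{th:cuvw refined T BS}, i.e., the coefficient of $\sigma_{[m]}$ in $\sigma_{\bf u}\cup\sigma_{\bf v}$ for a fixed pair $({\bf u},{\bf v})$ with $|{\bf u}|+|{\bf v}|=m$) is $\sum p_\varphi$ over \emph{all} bounded bijections $\varphi:{\bf u}\cap{\bf v}\to[m]\setminus({\bf u}\cup{\bf v})$, without any admissibility restriction; the one-position Chevalley step requires no admissibility and produces all of these terms. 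For a single fixed pair $({\bf u},{\bf v})$, restricting to admissible $\varphi$ will generically give a different (wrong) number. The admissibility refinement is valid only for the \emph{total} sum over all triples $({\bf u},{\bf v},\varphi)$: after summing over every $({\bf u},{\bf v})$ with $\ii_{\bf u}\in R(u)$, $\ii_{\bf v}\in R(v)$, the contributions from non-admissible $\varphi$ cancel in a non-obvious way. That cancellation is the real content of the theorem and occupies the paper's Propositions~\ref{pr:admiss_induction}, \ref{pr:1reduction}, and \ref{pr:2reduction}, which set up a sign-reversing involution on the set of non-admissible triples by swapping a repeated-letter pair between ${\bf u}$ and ${\bf v}$ and adjusting $\varphi$ accordingly. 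Your inductive plan, as written, has no mechanism for this cross-$({\bf u},{\bf v})$ cancellation and would therefore prove the (already known, and stated in the paper's Remark~2.7) version of \eqref{eq:cuvw} \emph{without} the admissibility condition, but not the stated theorem.

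A secondary, fixable inaccuracy: $\mu_\ii^*$ is not in general injective on any naturally described subring; what you actually use (and what suffices) is that $\mu_\ii^*(\sigma_{w'})$ has zero coefficient on the top class $\sigma_{[m]}$ for every $w'\neq w$ of length $m$, so the top-class coefficient of $\mu_\ii^*(\sigma_u)\cup\mu_\ii^*(\sigma_v)$ is precisely $c_{u,v}^w$. State it that way to avoid the injectivity claim.
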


\begin{remark} The right hand side of \eqref{eq:cuvw} depends on a choice of $\ii=(i_1,\ldots,i_m)\in R(w)$. It would be interesting to find an ``optimal'' $\ii$ (depending on $u$, $v$ and $w$)  that would minimize the number of summands.

\end{remark}

\begin{remark}
It turns out that \eqref{eq:cuvw}  holds if we drop the condition of $\ii$-admissibility. See
discussion after Theorem \ref{th:cuvw refined T BS} for details.
\end{remark}

\begin{remark}
\label{rem:cuvw+} It frequently happens that each summand of \eqref{eq:cuvw} is positive (see
Theorem \ref{th:nonneg} for details). Based on this observation, we can define for each $u,v,w\in
W$, and $\ii\in R(w)$ the coefficient $c_{u,v}^{\ii,+}$ by replacing each $p_\varphi$ in
\eqref{eq:cuvw} with $p_\varphi^+$, where $p_\varphi^+$ is obtained by replacing each factor
$p_\ell:=\langle w_{\ii_{M(\ell)}}(-\alpha_{i_\ell}),\alpha_{i_{\varphi(\ell)}}^\vee \rangle$ in
\eqref{eq:plambda1} with $\max(p_\ell,0)$. We then define $$c_{u,v}^{w,+}:=\min_{\ii\in R(w)}
c_{u,v}^{\ii,+}\ .$$ Based on numerous examples, including all $G=SL_n$, $n\le 6$, we can
conjecture that
\begin{equation}
\label{eq:cuvw+}
c_{u,v}^w\le c_{u,v}^{w,+}
\end{equation}
for all $u,v,w\in W$ with $\ell(u)+\ell(v)=\ell(w)$. In fact, in most of examples, the inequality \eqref{eq:cuvw+} was an equality. In any case, if the inequality \eqref{eq:cuvw+} holds,  we can try to express $c_{u,v}^w$ as a``sub-sum" of one of $c_{u,v}^{\ii,+}=\sum p_\varphi^+$, i.e.,
express the Littlewood-Richardson coefficient $c_{u,v}^w$ in purely nonnegative terms.

\end{remark}

\begin{remark} \label{rem:Duan} H.~Duan proved in \cite{Duan2005} that if $G$ is semisimple, then for each $u,v,w\in W$ such that $\ell(w)=\ell(u)+\ell(v)$ and a given $\ii=(i_1,\ldots,i_m)\in R(w)$ one has in the notation as above:
\begin{equation}
\label{eq:duan}
c_{u,v}^w=\sum  \prod\limits_{1< \ell \le m} \frac{ c_{*,\ell}!}{\prod\limits_{1\le k < \ell} c_{k,\ell}!}  \prod_{1\le k< \ell \le m} b_{k,\ell}^{c_{k,\ell}}
\end{equation}
where $b_{k,\ell}=\langle s_{i_{k+1}}\cdots s_{i_{\ell-1}}(-\alpha_{i_\ell}),\alpha_{i_k}^\vee\rangle$, $1\le k<\ell \le m$ and the summation is over all triples $({\bf u},{\bf v},{\bf c})$ such that

$\bullet$ ${\bf u},{\bf v}\subset [1,m]$, $\ii_{\bf u}\in R(w)$, $\ii_{\bf v}\in R(v)$.

$\bullet$ ${\bf c}=(c_{k,\ell}|1\le k<\ell\le m)$ is a triangular array of nonnegative integers such that
\begin{equation}
\label{eq:duans array}
c_{*,k}-c_{k,*}=
\begin{cases}
1 & \text{if $k\in {\bf u}\cap {\bf v}$} \\
-1 & \text{if $k\in {\bf u}\cup {\bf v}\setminus ({\bf u}\cap {\bf v})$}\\
0 & \text{otherwise}
\end{cases}
\end{equation}
for $k=1,\ldots,m$
(here we abbreviated $c_{*,k}=\sum\limits_{1\le s<k} c_{s,k}$, $c_{k,*}=\sum\limits_{k<s\le m} c_{k,s}$).

Even though Duan's formula \eqref{eq:duan} makes sense for any Kac-Moody group $G$ and bears some
resemblance with our formula \eqref{eq:cuvw} (see also Remark \ref{rem:Duan BS}), the formulas are
still very different:  the set of all triangular arrays ${\bf c}$ in \eqref{eq:duan} is much larger
than the set of all relevant bounded bijections in Theorem \ref{th:cuvw}. For instance, if
$G=\widehat{SL_2}$, $u=v=(s_1s_2)^2$, $w=u^2$, then Duan's formula for $c_{u,v}^w$ has about 17,000
summands, but our formula \eqref{eq:cuvw} contains only 19 summands.
\end{remark}

In fact, Theorem \ref{th:cuvw} is a particular case of more general result (Theorem \ref{th:cuvw
refined T}) in which we compute all $T$-{\it equivariant} Littlewood-Richardson coefficients
$p_{u,v}^w\in S^{\ell(u)+\ell(v)-\ell(w)}(V)$ that are defined  by:
\begin{equation}
\label{eq:cup GB T}
\sigma_u^T\cup \sigma_v^T=\sum_{w\in W} p_{u,v}^w \sigma_w^T\ ,
\end{equation}
where $H_T^*(G/B)$ is the $T$-equivariant cohomology algebra of $G/B$ (see e.g., \cite[Section
11.3]{Ku02}), $T\subset B$ is a maximal torus of $G$, and $\sigma_w^T$ is the $T$-equivariant
Schubert cocycle (it is well-known that $p_{u,v}^w$ is homogeneous of degree
$\ell(u)+\ell(v)-\ell(w)$, in particular,  $c_{u,v}^w=\delta_{\ell(w),\ell(u)+\ell(v)}p_{u,v}^w$).

Using results of \cite{Duan2005,wi04,wi06}, we compute  $p_{u,v}^w$ via the {\it Bott-Samelson
coefficients} $p_{K',K''}^{\ii,K}$ as follows.

Recall that for each sequence $\ii=(i_1,\ldots,i_m)\in I^m$  the  $\ii$-th {\it Bott-Samelson
variety} $\Gamma_\ii=\Gamma_\ii(G)$ of $G$ is defined by:
$$\Gamma_\ii=(P_{i_1}\times_B P_{i_2}\times_B\cdots \times_B P_{i_m})/B $$
where $P_i$, $i\in I$ stands for the $i$-th minimal parabolic subgroup
(see e.g., \cite{wi04}).

It is well-known (see e.g., \cite{AndFul,Ku02}) that the $T$-equivariant cohomology algebra
$H_T^*(\Gamma_\ii,\CC)$ has a $\CC$-linear basis $\{\sigma_K^T\}$, where $K$ runs over all subsets
of $[m]$. Therefore, one defines the {\it equivariant Bott-Samelson coefficients}
$p_{K',K''}^{\ii,K}\in S^{|K|-|K'\cup K''|}(V)$ similarly to \eqref{eq:cup GB T} by:
\begin{equation}
\label{eq:cup BS T}
\sigma_{K'}^T\cup \sigma_{K''}^T=\sum p_{K',K''}^{\ii,K} \sigma_K^T\ ,
\end{equation}
where the summation is over all subsets $K\subset [m]$ such that $K'\cup K''\subset K$ and $|K|\le
|K'|+|K''|$.

Note that $p_{K',K''}^{\ii,K}=\delta_{K,K'\cup K''}$ if $K'\cap K''=\emptyset$.

The following result was proved by H.~Duan in \cite[Lemma 5.1]{Duan2005} for the ordinary
cohomology and by M.~Willems in \cite[Proposition 9]{wi04} in  the $T$-equivariant setting (see
also our algebraic generalization, Theorem \ref{th:general BS} and Corollary \ref{cor:general BS
quasi-cartan}).
\begin{proposition}
\label{pr:willems} Let $G$ be a Kac-Moody group and $W$ be its Weyl group. Then for any sequence
$\ii=(i_1,\ldots,i_m)\in I^m$ one has:

(a) The pullback of the canonical projection $\mu_\ii:\Gamma_\ii\to G/B$ is an algebra homomorphism
$\mu_\ii^*:H^*_T(G/B,\CC)\to H^*_T(\Gamma_\ii,\CC)$ given by
$$\mu_\ii^*(\sigma_w^T)= \sum_{K\subset [m]:\ii_K\in R(w)} \sigma_K^T$$
for all $w\in W$ (with the convention that $\mu_\ii^*(\sigma_w)=0$ if $\ii_K\notin R(w)$ for all $k\subset [m]$).

(b) If $\ii\in R(w)$ for some $w\in W$, then for any $u,v\in W$  the equivariant Littlewood-Richardson and Bott-Samelson coefficients are related by:
\begin{equation}
\label{eq:puvw via BS}
p_{u,v}^w=\sum p_{K',K''}^{\ii,[m]}\ ,
\end{equation}
with the summation over all subsets $K',K''\in [m]$ such that $\ii_{K'}\in R(u)$, $\ii_{K''}\in R(v)$.

\end{proposition}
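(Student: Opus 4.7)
The plan is to establish part (a) by equivariant localization and then deduce part (b) from (a) by a purely formal algebraic manipulation using the defining relations of the structure constants.

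For part (a), I would use that both $\Gamma_\ii$ and $G/B$ are $T$-equivariantly formal with finitely many $T$-fixed points, so their $T$-equivariant cohomologies inject into $\bigoplus_{L\subset[m]} S(V^*)$ and $\bigoplus_{v\in W} S(V^*)$ respectively via restriction to fixed points. The map $\mu_\ii$ is $T$-equivariant and sends the fixed point $e_L\in \Gamma_\ii^T$ (indexed by a subset $L\subset[m]$) to the fixed point $w_{\ii_L}B/B\in (G/B)^T$, so the pullback $\mu_\ii^*$ acts on localizations by $\mu_\ii^*(\alpha)|_L = \alpha|_{w_{\ii_L}}$. Consequently, (a) reduces to checking, for every $L\subset[m]$, the identity
\[
\sigma_w^T\big|_{w_{\ii_L}} = \sum_{K\subset[m]:\,\ii_K\in R(w)} \sigma_K^T\big|_L.
\]
The right hand side collapses because $\sigma_K^T|_L=0$ unless $K\subset L$, in which case the restriction is a product of roots of the form $w_{\ii_{L\cap[k-1]}}(\alpha_{i_k})$ for $k\in K$. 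The left hand side is given by Billey's formula for $\sigma_w^T|_{w_{\ii_L}}$, which sums exactly the same root products over reduced subwords of any reduced expression of $w_{\ii_L}$. The matching of the two expressions is the extension of Billey's formula to the possibly non-reduced word $\ii_L$, which is precisely the content of \cite[Proposition 9]{wi04} and is also covered by the authors' later algebraic generalization (Theorem \ref{th:general BS} and Corollary \ref{cor:general BS quasi-cartan}).

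For part (b), I would apply the algebra homomorphism $\mu_\ii^*$ to the defining relation $\sigma_u^T\cup\sigma_v^T=\sum_{w'\in W} p_{u,v}^{w'}\sigma_{w'}^T$. Substituting the formula from (a) on both sides yields
\[
\Bigl(\sum_{K':\,\ii_{K'}\in R(u)}\sigma_{K'}^T\Bigr) \cup \Bigl(\sum_{K'':\,\ii_{K''}\in R(v)}\sigma_{K''}^T\Bigr) = \sum_{w'\in W} p_{u,v}^{w'} \sum_{K:\,\ii_K\in R(w')} \sigma_K^T.
\]
Expanding the left hand side via \eqref{eq:cup BS T} rewrites it as $\sum_K \bigl(\sum_{K',K''} p_{K',K''}^{\ii,K}\bigr)\sigma_K^T$, where the inner sum ranges over $K',K''\subset[m]$ with $\ii_{K'}\in R(u)$, $\ii_{K''}\in R(v)$. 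Extracting the coefficient of $\sigma_{[m]}^T$ on both sides and observing that $\ii_{[m]}=\ii\in R(w')$ forces $w'=w_\ii=w$, so that only the term $p_{u,v}^w$ contributes on the right, produces exactly \eqref{eq:puvw via BS}.

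The main obstacle is the localization identity at the end of part (a): verifying rigorously that $\sum_{K:\,\ii_K\in R(w)}\sigma_K^T|_L$ correctly computes $\sigma_w^T|_{w_{\ii_L}}$ for an arbitrary (and possibly non-reduced) subword $\ii_L$ is where all the actual work sits. Once this is in hand, the algebraic deduction of (b) is automatic. Given that the identity is already established in \cite{wi04}, the proof in this paper can proceed by citation of part (a) and the short algebraic argument above for part (b); the authors' own approach will refine this via their general algebraic framework for Bott-Samelson-type rings.
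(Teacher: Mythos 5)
Your proposal is correct and takes essentially the same stance as the paper: the geometric content of part (a) is cited from Willems \cite[Proposition 9]{wi04} (equivalently Duan \cite[Lemma 5.1]{Duan2005} for ordinary cohomology), and the paper likewise defers to those sources, only later supplying its own algebraic re-derivation through the nil-Hecke framework (Theorem \ref{th:general BS} and Corollary \ref{cor:general BS quasi-cartan}). Your deduction of (b) from (a) — applying the ring homomorphism $\mu_\ii^*$ to the defining cup-product relation, expanding both sides in the $\{\sigma_K^T\}$ basis, and extracting the coefficient of $\sigma_{[m]}^T$, using that $\ii_{[m]}=\ii\in R(w')$ forces $w'=w$ — is exactly right and matches the role \eqref{eq:puvw via BS} plays in the paper.
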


Thus, according to \eqref{eq:puvw via BS}, in order to compute all $p_{u,v}^w$, it suffices to
compute $p_{K',K''}^{\ii,K}$. To do so, we need some notation.

For each bijection $\varphi:L\widetilde \to [m]\setminus M$ and any $k\notin L$ we define a root
$\alpha_k^{(\varphi)}\in V$ by
\begin{equation}
\label{def:plambda2}
\alpha_k^{(\varphi)}:=\left(\buildrel\longrightarrow \over {\prod_{r\in M_{<k} \cup \varphi(L_{<k})}} s_{i_r}\right)(\alpha_{i_k}) \ .
\end{equation}


The following result, to the best of our knowledge, was previously unknown.

\begin{theorem}
\label{th:cuvw refined T BS} Let $G$ be a Kac-Moody group and $W=\langle s_i,i\in I\rangle$ be its
Weyl group. Then (in the notation of Theorem \ref{th:cuvw}) for each   $\ii\in I^m$  and
$K,K',K''\subset [m]$ with $K'\cup K''\subset K$, $|K|\le |K'|+|K''|$ one has:
\begin{equation}
\label{eq:cuvw refined T BS}
p_{K',K''}^{\ii,K}=\sum p_\varphi\cdot \prod_{k\in (K'\cap K'')\setminus L} \alpha_k^{(\varphi)}
\end{equation}
with the summation over all pairs  $(L,\varphi)$, where
\begin{itemize}
\item $L$ is a subset of $K'\cap K''$ such that $|L|+|K'\cup K''|=|K|$;
\item $\varphi:L\ \to K\setminus (K'\cup K'')$ is a bounded bijection.
\end{itemize}
\end{theorem}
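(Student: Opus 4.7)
I would proceed by induction on $m$, exploiting the canonical $\PP^1$-bundle projection $\pi: \Gamma_\ii \to \Gamma_{\ii'}$ with $\ii' = (i_1, \ldots, i_{m-1})$.  The base case $m = 0$ is trivial: $\Gamma_\emptyset$ is a point, and both sides of \eqref{eq:cuvw refined T BS} reduce to $1$.  For the inductive step the key algebraic input is the standard description of $H^*_T(\Gamma_\ii)$ as an $H^*_T(\Gamma_{\ii'})$-algebra --- a special case of the authors' Theorem \ref{th:general BS}.  Concretely, $\sigma_K^T = \pi^*(\sigma_K^{T, \ii'})$ when $m \notin K$, and $\sigma_K^T = \eta \cdot \pi^*(\sigma_{K \setminus \{m\}}^{T, \ii'})$ when $m \in K$, where $\eta \in H^2_T(\Gamma_\ii)$ is the class of the tautological section of $\pi$ and satisfies a quadratic relation $\eta^2 = \eta \cdot \pi^*(\gamma)$, with $\gamma \in H^2_T(\Gamma_{\ii'})$ the equivariant first Chern class of the normal bundle of the section.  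The Schubert expansion of $\gamma$ involves precisely the roots $\alpha_m^{(\cdot)}$ appearing in \eqref{eq:cuvw refined T BS}.

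I would then expand $\sigma_{K'}^T \cup \sigma_{K''}^T$ according to the membership of $m$ in $K'$ and $K''$.  When $m \notin K' \cup K''$ the product is pulled back from $\Gamma_{\ii'}$ and reduces directly to the inductive hypothesis with bounded bijections confined to $[m-1]$.  When $m$ lies in exactly one of $K', K''$ a single factor of $\eta$ appears, forcing $m \in K$, and the coefficient is given by the inductive hypothesis applied after deleting $m$ from the corresponding side.  The essential case is $m \in K' \cap K''$: the relation $\eta^2 = \eta \cdot \pi^*(\gamma)$ combined with the Schubert expansion of $\gamma$ splits the computation into two families of contributions.  The contributions with $m \in (K' \cap K'') \setminus L$ produce the root factor $\alpha_m^{(\varphi)}$ coming from the Schubert expansion of $\gamma$, while the contributions with $m \in L$ are those where the cross term from $\gamma$ is absorbed into the Littlewood--Richardson coefficient on $\Gamma_{\ii'}$, producing exactly the extra pairing $\langle w_{\ii_{M(m)}}(-\alpha_{i_m}), \alpha_{i_{\varphi(m)}}^\vee \rangle$ in $p_\varphi$.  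The boundedness condition $\varphi(m) < m$ is automatic since $m$ is the maximal element, and the recursive definitions of $M(\ell)$ and $\alpha_k^{(\varphi)}$ restrict compatibly under the deletion of the last position.

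The main obstacle is the combinatorial bookkeeping rather than any deep geometric step.  One must verify that every bounded bijection $\varphi$ for $\ii$ arises exactly once from the inductive expansion via the trichotomy $m \in L$, $m \in (K' \cap K'') \setminus L$, or $m \notin K' \cap K''$; that the products $p_\varphi$ and $\prod_k \alpha_k^{(\varphi)}$ restrict correctly under the passage $\ii \mapsto \ii'$; and that the cardinality constraint $|L| + |K' \cup K''| = |K|$ is preserved through each case.  A secondary technical point, which should follow from Theorem \ref{th:general BS}, is establishing the precise Schubert expansion of $\gamma$ (equivalently, the exact form of the quadratic relation for $\eta$) in the $T$-equivariant setting, so that the coefficients that fall out of the $\eta^2 = \eta \cdot \pi^*(\gamma)$ reduction match $\alpha_m^{(\varphi)}$ on the nose.
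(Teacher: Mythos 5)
Your approach is a genuinely different route from the paper's.  The paper translates the Bott--Samelson coefficients $p_{K',K''}^{\ii,K}$ to relative Littlewood--Richardson coefficients for a repetition-free sequence via \eqref{eq:BS shorthand} and \eqref{eq:relative LR via BS repetition free}, and then proves Proposition \ref{pr:free_coeff} by induction on $m$ \emph{deleting the first entry} $i_1$, using the operator recursion \eqref{eq:free composition}/\eqref{pr:free one step} from Corollary \ref{cor:demazure} together with the twisted Leibniz rule for the Demazure operator $x_{i_1}$.  You instead propose to induct \emph{deleting the last entry} $m$, via the $\PP^1$-bundle $\Gamma_\ii \to \Gamma_{(i_1,\ldots,i_{m-1})}$ and the quadratic relation for the tautological class $\sigma_m$ (Theorem \ref{th:general BS}(c)).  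These are different inductions and it is not just a cosmetic difference: the paper's version builds up the Weyl twists $\alpha_k^{(\varphi)}$ and $w_{\ii_{M(\ell)}}$ from the left, one reflection at a time, through the composite $f_{i_1}f_{i_2}\cdots$, while your version would have to produce them by summing many contributions from the right end.

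This is where your outline has a real gap.  The class $\gamma$ in the relation $\sigma_m^2 = \sigma_m\cdot\pi^*(\gamma)$ has Schubert expansion $\gamma = \alpha_{i_m}\cdot 1 + \sum_{\ell<m} c_\ell\,\sigma_\ell$ with \emph{scalar} coefficients $c_\ell$ (the entries $-a_{i_\ell,i_m}$ of the Cartan matrix) and a constant term that is the single simple root $\alpha_{i_m}$; it does not ``involve precisely the roots $\alpha_m^{(\cdot)}$'' as you claim.  In particular, the factor $\alpha_m^{(\varphi)} = \bigl(\prod_{r\in K\setminus\{m\}} s_{i_r}\bigr)(\alpha_{i_m})$ demanded by \eqref{eq:cuvw refined T BS} when $m\in (K'\cap K'')\setminus L$ is a Weyl-transformed root that does \emph{not} appear in $\gamma$; it can only emerge after the constant term $\alpha_{i_m}$ and the cross terms cascade through repeated applications of the quadratic relations for $\sigma_\ell$ with $\ell\in K'\cup K''$.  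Your trichotomy ($m\in L$, $m\in(K'\cap K'')\setminus L$, $m\notin K'\cap K''$) silently ignores the contributions from $\ell\in K'\cup K''$: those produce $\sigma_\ell^2$ or $\sigma_\ell^3$ inside $\sigma_\ell\sigma_{K'\setminus\{m\}}\sigma_{K''\setminus\{m\}}$, they do not correspond to a bounded bijection with image in $K\setminus(K'\cup K'')$, and yet they do contribute nontrivially (already in the rank-one example $\ii=(1,2,1)$, $K'=K''=\{2,3\}$, $K=\{1,2,3\}$ the term with $\ell=2\in K'\cup K''$ is nonzero and is exactly what makes the coefficient vanish as the formula predicts).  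Making your induction go through therefore requires identifying and carrying out the cancellations and reassembly of these ``interior'' terms into the closed form, which is precisely the combinatorial crux the proposal currently waves away.  The paper's first-entry recursion avoids this entirely, since the twisted Leibniz rule for $x_{i_1}$ splits the computation cleanly into Cases I--III with no spurious terms to be cancelled.
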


We prove Theorem \ref{th:cuvw refined T BS} in Section \ref{sect:proofs}.

\begin{remark} The Bott-Samelson coefficients $c_{K',K''}^{\ii,K}\in \ZZ$ for the ordinary cohomology
$H^*(\Gamma_\ii(G),\CC)$ are given by
$$c_{K',K''}^{\ii,K}=\delta_{|K|,|K'|+|K''|}p_{K',K''}^{\ii,K}.$$
In this case, i.e., when $|K|=|K'|+|K''|$, the formula \eqref{eq:cuvw refined T BS} simplifies
because $L=K'\cap K''$ and the thus summation in \eqref{eq:cuvw refined T BS} is over all bounded
bijections $\varphi:K'\cap K''\ \to K\setminus (K'\cup K'')$.
\end{remark}

\begin{remark}
\label{rem:Duan BS} The coefficients $c_{K',K''}^{\ii,K}$ were  computed in \cite{Duan2005}  and
$p_{K',K''}^{\ii,K}$ were computed for any $K,K',K''$ in \cite{wi06}. In the notation of Remark
\ref{rem:Duan} one has:
\begin{equation}
\label{eq:duan eqivar}
c_{K',K''}^{\ii,[m]}=\sum  \prod\limits_{1< \ell \le m} \frac{ c_{*,\ell}!}{\prod\limits_{1\le k < \ell} c_{k,\ell}!}  \prod_{1\le k< \ell \le m} b_{k,\ell}^{c_{k,\ell}}\ ,
\end{equation}
where the summation is over all ${\bf c}$ such that
$$c_{*,k}-c_{k,*}=
\begin{cases}
1 & \text{if $k\in K'\cap K''$} \\
-1 & \text{if $k\in K'\cup K''\setminus (K'\cap K'')$}\\
0 & \text{otherwise}
\end{cases}
$$
for $k=1,\ldots,m$.

Indeed, proving that the right hand sides of \eqref{eq:cuvw refined T BS}
and \eqref{eq:duan eqivar} are equal, is a rather interesting and challenging combinatorial problem.

\end{remark}

Note that, unlike (equivariant) Littlewood-Richardson coefficients, the (equivariant) Bott-Samelson
coefficients $p_{K',K''}^{\ii,K}$ are not always positive.  Therefore, the formula \eqref{eq:puvw
via BS} is not optimal. In fact, combining \eqref{eq:cuvw refined T BS} with \eqref{eq:puvw via BS}
(provided that $\ell(u)+\ell(v)=\ell(w)$), we obtain the assertion of Theorem \ref{th:cuvw} with
the $\ii$-admissibility condition dropped. For instance, in the example of Remark \ref{rem:Duan},
dropping $\ii$-admissibility would (modestly) increase the number of summands in \eqref{eq:cuvw}
from 19 to 190.

Instead of directly combining \eqref{eq:cuvw refined T BS} with \eqref{eq:puvw via BS}, we
introduce (and compute) the ``interpolating" coefficients as follows. For any triple of sequences
$\ii\in I^m$, $\ii'\in I^{m'}$, $\ii''\in I^{m''}$ define the {\it relative} ($T$-equivariant)
Littlewood-Richardson coefficient $p_{\ii',\ii''}^\ii$ by
\begin{equation}
\label{eq:relative LR via BS}
p_{\ii',\ii''}^\ii=\sum p_{K',K''}^{\ii,[m]}
\end{equation}
where the summation is over all pairs $K',K''\subset [m]$ such that $\ii_{K'}=\ii'$,
$\ii_{K''}=\ii''$ (in the notation of Definition \ref{def:admissible}).

We get that equation \eqref{eq:puvw via BS} simplifies to:
\begin{equation}
\label{eq:sum of relatives T}
p_{u,v}^w=\sum p_{\ii',\ii''}^\ii
\end{equation}
for all $u,v,w\in W$  and  any  given $\ii\in R(w)$, where  the summation is over all
sub-sequences $\ii',\ii''$ of $\ii$ such that $\ii'\in R(u)$, $\ii''\in R(v)$.

The following Theorem is our second main result which, taken together with  \eqref{eq:sum of
relatives T}, finishes the computation of all equivariant Littlewood-Richardson coefficients (and
thus verifies Theorem \ref{th:cuvw}).

\begin{theorem}
\label{th:cuvw refined T} Let $G$ be a Kac-Moody group and $W=\langle s_i,i\in I\rangle$ be its
Weyl group. Then  for each  admissible sequence $\ii\in I^m$ one has:
\begin{equation}
\label{eq:cuvw refined T}
p_{\ii',\ii''}^\ii=\sum p_\varphi\cdot \prod_{k\in (K'\cap K'')\setminus L} \alpha_k^{(\varphi)}
\end{equation}
with the summation over all quadruples  $(K',K'',L,\varphi)$, where
\begin{itemize}
\item $K'$ and $K''$ are subsets of $[m]$ such that $\ii_{K'}=\ii'$, $\ii_{K''}=\ii''$;
\item $L$ is a subset of $K'\cap K''$ such that $|L|+|K'\cup K''|=m$;
\item $\varphi:L\ \to [m]\setminus (K'\cup K'')$ is an $\ii$-admissible bounded bijection.
\end{itemize}
\end{theorem}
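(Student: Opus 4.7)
The plan is to derive Theorem \ref{th:cuvw refined T} by combining Theorem \ref{th:cuvw refined T BS} with the definition \eqref{eq:relative LR via BS} of the relative Littlewood-Richardson coefficient. Substituting the explicit BS formula of Theorem \ref{th:cuvw refined T BS} (applied with $K=[m]$) into \eqref{eq:relative LR via BS} immediately yields
$$p_{\ii',\ii''}^\ii = \sum_{(K',K'',L,\varphi)} p_\varphi \cdot \prod_{k\in(K'\cap K'')\setminus L}\alpha_k^{(\varphi)},$$
where the sum is over quadruples satisfying all hypotheses of Theorem \ref{th:cuvw refined T} \emph{except} that $\varphi$ is only required to be a bounded bijection (not necessarily $\ii$-admissible). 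The remaining task is therefore to show that, under the standing hypothesis that $\ii$ itself is admissible, the contributions of the non-$\ii$-admissible quadruples cancel in pairs.

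For this cancellation I would construct a sign-reversing, fixed-point-free involution $\tau$ on the set $\mathcal{N}$ of non-$\ii$-admissible quadruples. Given $(K',K'',L,\varphi)\in\mathcal{N}$, let $\ell^\ast\in L$ be the smallest element for which $\ii_{M\cup\varphi(L_{<\ell^\ast})}$ is not admissible (with $M=K'\cup K''$), and let $r<s$ be the leftmost pair of consecutive positions in $M\cup\varphi(L_{<\ell^\ast})$ with $i_r=i_s$. Admissibility of $\ii$ forces $s\ge r+2$, leaving combinatorial ``room'' between $r$ and $s$ for the local modification. The involution $\tau$ acts by exchanging the ``roles'' of $r$ and $s$ among the three sets $K'$, $K''$, and $\varphi(L_{<\ell^\ast})$, with a short case analysis depending on which of these sets contain $r$ and $s$. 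In every case, the equality $i_r=i_s$ guarantees that $\ii_{K'}=\ii'$ and $\ii_{K''}=\ii''$ are preserved; the sign reversal is produced by the identity $\langle s_i\beta,\alpha_i^\vee\rangle = -\langle\beta,\alpha_i^\vee\rangle$ applied to exactly one factor of $p_\varphi$ or of an $\alpha_k^{(\varphi)}$ located between $r$ and $s$.

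The main obstacle is the careful case-by-case bookkeeping required to verify that $\tau$ (i) preserves the minimality of $\ell^\ast$ and of the pair $(r,s)$ so that applying $\tau$ twice returns the original quadruple, (ii) has no fixed points so that the entire sum over $\mathcal{N}$ is paired off, and (iii) produces exactly one compensating sign change and no spurious numerical factors. A subtle point is tracking how the roots $\alpha_k^{(\varphi)}$ transform under $\tau$: since each $\alpha_k^{(\varphi)}$ depends on the ordered product of simple reflections indexed by $M_{<k}\cup\varphi(L_{<k})$, the swap may insert or delete a factor of the form $s_is_i=1$, which is trivial in $W$ but must be tracked precisely to confirm that the net effect on the monomial $p_\varphi\cdot \prod_k\alpha_k^{(\varphi)}$ is exactly an overall sign. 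Once $\tau$ is established, the summation over $\mathcal{N}$ collapses to zero and the identity \eqref{eq:cuvw refined T} follows.
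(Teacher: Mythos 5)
Your starting point is exactly the paper's: combine Theorem \ref{th:cuvw refined T BS} (with $K=[m]$) with the identity \eqref{eq:relative LR via BS general} to obtain Proposition \ref{pr:cuvw refined T}, i.e.\ the formula without the $\ii$-admissibility constraint, and then argue that the non-$\ii$-admissible contributions cancel.

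However, the cancellation step has a genuine gap: a single sign-reversing, fixed-point-free involution on $\mathcal{N}$ cannot work. The problem is that not all offending quadruples cancel in pairs with a sign flip. The paper partitions the bad quadruples into three types. For $J_1$ (the pair $M=K'\cup K''$ itself already fails admissibility) and $J_2$ (the non-admissible pair is precisely $\{\varphi(\ell_z),\ell_z\}$ sitting consecutively), the cancellation is a \emph{three-term} relation $p_\Lambda + p_{\Lambda_1} + p_{\Lambda_2}=0$: the relevant factor of $p_\Lambda$ equals $\langle\alpha_{i_{\ell_z}},\alpha_{i_{\varphi(\ell_z)}}^\vee\rangle = a_{ii}=2$, while the two $J_1$-partners $\Lambda_1,\Lambda_2$ (obtained by symmetric differences) each contribute the same $p_{\Lambda_1}=p_{\Lambda_2}$ and do \emph{not} differ by a sign from one another. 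No fixed-point-free involution on quadruples can encode this two-against-one cancellation. Only for the third type $J_3$ (where the non-admissible pair involves $\varphi(\ell_z)$ but $\ell_z$ is not adjacent to it) does the paper use a bona fide sign-reversing involution $\sigma$ swapping the two positions of the non-admissible pair, and your description of $\tau$ reasonably matches that case.

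There is a second, more technical gap. For the cancellation to be well defined one needs a unique non-admissible pair to act on; the paper secures this by an auxiliary induction on $k$, working with modified sequences $\jj(k)\in(I\times[m])^m$ that gradually "turn on'' repetitions, so that at each step $J(k-1)\setminus J(k)$ has a \emph{unique} offending pair. Your proposal implicitly assumes the pair $(r,s)$ and the position $\ell^\ast$ are canonically determined and stable under $\tau$, but without the filtration-by-$k$ device it is not clear that minimality of $(\ell^\ast, r, s)$ is preserved by the swap, nor that the bijection remains bounded (the paper has to check, in three subcases depending on the relative positions of $r$, $\ell_z$, and $\nu_\Lambda$, that boundedness survives). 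So the argument needs both the $J_1/J_2$ triple-cancellation mechanism and the auxiliary filtration before the involution part can go through.
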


%

We prove Theorem \ref{th:cuvw refined T} in Section \ref{sect:proofs}.  Note that the right hand
side of  \eqref{eq:cuvw} and \eqref{eq:cuvw refined T} makes sense for any Coxeter group $W$ acting
on $V$ by \eqref{eq:reflection}, even if the group $G$ does not exist.  The only data needed is the
Cartan matrix $A$. In fact, the main ingredient of the proof is the realization of the relative
coefficients $p_{\ii',\ii''}^\ii$ as the structure constants in the dual of the generalized nil-Hecke
algebra of the free  {\it Coxeter semigroup} as follows (see also Section \ref{sect:nil hecke}).

\begin{theorem}
\label{th:embedding H(G/B) to H(hat G/hat B)} For each Kac-Moody group $G$  there exists a
commutative $S(V)$-algebra ${\mathcal A}(G)$ with the basis $\{\sigma_\ii\}$, where $\ii$ runs over
all sequences in $I^m$, $m\ge 0$ such that:

(a) For any  sequences $\ii'\in I^{m'}$ and $\ii''\in I^{m''}$ one has:
$$\sigma_{\ii'} \sigma_{\ii''}= \sum_\ii p_{\ii',\ii''}^\ii \sigma_\ii$$
with the summation over all sequences $\ii\in I^m$, $m\ge 0$ containing $\ii'$ and $\ii''$ as
sub-sequences and such that $m\le m'+m''$.

(b) The linear span of all $\sigma_\ii$ with admissible $\ii$ is a subalgebra ${\mathcal
A}^{adm}(G)$ of ${\mathcal A}(G)$.

(c) The association
$\sigma_w^T \mapsto \sum\limits_{\ii\in R(w)} \sigma_\ii$
defines an injective algebra homomorphism
\begin{equation}
\label{eq:embedding H(G/B) to A(G)}
H_T^*(G/B)\hookrightarrow {\mathcal A}^{adm}(G) \ .
\end{equation}

(d) Given $\ii\in I^m$, the association
\begin{equation}
\label{eq:homomorphism A(G)->HGammaii}
\displaystyle{\sigma_{\ii'}\mapsto \begin{cases} \sum\limits_{K\subset [m]:\ii_K=\ii'}\sigma_K^T &\text{if $\ii'$ is a subsequence of $\ii$}\\
                                    0 & \it{otherwise}\\
                                    \end{cases}
                                    }
\end{equation}
for all $\ii'\in I^{m'}$, $m'\ge 0$ defines an algebra homomorphism
${\mathcal A}(G)\to  H^*(\Gamma_\ii(G),\CC)$; moreover,
the canonical algebra homomorphism $\varphi_\ii^*:H^*_T(G/B,\CC)\to H^*_T(\Gamma_\ii,\CC)$ from Proposition \ref{pr:willems}(a)
factors through as the composition of \eqref{eq:embedding H(G/B) to A(G)} and \eqref{eq:homomorphism A(G)->HGammaii}.

(e) For each $\ii\in I^m$ the linear span $J_\ii\subset {\mathcal A}(G)$ of all $\sigma_{\ii'}$ such that $\ii'$ is not a subsequence of $\ii$ is an ideal in ${\mathcal A}(G)$; moreover, the kernel of \eqref{eq:homomorphism A(G)->HGammaii} is $J_\ii$, hence one has an injective homomorphism of algebras
\begin{equation}
\label{eq:homomorphism Aii(G)->HGammaii}
{\mathcal A}(G)/J_\ii\hookrightarrow  H^*(\Gamma_\ii(G),\CC)
\end{equation}
\end{theorem}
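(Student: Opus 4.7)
The strategy is to define $\mathcal{A}(G)$ directly via structure constants and reduce every algebraic assertion to known facts about Bott-Samelson cohomology via a separating family of algebra maps. I would set $\mathcal{A}(G)$ to be the free $S(V)$-module on the basis $\{\sigma_\ii\}$ indexed by all sequences $\ii\in I^m$, $m\ge 0$, and declare the bilinear product
\[
\sigma_{\ii'}\cdot\sigma_{\ii''}=\sum_\ii p_{\ii',\ii''}^\ii\,\sigma_\ii,
\]
using the coefficients of \eqref{eq:relative LR via BS}; the sum is finite because $p_{\ii',\ii''}^\ii=0$ unless $|\ii|\le|\ii'|+|\ii''|$ and $\ii',\ii''$ embed as subsequences of $\ii$. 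For each $\ii\in I^m$, let $\pi_\ii\colon\mathcal{A}(G)\to H^*_T(\Gamma_\ii,\CC)$ be the $S(V)$-linear map defined by \eqref{eq:homomorphism A(G)->HGammaii}.

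The pivotal technical step is that each $\pi_\ii$ is multiplicative. The key observation --- evident from the explicit formula of Theorem \ref{th:cuvw refined T BS} --- is that $p_{K',K''}^{\ii,K}$ depends on $\ii$ only through the subsequence $\ii_K$: every factor $p_\varphi$ and $\alpha_k^{(\varphi)}$ uses only letters $i_r$ with $r\in K$, so under the order-preserving bijection $[|K|]\widetilde{\to} K$ one has $p_{K',K''}^{\ii,K}=p_{K'_0,K''_0}^{\ii_K,[|K|]}$ whenever $K'\cup K''\subset K$. Combining this with \eqref{eq:cup BS T} and the definition \eqref{eq:relative LR via BS} applied to $\ii_K$, the coefficient of $\sigma_K^T$ in both $\pi_\ii(\sigma_{\ii'})\cdot\pi_\ii(\sigma_{\ii''})$ and $\pi_\ii(\sigma_{\ii'}\sigma_{\ii''})$ equals $p_{\ii',\ii''}^{\ii_K}$. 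The family $\{\pi_\ii\}$ is moreover jointly injective: for a nonzero $x=\sum c_{\ii'}\sigma_{\ii'}$, picking $\ii^{*}$ of minimal length with $c_{\ii^{*}}\ne 0$ shows the coefficient of $\sigma_{[|\ii^{*}|]}^T$ in $\pi_{\ii^{*}}(x)$ is $c_{\ii^{*}}\ne 0$, since $\ii^{*}$ is the unique length-$|\ii^{*}|$ subsequence of itself and no shorter $\ii'$ contributes. Associativity and commutativity of $\mathcal{A}(G)$ then descend from those of each $H^*_T(\Gamma_\ii,\CC)$, establishing (a).

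Parts (d), (e), and (c) follow quickly. For (d), one computes $\pi_\ii(\iota(\sigma_w^T))=\sum_{\ii'\in R(w)}\sum_{K:\ii_K=\ii'}\sigma_K^T=\sum_{K:\ii_K\in R(w)}\sigma_K^T=\mu_\ii^*(\sigma_w^T)$ by Proposition \ref{pr:willems}(a). For (e), $\pi_\ii(\sigma_{\ii'})=0$ iff $\ii'$ is not a subsequence of $\ii$, so $\ker\pi_\ii=J_\ii$; distinct subsequences of $\ii$ yield $\pi_\ii$-images supported on disjoint sets of Schubert classes, giving injectivity of the induced map on $\mathcal{A}(G)/J_\ii$. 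Injectivity of $\iota$ in (c) is immediate since distinct $w$'s have disjoint $R(w)$; multiplicativity follows by the separation trick, as the discrepancy $d_{u,v}:=\iota(\sigma_u^T\cup\sigma_v^T)-\iota(\sigma_u^T)\iota(\sigma_v^T)$ satisfies $\pi_\ii(d_{u,v})=\mu_\ii^*(\sigma_u^T\cup\sigma_v^T)-\mu_\ii^*(\sigma_u^T)\mu_\ii^*(\sigma_v^T)=0$ for every $\ii$ by Proposition \ref{pr:willems}(a), so $d_{u,v}=0$.

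The main obstacle is property (b), equivalently the vanishing $p_{\ii',\ii''}^{\tilde\ii}=0$ for admissible $\ii',\ii''$ and non-admissible $\tilde\ii=(\tilde i_1,\dots,\tilde i_m)$. Via \eqref{eq:relative LR via BS} this amounts to the cancellation $\sum_{K',K''}p_{K',K''}^{\tilde\ii,[m]}=0$, summed over $K',K''$ with $\tilde\ii_{K'}=\ii'$, $\tilde\ii_{K''}=\ii''$. Fixing an index $k$ with $\tilde i_k=\tilde i_{k+1}=:i$, admissibility of $\ii'$ and $\ii''$ forces each of $K',K''$ to contain at most one of $\{k,k+1\}$, so the involution $\tau$ swapping the roles of positions $k$ and $k+1$ is well-defined on the indexing set. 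The plan is to show $p_{K',K''}^{\tilde\ii,[m]}+p_{\tau(K',K'')}^{\tilde\ii,[m]}=0$ on non-fixed pairs and that fixed pairs vanish individually, exploiting the identity $s_i s_i=1$ acting on the roots $\alpha_r^{(\varphi)}$ and the sign flip $s_i(\alpha_i)=-\alpha_i$ inside the factors $p_\varphi$ of Theorem \ref{th:cuvw refined T BS}. The smallest non-admissible case $\tilde\ii=(i,i)$, $\ii'=\ii''=(i)$ already exhibits the cancellation ($0+1+1-2=0$); executing a sign-reversing involution cleanly in full generality is the remaining combinatorial challenge.
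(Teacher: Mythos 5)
Your construction of $\mathcal{A}(G)$ by structure constants, together with the use of the Bott--Samelson maps $\pi_\ii$ as a jointly injective family of algebra homomorphisms, is a sound and genuinely different route from the paper's for parts (a), (c), (d), (e). The paper instead realizes $\mathcal{A}(G)$ abstractly as the $S(V)$-dual of the coalgebra $Q_{\widehat W}$ for the free monoid $\widehat W$ on $S$ (Proposition \ref{pr:dual nil Hecke}(b)), so that associativity/commutativity come for free from coassociativity/cocommutativity, and the maps to $H^*_T(\Gamma_\ii,\CC)\cong\mathcal{BS}_{A,\ii}$ arise as duals of the inclusions of subcoalgebras ${\mathcal C}_\ii\subset Q_{\widehat W}$ (Proposition \ref{pr:A-->Aii}, Theorem \ref{th:general BS}). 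Your ``separation'' argument via $\{\pi_\ii\}$ is a legitimate substitute, and your verification that $p_{K',K''}^{\ii,K}$ depends on $\ii$ only through $\ii_K$ is exactly the content of \eqref{eq:BS shorthand}.

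However, part (b) is a genuine gap: you reduce it correctly to the vanishing $p_{\ii',\ii''}^{\tilde\ii}=0$ for admissible $\ii',\ii''$ and non-admissible $\tilde\ii$, propose a sign-reversing involution swapping positions $k,k+1$ with $\tilde i_k=\tilde i_{k+1}$, check it on the one-letter example, and then explicitly leave the general cancellation as a ``remaining combinatorial challenge.'' That step is not carried out, and it is nontrivial: one must simultaneously match orphan terms and verify that all fixed points of the involution contribute zero, with the relevant roots living in the equivariant (not just ordinary) setting. The paper sidesteps this entirely. It observes that the canonical projection from the free monoid $\widehat W$ onto the free Coxeter group $\widetilde W$ (where $s_i^2=1$, hence $x_i^2=0$) induces a surjection of coalgebras $Q_{\widehat W}\twoheadrightarrow Q_{\widetilde W}$ sending $x_\ii\mapsto x_\ii$ for admissible $\ii$ and $x_\ii\mapsto 0$ otherwise and commuting with $\delta$; dualizing over $S(V)$ yields an injective algebra homomorphism ${\mathcal A}^{adm}(G)\hookrightarrow{\mathcal A}(G)$ whose image is precisely the span of the admissible $\sigma_\ii$. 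The vanishing you are trying to prove by hand is then automatic: since that span is the image of an algebra map, the product of two admissible basis vectors expands with no non-admissible terms. To complete your proof you should replace the proposed involution with this duality argument (or supply the full combinatorial cancellation).
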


We prove Theorem \ref{th:embedding H(G/B) to H(hat G/hat B)}  in Section \ref{sect:nil hecke} (as a
corollary of Proposition \ref{pr:dual nil Hecke} and Theorem \ref{the:cohomology of flags}).



\begin{remark} It follows from the results of Section \ref{sect:nil hecke} that the linear
span of all $\sigma_{\ii'}$ with admissible $\ii'$ is an $S(V)$-subalgebra (which we denote
${\mathcal A}^{adm}(G)$) of ${\mathcal A}(G)$.


Therefore, it would be natural to conjecture that for each $\ii$ there is are varieties (or at
least a topological spaces) $X_\ii=X_\ii(G)$ and $X^{adm}_\ii=X^{adm}_\ii(G)$ with the $T$-action
and $T$-equivariant morphisms $\Gamma_\ii(G)\twoheadrightarrow X_\ii\twoheadrightarrow
X_\ii^{adm}\to G/B$ such that:

$\bullet$ the canonical projection  $\mu_\ii:\Gamma_\ii(G)\to G/B$ factors through as $X_\ii$ and $X_\ii^{adm}$.

$\bullet$ $H_T^*(X_\ii^{adm},\CC)\cong {\mathcal A}_\ii^{adm}(G)$, $H_T^*(X_\ii,\CC)\cong {\mathcal A}_\ii(G)$ and the algebra homomorphisms
$$H_T^*(G/B,\CC)\to {\mathcal A}_\ii^{adm}(G)/J_\ii^{adm}\hookrightarrow {\mathcal A}_\ii(G)/J_\ii\hookrightarrow  H^*(\Gamma_\ii(G),\CC)$$ are just the pullbacks of the above  morphisms $\Gamma_\ii(G)\twoheadrightarrow X_\ii\twoheadrightarrow X_\ii^{adm}\to G/B$ (here $J_\ii^{adm}=J_\ii\cap {\mathcal A}^{adm}(G)$).

\medskip

If $\ii\in R(w)$, then $X_\ii^{adm}$ should be thought of as a ``minimal resolution'' of
singularities of the corresponding Schubert variety in $G/B$.

\end{remark}

\medskip

We now apply  Theorems \ref{th:cuvw} and \ref{th:cuvw refined T} to give a combinatorial proof of
positivity of the (equivariant) Littlewood-Richardson coefficients for a large class Kac-Moody
groups. In \cite{KuNo98}, Kumar and Nori proved that if $A$ is a Cartan matrix of some Kac-Moody
group $G$, then every coefficient $c_{u,v}^w\geq 0$. This result for semisimple groups $G$ is known
via Kleiman's transversality \cite{Kl74} and  transitivity of $G$-action on the flag variety $G/B$.
For equivariant coefficients corresponding to Kac-Moody groups, Graham in \cite{Gra01} proved that
$p^w_{u,v}$ have nonnegative coefficients as polynomials in the basis $\{\alpha_i\}_{i\in I}.$  To
the best of our knowledge, all known positivity proofs rely on the geometry of the flag variety
$G/B$.

In Section \ref{sect:nil hecke} we introduce the notion of compatibility of  a {\it quasi-Cartan} matrix $A$, i.e., an $I\times I$-matrix over $\kk$ such that $a_{ii}=2$ for $i\in I$ and $a_{ij}=0\Leftrightarrow a_{ji}=0$, and a Coxeter group $W=\langle s_i,i\in I\rangle$ by requiring that $W$ acts on the root space $V=\bigoplus\limits_{i\in I} \kk \alpha_i$ by reflections defined in \eqref{eq:reflection}.  We define the {\it generalized} Littlewood-Richardson coefficients $p_{u,v}^w=p_{u,v}^w(A)$, $c_{u,v}^w=\delta_{\ell(w),\ell(u)+\ell(v)}p_{u,v}^w$ for each such a compatible pair $(A,W)$ and all relevant $u,v,w\in W$.

\begin{theorem}\label{th:nonneg}
Let $A$ be a quasi-Cartan matrix over $\RR$ compatible with a Coxeter group $W$ such that
\begin{equation}
\label{eq:nonnegativity for A special}
a_{ij}< 0 \quad \text{and}\quad a_{ij}a_{ji}\geq 4
\end{equation}
for all $i\ne j$.
Then  all $c_{u,v}^w$ are non-negative and all $p_{u,v}^w\in \RR_{\ge 0}[\alpha_i,i\in I]$.
\end{theorem}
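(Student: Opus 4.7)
The plan is to use Theorem \ref{th:cuvw refined T} together with \eqref{eq:sum of relatives T} and prove positivity termwise: for each summand $p_\varphi\cdot\prod_{k\in(K'\cap K'')\setminus L}\alpha_k^{(\varphi)}$ in the resulting formula for $p_{u,v}^w$, I will show every factor $\alpha_k^{(\varphi)}$ lies in $\RR_{\geq 0}[\alpha_i,i\in I]$ (in fact is a positive real root) and that $p_\varphi>0$. The starting observation is that under the hypothesis the trace of $s_is_j$ on $\RR\alpha_i+\RR\alpha_j$ equals $a_{ij}a_{ji}-2\geq 2$, so $s_is_j$ has infinite order for every $i\ne j$ and $W$ is the free Coxeter group $\ast_{i\in I}\ZZ/2\ZZ$. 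Consequently a sequence in $I^m$ is reduced iff it is admissible, and every contiguous sub-sequence of an admissible sequence is admissible.

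For $k\in(K'\cap K'')\setminus L$, the sequence $\ii_{M_{<k}\cup\varphi(L_{<k})\cup\{k\}}$ is a prefix of an admissible sequence: of $\ii_{[k]}$ itself when $k>\max L$, or of $\ii_{M\cup\varphi(L_{<\ell^*})}$ truncated to positions $\leq k$ where $\ell^*\in L$ is the smallest element exceeding $k$. Hence it is admissible and reduced, and $\alpha_k^{(\varphi)}$ is a positive real root. The same reasoning shows $\ii_{M(\ell)\cup\{\ell\}}$ is admissible, so $\beta_\ell:=w_{\ii_{M(\ell)}}(\alpha_{i_\ell})$ is a positive real root (or simply $\alpha_{i_\ell}$ if $M(\ell)=\emptyset$), and each factor of $p_\varphi$ has the form $\langle-\beta_\ell,\alpha_{i_{\varphi(\ell)}}^\vee\rangle$.

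The main technical step is the following coefficient-dominance lemma: if $(j_1,\dots,j_r,i)\in I^{r+1}$ is admissible with $r\geq 1$ and $\beta=s_{j_1}\cdots s_{j_r}(\alpha_i)=\sum_k c_k\alpha_k$, then $c_{j_1}>\sum_{k\ne j_1}c_k$. The base $r=1$ is immediate since $\beta=\alpha_i+|a_{ij_1}|\alpha_{j_1}$ with $|a_{ij_1}|\geq 2>1$. For the inductive step, writing $\beta=s_{j_1}(\beta')$ with $\beta'=\sum c'_k\alpha_k$ and using $|a_{kj_1}|\geq 2$, one computes
\[
c_{j_1}-\sum_{k\ne j_1}c_k=-c'_{j_1}+\sum_{k\ne j_1}c'_k(|a_{kj_1}|-1)\geq -c'_{j_1}+\sum_{k\ne j_1}c'_k,
\]
and the inductive hypothesis applied to $\beta'$ (first letter $j_2\ne j_1$ by admissibility) gives $c'_{j_2}>c'_{j_1}$, whence $\sum_{k\ne j_1}c'_k\geq c'_{j_2}>c'_{j_1}$, making the bound strictly positive. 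Granted the lemma, for any $j\ne j_1$ one has $\sum_{k\ne j}c_k\geq c_{j_1}>c_j$, and $|a_{kj}|\geq 2$ then gives $\langle-\beta,\alpha_j^\vee\rangle=-2c_j+\sum_{k\ne j}c_k|a_{kj}|>0$. It remains to verify that $i_{\varphi(\ell)}\ne i_{j_1(\ell)}$ (where $j_1(\ell):=\min M(\ell)$), or $i_{\varphi(\ell)}\ne i_\ell$ when $M(\ell)=\emptyset$. This follows by a short case analysis: if $\ell=\max L$ then $\varphi(\ell)+1\in M\cup\varphi(L_{<\ell})$ (otherwise $\varphi(\ell)+1=\varphi(\ell'')$ for some $\ell''>\ell$ in $L$, contradicting maximality), so $\varphi(\ell)$ and its successor in that set (which is $j_1(\ell)$ or $\ell$) are adjacent and admissibility of $\ii$ gives the inequality; otherwise, taking $\ell^*\in L$ minimal with $\ell^*>\ell$, one checks that $\varphi(\ell)$ and $j_1(\ell)$ (or $\ell$) are adjacent in $\ii_{M\cup\varphi(L_{<\ell^*})}$, and $\ii$-admissibility of $\varphi$ forces the required inequality. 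The main obstacle is proving the coefficient-dominance lemma together with this adjacency verification; both rely crucially on the bound $|a_{kj}|\geq 2$ coming from $a_{ij}a_{ji}\geq 4$ combined with $a_{ij}<0$.
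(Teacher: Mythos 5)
Your overall architecture mirrors the paper's (cite Theorem \ref{th:cuvw refined T}, then prove termwise positivity of the summands), but the key lemma you introduce is false, and its failure traces to a single incorrect deduction that you state explicitly at the end: \emph{that $|a_{ij}|\geq 2$ follows from $a_{ij}<0$ and $a_{ij}a_{ji}\geq 4$.} This implication does not hold. Take $a_{ij}=-1$ and $a_{ji}=-5$, or $a_{ij}=-1/2$ and $a_{ji}=-8$: both satisfy the hypothesis \eqref{eq:nonnegativity for A special} while violating $|a_{ij}|\geq 2$. Consequently your coefficient-dominance lemma already fails at the base case $r=1$: there $\beta=s_{j_1}(\alpha_i)=\alpha_i+|a_{j_1,i}|\alpha_{j_1}$, so $c_{j_1}=|a_{j_1,i}|$ and $\sum_{k\ne j_1}c_k=1$; with $|a_{j_1,i}|<1$ the claimed strict inequality $c_{j_1}>1$ is simply wrong. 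The same false bound is used in the inductive step (the factor $|a_{j_1,k}|-1\geq 1$) and again in the final deduction $\langle -\beta,\alpha_j^\vee\rangle>0$ (via $|a_{kj}|\geq 2$), so it infects the entire argument, not merely the base case. One can also check that the lemma's conclusion, not just its proof, is false: for $\ii=(1,2,1,2,1)$ with $|a_{12}|=8$, $|a_{21}|=1/2$, the coefficient on $\alpha_{j_1}=\alpha_2$ in $s_2s_1s_2(\alpha_1)$ is $1$, while the coefficient on $\alpha_1$ is $3$, so $c_{j_1}<\sum_{k\ne j_1}c_k$.

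The structural reason a single coefficient comparison cannot work under the hypothesis $a_{ij}a_{ji}\geq 4$ is that the two off-diagonal entries $a=|a_{ij}|$ and $b=|a_{ji}|$ enter asymmetrically and only their \emph{product} is controlled. The paper circumvents this by passing (via Lemma \ref{le:Coxeter cancelation}) to the rank $2$ subgroup $W_{j,k}$ and tracking a \emph{pair} of coupled Chebyshev-like sequences $A_k:=aB_{k-1}-A_{k-2}$, $B_k:=bA_{k-1}-B_{k-2}$; the crucial Lemma \ref{lem:rank2pos} establishes the joint monotonicity $A_k\geq A_{k-2}$, $B_k\geq B_{k-2}$ using only $ab\geq 4$, and from this the two assertions of Proposition \ref{pr:positiveparts} (cone-positivity of $w(\alpha_{i_m})$ and nonpositivity of $\langle w(\alpha_{i_m}),\alpha_{i_1}^\vee\rangle$) follow. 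Your single quantity $c_{j_1}-\sum_{k\ne j_1}c_k$ does not see this coupling, and no strengthening of its proof can rescue it since the statement itself is false. To repair the argument you would have to replace the coefficient-dominance lemma with something equivalent to the paper's rank-$2$ reduction plus the $A_k,B_k$ monotonicity; as it stands the proposal does not prove the theorem.
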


The above theorem covers precisely those Kac-Moody groups $G$ whose Weyl group $W$ is a  Coxeter group with no braid relations.  We prove Theorem \ref{th:nonneg} in Section \ref{sect:positivity} by verifying  that each factor of $p_\varphi$ in \eqref{eq:plambda1} is nonnegative. This proof is completely combinatorial and relies on no geometry.



\smallskip

It is easy to show (see Section \ref{sect:examples}) that  for each pair $i\ne j$ the inequality
$c_{u,v}^w\ge 0$ for all $u,v,w\in W_{ij}=\langle s_i,s_j\rangle$ is equivalent to:

\begin{equation}
\label{eq:nonnegativity for A}
\text{$a_{ij}\le 0$ and: either
$a_{ij}a_{ji}\geq 4$ or $a_{ij}a_{ji}=\left(2\cos(\frac{\pi}{n_{ij}})\right)^2$}
\end{equation}
where $n_{ij}\in \ZZ_{>0}$ is the order of $s_is_j$ in $W_{ij}\subset W$. In 1971 E.~B.~Vinberg proved in \cite{vin71} that the condition \eqref{eq:nonnegativity for A} is equivalent to discreteness of the $W$-action on $\RR^I$.

The following conjecture refines Theorem \ref{th:nonneg} and asserts that this necessary condition is also sufficient.

\begin{conjecture}
\label{conj:nonneg} Let $A=(a_{ij})$ be a quasi-Cartan matrix such that \eqref{eq:nonnegativity for
A} holds for all $i\ne j$ (i.e., $W$ acts discretely on $\RR^I$). Then all Littlewood-Richardson
coefficients $c_{u,v}^w$ are nonnegative and all $p_{u,v}^w\in \RR_{\ge 0}[\alpha_i,i\in I]$.
\end{conjecture}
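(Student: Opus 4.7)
The plan divides naturally by the arithmetic nature of the matrix entries of $A$.

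In the crystallographic case, when $A$ is integer-valued, the condition \eqref{eq:nonnegativity for A} says exactly that $a_{ij}\leq 0$ and $a_{ij}a_{ji}\in\{0,1,2,3\}\cup\ZZ_{\geq 4}$, which is precisely the class of (symmetrizable) Kac--Moody Cartan matrices. By Theorem \ref{th:cuvw refined T} the generalized coefficients $p_{u,v}^w$ and $c_{u,v}^w$ defined here coincide with the honest Kac--Moody ones, so the conjecture follows immediately: Kumar--Nori \cite{KuNo98} gives $c_{u,v}^w\geq 0$, and Graham \cite{Gra01} gives $p_{u,v}^w\in\RR_{\geq 0}[\alpha_i,i\in I]$. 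So the real content of the conjecture is the non-crystallographic case, where some $n_{ij}\in\{5,7,8,9,10,\ldots\}$ forces $a_{ij}a_{ji}=(2\cos(\pi/n_{ij}))^2$ to be an algebraic irrational.

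For the non-crystallographic case I would first attempt a folding argument. Classical examples embed $W$ isometrically into a crystallographic Weyl group $\widetilde W$ (e.g.\ $H_3\subset W(D_6)$, $H_4\subset W(E_8)$, and dihedral analogues). The plan would be to lift a reduced word $\ii\in R(w)$ to a reduced word $\widetilde\ii$ of a suitable element $\widetilde w\in\widetilde W$, pass the sum \eqref{eq:cuvw refined T} through the folding, and express each $p_{u,v}^w$ as the restriction to $W$-invariants of a $\widetilde W$-invariant combination of crystallographic coefficients $p_{\widetilde u,\widetilde v}^{\widetilde w}$. Nonnegativity would then transfer via the projection from the crystallographic case already handled.

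The main obstacle is that folding does not respect the bijection-by-bijection structure of Theorem \ref{th:cuvw refined T}: a single factor $\langle w_{\ii_{M(\ell)}}(-\alpha_{i_\ell}),\alpha_{i_{\varphi(\ell)}}^\vee\rangle$ of \eqref{eq:plambda1} corresponds upstairs to a sum of factors over a fiber of the folding, so one only controls totals, not the sign of each $p_\varphi$ downstairs. A fallback, purely combinatorial route would be to directly prove the inequality $c_{u,v}^w\leq c_{u,v}^{w,+}$ from Remark \ref{rem:cuvw+}: for each triple $(u,v,w)$, construct an ``optimal'' reduced word $\ii\in R(w)$ together with a sign-reversing involution on the support of \eqref{eq:cuvw} pairing each negative summand with a positive one of equal absolute value, so that the surviving sub-sum is both manifestly nonnegative and equal to $c_{u,v}^w$. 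This would cover the crystallographic and non-crystallographic cases uniformly and bypass any need for geometry; the hard part is that the involution must be compatible with all braid relations in $W$ simultaneously, which is exactly what fails in naive attempts for mixed types such as $F_4$ or $H_4$.
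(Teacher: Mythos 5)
This is Conjecture~\ref{conj:nonneg}: the paper states it but does not prove it, so there is no ``paper's own proof'' to compare against. The only established partial results are Theorem~\ref{th:nonneg} (the case $a_{ij}a_{ji}\ge 4$ for all $i\ne j$, proved combinatorially via Proposition~\ref{pr:positiveparts}), the crystallographic case handled by Kumar--Nori \cite{KuNo98} and Graham \cite{Gra01}, and the rank-2 symmetric case from \cite{BerKap}. Your proposal, read as a proof, therefore fails at the outset: it is an outline of two possible routes, each of which you yourself acknowledge is incomplete (``one only controls totals, not the sign of each $p_\varphi$ downstairs''; ``the hard part is \dots which is exactly what fails in naive attempts'').

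Your crystallographic reduction is essentially correct, though the citation is off: the fact that the generalized coefficients $p_{u,v}^w$ of Section~\ref{sect:nil hecke} coincide with the geometric (equivariant) Schubert structure constants when $A$ is a Kac--Moody Cartan matrix follows from Theorem~\ref{the:cohomology of flags} (the isomorphism $H_T^*(G/B)\cong\mathcal{A}_A(W)$), not from Theorem~\ref{th:cuvw refined T}. Your observation that an integer quasi-Cartan matrix compatible with $W$ and satisfying \eqref{eq:nonnegativity for A} is automatically a generalized Cartan matrix (because $(2\cos(\pi/n))^2\in\ZZ$ forces $n\in\{2,3,4,6\}$) is correct and worth noting, since the paper doesn't spell this out. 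But the genuine content of the conjecture is the non-crystallographic case, and there your two proposals each hit a real obstruction. The folding idea ($H_3\hookrightarrow W(D_6)$, $H_4\hookrightarrow W(E_8)$) is a root-system embedding rather than a Dynkin-diagram folding, and it does not transport the quasi-Cartan form $A$ to a Cartan matrix on the ambient side in a way that matches the nil-Hecke structure; nothing in Theorem~\ref{th:folding} covers that situation, since that theorem folds along surjections of Coxeter monoids with a compatible surjection of quasi-Cartan data, not along subgroup inclusions. The sign-reversing-involution route is exactly the content of Remark~\ref{rem:cuvw+} and the inequality \eqref{eq:cuvw+}, which the authors themselves state only as a conjecture supported by examples. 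So neither branch of your plan closes, and the statement remains open.
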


Note that  the quasi-Cartan matrices in the conjecture include all Cartan matrices of Kac-Moody
groups and those involved in Theorem \ref{th:nonneg}. In the case where $W=\langle s_1,s_2\rangle$
is a dihedral group of order $2n$ and $A$ is a $2\times 2$ symmetric matrix with
$a_{12}=a_{21}=2\cos(\frac{\pi}{n})$, the nonnegativity of $c_{u,v}^w$ has been verified by the
first author and M.~Kapovich in \cite[Corollary 13.7]{BerKap}.

We conclude Section \ref{sect:Definitions and main results} with the (yet conjectural) construction of (equivariant) {\it Littlewood-Richardson polynomials} $p_{u,v}^w({\bf A})$ and their strong positivity conjecture. Indeed, our definition of relative coefficients  makes sense for the universal Coxeter group $\widehat W$ generated by $s_i,i\in I$ acting on $\ZZ[{\bf A}]^I$, where $\ZZ[{\bf A}]=\ZZ[{\bf a}_{ij},i\ne j]$ so that each $p_{\ii',\ii''}^\ii$ belong to $\ZZ[{\bf A},\alpha_i,i\in I]$, i.e., $p_{\ii',\ii''}^\ii=p_{\ii',\ii''}^\ii({\bf A})$ is a polynomial of the {\it universal} Cartan matrix ${\bf A}=({\bf a}_{ij})$ and all $\alpha_i$ (i.e., it is a polynomial in  $|I|(|I|-1)+|I|=|I|^2$ variables since ${\bf a}_{ii}=2$ for $i\in I$).

Therefore, given any  Coxeter group $W$ generated by $s_i,i\in I$ we define a polynomial $p_{u,v}^\ii({\bf A})$ for any $u,v\in W$ and any $\ii\in I^m$ (with $\ell(u)+\ell(v)\ge m$) by the following analogue of \eqref{eq:sum of relatives T}:
\begin{equation}
\label{eq:sum of relative polynomials}
p_{u,v}^\ii({\bf A})=\sum p_{\ii',\ii''}^\ii({\bf A})
\end{equation}
with  the summation is over all  sub-sequences $\ii',\ii''$ of $\ii$ such that $\ii'\in R(u)$, $\ii''\in R(v)$. By the construction, if $A$ is a (quasi-)Cartan matrix compatible with $W$, then  $p_{u,v}^\ii({\bf A})|_{{\bf A}=A}=p_{u,v}^w$ for all $u,v,w\in W$ with $\ell(u)+\ell(v)\ge \ell(w)$ and all $\ii\in R(w)$.

We define the polynomial $p_{u,v}^\ii(t)\in \kk[t,\alpha_i,i\in I]$ by the specialization
$$p_{u,v}^\ii(t):=p_{u,v}^\ii((1+t)\cdot A-2t\cdot Id)\ ,$$
where $Id$ is the $I\times I$ identity matrix. By definition, $p_{u,v}^\ii(0)=p_{u,v}^w$ for each $\ii\in R(w)$.
Based on numerous examples (see Section \ref{sect:examples}), we expect a that stronger positivity result holds.

\begin{conjecture}
\label{conj:positive puvw(t)}
For any $A$ and $W$ as in Conjecture \ref{conj:nonneg} each polynomial $p_{u,v}^\ii(t)$ has nonnegative real coefficients.
\end{conjecture}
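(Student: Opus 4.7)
My plan exploits the structural fact that the substitution $A\mapsto(1+t)A-2t\cdot Id$ leaves every diagonal entry fixed at $2$ while multiplying every off-diagonal entry $a_{ij}$ by $(1+t)$. Hence if we decompose the universal Littlewood-Richardson polynomial as $p_{u,v}^{\ii}({\bf A})=\sum_{d\ge 0}P_d$, where $P_d$ is homogeneous of degree $d$ in the off-diagonal indeterminates ${\bf a}_{ij}$, then after specialization
$$p_{u,v}^{\ii}(t)=\sum_{d\ge 0}(1+t)^d\,P_d\big|_{{\bf A}=A}\ .$$
Because $(1+t)^d\in\RR_{\ge 0}[t]$, the conjecture reduces to the sharper claim that each $P_d|_{{\bf A}=A}$ is a polynomial in the $\alpha_i$'s with nonnegative real coefficients.

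To prove this sharper claim, I would combine Theorems \ref{th:cuvw refined T BS} and \ref{th:cuvw refined T} with formula \eqref{eq:sum of relative polynomials} and expand each contribution $p_\varphi\cdot\prod_k\alpha_k^{(\varphi)}$ as a signed polynomial in the ${\bf a}_{ij}$'s and the $\alpha_k$'s, indexed by ``subword paths'' arising from iterated applications of \eqref{eq:reflection}. Mimicking the argument that proves Theorem \ref{th:nonneg} in the regime $a_{ij}a_{ji}\ge 4$, one expects each monomial appearing in $P_d$ to have coefficient of uniform sign $(-1)^d$; specializing to $a_{ij}\le 0$ then makes $P_d$ manifestly nonnegative as a polynomial in the $\alpha_i$'s. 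The extension to the finite-dihedral case $a_{ij}a_{ji}=(2\cos(\pi/n_{ij}))^2<4$ I would attempt by induction on the rank of $W$, reducing to rank-$2$ dihedral parabolic subgroups via the nil-Hecke decomposition of the algebra ${\mathcal A}^{adm}(G)$ from Theorem \ref{th:embedding H(G/B) to H(hat G/hat B)}.

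The principal obstacle is precisely this finite-dihedral case. There, the Weyl group satisfies the braid relation $\underbrace{s_is_j\cdots}_{n_{ij}}=\underbrace{s_js_i\cdots}_{n_{ij}}$, which imposes nontrivial Chebyshev-type polynomial identities in the quantity $a_{ij}a_{ji}$ evaluated at $4\cos^2(\pi/n_{ij})$. These identities force cancellations among the terms $p_\varphi$ in \eqref{eq:cuvw refined T}, and verifying that every such cancellation respects the uniform sign pattern $(-1)^d$ on each $P_d$ is delicate; moreover one must show that the sharper positivity statement is independent of the choice of reduced word $\ii\in R(w)$, which again reduces to dihedral braid consistency. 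A natural route is to first establish the conjecture in the pure dihedral rank-$2$ setting, building on the $n_{ij}=3$ computations of \cite{BerKap}, exhibit a manifestly nonnegative form of $P_d$ there, and then lift to general $W$ by an induction over parabolic subgroups carried out inside ${\mathcal A}^{adm}(G)$.
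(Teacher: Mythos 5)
The statement you are attempting to prove is a \emph{conjecture}; the paper does not establish it in general. The authors verify it only (a) under condition \eqref{eq:nonnegativity for A special}, i.e.\ when $a_{ij}a_{ji}\ge 4$ for all $i\ne j$ — this is done via Proposition \ref{pr:positiveparts t}, which proves the coefficientwise $\RR[t]$-positivity of the Chebyshev-type polynomials $A_k(t)$, $B_k(t)$ directly by a rank-$2$ recursion — and (b) by computer in finite types $A_3$ and $A_4$. The finite-dihedral case $a_{ij}a_{ji}=(2\cos(\pi/n_{ij}))^2<4$ remains open in the paper, so your honest acknowledgment of that gap is appropriate.

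However, your proposed reduction has a fatal flaw at the very first step. You observe correctly that the substitution $A\mapsto(1+t)A-2t\cdot Id$ multiplies every off-diagonal entry by $(1+t)$, so that $p_{u,v}^\ii(t)=\sum_d(1+t)^d\,P_d|_A$ with $P_d$ the degree-$d$ homogeneous piece in the off-diagonal indeterminates. But the ``sharper claim'' you then try to prove — that each $P_d|_A$ separately has nonnegative coefficients — is simply false, even in the regime $a_{ij}a_{ji}\ge 4$. Take $w=s_1s_2$ and $j=1$: using $s_i(\alpha_j)=\alpha_j-a_{ij}\alpha_i$ one computes
\[
s_1s_2(\alpha_1)=(a_{12}a_{21}-1)\,\alpha_1-a_{21}\,\alpha_2,
\]
so $P_0=-\alpha_1$ (coefficient $-1$), contradicting your predicted uniform sign $(-1)^0=+1$. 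The nonnegativity of the specialization $(a_{12}a_{21}-1)\alpha_1-a_{21}\alpha_2$ under $a_{ij}\le 0$, $a_{12}a_{21}\ge 4$ is a consequence of a cancellation between the degree-$0$ and degree-$2$ pieces (the factor $a_{12}a_{21}-1$), which your degree-by-degree decomposition discards by design. The same phenomenon shows up in the $p_\varphi$-factors: $\langle s_1s_2(-\alpha_1),\alpha_2^\vee\rangle=-a_{12}^2a_{21}+a_{12}+2a_{21}$ has a degree-$1$ piece $a_{12}+2a_{21}$ that is negative on the relevant locus. So your ``sharper claim'' is strictly stronger than the conjecture and is already falsified in rank $2$; the paper's proof under \eqref{eq:nonnegativity for A special} works precisely because it keeps the full polynomial $w(\alpha_{i_m})\in\RR[t]\cdot\alpha_1\oplus\RR[t]\cdot\alpha_2$ intact through the $A_k(t)$/$B_k(t)$ recursion, rather than decomposing it by degree in the Cartan entries. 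Your sketch for the finite-dihedral case (parabolic induction inside $\mathcal{A}^{adm}(G)$) is only a heuristic, and since the first reduction fails, the whole argument does not get off the ground.
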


In fact, the coefficients of $p_{u,v}^\ii(t)$ belong to the sub-ring of $\RR$ generated by all $a_{ij}$, e.g., if $A$ is an integer matrix, then the above conjecture asserts that all $p_{u,v}^\ii(t)\in \ZZ_{\ge 0}[t,\alpha_i,i\in I]$.  We verify the conjecture in Section \ref{sect:positivity} in the case when $W$ is a free Coxeter group.  The conjecture has also been verified by computer calculations for all $p_{u,v}^\ii(t)$ in finite types $A_3$ and $A_4$.

\smallskip

The polynomials $p_{u,v}^\ii(t)$ depends on the choice $\ii\in R(w)$, however, it frequently happens that $p_{u,v}^\ii(t)=p_{u,v}^{\ii'}(t)$ for $\ii'\ne \ii$. Denote by $\sim$ the equivalence relation on $R(w)$ generated by pairs $(\ii,\ii'')$ where $\ii'$ is obtained from $\ii$ by switching a single pair of adjacent indices $i_k$ and $i_{k+1}$ such that $a_{i_k,i_{k+1}}=0$. We refer to this as the {\it commutativity} relation on $R(w)$ and, following \cite{Stem96}, we say that $w\in W$ is {\it fully commutative} if $R(w)$ is a single equivalence class.

\begin{conjecture}
\label{conj:fully commutative} For any $A$ and $W$ as in Conjecture \ref{conj:nonneg} we have for $\ii,\ii'\in R(w)$ such that $\ii\sim \ii'$:
$$p_{u,v}^\ii(t)=p_{u,v}^{\ii'}(t) \ .$$

In particular, if  $w$ is a fully commutative element in $W$, then $p_{u,v}^w(t)$ is well-defined.
\end{conjecture}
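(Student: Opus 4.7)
The plan is to exhibit an explicit summand-preserving correspondence between the terms of the formulas \eqref{eq:cuvw refined T} for $\ii$ and for $\ii'$, where $\ii'$ is obtained from $\ii$ by swapping positions $k$ and $k+1$ (and $a_{i_k,i_{k+1}}=a_{i_{k+1},i_k}=0$ by hypothesis). By \eqref{eq:sum of relative polynomials} it suffices to establish
\[
p_{\jj',\jj''}^\ii(\mathbf{A})\equiv p_{\jj',\jj''}^{\ii'}(\mathbf{A})\pmod{(\mathbf{a}_{i_k,i_{k+1}},\mathbf{a}_{i_{k+1},i_k})}
\]
as polynomials in $\mathbf{A}$ and the $\alpha_i$, for each pair of common sub-sequences $\jj'\in R(u)$, $\jj''\in R(v)$; the $t$-specialization $\mathbf{A}\mapsto(1+t)A-2tId$ then kills the ideal on the right, since $a_{i_k,i_{k+1}}=0$ and the quasi-Cartan condition forces $a_{i_{k+1},i_k}=0$ as well.

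Let $\tau=(k,k+1)\in S_m$, so $\ii'_{\tau(j)}=\ii_j$. Given a quadruple $(K',K'',L,\varphi)$ contributing to $p_{\jj',\jj''}^\ii$, form the swapped quadruple
\[
\tau_\ast(K',K'',L,\varphi)=(\tau(K'),\tau(K''),\tau(L),\tau\circ\varphi\circ\tau^{-1}).
\]
The transposition $\tau$ acts trivially on subsets of $[m]$ containing both or neither of $k,k+1$, and exchanges them otherwise; combined with $\ii_j=\ii'_{\tau(j)}$, this ensures that $\tau_\ast$ sends quadruples indexing $p_{\jj',\jj''}^\ii$ to quadruples indexing $p_{\jj',\jj''}^{\ii'}$, and preserves boundedness and $\ii'$-admissibility via the commutation $s_{i_k}s_{i_{k+1}}=s_{i_{k+1}}s_{i_k}$ in $W$. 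On \emph{symmetric} configurations---those in which positions $k,k+1$ lie in the same stratum of the partition of $[m]$ determined by $(K',K'',L,\varphi,\varphi^{-1})$---the summand $p_\varphi\cdot\prod_{k'\in(K'\cap K'')\setminus L}\alpha_{k'}^{(\varphi)}$ is preserved by $\tau_\ast$: every ordered Weyl-group product $w_{\ii_{M(\ell)}}$ (respectively $\prod_{r\in M_{<k'}\cup\varphi(L_{<k'})}s_{i_r}$) contains either both reflections $s_{i_k},s_{i_{k+1}}$ or neither, and is thus invariant in $W$ under the relabeling.

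The main obstacle is the analysis of \emph{asymmetric} configurations, where $k$ and $k+1$ play different roles with respect to $(K',K'',L,\varphi)$. For these, the naive pairing $\tau_\ast$ typically does \emph{not} preserve individual summands; instead the differences $p_\varphi^\ii-p_{\tau_\ast(\varphi)}^{\ii'}$ must cancel in groups. The approach is: (i) partition the asymmetric configurations according to the finitely many role-types of $(k,k+1)$ with respect to $K',K'',L,\varphi^{\pm1}$; (ii) for each type, identify the matching pair (or tuple) of quadruples on the two sides; (iii) show, by using the identities $s_{i_k}(\alpha_j)=\alpha_j-\mathbf{a}_{i_k,j}\alpha_{i_k}$ and $s_{i_{k+1}}(\alpha_j)=\alpha_j-\mathbf{a}_{i_{k+1},j}\alpha_{i_{k+1}}$ together with the hypothesis $\mathbf{a}_{i_k,i_{k+1}}\equiv 0$, that the net difference of summand contributions within each type lies in the ideal $(\mathbf{a}_{i_k,i_{k+1}},\mathbf{a}_{i_{k+1},i_k})$. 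I expect the $\ii$-admissibility hypothesis to play an essential role here, filtering out precisely those configurations whose naive swapped partners would fail $\ii'$-admissibility, so that the remaining asymmetric ones can be paired cleanly; verifying the cancellation across every asymmetric type and tying the residual difference to the ideal is the principal technical step.
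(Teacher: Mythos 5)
This statement is Conjecture~\ref{conj:fully commutative}, which the paper explicitly leaves open: the authors only remark that it is trivially true when $W$ is a free Coxeter group (where $R(w)$ is a singleton, so $\ii\sim\ii'$ forces $\ii=\ii'$) and report computer verification in finite types $A_3$ and $A_4$. There is therefore no proof in the paper to compare your argument against.

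Your proposal is a plausible line of attack but has a concrete defect beyond the step you already flag as open. You assert that the reindexing $\tau_\ast(K',K'',L,\varphi)=(\tau K',\tau K'',\tau L,\tau\circ\varphi\circ\tau^{-1})$ ``preserves boundedness and $\ii'$-admissibility.'' This is false for boundedness. Take a quadruple with $k+1\in L=K'\cap K''$, $k\in[m]\setminus(K'\cup K'')$, and $\varphi(k+1)=k$; this is perfectly bounded since $\varphi(k+1)=k<k+1$. After applying $\tau$ one has $k\in\tau L$ and $(\tau\circ\varphi\circ\tau^{-1})(k)=\tau(\varphi(k+1))=\tau(k)=k+1>k$, so the image is not a bounded map, hence is not a legal term for $p_{\jj',\jj''}^{\ii'}$. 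Thus $\tau_\ast$ is not even a bijection between the index sets of the two sums, and the ``symmetric versus asymmetric'' dichotomy you propose has to be rebuilt on a different, more delicate matching of quadruples. The reformulation of the conjecture as a congruence mod the ideal $(\mathbf{a}_{i_k,i_{k+1}},\mathbf{a}_{i_{k+1},i_k})$ is sound---the specialization $\mathbf{A}\mapsto(1+t)A-2t\,Id$ does kill that ideal because $a_{i_k,i_{k+1}}=a_{i_{k+1},i_k}=0$ for a commuting pair---but steps (ii) and (iii), which are the substance of the statement, remain a to-do list rather than an argument. Until both the reindexing defect is repaired and the cancellation analysis is actually carried out (or a structural argument is given, for instance directly through the coproduct of the generalized nil-Hecke algebra or the subalgebra $\mathcal{A}_A^{adm}$), the statement remains as conjectural in your write-up as it is in the paper.
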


In particular, if $\ell(w)=\ell(u)+\ell(v)$ and $w$ is fully commutative, Conjectures \ref{conj:positive puvw(t)} and \ref{conj:fully commutative} imply that  $c_{u,v}^w(t):=p_{u,v}^w(t)$ is a well-defined polynomial in $t$ with nonnegative real coefficients.

If $W$ is a free Coxeter group, then the conjecture trivially is true.  We also verified the conjecture  in finite types $A_3$ and $A_4$.

\section{Twisted group algebras and generalized Littlewood-Richardson coefficients}
\label{sect:prelim}

We begin with some facts on twisted group  algebras.  Let $W$ be a monoid or a group and  let $Q$ be a commutative algebra over a field $\kk$.  For any  covariant $W$-action on $Q$ (i.e., such that $w(q_1\cdot q_2)=w(q_1)\cdot w(q_2)$) we define the {\it twisted} group (or, rather, monoidal) algebra $Q_W:=Q\rtimes \kk W$ generated by $Q$ and $W$ subject to the relations:
$$wq=w(q)\cdot w$$ for all $q\in Q$, $w\in W$.

We regard $Q_W$ as a $Q$-module via the left multiplication.  Note that one has a $\kk$-linear isomorphism
$$\iota:\kk W\otimes Q_W\widetilde \to Q_W\bigotimes\limits_Q Q_W$$
given by $w\otimes qw'\mapsto w\otimes qw'=q(w\otimes w')$. Taking into account that $\kk W\otimes Q_W$ is naturally a $\kk$-algebra, this isomorphism turns $Q_W\bigotimes\limits_Q Q_W$ into an associative algebra as well.
That is, the product in $Q_W\bigotimes\limits_Q Q_W$ is given by (cf. \cite[Section 4.14]{KK86}):
$$(q_1w_1\otimes q_2w_2)(q'_1w'_1\otimes q'_2w'_2)=(w_1\otimes q_1q_2w_2)(w'_1\otimes q'_1q'_2w'_2)=w_1w'_1\otimes q_1q_2w_2q'_1q'_2w'_2$$
(note however, that in general  the product in $Q_W$ and in  $Q_W\bigotimes\limits_Q Q_W$ is {\bf not}  $Q$-linear).


\begin{proposition}
\label{pr:delta} For any  commutative module algebra $Q$ over a monoid $W$ one has:

(a) The algebra $Q_W$ is a co-commutative coalgebra in the category of $Q$-modules with the coproduct $\delta:Q_W\to Q_W\bigotimes\limits_Q Q_W$ and the counit
$\varepsilon:Q_W\to Q$ given respectively by:
$$\delta(qw)=q\delta(w)=w\otimes qw,~\varepsilon(qw)=q$$
for all $q\in Q$, $w\in W$.

(b) The coproduct $\delta$ from (a) is a homomorphism of algebras.

(c) For any $x,y,z\in Q_W$ one has in the algebra $Q_W\bigotimes\limits_Q Q_W$:
\begin{equation}
\label{eq:diagonal product}
\delta(x)\cdot (y\otimes z)=x_{(1)}y\otimes x_{(2)}z\ ,
\end{equation}
where  $\delta(x)=x_{(1)}\otimes x_{(2)}$ in the Sweedler notation.



\end{proposition}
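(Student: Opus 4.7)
The plan is to verify the three assertions by direct calculation on the basis elements $qw$ of $Q_W$, making crucial use of the multiplication formula on $Q_W \otimes_Q Q_W$ (transported from $\kk W \otimes Q_W$ via $\iota$) together with the $Q$-balancing relation $qx \otimes y = x \otimes qy$ for $q \in Q$ and $x, y \in Q_W$.

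For part (a), I would first observe that $\delta$ is $Q$-linear by construction ($\delta(qw) = q(w \otimes w)$), so the coalgebra axioms reduce to verification on the basis $\{w : w \in W\}$. Since $\delta(w) = w \otimes w$ is group-like, each axiom is immediate: $(\delta \otimes \id)(w \otimes w) = w \otimes w \otimes w = (\id \otimes \delta)(w \otimes w)$ gives coassociativity; the flip $\tau$ fixes $w \otimes w$, giving cocommutativity; and $(\varepsilon \otimes \id)(w \otimes w) = 1 \otimes w$ corresponds to $w$ under the canonical $Q \otimes_Q Q_W \cong Q_W$.

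For part (b), since both $\delta$ and multiplication are $\kk$-linear, it suffices to check $\delta(xy) = \delta(x)\delta(y)$ on generators $x = qw$, $y = q'w'$. The left-hand side is $\delta(qw(q')ww') = ww' \otimes qw(q')ww'$, and the multiplication formula in $Q_W \otimes_Q Q_W$ gives
\[
(w \otimes qw)(w' \otimes q'w') \;=\; ww' \otimes qw \cdot q'w' \;=\; ww' \otimes qw(q')ww',
\]
matching.

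For part (c), the key intermediate step is to establish the group-like identity $(w \otimes w)(y \otimes z) = wy \otimes wz$ for all $y, z \in Q_W$. This is verified by expanding $y = \sum a_j v_j$ and $z = \sum a'_k v'_k$ in the basis, applying the multiplication formula, and simplifying the resulting expressions via the $Q$-balancing. Granted this, for general $x = qw$ one computes
\[
\delta(x)(y \otimes z) \;=\; q(w \otimes w)(y \otimes z) \;=\; q(wy \otimes wz) \;=\; qwy \otimes wz,
\]
while reading $\delta(qw) = w \otimes qw$ in Sweedler notation yields $x_{(1)}y \otimes x_{(2)}z = wy \otimes qwz$. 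These two expressions are then identified by a single application of balancing: $qwy \otimes wz = wy \otimes qwz$. The main obstacle throughout will be in carefully tracking scalars between tensor factors, since the twisted product of $Q_W$ (where $wq = w(q)w$) interacts non-trivially with the $Q$-balancing; but once the group-like identity $(w \otimes w)(y \otimes z) = wy \otimes wz$ is secured, the rest follows by $\kk$-linearity and elementary scalar manipulation.
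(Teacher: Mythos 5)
Your proof is correct and takes essentially the same route as the paper: direct verification on basis elements $qw$ using the twisted multiplication formula on $Q_W\bigotimes_Q Q_W$ together with the $Q$-balancing. The only cosmetic difference is in (c), where you first isolate the group-like identity $(w\otimes w)(y\otimes z)=wy\otimes wz$ and then factor out the scalar via left $Q$-linearity of the product, whereas the paper carries the scalar $q_1$ through a single computation on $x=q_1w_1$, $y=q_2w_2$, $z=q_3w_3$.
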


\begin{proof} Prove (a). First, verify that $\delta$ is  $Q$-linear. Indeed,
$$\delta(q_1q_2w)=w \otimes q_1q_2 w=q_1w \otimes q_2 w=q_1 (w \otimes q_2 w)=q_1 \delta(q_2w) \ .$$

Furthermore, the identity
$$(\delta\otimes 1)\delta(qw)=\delta(w)\otimes qw=w\otimes w\otimes qw=w\otimes \delta(qw)=(1\otimes \delta)\delta(qw)$$
verifies the coassociativity of $\delta$. Now verify the counit axiom:
$$(\varepsilon\otimes 1)\delta(qw)=(\varepsilon\otimes 1)(w\otimes qw)=qw=(1\otimes \varepsilon)(qw)=(1\otimes \varepsilon)\delta(qw)\ .$$
Finally, let us verify the co-commutativity. Let $\tau:Q_W\bigotimes\limits_Q Q_W$ be the permutation of factors. Then
$$\tau\delta(qw)=\tau(w\otimes qw)=qw\otimes w=w\otimes qw=\delta(qw)\ .$$
This proves (a).

Prove (b) now.   Indeed,
$$\delta((q_1w_1)(q_2w_2))=\delta((q_1w_1(q_2))w_1w_2)=w_1w_2\otimes (q_1w_1(q_2))w_1w_2$$
$$=w_1w_2\otimes q_1w_1q_2w_2=(w_1\otimes q_1w_1)(w_2\otimes q_2w_2)=\delta(q_1w_1)\delta(q_2q_2)\ .$$
This proves (b).

Prove (c) now. Indeed, it suffices to verify \eqref{eq:diagonal product} for $x=q_1w_1$, $y=q_2w_2$, $z=q_3w_3$:
$$\delta(q_1w_1)\cdot (q_2w_2\otimes q_3w_3)= (w_1\otimes q_1w_1)(w_2\otimes q_2q_3w_3)=w_1w_2\otimes q_1w_1q_2q_3w_3$$
$$=w_1(q_2)w_1w_2\otimes q_1w_1q_3w_3=w_1q_2w_2\otimes q_1w_1q_3w_3=w_1y\otimes q_1w_1z \ .$$
Part (c) is proved.

Therefore, the proposition is proved.
\end{proof}

\begin{remark} Note that $Q_W$ is not a bialgebra in the category of $Q$-modules because neither $Q_W$ nor $Q_W\bigotimes\limits_Q Q_W$ is not an algebra in this category.

\end{remark}

Clearly, if $M$ and $N$ are  free $Q$-modules,  and  $B_M$, $B_N$ are bases respectively in $M$ and $N$, then the set
$B_M\otimes B_N=\{b\otimes b'\,|\,b\in B_M,b'\in B_N\}$ is a basis of $M\bigotimes\limits_Q N$.

In particular, if $B$ is  a basis of $Q_W$, then set $B\otimes B\cong B\times B$ is a basis of $Q_W\bigotimes\limits_Q Q_W$. Using this, for each basis $B=\{x_w,w\in W\}$ of $Q_W$,
we define {\it generalized Littlewood-Richardson coefficients} $p_{u,v}^w\in Q$ by the formula:
\begin{equation}
\label{eq:generalized LR}
\delta(x_w)=\sum_{u,v\in W} p_{uv}^w x_u \otimes x_v \ .
\end{equation}

Dualizing this definition, we obtain the following result.

\begin{proposition}
\label{pr:dual}
Let $f:Q\to Q'$ be a homomorphism of commutative $\kk$-algebras such that the set $\{w\in W:f(p_{u,v}^w)\ne 0\}$ is finite for all $u,v\in W$.
Then there is a unique (associative) commutative $Q'$-algebra ${\mathcal A}_f$ with the free $Q'$-basis $\{\sigma_w\,|\,w\in W\}$ and the following multiplication table:
$$\sigma_u\sigma_v=\sum_{w\in W} f(p_{u,v}^w) \sigma_w$$
for all $u,v\in W$.

\end{proposition}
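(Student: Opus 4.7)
The plan is to construct ${\mathcal A}_f$ as the free $Q'$-module on the symbols $\{\sigma_w\,|\,w\in W\}$ equipped with the bilinear extension of the prescribed multiplication table. The finiteness hypothesis on $\{w:f(p_{u,v}^w)\ne 0\}$ ensures the sum $\sigma_u\sigma_v=\sum_w f(p_{u,v}^w)\sigma_w$ is a well-defined element of ${\mathcal A}_f$. Uniqueness is automatic: any $Q'$-algebra with this basis and this multiplication table is determined by bilinear extension.

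The real content is verifying commutativity and associativity, both of which I would deduce by dualizing the coalgebra properties already established for $Q_W$ in Proposition \ref{pr:delta}. For commutativity, the cocommutativity identity $\tau\circ\delta=\delta$ applied to $x_w$ yields
\[
\sum_{u,v\in W} p_{u,v}^w\, x_v\otimes x_u = \sum_{u,v\in W} p_{u,v}^w\, x_u\otimes x_v,
\]
and comparing coefficients in the $Q$-basis $\{x_u\otimes x_v\}$ of $Q_W\otimes_Q Q_W$ gives $p_{v,u}^w=p_{u,v}^w$. Applying $f$ yields $\sigma_v\sigma_u=\sigma_u\sigma_v$.

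For associativity, I would apply the coassociativity $(\delta\otimes 1)\delta=(1\otimes\delta)\delta$ to $x_w$ and expand both sides using \eqref{eq:generalized LR}. The left side gives $\sum_{u,v,a,b} p_{u,v}^w p_{a,b}^u\, x_a\otimes x_b\otimes x_v$ and the right side gives $\sum_{u,v,a,b} p_{u,v}^w p_{a,b}^v\, x_u\otimes x_a\otimes x_b$. Equating coefficients of $x_a\otimes x_b\otimes x_c$ in the free $Q$-basis of $Q_W\otimes_Q Q_W\otimes_Q Q_W$ yields the fundamental identity
\[
\sum_{u\in W} p_{a,b}^u\, p_{u,c}^w \;=\; \sum_{v\in W} p_{b,c}^v\, p_{a,v}^w
\]
for all $a,b,c,w\in W$. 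Applying the algebra homomorphism $f$ (and noting each side is a finite sum once $f$ is applied, by the finiteness hypothesis), and computing $(\sigma_a\sigma_b)\sigma_c$ and $\sigma_a(\sigma_b\sigma_c)$ by direct expansion, shows both triple products equal $\sum_w\bigl(\sum_u f(p_{a,b}^u)f(p_{u,c}^w)\bigr)\sigma_w$ and respectively $\sum_w\bigl(\sum_v f(p_{b,c}^v)f(p_{a,v}^w)\bigr)\sigma_w$, which agree term by term.

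The only real subtlety I anticipate is bookkeeping with the $Q$-linear tensor products: one must check that $\{x_u\otimes x_v\}_{u,v}$ and $\{x_a\otimes x_b\otimes x_c\}_{a,b,c}$ really are $Q$-bases of $Q_W\otimes_Q Q_W$ and of its triple analogue (so that coefficient comparison is legitimate), and that the intermediate sums appearing in associativity are eventually finite after $f$ is applied so that no convergence issue arises. Both follow from freeness of $Q_W$ over $Q$ (with basis $W$) and the finiteness hypothesis on $f$, respectively; with these in hand the proof is a short verification.
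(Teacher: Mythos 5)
Your proposal is correct and follows essentially the same path as the paper's proof, which packages the identical computation (deriving $\sum_u p_{a,b}^u p_{u,c}^w = \sum_v p_{b,c}^v p_{a,v}^w$ from coassociativity and $p_{u,v}^w=p_{v,u}^w$ from cocommutativity, then applying $f$) as a general lemma about dualizing a coalgebra in the category of $Q$-modules. One small refinement: the sums appearing in the associativity identity in $Q$ are already finite before applying $f$, since for each $w$ the coproduct $\delta(x_w)$ has only finitely many nonzero $p_{u,v}^w$; the finiteness hypothesis on $f$ is what makes the \emph{product} $\sigma_u\sigma_v=\sum_w f(p_{u,v}^w)\sigma_w$ land in the free module, not what makes the structure-constant identity converge.
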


\begin{proof} We need the following result.

\begin{lemma}
\label{le:from coalgebra to algebra}
Let  $\delta:{\mathcal C}\to {\mathcal C}\bigotimes\limits_Q {\mathcal C}$ be a coalgebra in the category of  $Q$-modules.
Assume that $B$ is a  basis of ${\mathcal C}$ such that
$$\delta(b)=\sum_{b,b'\in B} p_{b',b''}^b b \otimes b' \ ,$$
where all $p_{b',b''}^b\in Q$.
Then for any homomorphism $f:Q\to Q'$  of commutative  $\kk$-algebras such that the set $\{b\in B:f(p_{b',b''}^b)\ne 0\}$ is finite for all $b',b''\in B$ there is a unique associative
$Q'$-algebra ${\mathcal A}={\mathcal A}_f$ with the basis $\{\sigma_b\,|\,b\in B\}$ and the following multiplication table:
$$\sigma_{b'}\sigma_{b''}=\sum_{b\in B} f(p_{b',b''}^b) \sigma_b$$
for all $b',b''\in B$. If, additionally, ${\mathcal C}$ was co-commutative, then ${\mathcal A}_f$ is commutative.
\end{lemma}
\begin{proof}
Indeed,
$$(\delta\otimes 1)\delta(b_1)= \sum_{b,b_4\in B} p^{b_1}_{b,b_4} \delta(b)\otimes {b_4}=\sum_{b,b_4} p^{b_1}_{b,b_4} (\sum_{b_2,b_3\in B} p_{b_2,b_3}^b {b_2} \otimes {b_3})\otimes {b_4}$$
$$=\sum_{b,b_2,b_3,b_4\in B}  p^{b_1}_{b,b_4} p_{b_2,b_3}^b {b_2} \otimes {b_3}\otimes {b_4} \ .$$
$$(1\otimes \delta)\delta({b_1})= \sum_{b,b_2\in B} p^{b_1}_{b_2,b} {b_2}\otimes \delta(b) =\sum_{b,b_2} p^{b_1}_{b_2,b}{b_2}\otimes (\sum_{b_2,b_3\in B} p_{b_3,b_4}^b {b_3} \otimes {b_4})$$
$$=\sum_{b,b_2,b_3,b_4\in B} p^{b_1}_{b_2,b}p_{b_3,b_4}^b {b_2}\otimes  {b_3} \otimes {b_4} \ .$$
Taking into the account that $B\otimes B\otimes B\cong B\times B\times B$ is the basis of ${\mathcal C}\bigotimes\limits_Q {\mathcal C}\bigotimes\limits_Q {\mathcal C}$,  the coassociativity of $\delta$ implies
$$\sum_{b\in B} p^{b_1}_{b,b_4} p_{b_2,b_3}^b=\sum_{b\in B} p^{b_1}_{b_2,b}p_{b_3,b_4}^b $$
for all $b_1,b_2,b_3,b_4\in B$.
Applying $f$, this implies that
$$(\sigma_{b_2}\sigma_{b_3})\sigma_{b_4}=\sum_{b\in B}  f(p_{b_2,b_3}^b) \sigma_b\sigma_{b_4}=\sum_{b,b_1\in B}  f(p_{b_2,b_3}^b p^{b_1}_{b,b_4}) \sigma_{b_1}=\sum_{b,b_1\in B}  f(p^{b_1}_{b_2,b}p_{b_3,b_4}^b) \sigma_{b_1}$$
$$=\sum_{b\in B}  f(p_{b_3,b_4}^b) \sigma_{b_2}\sigma_b=\sigma_{b_2}(\sigma_{b_3}\sigma_{b_4}) \ .$$
Finally, note that co-commutativity of ${\mathcal C}$ is equivalent to $\tau\delta(b)=\delta(b)$ for all $b\in B$, i.e.,
$$\sum_{b',b''} b''\otimes p_{b',b''}^b b'=\sum_{b',b''} p_{b',b''}^b b' \otimes b''\ ,$$
i.e, $p_{b'',b'}^b =p_{b',b''}^b$ for all $b,b',b''\in B$. This implies that ${\mathcal A}_f$ is commutative.
The lemma is proved. \end{proof}

Taking ${\mathcal C}=Q_W$, $B=\{x_w,w\in W\}$,  we finish the proof of Proposition \ref{pr:dual}.
\end{proof}

In the assumptions of Proposition \ref{pr:dual} let $\langle \cdot,\cdot \rangle:{\mathcal A}_f\times Q_W\to Q'$ be the $Q'$-linear pairing given by
\begin{equation}
\label{eq:pairing}
\langle q'\sigma_u,q x_v\rangle =\delta_{u,v}\cdot  q'f(q)
\end{equation}
for all $u,v\in W$, $q\in Q$, $q'\in Q'$.

\begin{corollary}
\label{cor:Billey homomorphism}

In the assumptions of Proposition \ref{pr:dual}, we have

(a) The pairing \eqref{eq:pairing} satisfies:
$$\langle ab,x\rangle=\langle a\otimes b,\delta(x)\rangle= \langle a,x_{(1)}\rangle \langle b,x_{(2)}\rangle$$
for all $a,b\in {\mathcal A}_f$, $x\in Q_W$, where $\delta(x)=x_{(1)}\otimes x_{(2)}$ in the Sweedler notation.

(b) For each $w\in W$ the assignment $a\mapsto \langle a,w\rangle$, $a\in {\mathcal A}_f$ is a $Q'$-algebra homomorphism
$$\xi_w:{\mathcal A}_f\to Q'$$

\end{corollary}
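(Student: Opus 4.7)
The plan is to derive both parts directly from Propositions \ref{pr:delta} and \ref{pr:dual}, since the corollary is essentially a duality unwinding.

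For part (a), I would use $Q'$-linearity in $a,b$ and semi-$Q$-linearity in $x$ (through $f$) to reduce to basis elements: $a = \sigma_u$, $b = \sigma_v$, $x = qx_w$ with $u,v,w \in W$ and $q \in Q$. The left-hand side then unwinds via the multiplication in $\mathcal{A}_f$ from Proposition \ref{pr:dual},
\[
\sigma_u \sigma_v = \sum_{z} f(p_{u,v}^z)\,\sigma_z,
\]
so that $\langle \sigma_u \sigma_v,\, qx_w\rangle = f(p_{u,v}^w)\,f(q) = f(q\,p_{u,v}^w)$. For the right-hand side, $Q$-linearity of $\delta$ (Proposition \ref{pr:delta}(a)) together with \eqref{eq:generalized LR} gives
\[
\delta(q x_w) \;=\; \sum_{u',v' \in W} p_{u',v'}^{w}\,\bigl(q\, x_{u'} \otimes x_{v'}\bigr),
\]
and the factorizable extension of the pairing, $\langle a \otimes b,\, y \otimes z\rangle := \langle a,y\rangle\,\langle b,z\rangle$, picks out only the $(u',v') = (u,v)$ summand, again yielding $f(q\, p_{u,v}^w)$. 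A small preliminary step is to check that this extension descends to the tensor product $Q_W \bigotimes_{Q} Q_W$, which follows from $\langle \sigma_u, qy\rangle = f(q)\langle \sigma_u, y\rangle$ and its analogue on the other factor.

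For part (b), the key observation is that every group element $w \in W$, regarded inside $Q_W$, is group-like. Indeed, setting $q = 1$ in the formula $\delta(qw) = w \otimes qw$ of Proposition \ref{pr:delta}(a) gives $\delta(w) = w \otimes w$, and $\varepsilon(w) = 1$ is immediate from $\varepsilon(qw) = q$. Substituting $x = w$ into part (a) yields
\[
\xi_w(ab) \;=\; \langle ab, w\rangle \;=\; \langle a, w\rangle\,\langle b, w\rangle \;=\; \xi_w(a)\,\xi_w(b),
\]
while $Q'$-linearity of $\xi_w$ is inherited from the pairing. For unit preservation, I would identify $1_{\mathcal{A}_f}$ by dualizing the counit axiom: the relation $(\varepsilon\otimes 1)\delta(x_z) = x_z$ forces $\sum_{u} \varepsilon(x_u)\,p_{u,v}^{z} = \delta_{v,z}$, whence one reads off $1_{\mathcal{A}_f} = \sum_u f(\varepsilon(x_u))\,\sigma_u$. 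Writing $w = \sum_v \mu_v x_v$ in $Q_W$ and using $\varepsilon(w) = 1$, the pairing then delivers $\xi_w(1_{\mathcal{A}_f}) = f(\varepsilon(w)) = 1$.

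There is no serious obstacle here: the whole argument is a routine duality computation. The only mild care point is tracking the interplay of the two base rings $Q$ and $Q'$ through the homomorphism $f$, in particular verifying that the factorizable pairing is well-defined on $Q_W \bigotimes_{Q} Q_W$ and that the $Q'$-module structures on $\mathcal{A}_f$ and $\mathcal{A}_f \otimes_{Q'} \mathcal{A}_f$ are compatible with the $Q$-module structures on the coalgebra side.
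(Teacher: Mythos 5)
Your argument is correct and follows essentially the same route as the paper: reduce part (a) to basis elements via linearity and then apply the multiplication rule in $\mathcal{A}_f$, and deduce part (b) by substituting the group-like element $w$ (using $\delta(w) = w \otimes w$) into part (a). You add two small refinements that the paper leaves implicit — checking that the factorizable pairing descends to $Q_W \bigotimes_Q Q_W$ and verifying unit preservation by dualizing the counit axiom — but these are technical polish on the same underlying computation.
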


\begin{proof}
Prove (a). It suffices to verify the identity for $a=\sigma_u$, $b=\sigma_v$, $x=x_w$. Indeed,
$$\langle \sigma_u\otimes \sigma_v,\delta(x_w)\rangle=\langle \sigma_u\otimes \sigma_v,\sum_{u',v'\in W} p_{u',v'}^w x_{u'}\otimes x_{v'}\rangle=
\sum_{u',v'}f(p_{u',v'}^w)\langle \sigma_u,x_{u'}\rangle \langle\sigma_v,x_{v'}\rangle$$
$$=\sum_{u',v'}f(p_{u',v'}^w)\delta_{u,u'}\delta_v,x_v'=\langle\sum_{w'} p_{u,v}^w\delta_{w,w'} \sigma_{w'},x_w\rangle=\langle \sigma_u\sigma_v,x_w\rangle .$$

This proves (a).

Prove (b). The $Q'$-linearity of $\xi_w$ is obvious. Prove that $\xi_w$ respects multiplication. Indeed, for all $w\in W$, $a,b\in {\mathcal A}_f$ we have
$$\xi_w(ab)=\langle ab,w\rangle=\langle a\otimes b,\delta(w)\rangle=\langle a\otimes b,w\otimes w\rangle$$
$$=\langle a,w\rangle\langle b,w\rangle=\xi_w(a)\xi_w(b) \ .$$
This proves (b).

The corollary is proved.
\end{proof}

In what follows (Proposition \ref{pr:long recursion}), we introduce the analogues of $p_{uv}^w$ which we refer to as {\it relative} (generalized) Littlewood-Richardson coefficients.


\begin{definition}
\label{def:tame}
Given a subset $S=\{s_i,i\in I\}$ of $W\setminus\{1\}$, we say that a subset $X=\{x_i,i\in I\}$ of
$Q_W$ is $S$-{\it tame} if $X$ is a basis of the (free) $Q$-module $\sum_{i\in I} Q(s_i-1)$.
\end{definition}


For an $S$-tame set $X$ we have:
\begin{equation}
\label{eq:tame coefficients}
x_i=\sum_{j\in I} r_{ij} (s_j-1)\quad \text{and}\quad s_i=1+\sum_{j\in I} q_{ij} x_j
\end{equation}
for some mutually inverse $I\times I$ matrices $(q_{ij})$ and $(r_{ij})$ over $Q$.

For any sequence $\ii:=(i_1,\ldots,i_m)\in I^m$, define a monomial $x_\ii \in Q_W$ by:
$$x_\ii:=x_{i_1}\cdots x_{i_m}$$
with the convention that $x_\emptyset=1$.


The following fact is obvious.

\begin{lemma}
\label{le:counit action}
There is a unique left action of $Q_W$ on $Q$ such that
$$(qw)(q')=q\cdot w(q')$$
for $q,q'\in Q$, $w\in W$. The $Q_W$-action on $Q$ satisfies for all $x\in Q_W$:
$$x(q)=\varepsilon(xq)\ , $$
where $\varepsilon:Q_W\to Q$ is the counit from Proposition \ref{pr:delta}(a).
\end{lemma}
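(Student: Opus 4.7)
The plan is to build the action directly on the $Q$-basis $\{w : w \in W\}$ of $Q_W$ and then verify the single nontrivial identity.

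First, for existence, I would define a $\kk$-linear map $Q_W \otimes_\kk Q \to Q$ by the formula $(qw) \otimes q' \mapsto q \cdot w(q')$, extending $\kk$-linearly. To prove this is an action, I need to check two things: first, $1(q') = q'$, which is immediate since $1 = 1 \cdot e$ in $Q_W$; and second, associativity $(xy)(q') = x(y(q'))$. By linearity it suffices to take $x = q_1 w_1$, $y = q_2 w_2$. Then the product in $Q_W$ gives $xy = q_1 w_1(q_2) w_1 w_2$, so on one side $(xy)(q') = q_1 \cdot w_1(q_2) \cdot (w_1 w_2)(q') = q_1 \cdot w_1(q_2 \cdot w_2(q'))$, while on the other $x(y(q')) = (q_1 w_1)(q_2 \cdot w_2(q')) = q_1 \cdot w_1(q_2 \cdot w_2(q'))$. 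Both sides agree because $W$ acts on $Q$ by algebra automorphisms (covariance hypothesis), so this check uses precisely the assumption on $Q$.

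Second, for uniqueness, note that the formula $(qw)(q') = q \cdot w(q')$ specializes at $w = 1$ to $q(q') = q q'$ (the multiplication in $Q$) and at $q = 1$ to $w(q') = w(q')$ (the given $W$-action). Since every element of $Q_W$ is a $\kk$-linear combination of products $qw$, and any action must be $\kk$-linear, the action is determined by its values on the generating set $Q \cup W$; these values are prescribed.

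Third, for the identity $x(q) = \varepsilon(xq)$, I would verify it on a generator $x = q'w$ and invoke linearity. Using $wq = w(q) \cdot w$ in $Q_W$, one computes $xq = q' w q = q' \cdot w(q) \cdot w$, and then by the definition of the counit from Proposition \ref{pr:delta}(a),
\[
\varepsilon(xq) = \varepsilon\bigl(q' \cdot w(q) \cdot w\bigr) = q' \cdot w(q) = (q'w)(q) = x(q).
\]
Extending $\kk$-linearly in $x$ completes the proof.

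There is no serious obstacle here; the only substantive step is the associativity check, which hinges on the fact that $W$ acts on $Q$ by algebra homomorphisms — exactly the covariance hypothesis built into the construction of the twisted algebra $Q_W$. The formula $x(q) = \varepsilon(xq)$ is then a one-line consequence of the commutation relation $wq = w(q)\, w$ together with the definition of $\varepsilon$.
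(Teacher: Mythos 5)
Your proof is correct and complete. The paper does not actually give a proof — it introduces this lemma with the remark that ``the following fact is obvious'' — so your verification fills in exactly the routine checks the authors elide: well-definedness via $\kk$-linearity, the associativity $(xy)(q')=x(y(q'))$ reducing to covariance of the $W$-action, uniqueness by linearity, and the one-line computation $\varepsilon(q'wq)=\varepsilon(q'\,w(q)\,w)=q'\,w(q)=x(q)$. This matches the intended reasoning.
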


The following result is  a generalization of  Kostant-Kumar recursion from \cite{KK86}

\begin{proposition}
\label{pr:long recursion} For any $S$-tame set $X=\{x_i,i\in I\}$ in $Q_W$ we have:
\begin{equation}
\label{eq:pijk}
\delta(x_\ii)=\sum_{\ii',\ii''} p_{\ii',\ii''}^\ii\ x_{\ii'}\otimes x_{\ii''}
\end{equation}
where the summation is over all pairs of sequences $(\ii',\ii'')\in I^{m'}\times I^{m''}$ with $m',m''\le m$  and the coefficients $p_{\ii',\ii''}^{\ii}$ are determined recursively by
$p_{\emptyset,\emptyset}^{\emptyset}=1$
and:
\begin{equation}
\label{eq:recursion LR}
p_{\ii',\ii''}^\ii =x_{i_1}(p_{\ii',\ii''}^{\widetilde \ii})+\sum_{j\in I} r_{i_1,j} \Big(q_{j,i_1'}s_j(p_{\widetilde\ii',\ii''}^{\widetilde\ii})+ q_{j,i''_1}s_j(p_{\ii',\widetilde\ii''}^{\widetilde\ii})
+q_{j,i'_1}q_{j,i''_1}s_j(p_{\widetilde\ii',\widetilde \ii''}^{\widetilde\ii})\Big) \ ,
\end{equation}
if $m\ge 1$, where $\widetilde \ii$ stands for a sequence obtained from $\ii$ by deleting the first entry $i_1$.

\end{proposition}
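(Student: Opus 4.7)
The plan is to prove \eqref{eq:pijk} by induction on $m=|\ii|$, taking the recursion \eqref{eq:recursion LR} as the definition of $p^\ii_{\ii',\ii''}$. The base case $m=0$ is immediate since $x_\emptyset=1$ and $\delta(1)=1\otimes 1$ force $p^\emptyset_{\emptyset,\emptyset}=1$ and all other coefficients zero.

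For the inductive step I would write $\ii=(i_1,\widetilde\ii)$ and apply that $\delta$ is an algebra homomorphism (Proposition \ref{pr:delta}(b)) to split $\delta(x_\ii)=\delta(x_{i_1})\,\delta(x_{\widetilde\ii})$. Since $x_{i_1}=\sum_j r_{i_1,j}(s_j-1)$ and $\delta(qw)=w\otimes qw$ gives $\delta(s_j)=s_j\otimes s_j$, we have $\delta(x_{i_1})=\sum_j r_{i_1,j}(s_j\otimes s_j-1\otimes 1)$, and the key identity $(s_j\otimes s_j)(y\otimes z)=s_jy\otimes s_jz$ for arbitrary $y,z\in Q_W$ is Proposition \ref{pr:delta}(c) applied to $x=s_j$. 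Applying it termwise to the inductive expansion $\delta(x_{\widetilde\ii})=\sum_{\ii',\ii''}p^{\widetilde\ii}_{\ii',\ii''}(x_{\ii'}\otimes x_{\ii''})$, while using the $Q_W$-identity $s_jp=s_j(p)s_j$ to transport the scalars $p^{\widetilde\ii}_{\ii',\ii''}$ across $s_j$, yields
\begin{equation*}
\delta(x_\ii)=\sum_{j,\ii',\ii''}r_{i_1,j}\Bigl[s_j(p^{\widetilde\ii}_{\ii',\ii''})\bigl(s_jx_{\ii'}\otimes s_jx_{\ii''}\bigr)-p^{\widetilde\ii}_{\ii',\ii''}\bigl(x_{\ii'}\otimes x_{\ii''}\bigr)\Bigr].
\end{equation*}

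The final step is to substitute $s_jx_{\ii'}=x_{\ii'}+\sum_l q_{j,l}x_{l\ii'}$ (from $s_j=1+\sum_l q_{j,l}x_l$) and similarly for $s_jx_{\ii''}$, then distribute $s_jx_{\ii'}\otimes s_jx_{\ii''}$ into four tensor pieces: a diagonal $x_{\ii'}\otimes x_{\ii''}$, two singly-prepended pieces $\sum_l q_{j,l}x_{l\ii'}\otimes x_{\ii''}$ and $\sum_{l'}q_{j,l'}x_{\ii'}\otimes x_{l'\ii''}$, and the doubly-prepended piece $\sum_{l,l'}q_{j,l}q_{j,l'}x_{l\ii'}\otimes x_{l'\ii''}$. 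The diagonal piece combines with $-p^{\widetilde\ii}_{\ii',\ii''}(x_{\ii'}\otimes x_{\ii''})$ via the operator identity $\sum_j r_{i_1,j}(s_j-1)=x_{i_1}$ acting on $Q$ to give the first summand $x_{i_1}(p^{\widetilde\ii}_{\ii',\ii''})$ of \eqref{eq:recursion LR}, and matching the three off-diagonal pieces against a target $x_{\ii'}\otimes x_{\ii''}$ with $\ii'=(i'_1,\widetilde{\ii'})$ and $\ii''=(i''_1,\widetilde{\ii''})$ forces the identifications $l=i'_1$, $l'=i''_1$ and produces exactly the remaining three summands of \eqref{eq:recursion LR}. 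The main technical obstacle is the scalar bookkeeping in the twisted tensor product $Q_W\otimes_QQ_W$: scalars in $Q$ pass freely across $\otimes_Q$, but whenever a scalar must cross an $s_j$ it is transformed by the $W$-action via $s_jp=s_j(p)s_j$, which is exactly why the last three terms of \eqref{eq:recursion LR} involve $s_j(\cdot)$ rather than the bare scalars.
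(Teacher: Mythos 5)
Your proof is correct and follows essentially the same line as the paper's: induction on $m$, split $\delta(x_\ii)=\delta(x_{i_1})\delta(x_{\widetilde\ii})$ using Proposition~\ref{pr:delta}(b), invoke Proposition~\ref{pr:delta}(c) to push $\delta(x_{i_1})$ across the inductive expansion, and regroup terms to read off the recursion. The only (cosmetic) difference is that you keep $\delta(x_{i_1})=\sum_j r_{i_1,j}(s_j\otimes s_j-1\otimes 1)$ and substitute $s_j=1+\sum_l q_{j,l}x_l$ directly, whereas the paper first rewrites $\delta(x_{i_1})$ into the two-term form $x_{i_1}\otimes 1+\sum_{j,i''}r_{i_1,j}q_{j,i''}s_j\otimes x_{i''}$ and packages the scalar transport in Lemma~\ref{le:demazure commutator}; the two routes produce exactly the same four-term expansion.
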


\begin{proof} First, compute
$\delta(x_i)$. Indeed, using \eqref{eq:tame coefficients}, we obtain:
$$\delta(x_i)=\sum_{j\in I} r_{ij}(s_j\otimes s_j-1\otimes 1)=\sum_{j\in I} r_{ij}\left((s_j-1)\otimes 1+s_j\otimes (s_j-1)\right)$$
$$=x_i\otimes 1+ \sum_{j,i''\in I}  r_{ij}q_{j,i''}s_j\otimes x_{i''}= x_i\otimes 1+1\otimes x_i+ \sum_{j,i',i''\in I}  r_{ij} q_{j,i'}q_{j,i''} x_{i'}\otimes x_{i''}\ .$$

We need the following result.

\begin{lemma}
\label{le:demazure commutator}
For each $i\in I$, $p\in Q$ we have:
$$x_ip=x_i(p)+\sum_{j,i'} r_{ij} q_{j,i'}s_j(p)x_{i'}\ .$$
\end{lemma}
\begin{proof} Indeed,

\noindent $x_i p=\sum\limits_j r_{ij}(s_j-1) p =\sum\limits_j r_{ij} ((s_j-1)(p)+s_j(p)(s_j-1))=x_i(p)+\sum\limits_{j,i'} r_{ij} s_j(p)q_{j,i'}x_{i'}$.
\end{proof}

Furthermore, for $\ii=(i_1,\ldots,i_m)\in I^m$ denote $\tilde \ii={\ii \setminus \{i_1\}}:=(i_2,\ldots,i_m)$ so that $x_\ii=x_i x_{\tilde \ii}$. Therefore,  using the inductive hypothesis in the form:
$$\delta(x_{\tilde \ii})=\sum_{\tilde\ii',\tilde\ii''} p_{\tilde\ii',\tilde\ii''}^{\tilde \ii} x_{\tilde\ii'}\otimes x_{\tilde\ii''} \ ,$$
we obtain using the above computation of $\delta(x_i)$, Proposition  \ref{pr:delta}(c), and Lemma \ref{le:demazure commutator}:
$$\delta(x_\ii)=\delta(x_{i_1})\delta(x_{\tilde \ii})
=(x_{i_1}\otimes 1+\sum_{j,i_1''}  r_{i_i,j} q_{j,i''} s_j\otimes x_{i_1''})
\sum_{\tilde\ii',\tilde\ii''} p_{\tilde\ii',\tilde\ii''}^{\tilde \ii} x_{\tilde\ii'}\otimes x_{\tilde\ii''}$$
$$=\sum_{\tilde\ii',\tilde\ii''}  x_{i_1}p_{\tilde\ii',\tilde\ii''}^{\tilde \ii} x_{\tilde\ii'}\otimes x_{\tilde\ii''}
+\sum_{j,i''_1,\tilde\ii',\tilde\ii''} r_{i_1,j}q_{j,i''_1}s_jp_{\tilde\ii',\tilde\ii''}^{\tilde \ii} x_{\tilde\ii'}\otimes x_{i_1''}x_{\tilde\ii''}$$
$$=\sum_{\tilde\ii',\tilde\ii''}  x_{i_1}p_{\tilde\ii',\tilde\ii''}^{\tilde \ii} x_{\tilde\ii'}\otimes x_{\tilde\ii''}
+\sum_{j,i''_1,\tilde\ii',\tilde\ii''} r_{i_1,j}q_{j,i''_1}(s_j(p_{\tilde\ii',\tilde\ii''}^{\tilde \ii})+p_{\tilde\ii',\tilde\ii''}^{\tilde \ii}(s_j-1)) x_{\tilde\ii'}\otimes x_{i_1''}x_{\tilde\ii''}$$
$$=\sum_{\tilde\ii',\tilde\ii''}  \Big(x_{i_1}(p_{\tilde\ii',\tilde\ii''}^{\tilde \ii}) x_{\tilde\ii'}\otimes x_{\tilde\ii''}
+\sum_{j,i_1'} r_{i_1,j} q_{j,i_1'}s_j(p_{\tilde\ii',\tilde\ii''}^{\tilde \ii})x_{i_1'}x_{\tilde\ii'}\otimes x_{\tilde\ii''}$$
$$+\sum_{j,i''_1} r_{i_1,j}q_{j,i''_1}s_j(p_{\tilde\ii',\tilde\ii''}^{\tilde \ii}) x_{\tilde\ii'}\otimes x_{i_1''}x_{\tilde\ii''}
+\sum_{j,i'_1,i''_1} r_{i_1,j}q_{j,i''_1}s_j(p_{\tilde\ii',\tilde\ii''}^{\tilde \ii})q_{j,i'_1} x_{i'_1}x_{\tilde\ii'}\otimes  x_{i_1''}x_{\tilde\ii''}\Big)
$$
$$=\sum_{\ii',\ii''}  p_{\ii',\ii''}^{\ii} x_{\ii'}\otimes x_{\ii''}\ .$$

This proves \eqref{eq:pijk}. Therefore, Proposition \ref{pr:long recursion} is proved.

\end{proof}

We refer to $p_{\ii',\ii''}^{\ii}$ as the {\it relative Littlewood-Richardson} coefficients. Since the $x_\ii$ are not linearly independent in general, the relative Littlewood-Richardson are not unique. Nevertheless,
we can restore the uniqueness by replacing $W$ with a larger monoid as follows.

\begin{theorem}
\label{th:folding}
(Folding principle)
Let $Q$ (rep. $\widehat Q$) be a commutative module algebra over a monoid $W$ (resp. $\widehat W$). Let  $\varphi_-: \widehat W\to W$ be a homomorphism of monoids and let $\varphi_+:\widehat Q\to Q$ be an algebra homomorphism commuting with the $\widehat W$-action. Then:

(a) there exists a unique algebra homomorphism $\varphi:\widehat Q_{\widehat W}\to Q_W$ such that
$$\varphi|_{1\rtimes \kk\widehat W}=\varphi_-,~\varphi|_{\widehat Q\rtimes 1}=\varphi_+$$
and the following diagram is  commutative:
\begin{equation}
\label{eq:commutative diagram}
\xymatrix{\widehat Q_{\widehat W}\ \ar[r]^-{\widehat \delta} \ar[d]_-{\varphi}& \widehat Q_{\widehat W}\bigotimes_{\widehat Q} \widehat Q_{\widehat W} \ar[d]^-{\varphi\otimes\varphi} \\
 Q_W\ \ar[r]^-{\delta} & Q_W\bigotimes_Q  Q_W }\end{equation}


(b)  For any $S$-tame set $X=\{x_i,i\in I\}$ in $Q_W$, any $\widehat S$-tame set $\widehat X=\{\widehat x_k,k\in K\}$ in $\widehat Q_{\widehat W}$, and  a map $\pi:K\to  I$ such that
\begin{equation}
\label{eq:folding of tame sets}
\varphi(\widehat x_k)=x_{\pi(k)}
\end{equation}
for all $k\in K$ one has (for all $\ii\in I^m$, $\ii'\in I^{m'}$, $\ii''\in I^{m''}$ with $m',m''\le m$):
\begin{equation}
\label{eq:folded general coefficients}
p_{\ii',\ii''}^\ii=\sum \varphi(\widehat p_{\jj',\jj''}^{\,\jj}) \ ,
\end{equation}
where $\jj\in K^m$ is any sequence such that $\pi(\jj)=\ii$ and the summation is over all sequences $\jj'\in K^{m'}$, $\jj''\in K^{m''}$ such that $\pi(\jj')=\ii'$, $\pi(\jj'')=\ii''$, where  $\widehat p_{\jj',\jj''}^{\,\jj}$  are relative Littlewood-Richardson coefficients for $\widehat Q_{\widehat W}$.

\end{theorem}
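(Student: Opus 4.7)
For part (a), I would set $\varphi(\widehat q\widehat w):=\varphi_+(\widehat q)\varphi_-(\widehat w)$ on the $\kk$-basis of $\widehat Q_{\widehat W}$, which is the unique choice forced by the prescriptions on $\widehat Q$ and $\widehat W$. Multiplicativity reduces to the straightening identity $\varphi(\widehat w\widehat q)=\varphi_-(\widehat w)\varphi_+(\widehat q)$, which follows from
\[\varphi(\widehat w\widehat q)=\varphi(\widehat w(\widehat q)\cdot\widehat w)=\varphi_+(\widehat w(\widehat q))\varphi_-(\widehat w)=\varphi_-(\widehat w)(\varphi_+(\widehat q))\varphi_-(\widehat w)=\varphi_-(\widehat w)\varphi_+(\widehat q),\]
using in succession the defining relation of $\widehat Q_{\widehat W}$, the compatibility of $\varphi_+$ with the $\widehat W$-actions, and the defining relation of $Q_W$. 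The diagram \eqref{eq:commutative diagram} is then a one-line check on $\widehat q\widehat w$: both paths land on $\varphi_-(\widehat w)\otimes\varphi_+(\widehat q)\varphi_-(\widehat w)\in Q_W\bigotimes_Q Q_W$.

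For part (b), my plan is to apply $\varphi\otimes\varphi$ to \eqref{eq:pijk} for $\widehat Q_{\widehat W}$. Since $\varphi$ is multiplicative and $\varphi(\widehat x_k)=x_{\pi(k)}$ by \eqref{eq:folding of tame sets}, one has $\varphi(\widehat x_\jj)=x_\ii$ whenever $\pi(\jj)=\ii$, so (a) rewrites the $\widehat$-identity as $\delta(x_\ii)=\sum_{\ii',\ii''}P^\ii_{\ii',\ii''}\,x_{\ii'}\otimes x_{\ii''}$, where $P^\ii_{\ii',\ii''}$ denotes the right-hand side of \eqref{eq:folded general coefficients}. Because the tensors $x_{\ii'}\otimes x_{\ii''}$ need not be linearly independent in $Q_W\bigotimes_Q Q_W$, this alone does not identify the $P^\ii_{\ii',\ii''}$ with the canonically (recursively) defined $p^\ii_{\ii',\ii''}$. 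The remaining task is to show that the $P^\ii_{\ii',\ii''}$ satisfy the very same recursion \eqref{eq:recursion LR}, whence equality follows by induction on $m$.

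The inductive step rests on two compatibility identities obtained by applying $\varphi$ to the expansions \eqref{eq:tame coefficients} for $\widehat x_k$ and $\widehat s_k$ and invoking $S$-tameness of $X$ (uniqueness of coordinates in the free $Q$-modules at hand):
\[\sum_{\ell\in\pi^{-1}(j)}\varphi_+(\widehat r_{k,\ell})=r_{\pi(k),j},\qquad \sum_{\ell\in\pi^{-1}(j)}\varphi_+(\widehat q_{k,\ell})=q_{\pi(k),j}.\]
Combined with the fact that $\varphi$ intertwines the actions of $\widehat x_k,\widehat s_\ell$ on $\widehat Q$ with those of $x_{\pi(k)},s_{\pi(\ell)}$ on $Q$ (a consequence of $\varepsilon\circ\varphi=\varphi_+\circ\widehat\varepsilon$ and Lemma \ref{le:counit action}), I would split $\jj=(j_1,\tilde\jj)$ with $\pi(j_1)=i_1$, apply $\varphi_+$ to the $\widehat$-version of \eqref{eq:recursion LR} for $\widehat p^\jj_{\jj',\jj''}$, and sum over $\jj',\jj''$ with $\pi(\jj')=\ii',\pi(\jj'')=\ii''$. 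The pure-action term immediately becomes $x_{i_1}(p^{\tilde\ii}_{\ii',\ii''})$ by the inductive hypothesis, while each of the three mixed terms collapses to its $Q_W$-analog after one regrouping of the $K$-sum over $\ell\in\pi^{-1}(j)$ together with the outer sums over $j'_1\in\pi^{-1}(i'_1)$ and $j''_1\in\pi^{-1}(i''_1)$, which the two compatibility identities factor precisely as $r_{i_1,j}q_{j,i'_1}$, $r_{i_1,j}q_{j,i''_1}$, and $r_{i_1,j}q_{j,i'_1}q_{j,i''_1}$. The main obstacle I anticipate is this combinatorial bookkeeping---tracking three or four independent preimage-sums and verifying that they factor in exactly the right order to reproduce \eqref{eq:recursion LR}---after which the induction closes.
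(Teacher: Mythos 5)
Your part (a) matches the paper's proof line by line. For part (b) you take a genuinely different route. The paper first establishes Lemma \ref{le:free group}, which shows that passing from $W$ (resp.\ $\widehat W$) to the free monoid on $S$ (resp.\ $\widehat S$) leaves all relative coefficients unchanged and makes the monomials $x_\ii$ a free $Q$-basis; it may therefore assume without loss of generality that both monoids are free, apply $\varphi\otimes\varphi$ to the expansion of $\widehat\delta(\widehat x_\jj)$, and read off \eqref{eq:folded general coefficients} by matching coefficients of the now-linearly-independent tensors $x_{\ii'}\otimes x_{\ii''}$. You bypass the reduction to free monoids entirely and argue instead that the folded sums $P^\ii_{\ii',\ii''}$ satisfy the recursion \eqref{eq:recursion LR} that defines $p^\ii_{\ii',\ii''}$, then close by induction on $m$ and uniqueness of solutions to the recursion. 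Both are valid; the paper's approach is more modular (Lemma \ref{le:free group} is reused elsewhere, and its proof already hides a recursion-uniqueness argument), while yours is self-contained but shifts the weight onto the compatibility bookkeeping. One thing you should record explicitly: your two compatibility identities tacitly assume that $\varphi_-(\widehat s_k)=s_{\pi(k)}$ for all $k$, since otherwise the image under $\varphi$ of the expansion $\widehat x_k=\sum_\ell \widehat r_{k\ell}(\widehat s_\ell-1)$ cannot be compared coordinatewise with $x_{\pi(k)}=\sum_j r_{\pi(k),j}(s_j-1)$ inside the free $Q$-module $\sum_j Q\cdot(s_j-1)$. This condition is forced by $\varphi(\widehat x_k)=x_{\pi(k)}$ in every case the paper actually uses (Demazure tame sets), and the paper's linear-independence argument sidesteps it, but at the stated level of generality it is an extra hypothesis worth flagging.
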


\begin{proof} Prove (a). We verify the first assertion. Define a linear map $\varphi:\widehat Q_{\widehat W}\to Q_W$ by:
$$\varphi(\widehat q\widehat w)=\varphi_+(\widehat q)\varphi_-(\widehat w) \ .$$
In order to prove that $\varphi$ is an algebra homomorphism it suffices to show that $\varphi(\widehat w\widehat q)=\varphi(\widehat w)\varphi(\widehat q)$ for all  $\widehat q\in \widehat Q$, $\widehat w\in \widehat W$.
Indeed,
$$\varphi(\widehat w\widehat q)=\varphi(\widehat w(\widehat q)\cdot w)=\varphi_+(\widehat w(\widehat q))\cdot \varphi_-(\widehat w)$$
$$=(\varphi_-(\widehat w))(\varphi_+(\widehat q))\varphi_-(\widehat w)=\varphi_-(\widehat w)\cdot \varphi_+(\widehat q)=\varphi(\widehat w)\varphi(\widehat q)\ .$$

Now verify the commutativity of the diagram \eqref{eq:commutative diagram}. Indeed,

$$\delta(\varphi(\widehat q\widehat w))=\delta(\varphi(\widehat q)\varphi(\widehat w))=\varphi(\widehat w)\otimes \varphi(\widehat q\widehat w)=(\varphi\otimes \varphi)(\widehat w\otimes \widehat q\widehat w)=(\varphi\otimes \varphi)\widehat \delta(\widehat q\widehat w) \ .$$
This proves (a).

Prove (b) now.
We need the following result.

\begin{lemma}
\label{le:free group}
Let $\widehat W$ be the free monoid generated by $S\subset W$, then:

(i)  One has a (unique)  algebra homomorphism $\varphi:Q_{\widehat W}\to Q_W$ such that $\varphi|_S=Id_S$ and $\varphi|_Q=Id_Q$;

(ii) for any $S$-tame set $X=\{x_i\in I\}$ in $Q_W$  the set
$$\widehat X=\{\widehat x_i=\varphi^{-1}(x_i)\cap  \sum_{s\in S} Q\cdot (s-1),i\in I\}$$
is $S$-tame in $Q_{\widehat W}$;

(iii) The monomials $\widehat x_\ii=\widehat x_{i_1}\cdots \widehat x_{i_m}$
are $Q$-linearly independent in $Q_{\widehat W}$.

 (iv) Each relative Littlewood-Richardson coefficient $\widehat p_{\ii',\ii''}^{\,\ii}$ for $Q_{\widehat W}$ with respect to $\widehat X$
 equals to the relative Littlewood-Richardson coefficient $p_{\ii',\ii''}^\ii$ for $Q_W$ and is uniquely determined by the expansion \eqref{eq:pijk}:
\begin{equation}
\label{eq:pijk free}
\widehat \delta(\widehat x_\ii)=
\sum_{\ii',\ii''} p_{\ii',\ii''}^\ii
\widehat x_{\ii'}\otimes \widehat x_{\ii''} \ .
\end{equation}
\end{lemma}

\begin{proof} Indeed, $\varphi:Q_{\widehat W}\to Q_W$ as an algebra homomorphism by Theorem \ref{th:folding}(a). This verifies (i).
Furthermore, since the restriction of $\varphi$ to $\sum_{s\in S} Q\cdot (s-1)$ is the identity map, one can trivially lift each
$x_i\in X$ to a unique element $\widehat x_i\in Q_{\widehat W}$ such that $\varphi(\widehat x_i)=x_i$. This verifies (ii).
Let us show that all monomials $x_\ii$ form a basis in  the subalgebra $Q_{\widehat W_+}$ of $Q_{\widehat W}$ generated by $S$ and $Q$.
Indeed, $Q_{\widehat W_+}$ has a $Q$-basis of the form $w_\ii=s_{i_1}\cdots s_{i_m}$, where $\ii\in I^m$, $m\ge 0$ runs over all sequences.
Denote by ${\mathcal A}_{\le n}$ the $Q$-submodule of $Q_{\widehat W_+}$ spanned by all $w_\ii$, $\ii\in I^m$, $m\le n$.
Also denote by ${\mathcal B}_{\le n}$ the $Q$-submodule of $Q_{\widehat W_+}$ spanned by all $x_\ii$, $\ii\in I^m$, $m\le n$.
Let us show that ${\mathcal A}_{\le n}={\mathcal B}_{\le n}$. Clearly, both ${\mathcal A}_{\le n}$ defines a filtration on the algebra
$Q_{\widehat W_+}$ such that ${\mathcal A}_{\le n}=({\mathcal A}_{\le 1})^n$.
Note  that $$x_iq\in Q+\sum_{j\in I} Q\cdot (s_j-1)\subseteq Q+\sum_{j\in I} Q\cdot x_j={\mathcal B}_{\le 1}$$ for each $i\in I$, $q\in Q$.
This implies that  ${\mathcal B}_{\le n}$ is also a filtration on the algebra $Q_{\widehat W_+}$ such that ${\mathcal B}_{\le n}=({\mathcal B}_{\le 1})^n$.
Since  ${\mathcal A}_{\le 1}={\mathcal B}_{\le 1}$ by definition of the tame set $\widehat X$, we see that ${\mathcal A}_{\le n}={\mathcal B}_{\le n}$.
This proves linear independence of all $x_\ii$ and, thus, verifies part (iii).
Finally, in view of (iii), the coefficients $ p_{\ii',\ii''}^\ii$ are uniquely determined by:
$$\widehat \delta(\widehat x_\ii)=
\sum_{\ii',\ii''} \widehat p_{\ii',\ii''}^{\,\ii}
\widehat x_{\ii'}\otimes \widehat x_{\ii''} \ .$$
This implies that $\widehat p_{\ii',\ii''}^{\,\ii}=p_{\ii',\ii''}^{\,\ii}$ for all relevant $\ii,\ii',\ii''$ because both families
$\{p_{\ii',\ii''}^{\,\ii}\}$ and $\{\widehat p_{\ii',\ii''}^{\,\ii}\}$
satisfy the same recursion \eqref{eq:tame coefficients}. This verifies (iv).

The lemma is proved.
\end{proof}

Furthermore, we prove \eqref{eq:folded general coefficients}. Using  using Lemma \ref{le:free group},
without loss of generality we may assume that $W$ is a free monoid generated by $S=\{s_i,i\in I\}$ and $\widehat W$ is a free monoid generated by $\widehat S=\{\widehat s_1,\ldots,\widehat s_m\}$.
In particular, one has a unique expansion
$$\delta(x_\ii)=\sum_{\ii',\ii''} p_{\ii',\ii''}^\ii x_{\ii'}\otimes  x_{\ii''}$$
where the summation is over all sequences $\ii'\in I^{m'}$ and $\ii''\in I^{m''}$, $m',m''\le m$ and
$$\widehat \delta(\widehat x_\jj)=\sum_{\jj',\jj''} \widehat p_{\jj',\jj''}^{\,\jj}  \widehat x_{\jj'}\otimes \widehat x_{\jj''} \ ,$$
where the summation is over all sequences $\jj'\in K^{m'}$ and $\jj''\in K^{m''}$,  $m',m''\le m$.  Since the diagram \eqref{eq:commutative diagram} is commutative, we obtain:
$$\delta(\varphi(\widehat x_\jj))=(\varphi\otimes \varphi)(\widehat \delta(\widehat x_\jj))$$
Since $\widehat \varphi(\widehat x_{\jj'})=x_{\pi(\jj')}$ for any $\jj'\in K^{m'}$, we obtain:
$$\delta(x_\ii)=\sum_{\jj',\jj''}\varphi(\widehat p_{\jj',\jj''}^{\,\jj})  x_{\varphi(\jj')}\otimes x_{\varphi(\jj'')} \ .$$
Since the tensors $x_{\ii'}\otimes x_{\ii''}$ are $Q$-linearly independent, by collecting the coefficient of each $x_{\ii'}\otimes x_{\ii''}$ we obtain \eqref{eq:folded general coefficients}.
The theorem is proved.
\end{proof}

Dualizing the assertions of Theorem \ref{th:folding}, we obtain the following result.

\begin{proposition}
\label{pr:dual folding}
In the assumption of Theorem \ref{th:folding}, let $\{x_w,w\in W\}$ (resp. $\{\widehat x_{\widehat w},\widehat w\in \widehat W\}$) be a
$Q$-linear (resp. $\widehat Q$-linear) basis of $Q_W$ (resp. of $\widehat Q_{\widehat W}$) such that  for all $\widehat w\in \widehat W$:
$$\varphi(\widehat x_{\widehat w})
=\begin{cases}
x_w & \text{if $\widehat w\in \widehat W_w$}\\
0 & \text{if $\widehat w\notin \widehat W_w$}\\
\end{cases}
$$
where $\widehat W_w\subset \widehat W$, $w\in W$ is  a finite subset of $\widehat W$.
Then:

(a) For all $u,v,w\in W$ and each $\widehat w\in \widehat W_w$  one has:
\begin{equation}
\label{eq:LR folding}
p_{u,v}^w=\sum_{(\widehat u,\widehat v)\in \widehat W_u\times  \widehat W_v} \varphi(\widehat p_{\widehat u,\widehat v}^{\,\widehat w})\ .
\end{equation}

(b) Assume additionally that  $f:Q\to Q'$ and $\widehat \varphi:Q'\to \widehat  Q'$ are homomorphisms of commutative $\kk$-algebras such that:

$\bullet$ the set $\{w\in W:f(p_{u,v}^w)\ne 0\}$ is finite for all $u,v\in W$;

$\bullet$ the set $\{\widehat w\in W:\widehat f(\widehat p_{\widehat u,\widehat v}^{\,\widehat w})\ne 0\}$ is finite for all $\widehat u,\widehat v\in \widehat W$, where $\widehat f=\widehat \varphi \circ f \circ \varphi$.


Then, in the notation of Proposition \ref{pr:dual},  the association
$$\sigma_w\mapsto \sum_{\widehat w\in \widehat W_w}  \widehat \sigma_{\widehat w}$$
defines a homomorphism of $\kk$-algebras $\varphi^*: {\mathcal A}_f\to \widehat  {\mathcal A}_{\widehat f}$ such that $\varphi^*|_{Q'}=\widehat \varphi$.

\end{proposition}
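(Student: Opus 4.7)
The plan is to derive (a) directly from the commutative diagram in Theorem \ref{th:folding}(a) by applying $\varphi\otimes\varphi$ to the coproduct expansion of a single lift $\widehat x_{\widehat w}$ and comparing coefficients, and then to deduce (b) by expanding $\varphi^*(\sigma_u\sigma_v)$ and $\varphi^*(\sigma_u)\varphi^*(\sigma_v)$ in the basis $\{\widehat\sigma_{\widehat w}\}$ and matching them termwise via (a).

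For part (a), fix $\widehat w\in \widehat W_w$. I would start from the defining expansion
$$\widehat\delta(\widehat x_{\widehat w})=\sum_{\widehat u,\widehat v\in \widehat W}\widehat p_{\widehat u,\widehat v}^{\,\widehat w}\,\widehat x_{\widehat u}\otimes \widehat x_{\widehat v}$$
and apply $\varphi\otimes\varphi$. The commutativity of diagram \eqref{eq:commutative diagram}, together with $\varphi(\widehat x_{\widehat w})=x_w$, turns the left hand side into $\delta(x_w)=\sum_{u,v}p_{u,v}^w\,x_u\otimes x_v$. On the right hand side, the tensor $\varphi(\widehat x_{\widehat u})\otimes \varphi(\widehat x_{\widehat v})$ equals $x_u\otimes x_v$ exactly when $(\widehat u,\widehat v)\in \widehat W_u\times \widehat W_v$ and vanishes otherwise, so the image becomes $\sum_{u,v}\bigl(\sum_{\widehat u\in \widehat W_u,\widehat v\in \widehat W_v}\varphi(\widehat p_{\widehat u,\widehat v}^{\,\widehat w})\bigr)\,x_u\otimes x_v$. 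Since $\{x_u\otimes x_v\}_{u,v\in W}$ is a $Q$-basis of $Q_W\bigotimes_Q Q_W$, equating coefficients yields \eqref{eq:LR folding}. (A convenient by-product of this argument is that the right hand side of \eqref{eq:LR folding} is independent of the choice of $\widehat w\in \widehat W_w$.)

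For part (b), I would define $\varphi^*$ on the basis by $\sigma_w\mapsto \sum_{\widehat w\in \widehat W_w}\widehat\sigma_{\widehat w}$ and extend it $\widehat\varphi$-semilinearly; the finiteness hypothesis on $\widehat f$ guarantees that all products of basis elements in $\widehat{\mathcal A}_{\widehat f}$ have finite support, so the map is well defined, and by construction $\varphi^*|_{Q'}=\widehat\varphi$. Multiplicativity then reduces, after expanding
$$\varphi^*(\sigma_u)\varphi^*(\sigma_v)=\sum_{\widehat w\in \widehat W}\Bigl(\sum_{\widehat u\in \widehat W_u,\widehat v\in \widehat W_v}\widehat f(\widehat p_{\widehat u,\widehat v}^{\,\widehat w})\Bigr)\widehat\sigma_{\widehat w}$$
and
$$\varphi^*(\sigma_u\sigma_v)=\sum_{w\in W}\widehat\varphi(f(p_{u,v}^w))\sum_{\widehat w\in \widehat W_w}\widehat\sigma_{\widehat w},$$
to the identity $\sum_{\widehat u\in \widehat W_u,\widehat v\in \widehat W_v}\widehat f(\widehat p_{\widehat u,\widehat v}^{\,\widehat w})=\widehat\varphi(f(p_{u,v}^w))$ for $\widehat w\in \widehat W_w$ (which is immediate from (a) by applying $\widehat\varphi\circ f$) together with the vanishing of the same inner sum for $\widehat w\notin \bigcup_{w'}\widehat W_{w'}$.

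The one subtle step, and hence the main obstacle, is justifying that vanishing for the exceptional $\widehat w$. I would handle it by rerunning the coefficient-matching argument of (a) with $\varphi(\widehat x_{\widehat w})=0$: the image $(\varphi\otimes\varphi)\widehat\delta(\widehat x_{\widehat w})$ must equal $\delta(0)=0$, and since the tensors $x_u\otimes x_v$ are $Q$-linearly independent, every partial sum $\sum_{\widehat u\in \widehat W_u,\widehat v\in \widehat W_v}\varphi(\widehat p_{\widehat u,\widehat v}^{\,\widehat w})$ must vanish in $Q$; applying $\widehat\varphi\circ f$ then yields the desired vanishing of the corresponding sum of $\widehat f(\widehat p_{\widehat u,\widehat v}^{\,\widehat w})$. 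Substituting both identities back into the two expansions above matches them termwise against $\widehat\sigma_{\widehat w}$, which completes the verification that $\varphi^*$ is a $\kk$-algebra homomorphism.
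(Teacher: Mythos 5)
Your proof is correct and follows essentially the same route as the paper's: part (a) by applying $\varphi\otimes\varphi$ to the coproduct of $\widehat x_{\widehat w}$ via the commutative diagram \eqref{eq:commutative diagram} and comparing coefficients against the basis $\{x_u\otimes x_v\}$, and part (b) by expanding both sides of $\varphi^*(\sigma_u\sigma_v)=\varphi^*(\sigma_u)\varphi^*(\sigma_v)$ in the basis $\{\widehat\sigma_{\widehat w}\}$ and matching termwise using (a). You do, however, make explicit one step that the paper passes over in silence: when $\widehat w$ lies in no $\widehat W_{w'}$, the inner sum $\sum_{\widehat u\in\widehat W_u,\widehat v\in\widehat W_v}\widehat f(\widehat p_{\widehat u,\widehat v}^{\,\widehat w})$ must vanish for the two expansions in (b) to match. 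Your derivation of this from $\varphi(\widehat x_{\widehat w})=0$, $(\varphi\otimes\varphi)\widehat\delta(\widehat x_{\widehat w})=\delta(0)=0$, and $Q$-linear independence of $\{x_u\otimes x_v\}$ is exactly the right way to close that gap.
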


\begin{proof} Prove (a) Indeed, as in the proof of \eqref{eq:folded general coefficients}, applying $\varphi$ to the expansion \eqref{eq:generalized LR} for  $\widehat Q_{\widehat W}$ and using commutativity of \eqref{eq:commutative diagram}, we obtain \eqref{eq:LR folding}.

Prove (b) now. Indeed,
$$\varphi^*(\sigma_u\sigma_v)=\varphi^*(\sigma_u\sigma_v)=\sum_{w\in W} \varphi^*(f(p_{u,v}^w)\sigma_w)=\sum_{w\in W} (\widehat \varphi\circ f)(p_{u,v}^w)\varphi^*(\sigma_w)$$
$$=\sum_{w\in W,\widehat w\in \widehat W_w}(\widehat \varphi\circ f)(p_{u,v}^w) \widehat \sigma_{\widehat w}=\sum_{w\in W,\widehat w\in \widehat W_w,\atop (\widehat u,\widehat v)\in \widehat W_u\times  \widehat W_v} \widehat f(p_{\widehat u,\widehat v}^{\widehat w}) \widehat \sigma_{\widehat w}
=\sum_{(\widehat u,\widehat v)\in \widehat W_u\times  \widehat W_v}  \widehat \sigma_{\widehat u}\widehat \sigma_{\widehat v} =\varphi^*(\sigma_u)\varphi^*(\sigma_v)\ .$$
This proves (b).

The proposition is proved.
\end{proof}


Now we will compute all relative Littlewood-Richardson coefficients for our main class of $S$-tame sets  $X=\{x_i,i\in I\}$, where
\begin{equation}
x_i=\alpha_i^{-1}(s_i-1)
\end{equation}
where $\alpha_i$ are some invertible elements of $Q$ and $s_i\in W\setminus \{1\}$. We sometimes refer to the elements $x_i$ as  {\it Demazure elements}.

\begin{corollary}
\label{cor:demazure} For any $S=\{s_i,i\in I\}$ the Demazure elements $x_i$, $i\in I$ and their monomials $x_\ii$, $\ii\in I^m$ satisfy:
$$\delta(x_\ii)=\sum_{\ii',\ii''} p_{\ii',\ii''}^\ii\ x_{\ii'}\otimes x_{\ii''}$$
where the summation is over all pairs of subsequences $(\ii',\ii'')$ of $\ii$  and the relative Littlewood-Richardson coefficients $p_{\ii',\ii''}^{\ii}$ are determined recursively by $p_{\emptyset,\emptyset}^{\emptyset}=1$ and:
\begin{equation}
\label{eq:pijk demazure}
p_{\ii',\ii''}^\ii =x_{i_1}(p_{\ii',\ii''}^{\widetilde\ii})+\delta_{i_1,i_1'}s_{i_1}(p_{\widetilde\ii',\ii''}^{\widetilde\ii})
+ \delta_{i_1,i''_1}s_{i_1}(p_{\ii',\widetilde\ii''}^{\widetilde\ii})
+\delta_{i_1,i'_1}\delta_{i_1,i''_1}\alpha_{i_1}s_{i_1}(p_{\widetilde\ii',\widetilde \ii''}^{\widetilde\ii})
\end{equation}
if $m\ge 1$, where $\widetilde \ii$ stands for a sequence obtained from $\ii$ by deleting the first entry $i_1$.

\end{corollary}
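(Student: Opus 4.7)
The plan is to directly specialize Proposition \ref{pr:long recursion} to the Demazure set $X=\{x_i=\alpha_i^{-1}(s_i-1)\}_{i\in I}$ and then simplify the general recursion \eqref{eq:recursion LR} using the explicit form of the matrices $(r_{ij})$ and $(q_{ij})$ from \eqref{eq:tame coefficients}.

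First I would read off these matrices. The identity $x_i=\alpha_i^{-1}(s_i-1)=\sum_j r_{ij}(s_j-1)$ gives $r_{ij}=\delta_{ij}\alpha_i^{-1}$, and inverting yields $s_i=1+\alpha_i x_i$, so $q_{ij}=\delta_{ij}\alpha_i$. Both matrices are diagonal, so the sum over $j\in I$ in \eqref{eq:recursion LR} collapses to its single $j=i_1$ term. The $\alpha$'s cancel in the two ``single'' summands since
$$r_{i_1,j}q_{j,i_1'}=\delta_{i_1,j}\delta_{j,i_1'},\qquad r_{i_1,j}q_{j,i_1''}=\delta_{i_1,j}\delta_{j,i_1''},$$
whereas one factor $\alpha_{i_1}$ survives in the ``double'' summand,
$$r_{i_1,j}q_{j,i_1'}q_{j,i_1''}=\delta_{i_1,j}\delta_{j,i_1'}\delta_{j,i_1''}\,\alpha_{i_1}.$$
Plugging these identities into \eqref{eq:recursion LR} reproduces \eqref{eq:pijk demazure} verbatim.

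It remains to verify that the expansion \eqref{eq:pijk} is supported on pairs of subsequences of $\ii$, i.e.\ that $p_{\ii',\ii''}^\ii=0$ unless both $\ii'$ and $\ii''$ are subsequences of $\ii$. I would prove this by induction on $m=|\ii|$: the base $m=0$ is the convention $p_{\emptyset,\emptyset}^\emptyset=1$, and for $m\ge 1$ the four summands in \eqref{eq:pijk demazure} correspond precisely to the four possibilities for how $i_1$ is consumed in $(\ii',\ii'')$ (used in neither, in $\ii'$ only, in $\ii''$ only, or in both). The Kronecker deltas, combined with the inductive hypothesis applied to the coefficients indexed by $\widetilde\ii$, force every nonzero contribution to come from a pair of subsequences of $\ii$.

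No serious obstacle is expected: the whole argument is a mechanical substitution into the general recursion \eqref{eq:recursion LR}, followed by a short inductive remark pinning down the support of $p_{\ii',\ii''}^\ii$ in the indices $\ii'$ and $\ii''$.
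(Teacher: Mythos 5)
Your proposal is correct and matches the paper's proof essentially verbatim: the paper likewise reads off $r_{ij}=\delta_{ij}\alpha_i^{-1}$, $q_{ij}=\delta_{ij}\alpha_i$, substitutes into \eqref{eq:recursion LR} to obtain \eqref{eq:pijk demazure}, and then notes (by induction in $m$) that the recursion forces $p_{\ii',\ii''}^\ii=0$ unless $\ii'$ and $\ii''$ are subsequences of $\ii$. No gaps.
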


\begin{proof} Note that for the Demazure elements $x_i$,  in the notation of \eqref{eq:tame coefficients}, we have
$r_{ij}=\delta_{ij} \alpha_i^{-1},~q_{ij}=
\delta_{ij} \alpha_i$ for $i,j\in I$.
Then the recursion \eqref{eq:recursion LR} becomes \eqref{eq:pijk demazure}. Finally, it follows from \eqref{eq:pijk demazure} (by induction in $m$) that
$p_{\ii',\ii''}^\ii=0$ if either $\ii'$ or $\ii''$ is not a sub-sequence of $\ii$.
\end{proof}

\begin{proposition}
\label{pr:A-->Aii}
For any $S=\{s_i,i\in I\}$, a Demazure $S$-tame set $X=\{x_i,i\in I\}\subset Q_W$ and any subalgebra $R\subset Q$ such that all $p_{\ii',\ii''}^\ii\in R$ one has:

(a) There exists a commutative $R$-algebra ${\mathcal A}_{X,R}$ with the basis $\{\sigma_\ii\}$, where $\ii$ runs over all sequences in $I^m$, $m\ge 0$ such that:
$$\sigma_{\ii'} \sigma_{\ii''}= \sum_\ii p_{\ii',\ii''}^\ii \sigma_\ii$$
for any  sequences $\ii'\in I^{m'}$ and $\ii''\in I^{m''}$,
where the summation over all sequences $\ii\in I^m$, $m\ge 0$ containing $\ii'$ and $\ii''$ as sub-sequences and such that $m\le m'+m''$.

(b) For each given $\ii\in I^m$ there exists an associative algebra ${\mathcal A}_{X,\ii,R}$ with the basis
$\{\sigma_\jj^{(\ii)}\}$, where $\jj$ runs over all subsequences of $\ii$ such that:
$$\sigma_{\jj'}^{(\ii)} \sigma_{\jj''}^{(\ii)}= \sum_\jj p_{\jj',\jj''}^\jj \sigma_\jj^{(\ii)}$$
for any subsequences $\jj'$, $\jj''$ of $\ii$,
where the summation over all subsequences $\jj$ of $\ii$.

(c) For each $\ii\in I^m$, $m\ge 0$, the association
\begin{equation}
\label{eq:A-->Aii}
\sigma_\jj\mapsto
\begin{cases}
\sigma_\jj^{(\ii)} & \text{if $\jj$ is  a subsequence of $\ii$}\\
0 & \text{otherwise}\\
\end{cases}
\end{equation}
defines a surjective homomorphism of $R$-algebras
$\pi_\ii:{\mathcal A}_{X,R}\twoheadrightarrow {\mathcal A}_{X,\ii,R}$.
\end{proposition}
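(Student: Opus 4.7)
The plan is to construct ${\mathcal A}_{X,R}$ by dualizing a sub-coalgebra of the twisted group algebra over the free monoid $\widehat W$ on $S$, where the Demazure monomials $\widehat x_\ii$ are $Q$-linearly independent by Lemma \ref{le:free group}(iii), and then to realize ${\mathcal A}_{X,\ii,R}$ as a quotient of ${\mathcal A}_{X,R}$ by a subsequence ideal.

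I would first establish two vanishing properties of the relative coefficients $p_{\ii',\ii''}^\ii$: (v1) $p_{\ii',\ii''}^\ii = 0$ unless both $\ii'$ and $\ii''$ are subsequences of $\ii$, which is already noted at the end of Corollary \ref{cor:demazure}; and (v2) $p_{\ii',\ii''}^\ii = 0$ whenever $|\ii| > |\ii'| + |\ii''|$. For (v2) I would use a grading argument: endow $Q$ with the natural $\ZZ$-grading in which each $\alpha_i$ has degree $1$ and each $s_i$ acts as a degree-$0$ automorphism (so by \eqref{eq:reflection}, $s_i$ preserves $V \subset Q$), so that $\deg x_i = -1$. A direct check on $\delta(x_i) = x_i \otimes 1 + 1 \otimes x_i + \alpha_i x_i \otimes x_i$ shows every summand has degree $-1$, hence $\delta$ is degree-preserving; consequently $\delta(x_\ii)$ is homogeneous of degree $-m$, forcing each $p_{\ii',\ii''}^\ii$ to be homogeneous of degree $m' + m'' - m$. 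An easy induction on $m$ using \eqref{eq:pijk demazure} then shows that $p_{\ii',\ii''}^\ii \in S(V)$; the only nontrivial step is Term~$1$, where $x_{i_1}(p) = \alpha_{i_1}^{-1}(s_{i_1}(p) - p)$ stays in $S(V)$ because $s_{i_1}(p) - p$ is divisible by $\alpha_{i_1}$ (since $s_{i_1}$ acts trivially on $V/\kk\alpha_{i_1}$). Because $S(V)$ has no elements of negative degree, (v2) follows.

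For part (a), the uniqueness of the expansion \eqref{eq:pijk free} from Lemma \ref{le:free group}(iii)--(iv), combined with (v1), (v2), and the hypothesis $p_{\ii',\ii''}^\ii \in R$, makes the $R$-linear span $\mathcal{C}_R$ of $\{\widehat x_\ii\}$ a sub-$R$-coalgebra of $\widehat Q_{\widehat W}$ whose coproduct expands finitely on each basis element. Coassociativity and cocommutativity are inherited from Proposition \ref{pr:delta}(a), and Lemma \ref{le:from coalgebra to algebra} applied with base ring $R$ and $f = \operatorname{id}_R$ produces the required commutative associative $R$-algebra ${\mathcal A}_{X,R}$ with basis $\{\sigma_\ii\}$ and the stated multiplication; the summation range $m \leq m' + m''$ is exactly (v2).

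For parts (b)--(c), let $J_\ii \subset {\mathcal A}_{X,R}$ be the $R$-span of $\{\sigma_\jj : \jj \text{ is not a subsequence of } \ii\}$. The only nontrivial point is that $J_\ii$ is an ideal: if $\sigma_\jj \in J_\ii$ and $p_{\jj',\jj}^{\jj''} \ne 0$, then by (v1) $\jj$ is a subsequence of $\jj''$, so $\jj''$ being a subsequence of $\ii$ would, by transitivity, force $\jj$ to be one as well, a contradiction; hence every $\sigma_{\jj''}$ appearing in $\sigma_{\jj'}\sigma_\jj$ lies in $J_\ii$. The quotient ${\mathcal A}_{X,\ii,R} := {\mathcal A}_{X,R}/J_\ii$ has the desired basis $\{\sigma_\jj^{(\ii)}\}$ indexed by subsequences of $\ii$, and the surjection is the homomorphism $\pi_\ii$ of \eqref{eq:A-->Aii}. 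The main obstacle is the grading argument for (v2); everything else reduces to the dualization machinery and transitivity of the subsequence relation.
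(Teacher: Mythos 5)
Your overall strategy is the same as the paper's: construct $\mathcal{A}_{X,R}$ by dualizing, over $R$, the coalgebra $Q_{\widehat W}$ over the free monoid $\widehat W$, via Lemma~\ref{le:from coalgebra to algebra}, using (v1) from Corollary~\ref{cor:demazure} to control the support of the coproduct. The only structural deviation is in (b)--(c): the paper takes the subcoalgebra $\mathcal{C}_\ii \subset Q_{\widehat W}$ spanned by the $x_\jj$ with $\jj$ a subsequence of $\ii$, dualizes it to get $\mathcal{A}_{X,\ii,R}$, and then dualizes the inclusion $\mathcal{C}_\ii \hookrightarrow Q_{\widehat W}$ to get $\pi_\ii$; you instead build the ideal $J_\ii \subset \mathcal{A}_{X,R}$ directly and quotient. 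These are exactly dual manoeuvres, and your verification (using (v1) and transitivity of the subsequence relation) that $J_\ii$ is an ideal is correct.

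There is, however, a genuine gap in your argument for (v2) (that $p_{\ii',\ii''}^\ii = 0$ whenever $m > m'+m''$, which is what makes the multiplication table finite and is indeed needed before Lemma~\ref{le:from coalgebra to algebra} can be invoked). Your grading argument implicitly assumes $Q = \operatorname{Frac}(S(V))$, that the $\alpha_i$ are degree-one linear forms, and that $s_i$ acts by the reflections \eqref{eq:reflection} --- you appeal explicitly to $V \subset Q$ and to \eqref{eq:reflection}. But in the setting of Proposition~\ref{pr:A-->Aii} as stated, $Q$ is an arbitrary commutative $W$-module algebra and the $\alpha_i$ are arbitrary invertible elements; no such grading exists. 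Worse, (v2) can actually fail in this generality: a direct computation with the recursion \eqref{eq:pijk demazure} for a repetition-free $\ii = (i,j,k)$ and $\ii'=\ii''=(k)$ gives $p_{(k),(k)}^{(i,j,k)} = x_i(x_j(\alpha_k))$. In the reflection setting this vanishes because $x_j(\alpha_k) = -a_{jk} \in \kk$ and $x_i$ kills scalars, but for a generic $\alpha_k \in Q^{\times}$ this is nonzero. So the $m \le m'+m''$ bound you (and the paper) assert depends on structure beyond the proposition's stated hypotheses. The honest thing is either to state (v2) as an additional hypothesis on $(Q,\{\alpha_i\},S)$ --- which is what you are implicitly doing with the grading --- or to note that the paper's applications (where $Q = \operatorname{Frac}(S(V))$, as in Theorem~\ref{th:general BS} and Section~\ref{sect:nil hecke}) always satisfy it. As written, your proof silently imports a hypothesis that is not in the proposition's statement, and a reader following it in the stated generality would be misled.
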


\begin{proof} Let now $\widehat W$ be the free monoid generated by $S=\{s_i,i\in I\}$.
Then the algebra ${\mathcal A}_{X,R}$ is dual (over $R$) of the coalgebra $Q_{\widehat W}$, i.e., is obtained by  combining Lemma \ref{le:from coalgebra to algebra} (with $Q=Q'$, $f=id_Q$) and Theorem \ref{th:folding}(b). The commutativity of ${\mathcal A}_{X,R}$ follows from the symmetry $$p_{\ii'',\ii'}^\ii=p_{\ii',\ii''}^\ii\ ,$$
which directly follows from the recursive definition \eqref{eq:recursion LR}. This proves (a).

Prove (b). Denote by  ${\mathcal C}_\ii\subset Q_{\widehat W}$ the $Q$-linear span of all $x_\jj$ where $\jj$ runs over all subsequences of $\ii$. It follows from Corollary \ref{cor:demazure}  that  ${\mathcal C}_\ii$ is closed under the coproduct $\delta:Q_{\widehat W}\to Q_{\widehat W}\bigotimes\limits_Q Q_{\widehat W}$ and thus ${\mathcal C}_\ii$ is a coalgebra in the category of $Q$-modules. Then applying Lemma \ref{le:from coalgebra to algebra} once again,  we finish the proof of (b).

Prove (c) Indeed, the natural inclusion ${\mathcal C}_\ii\hookrightarrow Q_{\widehat W}$ is a homomorphism of coalgebras in the category of $Q$-modules. Its dual is a homomorphism of algebras given by  \eqref{eq:A-->Aii}. This proves (c)

The proposition is proved.
\end{proof}

We say that an index $i_k$ of $\ii=(i_1,\ldots,i_m)\in I^m$ is {\it repetition-free} if $i_\ell\ne i_k$ for all $\ell\in [m]\setminus \{k\}$. And we say that $\ii$ is repetition-free if each $i_k$, $k\in [m]$ is repetition-free, i.e., all indices $i_1,\ldots,i_m$ are distinct (equivalently, $|\{\ii\}|=m$, where  $\{\ii\}=\{i_1,\ldots,i_m\}\subset I$ denotes the underlying set).

For any subsequences $\ii'$, $\ii''$ of $\ii$ and a repetition-free index $i$ of $\ii$ we define the $f_i:=f_i(\ii,\ii',\ii'')\in Q_W$ by
\begin{equation}
\label{eq:fi}
f_i=\begin{cases}
\alpha_is_i& \text{if $i\in \{\ii'\}\cap \{\ii''\}$}\\
x_i& \text{if $i\notin \{\ii'\}\cup \{\ii''\}$}\\
s_i& \text{otherwise}
\end{cases}
\end{equation}

The following result computes all relative Littlewood-Richardson coefficients in the repetition-free case.

\begin{proposition}
\label{pr:free}
Assume that the indices $i_1,i_2,\ldots,i_k$ of $\ii$ are repetition-free. Then  for any subsequences $\ii'$, $\ii''$ of $\ii$ we have:
\begin{equation}
\label{eq:free composition} p_{\ii',\ii''}^\ii=f_{i_1}f_{i_2}\cdots f_{i_k}(p^{(i_{k+1},\ldots,i_m)}_{\ii'\setminus\{i_1,\ldots,i_k\},\ii''\setminus\{i_1,\ldots,i_k\}})
\end{equation}
In particular, if $\ii$ is repetition-free, then
$p_{\ii',\ii''}^\ii$ depends only on $\ii$, $\{\ii'\}\cap \{\ii''\}$, and $\{\ii'\}\cup \{\ii''\}$.
\end{proposition}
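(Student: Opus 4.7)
The plan is to proceed by induction on $k$, with the main work done in the base case $k=1$, which is essentially a case analysis applied to the recursion \eqref{eq:pijk demazure} from Corollary \ref{cor:demazure}.

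For the base case, we apply \eqref{eq:pijk demazure} with $i_1$ repetition-free. The key observation is that because $i_1$ does not appear anywhere in $\widetilde\ii=(i_2,\ldots,i_m)$, a subsequence of $\widetilde\ii$ cannot contain $i_1$; consequently, if $i_1\in \{\ii'\}$ (so that $\ii'$ must begin with $i_1$, i.e., $i_1'=i_1$), then $p_{\ii',\ii''}^{\widetilde\ii}=0$ and $p_{\ii',\widetilde\ii''}^{\widetilde\ii}=0$, and symmetrically for $\ii''$. Splitting into the four cases according to membership of $i_1$ in $\{\ii'\}$ and $\{\ii''\}$, the recursion \eqref{eq:pijk demazure} then collapses to a single surviving term in each case:
\begin{itemize}
\item If $i_1\notin \{\ii'\}\cup\{\ii''\}$, only $x_{i_1}(p_{\ii',\ii''}^{\widetilde\ii})$ survives, matching $f_{i_1}=x_{i_1}$.
\item If $i_1$ lies in exactly one of $\{\ii'\},\{\ii''\}$, only a term $s_{i_1}(p_{\widetilde\ii',\ii''}^{\widetilde\ii})$ or $s_{i_1}(p_{\ii',\widetilde\ii''}^{\widetilde\ii})$ survives, matching $f_{i_1}=s_{i_1}$.
\item If $i_1\in\{\ii'\}\cap\{\ii''\}$, only $\alpha_{i_1}s_{i_1}(p_{\widetilde\ii',\widetilde\ii''}^{\widetilde\ii})$ survives, matching $f_{i_1}=\alpha_{i_1}s_{i_1}$.
\end{itemize}
In each case the remaining pair of subsequences is $(\ii'\setminus\{i_1\},\ii''\setminus\{i_1\})$, establishing \eqref{eq:free composition} for $k=1$.

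For the inductive step, assume the result for $k-1$. Applying the case $k=1$ to the leading index $i_1$ gives
\[
p_{\ii',\ii''}^\ii = f_{i_1}\!\left(p_{\ii'\setminus\{i_1\},\ii''\setminus\{i_1\}}^{\widetilde\ii}\right).
\]
Since the indices $i_2,\ldots,i_k$ are still repetition-free in $\widetilde\ii$, the inductive hypothesis applies to the right-hand coefficient. A compatibility check is needed: for each $j\in\{i_2,\ldots,i_k\}$ (necessarily distinct from $i_1$), membership of $j$ in $\{\ii'\setminus\{i_1\}\}$ coincides with membership in $\{\ii'\}$, and likewise for $\ii''$; hence the operator $f_j$ computed with respect to $(\ii'\setminus\{i_1\},\ii''\setminus\{i_1\})$ agrees with the $f_j$ computed with respect to $(\ii',\ii'')$. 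Unwinding the induction yields \eqref{eq:free composition}.

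For the final assertion, when $\ii$ itself is repetition-free take $k=m$. Then $\ii'\setminus\{i_1,\ldots,i_m\}=\emptyset=\ii''\setminus\{i_1,\ldots,i_m\}$, and the residual coefficient is $p_{\emptyset,\emptyset}^{()}=1$, so \eqref{eq:free composition} reduces to $p_{\ii',\ii''}^\ii = f_{i_1}f_{i_2}\cdots f_{i_m}(1)$. By the definition \eqref{eq:fi}, each $f_{i_j}$ is determined entirely by whether $i_j$ belongs to $\{\ii'\}\cap\{\ii''\}$, to the symmetric difference, or to neither, i.e.\ by the pair $(\{\ii'\}\cap\{\ii''\},\{\ii'\}\cup\{\ii''\})$, as claimed.

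The only delicate point is the case analysis at $k=1$, where one must verify carefully that the ``wrong'' terms of \eqref{eq:pijk demazure} vanish because the relevant subsequence contains an $i_1$ absent from $\widetilde\ii$. Everything else is bookkeeping.
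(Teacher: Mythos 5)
Your proof is correct and follows essentially the same route as the paper: reduce to the one-step case \eqref{pr:free one step} via the Demazure recursion \eqref{eq:pijk demazure}, observe that when $i_1$ is repetition-free the "wrong" terms vanish because the relevant argument sequence contains an $i_1$ absent from $\widetilde\ii$, and then induct. The only addition you make beyond the paper's proof is the explicit compatibility check in the inductive step (that $f_j$ for $j\in\{i_2,\ldots,i_k\}$ is unchanged after deleting $i_1$), which the paper leaves implicit; that is a reasonable thing to spell out.
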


\begin{proof} If the index $i_1$ is repetition-free, the recursion \eqref{eq:pijk demazure}
drastically simplifies:
\begin{equation}
\label{pr:free one step}
p_{\ii',\ii''}^\ii =\begin{cases}
\alpha_{i_1}s_{i_1}\big(p_{\ii'\setminus \{i_1\},\ii''\setminus \{i_1\}}^{\ii \setminus \{i_1\}}\big)& \text{if $i'_1=i''_1=i_1$}\\
s_{i_1}(p_{\ii'\setminus \{i_1\},\ii''}^{\ii \setminus \{i_1\}}) & \text{if $i'_1=i_1\not =i''_1$}\\
s_{i_1}(p_{\ii',\ii''\setminus \{i_1\}}^{\ii \setminus \{i_1\}})& \text{if $i''_1=i_1\not =i'_1$}\\
x_{i_1}\big( p_{\ii',\ii''}^{\ii \setminus \{i_1\}}\big) & \text{if $i_1\notin\{i'_1,i''_1\}$}
\end{cases}=f_{i_1}(p^{(i_2,\ldots,i_m)}_{\ii'\setminus\{i_1\},\ii''\setminus\{i_1\}})
\end{equation}
because in each of the cases in \eqref{eq:pijk demazure}, all  non-leading terms are zero (for instance, if $i'_1=i''_1=i_1$, then neither $\ii'$ not $\ii''$ is a sub-sequence of $\ii\setminus \{i_1\}$).
This proves \eqref{eq:free composition} by induction.

The proposition is proved. \end{proof}

When $\ii$ has repetitions, we can  reduce the computation of the relative Littlewood-Richardson coefficients to the repetition-free case by introducing a certain class of relative repetition-free coefficients $p_{K',K''}^{\ii,K}$ which we refer to as {\it generalized Bott-Samelson coefficients}.

\begin{definition}
\label{def:generalized BS}
For any $S=\{s_i,i\in I\}\subset W$
and any sequence  $\ii=(i_1,\ldots,i_m)\in I^m$ let $\widehat W_\ii$ be the free monoid generated by $\widehat S=\{\widehat s_k\}$, $k\in [m]$ and let $\varphi_\ii:\widehat W_\ii\to W$ be the homomorphism of monoids given by $\widehat s_k\mapsto s_{i_k}$ for $k\in [m]$. This makes any $W$-module algebra $Q$ into a $\widehat W_\ii$-module algebra and thus the twisted group algebra Then $Q_{\widehat W_\ii}=Q\rtimes \widehat W_\ii$ is is well-defined. Next, we fix the Demazure $\widehat S$-tame set $\widehat X_\ii=\{\widehat x_k=\frac{1}{\alpha_{i_k}}(\widehat s_k-1),i\in I,k\in [m]\}$.

Then for any subsets $K,K',K''\subset [m]$ we set
$$p_{K',K''}^{\ii,K}:=\widehat p^{\,\bf k}_{{\bf k}',{\bf k}''}$$
where the right hand side is the relative coefficient for the twisted group algebra $Q_{\widehat W_\ii}$ with respect to $\widehat X_\ii$ and ${\bf k}\in [m]^{|K|},{\bf k}'\in [m]^{|K'|},{\bf k}''\in [m]^{|K''|}$ are the sequences naturally obtained from the sets $K,K',K''$ respectively.

\end{definition}

By definition,
\begin{equation}
\label{eq:BS shorthand}
p_{K',K''}^{\ii,K}=p_{\varphi(K'),\varphi(K'')}^{\ii_K,[|K|]}
\end{equation}
for any $K, K',K''$, where $\ii_K$ is as in Definition \ref{def:admissible} and
$\varphi:K\widetilde \to \{1,\ldots,|K|\}$ is the natural order-preserving bijection.

The following is a direct corollary of Theorem \ref{th:folding}.

\begin{corollary}
\label{cor:Bott-Samelson to relative}
For any $\ii\in I^m$ and any subsequences $\ii'$ and $\ii''$ of $\ii$ one has
\begin{equation}
\label{eq:relative LR via BS general}
p_{\ii',\ii''}^\ii=\sum p_{K',K''}^{\ii,[m]}
\end{equation}
where the summation is over all pairs $K',K''\subset [m]$ such that $\ii_{K'}=\ii'$, $\ii_{K''}=\ii''$.
In particular,
if $\ii$ is repetition-free, then:
\begin{equation}
\label{eq:relative LR via BS repetition free}
p_{\ii',\ii''}^\ii=p_{K',K''}^{\ii,[m]}
\end{equation}
where $K'=\{\ii'\}$,  $K''=\{\ii''\}$ (in the notation of \eqref{eq:fi}).
\end{corollary}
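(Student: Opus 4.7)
The plan is to derive \eqref{eq:relative LR via BS general} as a direct application of the folding principle (Theorem \ref{th:folding}(b)) to the homomorphism $\varphi_\ii:\widehat W_\ii\to W$ used to define the Bott-Samelson coefficients in Definition \ref{def:generalized BS}. Specifically, I take $\widehat Q=Q$ as a $\widehat W_\ii$-module algebra via $\varphi_\ii$, $\varphi_+=\id_Q$ and $\varphi_-=\varphi_\ii$, so that the resulting algebra homomorphism $\varphi:Q_{\widehat W_\ii}\to Q_W$ from Theorem \ref{th:folding}(a) restricts to the identity on $Q$ and sends $\widehat s_k\mapsto s_{i_k}$.

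Next, I verify that the Demazure tame sets are compatible with $\varphi$ via the map $\pi:[m]\to I$, $k\mapsto i_k$. Indeed, for $\widehat x_k=\alpha_{i_k}^{-1}(\widehat s_k-1)\in \widehat X_\ii$ one has $\varphi(\widehat x_k)=\alpha_{i_k}^{-1}(s_{i_k}-1)=x_{i_k}=x_{\pi(k)}$, which is the hypothesis \eqref{eq:folding of tame sets}. Choose $\jj=(1,2,\dots,m)\in [m]^m$, which satisfies $\pi(\jj)=\ii$. Then Theorem \ref{th:folding}(b) yields
$$p_{\ii',\ii''}^\ii=\sum \varphi\bigl(\widehat p_{\jj',\jj''}^{\,\jj}\bigr),$$
with the sum over sequences $\jj'\in [m]^{m'}$ and $\jj''\in [m]^{m''}$ with $\pi(\jj')=\ii'$ and $\pi(\jj'')=\ii''$. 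Since $\jj=(1,\ldots,m)$ is repetition-free, each such $\jj'$ (respectively $\jj''$) is uniquely determined by, and identified with, its underlying subset $K'\subset[m]$ (respectively $K''$); the condition $\pi(\jj')=\ii'$ translates into $\ii_{K'}=\ii'$. Because $\varphi$ restricts to the identity on $Q$, $\varphi(\widehat p_{\jj',\jj''}^{\,\jj})=\widehat p_{\jj',\jj''}^{\,\jj}=p_{K',K''}^{\ii,[m]}$, where the last equality is Definition \ref{def:generalized BS} applied with $K=[m]$ (so $\bf k=\jj$). This gives \eqref{eq:relative LR via BS general}.

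For the repetition-free case \eqref{eq:relative LR via BS repetition free}: if all entries of $\ii$ are distinct, then for any subsequence $\ii'$ of $\ii$ there is a unique subset $K'\subset[m]$ with $\ii_{K'}=\ii'$, namely $K'=\{k:i_k\in\{\ii'\}\}$, and similarly for $\ii''$. Hence the sum in \eqref{eq:relative LR via BS general} collapses to a single term, yielding \eqref{eq:relative LR via BS repetition free}.

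The argument is essentially bookkeeping; the only point requiring care is to check that the identification ``subsequence $\jj'$ of $(1,\ldots,m)$ $\longleftrightarrow$ subset $K'\subset[m]$'' is consistent with the two shorthand conventions for Bott-Samelson coefficients (the one in Definition \ref{def:generalized BS} and the normalized version in \eqref{eq:BS shorthand}), but since we are working with $K=[m]$, these coincide trivially and no obstacle arises.
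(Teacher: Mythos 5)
Your proof is correct and fills in exactly what the paper leaves implicit when it calls this a ``direct corollary'' of Theorem~\ref{th:folding}: you instantiate the folding principle with $\widehat W=\widehat W_\ii$, $\varphi_-=\varphi_\ii$, $\varphi_+=\id_Q$, $\pi(k)=i_k$, $\jj=(1,\ldots,m)$, check the tame-set compatibility \eqref{eq:folding of tame sets}, and unwind Definition~\ref{def:generalized BS}. One small imprecision worth flagging: the sum in Theorem~\ref{th:folding}(b) a priori runs over \emph{all} sequences $\jj'\in[m]^{m'}$ with $\pi(\jj')=\ii'$, not only over increasing ones, so different $\jj'$ with the same underlying set could occur. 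The passage to subsets $K'\subset[m]$ is legitimate only because, by the last sentence of the proof of Corollary~\ref{cor:demazure}, $\widehat p_{\jj',\jj''}^{\,\jj}=0$ unless $\jj'$ and $\jj''$ are subsequences of $\jj=(1,\ldots,m)$, i.e.\ strictly increasing; you should state this vanishing explicitly rather than attribute the collapse to $\jj$ being repetition-free alone. With that caveat made explicit, the argument is complete and agrees with the paper's intended route.
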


Since \eqref{eq:relative LR via BS general} is a copy of  \eqref{eq:relative LR via BS}, this justifies the name {\it generalized Bott-Samelson coefficients} for $p_{K',K''}^{\ii,K}$.
We will make the analogy precise in the following result that generalizes \cite[Proposition 4]{wi04}.

\begin{theorem}
\label{th:general BS}
Let $S=\{s_i,i\in I\}\subset W$ and  $X=\{x_i=\frac{1}{\alpha_i}(s_i-1),i\in I\}\subset Q_W$ be a Demazure $S$-tame set. Then for any sequence  $\ii=(i_1,\ldots,i_m)\in I^m$, and any $W$-invariant subalgebra $R\subset Q$ such that such that $x_{i_k}(R)\subset R$ and $\alpha_{i_k}\in R$ for $k\in [m]$ one has:

(a) There exists a commutative $R$-algebra ${\mathcal BS}_{X,\ii,R}$ with the basis $\{\sigma_K,K\subset [m]\}$ such that
\begin{equation}
\label{eq:BS multiplication}
\sigma_{K'} \sigma_{K''}= \sum_{K\subset [m]} p_{K',K''}^{\ii,K} \sigma_K
\end{equation}
for any subsets $K', K''\subset [m]$,
where the summation over all $K\subset [m]$, such that $K'\cup K''\subset K$ and $|K|\le |K'|+|K''|$.

(b) For each $K\subset [m]$ one has in ${\mathcal BS}_{X,\ii,R}$:
$$\sigma_K=\prod_{k\in K}\sigma_k \ .$$

(c) The algebra ${\mathcal BS}_{X,\ii,R}$ is generated by $\sigma_k:=\sigma_{\{k\}}$, $k\in [m]$ subject to the relations:
\begin{equation}
\label{eq:sigma_k^2}
\sigma_k^2=\alpha_{i_k}\sigma_k+\sum_{\ell<k} x_{i_k}(\alpha_{i_\ell})\sigma_\ell\sigma_k
\end{equation}
for $k\in [m]$.

(d) For each $\ii\in I^m$ the association
\begin{equation}
\label{eq:Aii-->BSii}
\sigma_\jj^{(\ii)} \mapsto \sum_{K\subset [m]:\atop \ii_K=\jj} \sigma_K
\end{equation}
defines an injective homomorphism of $R$-algebras
$\pi_\ii:{\mathcal A}_{X,\ii,R}\hookrightarrow {\mathcal BS}_{X,\ii,R}$.
\end{theorem}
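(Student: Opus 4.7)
The plan is to realize $\mathcal{BS}_{X,\ii,R}$ as a special instance of the ``folded'' algebra $\mathcal{A}_{\widehat{X}_\ii,(1,\ldots,m),R}$ attached to the free-monoid Demazure setup of Definition~\ref{def:generalized BS}, and then read off its structure. Since the generators $\widehat s_1,\ldots,\widehat s_m$ of $\widehat W_\ii$ are pairwise distinct, the sequence $(1,2,\ldots,m)\in[m]^m$ is repetition-free, and its subsequences correspond bijectively to subsets $K\subset[m]$. Thus Proposition~\ref{pr:A-->Aii}(b), applied in this free-monoid setting, directly yields part (a): the $R$-basis $\{\sigma_K\}_{K\subset[m]}$, the multiplication rule \eqref{eq:BS multiplication}, and commutativity (the latter via the symmetry $p_{K',K''}^{\ii,K}=p_{K'',K'}^{\ii,K}$ inherited from co-commutativity of $\delta$ in Proposition~\ref{pr:delta}(a)).

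To establish (b), I would argue by induction on $|K|$, using the vanishing/delta statement: when $K'\cap K''=\emptyset$ the coefficient $p_{K',K''}^{\ii,K}$ equals $\delta_{K,K'\cup K''}$. This is immediate from Proposition~\ref{pr:free} applied in the repetition-free free-monoid setting, because the factor $f_j$ of type $\alpha_{i_j}\widehat s_j$ (the only source of a nonzero index outside $K'\cup K''$) never appears when $K'$ and $K''$ are disjoint, so the recursive unfolding picks out exactly the sequence corresponding to $K'\cup K''$. For (c), I would compute $\sigma_k^2$ directly from the recursion \eqref{eq:pijk demazure} applied to $p_{(k),(k)}^{\,\jj}$ with $\jj$ a subsequence of $(1,\ldots,m)$ of length at most two: only the summand $K=\{k\}$ with coefficient $\alpha_{i_k}$ survives (from the fourth term of \eqref{eq:pijk demazure}), together with $K=\{\ell,k\}$ for $\ell<k$, coming from a single application of $\widehat x_\ell$ to $\alpha_{i_k}$. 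To upgrade ``these relations hold'' to ``these relations present the algebra'', note that the free $R$-algebra on commuting symbols $\sigma_1,\ldots,\sigma_m$ modulo \eqref{eq:sigma_k^2} is spanned by the $2^m$ squarefree monomials (repeatedly rewriting each $\sigma_k^2$ as an $R$-linear combination of $\sigma_k$ and $\sigma_\ell\sigma_k$ with $\ell<k$ strictly reduces a lexicographic complexity on monomials); hence this quotient surjects onto $\mathcal{BS}_{X,\ii,R}$ which has rank exactly $2^m$, forcing the surjection to be an isomorphism.

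Finally, for (d), I would verify that the map $\sigma_\jj^{(\ii)}\mapsto\sum_{\ii_K=\jj}\sigma_K$ is multiplicative by expanding both sides of the putative identity: the coefficient of a fixed $\sigma_L$ in the product of images is $\sum_{K,K'\subseteq L,\,\ii_K=\jj,\,\ii_{K'}=\jj'} p_{K,K'}^{\ii,L}$, which by \eqref{eq:relative LR via BS general} (applied to the sequence $\ii_L$) equals $p_{\jj,\jj'}^{\,\ii_L}$, matching the image of $\sigma_\jj^{(\ii)}\sigma_{\jj'}^{(\ii)}$. Injectivity is automatic because the fibers $\{K:\ii_K=\jj\}$ are pairwise disjoint as $\jj$ ranges over subsequences of $\ii$, so the images lie in disjoint parts of the $\sigma_K$-basis. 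The principal obstacle is the algebra-presentation step in (c); everything else reduces mechanically to Proposition~\ref{pr:A-->Aii} and the repetition-free formula of Proposition~\ref{pr:free}, once one has set up the free-monoid lift $\widehat W_\ii$.
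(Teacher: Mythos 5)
Your proposal is correct and follows the same overall architecture as the paper's proof: realize ${\mathcal BS}_{X,\ii,R}$ as ${\mathcal A}_{\widehat X_\ii,(1,\ldots,m),R}$ via the free-monoid lift $\widehat W_\ii$ and read everything off the Demazure recursion. The individual parts are packaged a bit differently. For (b), you invoke the full disjoint-sets vanishing $p_{K',K''}^{\ii,K}=\delta_{K,K'\cup K''}$ (which indeed falls out of Proposition~\ref{pr:free} because some $f_j=\widehat x_j$ must eventually hit the constant $1$), whereas the paper only needs and proves the single-prepend case $p_{\{k\},K}^{\ii,\{k\}\cup K}=1$ for $k<\min K$ via one step of \eqref{pr:free one step}; both are legitimate, and yours gives the slightly stronger fact $\sigma_{K'}\sigma_{K''}=\sigma_{K'\cup K''}$ for disjoint $K',K''$. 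For (c), the ``spanned by squarefree monomials, surjects onto a free rank-$2^m$ module, hence isomorphism'' argument is sound, though you should make the rank bookkeeping explicit: the images of the $2^m$ squarefree monomials are precisely the $\sigma_K$'s (by (b)), which are a free $R$-basis, hence the preimages are $R$-independent and the surjection is injective; the paper instead builds an explicit triangular basis via the inverse-lexicographic valuation, which is the same idea with the termination/independence made concrete. The most genuine divergence is (d): the paper dualizes the surjective coalgebra map $\widehat{\mathcal C}_{(1,\ldots,m)}\twoheadrightarrow{\mathcal C}_\ii$ and obtains injectivity of $\pi_\ii$ abstractly, whereas you verify multiplicativity by comparing coefficients of $\sigma_L$ on both sides and invoking \eqref{eq:relative LR via BS general} relativized to $\ii_L$ via the shorthand \eqref{eq:BS shorthand}; both are fine, but note that \eqref{eq:relative LR via BS general} is itself an application of the folding principle, so your route is not really more elementary than the paper's, just more computational.
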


\begin{proof} First note that the recursion \eqref{eq:pijk demazure} guarantees that the algebra $R$ contains all $p^\ii_{\ii',\ii''}$ and all $p^{\ii,K}_{K',K''}$.

Prove (a) now. In the notation of Proposition \ref{pr:A-->Aii} define ${\mathcal BS}_{X,\ii,R}:={\mathcal A}_{\widehat X_\ii,(1,2,\ldots,m),R}$ and abbreviate
$$\sigma_K:=\sigma_\jj^{(1,2,\ldots,m)}\ ,$$
for all $K\subset J$ where $\jj$ is the only subsequence of $(1,2,\ldots,m)$ such that $\{\jj\}=K$.
Here the algebra ${\mathcal A}_{\widehat X_\ii,(1,2,\ldots,m),R}$ is associated to
$Q_{\widehat W_\ii}$ with $\widehat W_\ii$ and $\widehat X_\ii$ as in
Definition \ref{def:generalized BS}. Clearly, \eqref{eq:BS multiplication} holds in ${\mathcal BS}_{X,\ii,R}$. This proves (a).

Prove (b). It suffices to prove that $\sigma_k\sigma_K=\sigma_{\{k\}\cup K}$
whenever $k$ is less than the minimal element of $K$, or, equivalently,
\begin{equation}
\label{eq:K cup k}
p_{\{k\},K}^{\ii,\{k\}\cup K}=1\ .
\end{equation}
Indeed, using \eqref{pr:free one step}, we see that
$$p_{\{k\},K}^{\ii,\{k\}\cup K}=\widehat s_1(p_{(k),(k_1,\ldots,k_\ell)}^{(k,k_1,\ldots,k_\ell)})$$
where $K=\{k_1<k_2<\cdots k_\ell\}$. Taking into the account that $p_{\emptyset,{\bf k}}^{\bf k}=1$ for all sequences ${\bf k}$
(this easily follows by induction from \eqref{eq:pijk demazure}), we finish the proof of \eqref{eq:K cup k}. Part (b) is proved.

Prove (c).
Indeed,
it follows from Definition \ref{def:generalized BS}, \eqref{eq:relative LR via BS repetition free} and \eqref{pr:free one step}
that for any $K=\{k_1<k_2\}\subset [m]$ and $k\in [m]$ one has
$$p^{\ii,\{k_1,k_2\}}_{\{k\},\{k\}}=\delta_{k_1,k}p^{(k,k_2)}_{(k),(k)}+\delta_{k_2,k}p^{(k_1,k)}_{(k),(k)}=
\delta_{k_1,k}\widehat s_k(p^{(k_2)}_{\emptyset,\emptyset})+\delta_{k_2,k}\widehat x_{k_1}(p^{(k)}_{(k),(k)})
=\delta_{k_2,k}x_{i_{k_1}}(\alpha_{i_k})\ .$$
Taking into account that $p^{\ii,\{k\}}_{\{k\},\{k\}}=p^{(i_k)}_{(i_k),(i_k)}=\alpha_{i_k}$, we obtain:
$$\sigma_k^2=\sum_{K\subset [m]\atop k\in K,|K|\le 2} p_{\{k\},\{k\}}^{\ii,K} \sigma_K=\sum_{k'\in [m]} p_{\{k\},\{k\}}^{\ii,\{k,\ell\}}\sigma_{\{k,\ell\}}=\alpha_{i_k}\sigma_k+\sum_{\ell\ne k} p_{\{k\},\{k\}}^{\ii,\{k,\ell\}}\sigma_{\{k,\ell\}}$$
$$=\alpha_{i_k}\sigma_k+\sum_{\ell\ne k}\delta_{\max(k,\ell),k}\sigma_{\{k,\ell\}}=\alpha_{i_k}\sigma_k+\sum_{\ell< k}\sigma_\ell\sigma_k\ .$$
This proves \eqref{eq:sigma_k^2}. It remains to prove that the relations \eqref{eq:sigma_k^2} are defining. This follows from the following result.

\begin{proposition} Let  ${\mathcal A}$ be a commutative $R$-algebra generated by $\sigma_k$, $k\in [m]$ subject to the relations:
\begin{equation}
\label{eq:sigma_k^2 general}
\sigma_k^2=c_k\sigma_k+\sum_{\ell<k} c_{\ell,k} \sigma_\ell\sigma_k
\end{equation}
for all $k\in [m]$, where all $c_k$ and $c_{\ell,k}$ belong to $R$. Then  the set of square-free monomials in $\sigma_1,\ldots,\sigma_m$ is a free $R$-linear basis of ${\mathcal A}$ (hence $dim_R {\mathcal A}=2^m$).
\end{proposition}

\begin{proof} Define the valuation
$\nu:R[\sigma_1,\ldots,\sigma_m]\setminus \{0\}\to
\ZZ_{\ge 0}^m$ by
$$\nu(\sum c_{\bf r}\sigma^{\bf r})=\max \{{\bf r|{\bf r}\in \ZZ_{\ge 0}^m:c_{\bf r}\ne 0}\}$$
where we abbreviated $\sigma^{\bf r}=\sigma_1^{r_1}\cdots \sigma_m^{r_m}$ and the maximum is taken with respect to the the {\it inverse lexicographic} ordering on $\ZZ_{\ge 0}^m$ (i.e., ${\bf r}<{\bf r'}$ if there exists $k\in [m]$ such that $r_k<r'_k$ and $r_\ell=r'_\ell$ for all $\ell>k$).

For each $p=\sum c_{\bf r}\sigma^{\bf r}\in R[\sigma_1,\ldots,\sigma_m]\setminus \{0\}$ define the the leading coefficient $c(p)$ by
$$c(p):=c_{\nu(p)}\ .$$

We say that a subset $B$ of $R[\sigma_1,\ldots,\sigma_m]\setminus \{0\}$ is {\it triangular} if
$$\nu(B)=\ZZ_{\ge 0}^m,c(B)=\{1\}$$

The following fact is obvious.

\begin{lemma} Each triangular subset $B$ of $R[\sigma_1,\ldots,\sigma_m]\setminus \{0\}$ is a free $R$-linear basis of
$R[\sigma_1,\ldots,\sigma_m]$ and the transition matrix between between $B$ and the standard monomial basis $\sigma^{\ZZ_{\ge 0}^m}$ is unitriangular with respect to the inverse lexicographic order.

\end{lemma}

In the notation of \eqref{eq:sigma_k^2 general} define $M_k\in R[\sigma_1,\ldots,\sigma_m]\setminus \{0\}$, $k\in [m]$ by
$$M_k=\sigma_k^2-c_k\sigma_k-\sum_{\ell<k} c_{\ell,k} \sigma_\ell\sigma_k \ .$$
Clearly, the quotient algebra $R[\sigma_1,\ldots,\sigma_m]/J_m$, where $J_m$ is the ideal generated by all $M_k$, $k\in M$
is isomorphic to ${\mathcal A}$.

It is also clear that $\nu(M_k)=2e_k$
for $k=1,\ldots,m$, where $e_1,\ldots,e_m$ is the standard basis of $\ZZ_{\ge 0}^m$.

Denote by ${\mathcal M}$ the $R$-subalgebra of $R[\sigma_1,\ldots,\sigma_m]$ generated by $M_1,\ldots,M_m$.
For each ${\bf t}\in \ZZ_{\ge 0}^m$ define a monomial $M^{\bf t}\in {\mathcal M}$ by:
$$M^{\bf t}:=M_1^{t_1}\cdots M_m^{t_m}\ .$$


Denote by $B_0$ the set of all square-free monomials in $R[\sigma_1,\ldots,\sigma_m]$, i.e., all $\sigma^{\bf r}$ with $\max\limits_{k\in [m]}(r_k)\le 1$.
The following fact is obvious.

\begin{lemma} The set
$$B=\sqcup_{t\in \ZZ_{\ge 0}^m} M^{\bf t}\cdot  B_0$$
is triangular. In particular, ${\mathcal M}\cong R[x_1,\ldots,x_m]$ and $R[\sigma_1,\ldots,\sigma_m]$ is a free ${\mathcal M}$-module with the free ${\mathcal M}$-linear basis $B_0$ of square-free monomials.

\end{lemma}

This implies that $\sqcup_{t\in \ZZ_{\ge 0}^m\setminus \{0\}} M^{\bf t}\cdot  B_0$ is a free $R$-linear basis in the ideal $J_m$. Hence the restriction to $B_0$ of the quotient map $R[\sigma_1,\ldots,\sigma_m]\to R[\sigma_1,\ldots,\sigma_m]/J_m={\mathcal A}$ is injective and the image of $B_0$ is a free $R$-linear basis of ${\mathcal A}$.

%
The proposition is proved. \end{proof}

This finishes the proof of part (c).

Prove (d) now. Indeed, Theorem \ref{th:folding}(b) guarantees that (in the notation of the proof of \ref{pr:A-->Aii}(b)), the homomorphism of monoids $\widehat W_\ii\to W$ given by $\widehat s_k\mapsto s_{i_k}$ extends to  we have a surjective homomorphism of coalgebras $\varphi_\ii:\widehat {\mathcal C}_{(1,2,\ldots,m)}\twoheadrightarrow {\mathcal C}_\ii$. Therefore, dualizing this surjective homomorphism over $R$, we obtain an injective homomorphism  $\pi_\ii:{\mathcal A}_{X,\ii,R}\hookrightarrow {\mathcal BS}_{X,\ii,R}$ given by \eqref{eq:Aii-->BSii}. Part (d) is proved.

Therefore, Theorem \ref{th:general BS} is proved.
\end{proof}

\begin{remark} In fact, the homomorphism \eqref{eq:Aii-->BSii} verifies \eqref{eq:relative LR via BS general}.

\end{remark}

\section{Generalized nil Hecke algebras and proof of Theorem \ref{th:embedding H(G/B) to H(hat G/hat B)}}
\label{sect:nil hecke}
Let $I$ be a finite set of indices.
We say that an $I\times I$ matrix $A=(a_{i,j})$ over $\kk$ is {\it quasi-Cartan} if all $a_{ii}=2$ and $a_{ij}a_{ji}=0$ implies $a_{ij}=a_{ji}=0$. Let $V$ be a $\kk$ vector space with basis  $\{\alpha_i,i\in I\}.$

\begin{definition} We say that a monoid  generated by  $S=\{s_i,i\in I\}$ is a {\it Coxeter semigroup} if $W$ is subject to the relations
$$(s_is_j)^{n_{ij}}=1$$ for all $i,j\in I$,
where $n_{ii}\in \{0,2\}$, $n_{ij}=n_{ji}\in \{0\} \cup \ZZ_{\ge 2}$ for all $i,j$; and: if $n_{ii}= 0$ for some $i$, then  $n_{ij}=0$ for all $j$.

\end{definition}

\begin{remark} In fact, the term ``Coxeter monoid'' has been used by several authors to denote a different object (see e.g., \cite{HiSchThi09, RiSpr90,tsar89,tsar90}) that is never a group. At the
same time, any Coxeter semigroup with all $n_{ii}=2$ is a Coxeter group.

\end{remark}

Note that the free monoid generated by $S$ is a Coxeter semigroup, moreover it is an initial object in the category of Coxeter semigroups generated by $S$.

The following fact is obvious.

\begin{lemma} Let $W$ be a Coxeter semigroup generated by $S=\{s_i,i\in I\}$ and let $I_0=\{i\in I:n_{ii}=0\}$. Then

(a) The sub-monoid of $W$ generated by all $s_i$, $i\in I_0$ is a free monoid $M_0$

(b) The sub-monoid of $W$ generated by all $s_i$, $i\in I\setminus I_0$ is a Coxeter group $W_0$.

(c) One has $W=W_0\star M_0$, where $\star$ stands for the free product of monoids.

\end{lemma}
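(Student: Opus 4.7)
The plan is to prove all three parts by constructing retraction homomorphisms and then invoking the universal property of the free product. The combinatorial input that drives the proof is the hypothesis that $n_{ii}=0$ forces $n_{ij}=0$ for every $j$ (together with the symmetry $n_{ij}=n_{ji}$): this means every defining relation $(s_is_j)^{n_{ij}}=1$ with at least one index in $I_0$ is vacuous, so the generators indexed by $I_0$ are completely decoupled from the rest. Everything then follows from comparing presentations.

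For part (a), I would construct a monoid homomorphism $\pi_0\colon W\to F(I_0)$, where $F(I_0)$ denotes the free monoid on $\{s_i\mid i\in I_0\}$, by sending $s_i\mapsto s_i$ for $i\in I_0$ and $s_j\mapsto 1$ for $j\notin I_0$. To see this is well defined I would check the defining relations case by case: if $i\in I_0$ or $j\in I_0$, then $n_{ij}=0$ by hypothesis and the relation is vacuous; if $i,j\notin I_0$, the relation maps to $1^{n_{ij}}=1$ in $F(I_0)$. The sub-monoid of $W$ generated by $\{s_i\mid i\in I_0\}$ surjects onto $F(I_0)$ under $\pi_0$, and the composition with the natural map $F(I_0)\to W$ is the identity on $F(I_0)$; hence this sub-monoid is isomorphic to $F(I_0)$, which is $M_0$.

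For part (b), I would use the same retraction trick with an abstract Coxeter group $W_0'$ presented by $\{s_i\mid i\notin I_0\}$ subject to the relations $(s_is_j)^{n_{ij}}=1$ for $i,j\notin I_0$: define $\pi_1\colon W\to W_0'$ by $s_i\mapsto s_i$ for $i\notin I_0$ and $s_i\mapsto 1$ for $i\in I_0$. Again, relations with an index in $I_0$ are vacuous, and the remaining relations are exactly the defining relations of $W_0'$, so $\pi_1$ is well defined. The same retraction argument as above identifies the sub-monoid of $W$ generated by $\{s_i\mid i\notin I_0\}$ with $W_0'=W_0$.

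Finally, for part (c), the universal property of the free product supplies a homomorphism $\Phi\colon W_0\star M_0\to W$ sending each generator to the corresponding $s_i$. Conversely, since every defining relation of $W$ is either vacuous (if any index lies in $I_0$) or holds in $W_0\subset W_0\star M_0$, the presentation of $W$ descends to a homomorphism $\Psi\colon W\to W_0\star M_0$ with $s_i\mapsto s_i$. On generators $\Phi\circ\Psi$ and $\Psi\circ\Phi$ are the identity, so they are mutually inverse, proving $W\cong W_0\star M_0$. There is no serious obstacle here; the only delicate point is the case-analysis verifying that the retractions respect the Coxeter relations, and it is precisely the standing hypothesis on the $n_{ij}$ that makes every relevant case trivial, which explains the author's label ``obvious''.
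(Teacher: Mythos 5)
The paper asserts this lemma as ``obvious'' and supplies no proof, so there is nothing internal to compare against. Your argument is correct and complete: the standing hypothesis $n_{ii}=0\Rightarrow n_{ij}=0$ (together with $n_{ij}=n_{ji}$) makes every defining relation of $W$ involving an index from $I_0$ vacuous, which is exactly what renders the retractions $\pi_0\colon W\to F(I_0)$ and $\pi_1\colon W\to W_0'$ well defined; the splitting $\pi_0\circ\iota_0=\mathrm{id}$ (and likewise for $\pi_1$) then forces $\iota_0,\iota_1$ to be injective, identifying the two sub-monoids with $F(I_0)$ and $W_0'$, and the comparison of presentations gives (c). This is precisely the routine case analysis the authors presumably had in mind when labeling the statement obvious.
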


\begin{definition}
\label{def:admissible for semigroups}
Similarly to Definition \ref{def:admissible}, given a  Coxeter semigroup, we say that a sequence $\ii=(i_1,\ldots,i_m)\in I^m$ is {\it reduced} if the element $w=w_\ii:=s_{i_1}\cdots s_{i_m}\in W$ is shortest possible in $W$ and define its {\it Coxeter length} $\ell(w):=m$.
Given $w\in W$, denote by $R(w)$ the set of all {\it reduced words} of $w$, i.e, all $\ii\in I^{\ell(w)}$ such that $w_\ii=w$.
\end{definition}

\begin{definition}
We say that a Coxeter semigroup $W$ is {\it weakly compatible} with a quasi-Cartan matrix $A$ if for  each $i\ne j$ we have:
$$n_{ij}\ge 2~\text{implies that}~a_{ij}a_{ji}=\zeta_{ij}+\zeta_{ij}^{-1}+2$$
for some  $n_{ij}$-th root of unity $\zeta_{ij}\in \kk^\times$.
\end{definition}

The following result is obvious.

\begin{lemma}
\label{le:Coxeter action}
Let $V$ be a $\kk$ vector space with basis  $\{\alpha_i,i\in I\}$  and let $A=(a_{ij})$ be an $I\times I$ quasi-Cartan matrix weakly  compatible with a Coxeter semigroup $W$ generated by $S=\{s_i,i\in I\}$. Then the association
$$s_i(\alpha_j)=\alpha_i-a_{ij}\alpha_j $$
defines an action of $W$ on $V$.
\end{lemma}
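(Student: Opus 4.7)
The plan is to verify that the linear operators $s_i$ on $V$, defined by $s_i(\alpha_j) = \alpha_j - a_{ij}\alpha_i$ as in \eqref{eq:reflection}, satisfy the defining relations $(s_is_j)^{n_{ij}} = 1$ of the Coxeter semigroup $W$ for every pair with $n_{ij} \ne 0$. Since $W$ is the quotient of the free monoid on $S$ by these relations, and the free monoid trivially acts on $V$ via the above formula, this suffices. The diagonal relation $s_i^2 = \id$ follows immediately from $a_{ii} = 2$: direct calculation gives $s_i(\alpha_i) = -\alpha_i$ and $s_i^2(\alpha_j) = s_i(\alpha_j - a_{ij}\alpha_i) = \alpha_j$ for every $j$.

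For an off-diagonal pair $i \ne j$ with $n_{ij} \ge 2$, I would restrict attention to the two-dimensional subspace $V_{ij} := \kk\alpha_i + \kk\alpha_j$, which is stable under both $s_i$ and $s_j$. A direct $2 \times 2$ matrix computation shows that $T := s_is_j$ restricted to $V_{ij}$ has determinant $1$ and trace $a_{ij}a_{ji} - 2$. By weak compatibility, this trace equals $\zeta_{ij} + \zeta_{ij}^{-1}$, so the characteristic polynomial of $T|_{V_{ij}}$ factors as $(x - \zeta_{ij})(x - \zeta_{ij}^{-1})$ with $\zeta_{ij}$ an $n_{ij}$-th root of unity, giving $T^{n_{ij}}|_{V_{ij}} = \id$. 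For $\alpha_k$ with $k \notin \{i,j\}$, note that $v_k := T(\alpha_k) - \alpha_k \in V_{ij}$, and a straightforward induction on $n$ yields
$$T^n(\alpha_k) = \alpha_k + (I + T + T^2 + \cdots + T^{n-1})(v_k).$$
From $(T - I)(I + T + \cdots + T^{n_{ij}-1}) = T^{n_{ij}} - I = 0$ on $V_{ij}$, one concludes that $I + T + \cdots + T^{n_{ij}-1}$ vanishes on $V_{ij}$ as soon as $T|_{V_{ij}} - I$ is invertible, so $T^{n_{ij}}(\alpha_k) = \alpha_k$, completing the proof.

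The main obstacle is the degenerate case $\zeta_{ij} = 1$ (equivalently $a_{ij}a_{ji} = 4$), where $T|_{V_{ij}}$ has $1$ as a repeated eigenvalue and may fail to be diagonalizable, so that neither $T^{n_{ij}}|_{V_{ij}} = \id$ nor the invertibility of $T|_{V_{ij}} - I$ is automatic. I would handle this by reading the weak compatibility condition as requiring $\zeta_{ij}$ to be a \emph{primitive} $n_{ij}$-th root of unity, under which $\zeta_{ij} \ne 1$ and the above argument applies uniformly. The secondary boundary case $\zeta_{ij} = -1$ is disposed of directly: then $a_{ij}a_{ji} = 0$, the quasi-Cartan hypothesis forces $a_{ij} = a_{ji} = 0$, so $s_i$ and $s_j$ act diagonally on $V_{ij}$ with $T|_{V_{ij}} = -\id$, and the required relation $(s_is_j)^2 = 1$ on all of $V$ then follows from the computation above with $v_k = -a_{ik}\alpha_i - a_{jk}\alpha_j$.
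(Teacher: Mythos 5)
The paper marks this lemma as ``obvious'' and supplies no proof, so there is nothing to compare against directly; your argument is the standard rank-two verification of the geometric representation of a Coxeter group, and it is essentially correct. You observe (and the paper takes for granted) that the displayed formula in the lemma statement is a typo and should read $s_i(\alpha_j)=\alpha_j-a_{ij}\alpha_i$ as in \eqref{eq:reflection}; your computation of the trace and determinant of $T=s_is_j$ on $V_{ij}$, the eigenvalue argument, and the reduction of $T^{n_{ij}}(\alpha_k)=\alpha_k$ to the identity $(I+T+\cdots+T^{n_{ij}-1})|_{V_{ij}}=0$ via invertibility of $(T-I)|_{V_{ij}}$ are all sound. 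Your flag on the degenerate case $\zeta_{ij}=1$ (equivalently $a_{ij}a_{ji}=4$) is a genuine observation, not a cosmetic one: if $a_{ij}a_{ji}=4$ and $n_{ij}\ge 2$, then since $a_{ij}\ne 0$ the restriction $T|_{V_{ij}}$ is a nontrivial Jordan block with eigenvalue $1$, hence has infinite order, and the relation $(s_is_j)^{n_{ij}}=1$ fails. The definition of weak compatibility as written does permit $\zeta_{ij}=1$ (since $1$ is an $n$-th root of unity for every $n$), so the lemma is literally false unless, as you propose, $\zeta_{ij}$ is required to be a primitive $n_{ij}$-th root of unity --- this is also the interpretation consistent with the classical condition $a_{ij}a_{ji}=4\cos^2(\pi m/n_{ij})$ with $\gcd(m,n_{ij})=1$. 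Your treatment of $\zeta_{ij}=-1$, using the quasi-Cartan hypothesis to force $a_{ij}=a_{ji}=0$ and hence $T|_{V_{ij}}=-\mathrm{id}$, is also correct (noting that $\zeta_{ij}=-1$ can only be an $n_{ij}$-th root of unity when $n_{ij}$ is even, which is what makes $(-1)^{n_{ij}}=1$).
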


Throughout the section we fix a Coxeter semigroup $W=\langle s_i,i\in I\rangle$ and a weakly compatible quasi-Cartan matrix $A=(a_{ij})$
together with the action $W\times V\to V$ prescribed by Lemma \ref{le:Coxeter action}. Denote by $Q=Frac(S(V))$ the field of fractions of the symmetric algebra $S(V)$.

Clearly, $Q$ is a module algebra over the group (or, rather, monoidal) algebra $\kk W$ so one has a twisted group algebra $Q_W:=Q\rtimes \kk W$ and the coaction
\begin{equation}
\label{eq:delta Coxeter}
\delta:Q_W\to Q_W\bigotimes\limits_Q Q_W
\end{equation}
given by Proposition \ref{pr:delta}.

For any $i\in I$, define a Demazure element $x_i\in Q_W$ by:
$$x_i:=\frac{1}{\alpha_i}(s_i-1)$$
and denote by ${\mathcal H}_A(W)$ the subalgebra of $Q_W$ generated by all $x_i$, $i\in I$. Following Kostant and Kumar (\cite{KK86}), we refer to it  as a  {\it generalized nil Hecke algebra}.

\begin{theorem}
\label{th:braid relations}
Assume that a Coxeter semigroup $W$ and a quasi-Cartan matrix $A$ are weakly compatible and $\sqrt{a_{ij}a_{ji}}\in \kk$ whenever $n_{ij}$ is odd. Then the generalized nil Hecke algebra ${\mathcal H}_A(W)$
is subject to the following relations:
\begin{equation}
\label{eq:relations nil hecke}
x_i^2=0 ~ \text{iff $n_{ii}=2$};~~
\begin{cases}
\underbrace{x_ix_j\cdots x_j}_{n_{ij}}=\underbrace{x_jx_i\cdots x_i}_{n_{ij}} & \text{if $n_{ij}\in 2\ZZ_{>0}$} \\
\underbrace{x_ix_j\cdots x_i}_{n_{ij}}=\sqrt{\frac{a_{ij}}{a_{ji}}}\underbrace{x_jx_i\cdots x_j}_{n_{ij}} & \text{if $n_{ij} \in 1+2\ZZ_{\ge 0}$} \\
\end{cases}
\end{equation}
In particular, the monomials $x_\ii=x_{i_1}\cdots x_{i_m}$ satisfy:
\begin{equation}
\label{eq:nilproduct}
x_\ii=0
\end{equation}
for any non-reduced sequence $\ii$ and
\begin{equation}
\label{eq:equalproducts}
x_\ii=d_{\ii,\ii'} x_{\ii'}
\end{equation}
for any $\ii,\ii'\in R(w)$, $w\in W$, where  $=d_{\ii,\ii'}\in \kk^\times$ is a product of $\sqrt{\frac{a_{ij}}{a_{ji}}}$.
\end{theorem}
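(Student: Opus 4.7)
The plan is to establish the three relations of \eqref{eq:relations nil hecke} directly and then derive \eqref{eq:nilproduct} and \eqref{eq:equalproducts} from them via Matsumoto's theorem.

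For the quadratic relation, note that $a_{ii}=2$ forces $s_i(\alpha_i)=-\alpha_i$, so $s_i\cdot\alpha_i^{-1}=-\alpha_i^{-1}\cdot s_i$ in $Q_W$. A direct computation then gives
\[
x_i^2=\tfrac{1}{\alpha_i}(s_i-1)\tfrac{1}{\alpha_i}(s_i-1)=-\tfrac{1}{\alpha_i^2}(s_i+1)(s_i-1)=-\tfrac{s_i^2-1}{\alpha_i^2},
\]
which vanishes if and only if $s_i^2=1$ in $W$, i.e.\ precisely when $n_{ii}=2$.

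For the braid relations I reduce to the rank-$2$ case. Since each monomial in $x_i,x_j$ only involves $s_i,s_j$ and rational functions of $\alpha_i,\alpha_j$, both sides of the proposed identity lie in the $\kk(\alpha_i,\alpha_j)$-span of the sub-semigroup $W_{ij}=\langle s_i,s_j\rangle$, so it suffices to treat the case $|I|=2$ and $n:=n_{ij}<\infty$. Using the commutation rule $x_iq=x_i(q)+s_i(q)x_i$ iteratively I would expand each of $\underbrace{x_ix_j\cdots}_{n}$ and $\underbrace{x_jx_i\cdots}_{n}$ as $\sum_{w\in W_{ij}} c_w w$; the dihedral relation $(s_is_j)^n=1$ identifies the two ``top'' words as the longest element $w_0\in W_{ij}$, and collecting the $w_0$-coefficient yields the reciprocal products $\bigl(\prod_{k=1}^{n}\beta_k\bigr)^{-1}$ and $\bigl(\prod_{k=1}^{n}\gamma_k\bigr)^{-1}$ of the inversion roots
\[
\beta_k=s_{i_1}\cdots s_{i_{k-1}}(\alpha_{i_k}),\qquad \gamma_k=s_{j_1}\cdots s_{j_{k-1}}(\alpha_{j_k})
\]
of the two reduced expressions of $w_0$. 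A short dihedral calculation using $s_i(\alpha_j)=\alpha_j-a_{ij}\alpha_i$ then shows that $\prod\beta_k=\prod\gamma_k$ when $n$ is even (the multisets coincide) and $\prod\beta_k=\sqrt{a_{ji}/a_{ij}}\cdot\prod\gamma_k$ when $n$ is odd (the hypothesis $\sqrt{a_{ij}a_{ji}}\in\kk$ gives meaning to the square root), which after inversion matches the claimed factor $\sqrt{a_{ij}/a_{ji}}$. To promote equality of the leading coefficients to full equality in $Q_{W_{ij}}$ I would either compare both sides as operators on $Q$ by left multiplication (using that $Q_W$ acts faithfully on $Q$ and that the classical braid relations for the Demazure operators $f\mapsto (f-s_i(f))/\alpha_i$ are already known to hold) or else verify the coincidence of all $|W_{ij}|=2n$ coefficients as rational functions on $V_{ij}$ directly. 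This coefficient bookkeeping in the asymmetric dihedral expansion is the main obstacle.

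With \eqref{eq:relations nil hecke} established, the remaining two assertions follow from Matsumoto--Tits. Any two reduced words $\ii,\ii'\in R(w)$ are connected by a sequence of braid moves, and each such move multiplies $x_\ii$ by either $1$ or $\sqrt{a_{ij}/a_{ji}}$ according to \eqref{eq:relations nil hecke}, producing the required $d_{\ii,\ii'}\in\kk^\times$ for \eqref{eq:equalproducts}. For \eqref{eq:nilproduct} I induct on $m$: write $\ii=(i_1,\tilde\ii)$ and suppose $\ii$ is not reduced. If $\tilde\ii$ is non-reduced the induction hypothesis gives $x_{\tilde\ii}=0$; otherwise $\ell(s_{i_1}w_{\tilde\ii})<\ell(w_{\tilde\ii})$, so by the exchange condition $w_{\tilde\ii}$ admits a reduced expression starting with $i_1$, and substituting this expression into $x_{\tilde\ii}$ via \eqref{eq:equalproducts} exposes an initial factor $x_{i_1}^2=0$ in $x_\ii=x_{i_1}x_{\tilde\ii}$.
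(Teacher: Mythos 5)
Your treatment of the quadratic relation is clean and correct: the computation $x_i^2=-\alpha_i^{-2}(s_i^2-1)$ is an explicit version of what the paper declares "obvious," and it cleanly accounts for both directions of the iff. The reduction of the braid relation to rank $2$, and the comparison of the $w_\circ$-coefficients via the inversion-root products $\prod_k\beta_k$ and $\prod_k\gamma_k$, are also in the right direction (this matches the paper's Lemma \ref{le:dihedral}(b)--(c), which gives $c_n^{(1)}=c_n^{(2)}$ after rescaling so that $a_{12}=a_{21}$). The deduction of \eqref{eq:nilproduct} and \eqref{eq:equalproducts} from the braid and quadratic relations via Matsumoto/exchange is essentially the same as the paper's (which just says they ``immediately imply'').

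However, there is a genuine gap exactly where you flag the "main obstacle": you never finish the braid relation. Showing that the two sides of \eqref{eq:braid relations} have the same coefficient on the top cell $w_\circ$ does not by itself show they agree on the remaining $2n-1$ lower cells of $W_{ij}$. Of the two ways you propose to close this, the first is circular: the ``classical braid relations for Demazure operators'' are only classical for genuine Cartan matrices, whereas here $A$ is an arbitrary weakly compatible quasi-Cartan matrix (possibly with $a_{12}a_{21}$ irrational, and with $a_{12}\neq a_{21}$), and Proposition \ref{pr:dihedral braid relation} is precisely the statement one would be trying to import. Using faithfulness of $Q_W\curvearrowright Q$ just converts the algebra identity into the operator identity and back; it does not prove either. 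The second way (explicit bookkeeping of all $2n$ coefficients) is the thing you say you have not done.

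The paper's device for bypassing that bookkeeping is the key thing missing from your proposal. Set ${\bf D}=\underbrace{x_1x_2\cdots}_n-\underbrace{x_2x_1\cdots}_n$ (after the rescaling $a_{12}=a_{21}$ that you already allow). A one-step commutation formula of the shape
\[
\alpha\,\underbrace{x_ix_j\cdots x_{i'}}_m=\underbrace{x_ix_j\cdots x_{i'}}_m\,w_m^{-1}(\alpha)
-\langle\alpha,\alpha_i^\vee\rangle\,\underbrace{x_jx_i\cdots}_{m-1}
-\langle w_{m-1}^{-1}(\alpha),\alpha_{i'}^\vee\rangle\,\underbrace{x_ix_j\cdots}_{m-1}
\]
(the paper's Lemma \ref{le:dihedral}(a)) has the feature that, when applied to both $n$-fold products and subtracted, the two ``error'' terms cancel pairwise because $w_\circ$ swaps (or negates) the simple coroots. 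One gets the global intertwining identity $\alpha\,{\bf D}={\bf D}\,w_\circ^{-1}(\alpha)$ for every $\alpha\in V$. Writing ${\bf D}=\sum_{w}c_w\,w$, your leading-coefficient comparison shows $c_{w_\circ}=0$, and then the intertwining identity becomes $\sum_{\ell(w)<\ell(w_\circ)}c_w\bigl(\alpha-ww_\circ^{-1}(\alpha)\bigr)w=0$. Since $ww_\circ^{-1}\neq 1$ for each such $w$, one can choose $\alpha$ with $\alpha\neq ww_\circ^{-1}(\alpha)$, and linear independence of group elements over $Q$ forces every $c_w=0$, i.e.\ ${\bf D}=0$. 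This is what promotes equality of top coefficients to equality of the whole elements, and is the step your proposal leaves open.

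One smaller caution: in deriving \eqref{eq:nilproduct} you invoke the exchange condition, which is a Coxeter-group statement; here $W$ is a Coxeter semigroup $W_0\star M_0$ with a possible free-monoid factor. This is easy to patch (reduced words are unique in the free factor, and your argument applies verbatim in $W_0$), but it should be said.
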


\begin{proof} Indeed, if $n_{ii}=2$, i.e., $s_i^2=1$, then the relation $x_i^2=0$ is obvious.

The remaining relations follow from the following rank $2$ computation.

\begin{proposition}
\label{pr:dihedral braid relation}
Assume that $I=\{1,2\}$, $W=\langle s_1,s_2\,|\,s_1^2=s_2^2=(s_1s_2)^n=1\rangle$ is a dihedral group, and
$A=\begin{pmatrix} 2 & a_{12} \\
a_{21} & 2
\end{pmatrix}
$ is  a quasi-Cartan matrix over $\kk$ with $a_{12}a_{21}=\zeta+\zeta^{-1}+2$, where $\zeta\in \kk$ is an $n$-th root of unity and
$\sqrt{a_{12}a_{21}}\in \kk$ if $n$ is odd. Then the generators of the generalized nil Hecke algebra ${\mathcal H}_A(W)$ satisfy:
\begin{equation}
\label{eq:braid relations}
\begin{cases}
\underbrace{x_1x_2\cdots x_2}_n=\underbrace{x_2x_1\cdots x_1}_n & \text{if $n$ is even} \\
\underbrace{x_1x_2\cdots x_1}_n=\sqrt{\frac{a_{21}}{a_{12}}}\underbrace{x_2x_1\cdots x_2}_n & \text{if $n$ is odd} \\
\end{cases} \ .
\end{equation}

\end{proposition}

\begin{proof}  We will follow the proof of  \cite[Proposition 4.2]{KK86}. Let $V=\kk \alpha_1\oplus \kk \alpha_2$ and $\langle \cdot, \alpha_j^\vee\rangle$, $j=1,2$,
be  a linear function $V\to \kk$ given by $\langle \alpha_i,\alpha_j^\vee\rangle=a_{ij}$. Without loss of generality, by rescaling $\alpha_i$ and $\alpha_i^\vee$, $i=1,2$,
we assume that $a_{12}=a_{21}$ if $n$ is odd.

The following result is obvious.

\begin{lemma}
\label{le:dihedral}
In the assumptions of Proposition \ref{pr:dihedral braid relation} and that $a_{12}=a_{21}$ if $n$ is odd, we have:

(a) for any $\alpha\in V$:
$$\alpha \underbrace{x_ix_j\cdots x_{i'}}_m= \underbrace{x_ix_j\cdots x_{i'}}_m \cdot w_m^{-1}(\alpha)-\langle \alpha,\alpha_i^\vee\rangle \cdot \underbrace{x_jx_i\cdots }_{m-1} - \langle w_{m-1}^{-1}(\alpha),\alpha_{i'}^\vee \rangle  \cdot \underbrace{x_ix_j\cdots }_{m-1} $$
for $m\in \ZZ_{>0}$ and $\{i,j\}=\{1,2\}$, where   $i'=i$ if $m$ is odd and $i'=j$ if $m$ is even, and
$$w_m=w_m^{(i)}=\underbrace{s_is_j\cdots s_{i'}}_m\ .$$

(b) $\underbrace{x_ix_j\cdots x_{i'}}_m\in c_m^{(i)} w_m^{(i)}+\sum\limits_{w:\ell(w)<\ell(w_m^{(i)})} Q\cdot w$ for $\{i,j\}=\{1,2\}$, where
$$c_k^{(i)}=\frac{1}{\alpha_i\cdot s_i(\alpha_j)\cdots w_{k-1}^{(i)}(\alpha_{i'})} \ .$$
(c) The action of the longest element $w_\circ:=w_n^{(1)}=w_n^{(2)}$ on $V$ and $V^*$ is given by:
$$w_\circ(\alpha_i)=
\begin{cases} -\alpha_i & \text{if $n$ is even}\\
-\alpha_j & \text{if $n$ is odd}\\
\end{cases}, ~w_\circ(\alpha_i^\vee)=
\begin{cases} -\alpha_i^\vee & \text{if $n$ is even}\\
-\alpha_j^\vee & \text{if $n$ is odd}\\
\end{cases}$$
for $\{i,j\}=\{1,2\}$, where $\ell(w)$ is the Coxeter length of $w$ (see Definition \ref{def:admissible for semigroups}).

\end{lemma}

This implies that
$$\alpha \underbrace{x_1x_2\cdots x_2}_n- \underbrace{x_1x_2\cdots x_2}_n \cdot  w_\circ^{-1}(\alpha)=
\alpha \underbrace{x_2x_1\cdots x_1}_n- \underbrace{x_2x_1\cdots x_1}_n\cdot w_n^{-1}(\alpha)$$
for all $\alpha\in V$. Equivalently
\begin{equation}
\label{eq:w_0 commutation}
\alpha\cdot {\bf D} ={\bf D}\cdot  w_\circ^{-1}(\alpha)
\end{equation}
for all $\alpha\in V$, where ${\bf D}:=\underbrace{x_1x_2\cdots }_n- \underbrace{x_2x_1\cdots }_n$.
Let us prove that $\Delta=0$. First, parts (b) and (c) of Lemma \ref{le:dihedral} imply that $c_n^{(1)}=c_n^{(2)}.$  Therefore,
$${\bf D}=\sum_{w:\ell(w)<\ell(w_\circ)} c_w w$$
for some $c_w\in Q=\kk(\alpha_1,\alpha_2)$. Hence, \eqref{eq:w_0 commutation} becomes:
$$0=\sum_{w:\ell(w)<\ell(w_\circ)} c_w \cdot (\alpha \cdot w-w\cdot w_\circ^{-1}(\alpha))=\sum_{w:\ell(w)<\ell(w_\circ)} c_w \cdot (\alpha-w w_\circ^{-1}(\alpha)) \cdot w$$
for all $\alpha\in V$. This implies that all $c_w=0$ hence ${\bf D}=0$.

The proposition is proved.
\end{proof}

This proves the  relations \eqref{eq:relations nil hecke} in ${\mathcal H}_A(W)$ which immediately imply \eqref{eq:nilproduct} and
\eqref{eq:equalproducts}. Let us verify that the relations \eqref{eq:relations nil hecke} are defining. For each $w\in W$ let us choose a representative $\ii_w\in R(w)$. Then
$$\sum_{w\in W} \kk\cdot x_{\ii_w}={\mathcal H}_A(W)$$
by \eqref{eq:nilproduct} and \eqref{eq:equalproducts}. Note that  $Q\cdot {\mathcal H}_A(W)=Q_W$ since $X=\{x_i,i\in I\}$ is tame, and therefore
\begin{equation}
\label{eq:Q_W as direct sum}
\sum_{w\in W} Q\cdot x_{\ii_w}=Q_W\ .
\end{equation}
It suffices to prove that this sum is direct, i.e., $\{x_{\ii_w}\,|\,w\in W\}$ is a $Q$-basis of $Q_W$.
Indeed, it is easy to see that $Q_W$ is filtered by $Q$-submodules $(Q_W)_{\le m}=\bigoplus\limits_{w:\ell(w)\le m} Q\cdot w$ and that for any $\ii=(i_1,\ldots,i_m)\in R(w)$ one has:
$$x_\ii\equiv \frac{1}{\alpha_{i_1}}s_{i_1}\cdots \frac{1}{\alpha_{i_m}}s_{i_m} \mod (Q_W)_{\le m-1}$$
hence $x_{\ii_w}\in Q^\times w + (Q_W)_{\le \ell(w)-1}$
for all $w\in W$.
Therefore, $\{x_{\ii_w}\,|\,w\in W\}$ is a $Q$-basis of $Q_W$ and Theorem \ref{th:braid relations} is proved.
\end{proof}

\begin{remark} In fact, we can explicitly compute the expansion of elements $\underbrace{x_ix_j\cdots x_{i'}}_k$ in Lemma \ref{le:dihedral}(b)  by generalizing the recursion \cite[Equation (8)]{Ku96}.
Namely, in the notation of Lemma \ref{le:dihedral}, assume that $a_{12}=a_{21}=t+t^{-1}$.
Then the coefficients $d_{w}^{(i)}=d_{w}^{(i;m)}\in \ZZ[a_{12},\alpha_1,\alpha_2]$ of the expansion:
$$\underbrace{x_ix_j\cdots }_m=c_m^{(i)}w_m^{(i)}+\sum_{w\in W:\ell(w)<m} d_w^{(i)} w$$
are given by $d_{{\underbrace{s_js_i \cdots }_{k}}}^{(i)}=-d_{\underbrace{s_is_j \cdots }_{k+1}}^{(i)}$ for $0\le k<m$  and:
$$d_{\underbrace{s_is_j \cdots}_{m-2k}}^{(i)}=\left[\begin{array}{c}m-1\\ k \end{array}\right]_t
c^{(j)}_k\cdot c^{(i)}_{m-k},~d_{\underbrace{s_is_j \cdots}_{m-2k-1}}^{(i)}=-\left[\begin{array}{c}m-1\\ k \end{array}\right]_t
c^{(i)}_k\cdot c^{(j)}_{m-k-1}$$
for $0\le k<\frac{m}{2}$, $\{i,j\}=\{1,2\}$, where
$\left[\begin{array}{c}m-1\\ k  \end{array}\right]_t\in \ZZ[a_{12}]$ are binomial polynomials in $t$ (as in Remark \ref{rem:binomial polynomials}).

\end{remark}

\begin{definition} We say that a Coxeter semigroup $W$ and a weakly compatible quasi-Cartan matrix $A$ are {\it compatible} if $n_{ij}\in 2\ZZ+1$ implies that $a_{ij}=a_{ji}$.

\end{definition}

Clearly, for any $I\times I$ quasi-Cartan matrix $A$, the {\it free} Coxeter group $\widetilde W=\langle s_i,i\in I: s_i^2=1\rangle$ and the free monoid on $S=\{s_i,i\in I\}$ are both  compatible with $A$.

Assume now that $A$ and $W$ are compatible, then $d_{\ii,\ii'}=1$ in \eqref{eq:equalproducts} and for each $w\in W$ there exists an element $x_w\in {\mathcal H}_A(W)$ such that
$$x_w=x_\ii$$
for all $\ii\in R(w)$.
Clearly, the collection  $\{x_w\,|\,w\in W\}$ is determined by  $x_{s_i}=x_i$ for $i\in I$ and
$$x_ux_v=\begin{cases}
x_{uv} & \text{if $\ell(uv)=\ell(u)+\ell(v)$}\\
0 & \text{if $\ell(uv)<\ell(u)+\ell(v)$}\\
\end{cases}.
$$

The following is an immediate corollary from the proof of Theorem \ref{th:braid relations}.

\begin{corollary} Assume that a Coxeter semigroup $W$ and a quasi-Cartan matrix $A$ are compatible.
Then the collection  $\{x_w\,|\,w\in W\}$ is a basis of the generalized nil Hecke algebra ${\mathcal H}_A(W)$.
\end{corollary}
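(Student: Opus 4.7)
The plan is to read off the corollary directly from the proof of Theorem~\ref{th:braid relations}, with compatibility used only to ensure well-definedness of $x_w$.

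First I would note that compatibility makes $x_w$ well-defined. By \eqref{eq:equalproducts}, for $\ii,\ii'\in R(w)$ we have $x_\ii=d_{\ii,\ii'}\,x_{\ii'}$ where $d_{\ii,\ii'}$ is a product of factors $\sqrt{a_{ij}/a_{ji}}$, and such factors arise only for pairs $(i,j)$ with $n_{ij}$ odd (these are the pairs that appear in the odd braid relations in \eqref{eq:relations nil hecke}). Compatibility says $a_{ij}=a_{ji}$ exactly in these cases, so every such factor is $1$, hence $d_{\ii,\ii'}=1$ and $x_w:=x_\ii$ depends only on $w$.

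Next I would verify spanning. The algebra ${\mathcal H}_A(W)$ is generated by $\{x_i\}_{i\in I}$, so it is $\kk$-spanned by monomials $x_\ii=x_{i_1}\cdots x_{i_m}$ over all $\ii\in I^m$, $m\ge 0$. By \eqref{eq:nilproduct}, any such monomial vanishes unless $\ii$ is reduced; and when $\ii$ is reduced, $x_\ii=x_{w_\ii}$ by the definition above. Hence $\{x_w\,|\,w\in W\}$ spans ${\mathcal H}_A(W)$.

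Finally I would establish linear independence by invoking what was already shown inside the proof of Theorem~\ref{th:braid relations}: fix any system of representatives $\ii_w\in R(w)$; then the triangular expansion $x_{\ii_w}\in Q^\times\cdot w+(Q_W)_{\le \ell(w)-1}$ combined with \eqref{eq:Q_W as direct sum} shows that $\{x_{\ii_w}\,|\,w\in W\}$ is a $Q$-basis of $Q_W$. In the compatible case $x_{\ii_w}=x_w$, and $Q$-linear independence implies $\kk$-linear independence since $\kk\subset Q$. Combining this with the previous paragraph yields the basis statement. There is no real obstacle here — compatibility was precisely introduced so that the braid constants $d_{\ii,\ii'}$ disappear, and once they do, the corollary is an immediate consequence of the already-proven structure of $Q_W$.
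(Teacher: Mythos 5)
Your proof is correct and is exactly the argument the paper intends when it calls this "an immediate corollary from the proof of Theorem \ref{th:braid relations}": compatibility kills the constants $d_{\ii,\ii'}$ so $x_w$ is well-defined, spanning follows from \eqref{eq:nilproduct} and \eqref{eq:equalproducts}, and $\kk$-linear independence follows from the $Q$-basis property of $\{x_{\ii_w}\}$ in $Q_W$ established by the filtration argument in that proof.
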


In particular, $B=\{x_w|w\in W\}$ is a left $Q$-basis of $Q_{A,W}$ (in the notation of Section \ref{sect:prelim}).
This defines the Littlewood-Richardson coefficients $p_{u,v}^w=p_{u,v}^w(A)\in Q$ for $u,v,w\in W$ by \eqref{eq:delta Coxeter}
and the formula \eqref{eq:generalized LR}. Similarly, for each admissible (in the sense of Definition \ref{def:admissible})
sequence $\ii\in I^m$ and its subsequences $\ii'$ and $\ii''$ one defines the corresponding relative Littlewood-Richardson coefficient
$p_{\ii',\ii''}^\ii=p_{\ii',\ii''}^\ii(A)$. If $W$ is the free monoid (resp. the free Coxeter group) on $S$, then  the assignment
\begin{equation}
\label{eq:bijection admissible}
\ii=(i_1,\ldots,i_m)\mapsto \widehat w_\ii=s_{i_1}\cdots s_{i_m}
\end{equation}
is a bijection between the set of all (resp. all admissible) sequences and $W$, e.g., $p_{\ii',\ii''}^\ii=p_{\widehat w_{\ii'},\widehat w_{\ii''}}^{\widehat w_\ii}$.

\begin{proposition}
\label{pr:relative versus absolute}
For each quasi-Cartan matrix $A$ and a weakly compatible Coxeter semigroup $W$ one has:

(a) Each $p_{\ii',\ii''}^\ii$ belongs to $S(V)$ and is homogeneous of degree $m'+m''-m$, where $m$, $m'$, and $m''$ are respectively the lengths of $\ii$, $\ii'$, and $\ii''$.

(b) Assume that $A$ and $W$ are compatible. Then for each triple $u,v,w\in W$ with $\ell(u)+\ell(v)\ge \ell(w)$ and for each $\ii\in R(w)$ one has:
\begin{equation}
\label{eq:puvw=piii}
p_{u,v}^w=\sum_{\ii'\in R(u),\ii''\in R(v)} p_{\ii',\ii''}^\ii \ .
\end{equation}
\end{proposition}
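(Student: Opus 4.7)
For part (a), I would proceed by induction on $m=|\ii|$ using the recursion in Corollary \ref{cor:demazure}. The base $p^{\emptyset}_{\emptyset,\emptyset}=1\in S^{0}(V)$ is trivial. For the inductive step, each of the four summands in \eqref{eq:pijk demazure} preserves $S(V)\subset Q$ and has the correct degree: the reflection $s_{i_1}$ acts on $S(V)$ by \eqref{eq:reflection} and preserves degree; multiplication by $\alpha_{i_1}$ raises degree by $1$; and the Demazure operator $x_{i_1}=\alpha_{i_1}^{-1}(s_{i_1}-1)$ carries $S(V)$ into itself and lowers degree by $1$, since $(s_{i_1}-1)$ sends $S(V)$ into $\alpha_{i_1}\cdot S(V)$ (as $s_{i_1}(\alpha_{i_1})=-\alpha_{i_1}$). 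Applying the inductive hypothesis $p^{\widetilde\ii}_{\jj',\jj''}\in S^{|\jj'|+|\jj''|-(m-1)}(V)$ to each summand yields $p^{\ii}_{\ii',\ii''}\in S^{m'+m''-m}(V)$.

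For part (b), I would apply the folding principle to the projection $\varphi_-:\widetilde W\twoheadrightarrow W$ from the \emph{free Coxeter group} $\widetilde W=\langle s_i\mid s_i^2=1\rangle_{i\in I}$ (for which $n_{ij}=0$ whenever $i\ne j$, hence vacuously compatible with $A$). With $\varphi_+=\mathrm{id}_Q$, Theorem \ref{th:folding}(a) produces an algebra homomorphism $\varphi:Q_{\widetilde W}\to Q_W$ intertwining the coproducts. Since $\widetilde W$ has no braid relations, each $\widetilde w\in\widetilde W$ has a unique reduced word (necessarily admissible), and the bijection \eqref{eq:bijection admissible} gives $\widetilde x_{\widetilde w_\ii}=\widetilde x_\ii$ for the corresponding sequence. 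By Theorem \ref{th:braid relations} applied to the compatible pair $(A,W)$, $\varphi(\widetilde x_\ii)=x_\ii$ equals $x_w$ if $\ii\in R(w)$ and $0$ otherwise. Setting $\widehat W_w:=\{\widetilde w_\ii:\ii\in R(w)\}$ (a finite set, since $R(w)$ is finite), the hypotheses of Proposition \ref{pr:dual folding}(a) are met, and its formula evaluated at any $\widetilde w\in\widehat W_w$ gives
\begin{equation*}
p_{u,v}^w=\sum_{(\widetilde u,\widetilde v)\in\widehat W_u\times\widehat W_v}\varphi\bigl(\widetilde p^{\widetilde w}_{\widetilde u,\widetilde v}\bigr).
\end{equation*}

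It then remains to identify the absolute coefficient $\widetilde p^{\widetilde w_\ii}_{\widetilde u,\widetilde v}$ of $\widetilde W$ with the relative coefficient $p^{\ii}_{\ii',\ii''}$, where $\ii',\ii''$ are the reduced words of $\widetilde u,\widetilde v$. This follows by comparing the expansion of $\delta(\widetilde x_\ii)$ on the basis $\{\widetilde x_{\widetilde w}\}_{\widetilde w\in\widetilde W}$ with its expansion via Proposition \ref{pr:long recursion} on the sub-sequence monomials $\widetilde x_{\ii'}\otimes\widetilde x_{\ii''}$: in the free Coxeter group, $\widetilde x_{\jj}=0$ exactly when $\jj$ is non-admissible, and the unique sub-sequence-style representative of $\widetilde u$ is its reduced word. (If this reduced word fails to be a sub-sequence of $\ii$, both coefficients vanish---by Corollary \ref{cor:demazure} and by the vanishing above, respectively.) Since $\varphi|_Q=\mathrm{id}_Q$, substitution yields \eqref{eq:puvw=piii}. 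The main delicacy is precisely this identification between absolute coefficients for $\widetilde W$ and relative coefficients indexed by sub-sequences, which crucially relies on the uniqueness of reduced expressions in the free Coxeter group $\widetilde W$.
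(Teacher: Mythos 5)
Your proposal is correct and follows essentially the same route as the paper: part (a) by induction on $m$ via the Demazure recursion together with the observation that $x_i$ preserves $S(V)$ and lowers degree by one, and part (b) by applying the folding principle (Theorem \ref{th:folding}(a) and Proposition \ref{pr:dual folding}(a)) to the surjection $\widetilde W\twoheadrightarrow W$ from the free Coxeter group, then identifying absolute coefficients for $\widetilde W$ with relative coefficients via uniqueness of reduced words. The only difference is that you spell out the identification $\widetilde p^{\widetilde w_\ii}_{\widetilde u,\widetilde v}=p^{\ii}_{\ii',\ii''}$ more carefully than the paper, which simply invokes the bijection \eqref{eq:bijection admissible} stated just before the proposition.
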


\begin{proof} Part (a) directly follows from the recursion \eqref{eq:recursion LR} and the following obvious fact.

\begin{lemma} Under the action of $x_i=\frac{1}{\alpha_i}(s_i-1)$ on $Q=Frac(S(V))$ one has:
$$x_i(\alpha_j)=-a_{ij}, x_i(fg)=x_i(f)g+s_i(f)x_i(g)$$
for all $i,j\in I$, $f,g\in Q$. In particular,
$$x_i(S^k(V))\subset S^{k-1}(V)$$ for each $k\ge 0$.
\end{lemma}

Prove (b). Indeed, in the notation of Theorem \ref{th:folding}(a), let $\widehat W$ be the free Coxeter group generated by $\widehat s_i$, $i\in I$
with the structural surjective  homomorphism $\varphi_-:\widehat W\to W$ given by $\widehat s_i \mapsto s_i$, which,
together with the identity map $Q\to Q$ extends to an algebra homomorphism $\varphi:Q_{\widehat W}\to Q_W$ such that $\varphi(\widehat x_{\widehat w_\ii})=x_w$ for all $\ii\in R(w)$ under the bijection \eqref{eq:bijection admissible}, i.e., $\widehat W_w=R(w)$. Finally, taking into account that   $p_{\widehat w_{\ii'},\widehat w_{\ii''}}^{\widehat w_\ii}=p_{\ii',\ii''}^\ii$, the identity \eqref{eq:LR folding} becomes \eqref{eq:puvw=piii}. This proves (b).

%
The proposition is proved. \end{proof}

As a corollary, we obtain a generalization of \cite[Proposition 4.15]{KK86}.

\begin{corollary} Each each $p_{u,v}^w$ belongs to $S(V)$ and is homogeneous of degree $\ell(u)+\ell(v)-\ell(w)$ (e.g., $p_{u,v}^w=0$ if $\ell(u)+\ell(v)<\ell(w)$).

\end{corollary}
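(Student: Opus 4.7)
The plan is to deduce this corollary directly from Proposition \ref{pr:relative versus absolute}, treating the two sign regimes of $\ell(u)+\ell(v)-\ell(w)$ separately.

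First, suppose $\ell(u)+\ell(v)\ge \ell(w)$. Fix any $\ii\in R(w)$, so $|\ii|=\ell(w)$. By part (b) of the proposition,
$$p_{u,v}^w=\sum_{\ii'\in R(u),\,\ii''\in R(v)}p_{\ii',\ii''}^\ii,$$
a finite sum in which every index pair $(\ii',\ii'')$ satisfies $|\ii'|=\ell(u)$ and $|\ii''|=\ell(v)$. Part (a) asserts that each summand $p_{\ii',\ii''}^\ii$ lies in $S(V)$ and is homogeneous of degree $|\ii'|+|\ii''|-|\ii|=\ell(u)+\ell(v)-\ell(w)$. Consequently $p_{u,v}^w\in S(V)$ is homogeneous of that same degree.

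Second, suppose $\ell(u)+\ell(v)<\ell(w)$. I would apply the folding principle (Theorem \ref{th:folding} and its dualization in Proposition \ref{pr:dual folding}) from the free Coxeter group $\widehat W=\langle \widehat s_i,i\in I\mid \widehat s_i^{\,2}=1\rangle$ to $W$, exactly as in the proof of Proposition \ref{pr:relative versus absolute}(b). Under the structural homomorphism $\varphi_-:\widehat W\twoheadrightarrow W$ one has $\widehat W_w=R(w)$ via the bijection \eqref{eq:bijection admissible}, and \eqref{eq:LR folding} gives
$$p_{u,v}^w=\sum_{(\widehat u,\widehat v)\in \widehat W_u\times \widehat W_v}\varphi\bigl(\widehat p_{\widehat u,\widehat v}^{\,\widehat w}\bigr)$$
for any fixed $\widehat w\in\widehat W_w$; under the identification, each $\widehat p_{\widehat u,\widehat v}^{\,\widehat w}$ equals the relative coefficient $p_{\ii',\ii''}^\ii$ with $\ii\in R(w)$, $\ii'\in R(u)$, $\ii''\in R(v)$. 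By part (a) each such coefficient is homogeneous of strictly negative degree $\ell(u)+\ell(v)-\ell(w)<0$ in $S(V)$, hence is zero. Therefore $p_{u,v}^w=0$ in this case.

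The only mildly subtle point — essentially the main obstacle — is the vanishing regime $\ell(u)+\ell(v)<\ell(w)$, which is not directly covered by the stated formula \eqref{eq:puvw=piii} in Proposition \ref{pr:relative versus absolute}(b). Invoking the folding principle circumvents this cleanly, once one observes that a homogeneous element of $S(V)$ of negative degree must vanish. Everything else is bookkeeping on top of part (a)'s degree count, which itself was already reduced in the proof of the proposition to the fact that each Demazure operator $x_i$ drops polynomial degree by one.
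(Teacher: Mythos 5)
Your proof is correct and follows the same route the paper intends: combine Proposition \ref{pr:relative versus absolute}(a) (degree count for the relative coefficients) with the folding identity \eqref{eq:LR folding}/\eqref{eq:puvw=piii} relating $p_{u,v}^w$ to a sum of relative coefficients indexed by reduced words. The one detail worth noting is that your explicit two-case split is in fact unnecessary: the proof the paper gives for part (b) never uses the hypothesis $\ell(u)+\ell(v)\ge\ell(w)$ (the length restriction is only stated there to flag when the identity is nontrivial), so \eqref{eq:puvw=piii} already holds for all $u,v,w$, and the vanishing when $\ell(u)+\ell(v)<\ell(w)$ falls out immediately from (a) since a nonzero homogeneous element of $S(V)$ cannot have negative degree. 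Your instinct that the negative-degree regime needs the free Coxeter group and the folding principle is exactly right — that is the hidden content behind the parenthetical in the corollary.
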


Dualizing the above arguments, we obtain the following result.

\begin{proposition}
\label{pr:dual nil Hecke}
For each quasi-Cartan matrix $A$ and any compatible Coxeter semigroup $W$ we have:

(a) There is a unique commutative $S(V)$-algebra ${\mathcal A}_A(W)$ with the free $S(V)$-basis $\{\sigma_w\,|\,w\in W\}$ and the following multiplication table:
$$\sigma_u\sigma_v=\sum_{w\in W} p_{u,v}^w \sigma_w$$
for all $u,v\in W$.

(b) There is a unique commutative $S(V)$-algebra $\widehat {\mathcal A}_A$ with the free $S(V)$-basis $\{\sigma_\ii\}$
labeled by all sequences in $I^m$, $m\ge 0$ with following multiplication table:
\begin{equation}
\label{eq:product free monoid}
\sigma_{\ii'} \sigma_{\ii''}= \sum_\ii p_{\ii',\ii''}^\ii \sigma_\ii
\end{equation}
where the summation over all sequences $\ii\in I^m$, $m\ge 0$ containing $\ii'$ and $\ii''$ as sub-sequences and such that $m\le m'+m''$.

(c) One has an injective algebra homomorphism ${\mathcal A}_A(W)\hookrightarrow \widehat {\mathcal A}_A$ via:
$$\sigma_w\mapsto \sum_{\ii\in R(w)} \sigma_\ii \ .$$

\end{proposition}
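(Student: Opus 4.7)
The plan is to deduce the proposition from the general duality machinery developed in Section~\ref{sect:prelim}, combined with the integrality and homogeneity properties of the Littlewood--Richardson coefficients established in Proposition~\ref{pr:relative versus absolute} and its corollary.

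For part~(a), uniqueness is automatic since an $S(V)$-algebra is determined by its multiplication table on a basis. For existence, the preceding corollary gives $p_{u,v}^w \in S(V)$, homogeneous of degree $\ell(u)+\ell(v)-\ell(w)$; since $W$ is finitely generated, only finitely many $w$ have length at most $\ell(u)+\ell(v)$, so the proposed product $\sigma_u\sigma_v = \sum_w p_{u,v}^w\sigma_w$ is a finite $S(V)$-linear combination. I would define ${\mathcal A}_A(W)$ to be the free $S(V)$-module on $\{\sigma_w\}_{w\in W}$ with this multiplication. Associativity amounts, after extracting the coefficient of each basis element, to the identity
$$\sum_w p_{u,v}^w\, p_{w,z}^{w'} \;=\; \sum_{w''} p_{v,z}^{w''}\, p_{u,w''}^{w'}$$
for all $u,v,z,w'\in W$, which is precisely the coassociativity of $\delta$ read off on the $Q$-basis $\{x_w\}_{w\in W}$ of $Q_W$ (this basis is guaranteed by Theorem~\ref{th:braid relations}, and coassociativity together with co-commutativity of $\delta$ is Proposition~\ref{pr:delta}(a)). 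Commutativity follows dually from co-commutativity. This is essentially the argument of Lemma~\ref{le:from coalgebra to algebra}, carried out directly over the subring $S(V)\subset Q$ in which the structure constants already live.

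Part~(b) I would deduce from part~(a) applied to the free monoid $\widehat W$ generated by $\{s_i : i \in I\}$: this is trivially a Coxeter semigroup compatible with any quasi-Cartan matrix, and under the bijection \eqref{eq:bijection admissible} between sequences $\ii \in I^m$ and elements $\widehat w_\ii \in \widehat W$ one has $p_{\ii',\ii''}^\ii = p_{\widehat w_{\ii'},\widehat w_{\ii''}}^{\widehat w_\ii}$, so one can simply set $\widehat{\mathcal A}_A := {\mathcal A}_A(\widehat W)$ with $\sigma_\ii := \sigma_{\widehat w_\ii}$.

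For part~(c), the candidate map $\Phi : \sigma_w \mapsto \sum_{\ii\in R(w)}\sigma_\ii$ is obtained by dualizing the algebra surjection $\varphi : Q_{\widehat W}\twoheadrightarrow Q_W$ from Lemma~\ref{le:free group}(i), via Proposition~\ref{pr:dual folding}(b) with $\widehat W_w := R(w)$. This set is finite since every reduced word for $w$ has length $\ell(w)$, and Theorem~\ref{th:braid relations} gives $\varphi(\widehat x_{\widehat w_\ii}) = x_w$ when $\ii\in R(w)$ and $\varphi(\widehat x_{\widehat w_\ii})=0$ otherwise, which is exactly the input required by Proposition~\ref{pr:dual folding}. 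Multiplicativity of $\Phi$ then reduces to the folding identity $p_{u,v}^w = \sum_{\ii'\in R(u),\,\ii''\in R(v)} p_{\ii',\ii''}^\ii$, i.e.\ to equation \eqref{eq:puvw=piii} of Proposition~\ref{pr:relative versus absolute}(b). Injectivity of $\Phi$ is immediate because the sets $R(w)$ for distinct $w$ are pairwise disjoint, so the images $\{\Phi(\sigma_w)\}_{w\in W}$ are $S(V)$-linearly independent in $\widehat{\mathcal A}_A$. No step presents a real obstacle; the only thing that needs verification is the finiteness hypothesis of Proposition~\ref{pr:dual folding}, and this again reduces to the length bound $\ell(w)\le\ell(u)+\ell(v)$.
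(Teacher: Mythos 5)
Your proposal is correct and takes essentially the same route as the paper, which simply says ``dualizing the above arguments'' without spelling out the details: you dualize the coalgebra $(Q_W,\delta)$ on the $Q$-basis $\{x_w\}$ supplied by Theorem~\ref{th:braid relations}, restrict the structure constants to $S(V)$ using Proposition~\ref{pr:relative versus absolute} and its corollary, specialize to the free monoid for part~(b), and obtain the embedding in part~(c) by dualizing the surjection $Q_{\widehat W}\twoheadrightarrow Q_W$ through the folding machinery of Proposition~\ref{pr:dual folding} and the identity \eqref{eq:puvw=piii}. The one subtlety you correctly flag is that Proposition~\ref{pr:dual} as stated requires a map $Q\to Q'$ and cannot literally be applied with $Q'=S(V)\subset Q$, so the argument of Lemma~\ref{le:from coalgebra to algebra} must be carried out directly over the subring $S(V)$ (the paper handles the same issue in Proposition~\ref{pr:A-->Aii} by allowing an arbitrary subring $R\subset Q$ containing the structure constants, and again in Corollary~\ref{cor:Billey homomorphism coxeter} by ``replacing $Q$ with $S(V)$'').
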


The following is a slight modification ($Q$ is replaced with $S(V)$) of Corollary \ref{cor:Billey homomorphism}.

\begin{corollary}
\label{cor:Billey homomorphism coxeter}
Given a Coxeter semigroup $W$ and a compatible  quasi-Cartan matrix $A$, let
$\langle \cdot,\cdot \rangle:{\mathcal A}_A(W)\times S(V)\cdot {\mathcal H}_A(W)\to S(V)$ be the non-degenerate $S(V)$-bilinear pairing given by
$$
\langle p\sigma_u,q x_v\rangle =\delta_{u,v}\cdot  pq
$$
for all $u,v\in W$, $p,q\in S(V)$. Then:

(a) The above pairing  satisfies:
$$\langle ab,x\rangle=\langle a\otimes b,\delta(x)\rangle= \langle a,x_{(1)}\rangle \langle b,x_{(2)}\rangle$$
for all $a,b\in {\mathcal A}_A(W)$, $x\in S(V)\cdot {\mathcal H}_W$, where $\delta(x)=x_{(1)}\otimes x_{(2)}$ in the Sweedler notation.

(b) For each $w\in W$ the assignment $a\mapsto \langle a,w\rangle$, $a\in {\mathcal A}_A(W)$ is an $S(V)$-algebra homomorphism
$$\varphi_w:{\mathcal A}_A(W)\to S(V) \ .$$

\end{corollary}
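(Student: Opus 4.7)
The plan is to mimic the proof of Corollary~\ref{cor:Billey homomorphism} almost verbatim, with the only new ingredient being that everything now lives in $S(V)$ rather than in $Q$. So my first task is to verify that the pairing is actually well-defined with values in $S(V)$. For this I will use Proposition~\ref{pr:relative versus absolute}(a) (equivalently, the corollary immediately following it) which says $p_{u,v}^w\in S(V)$ for all $u,v,w\in W$. Combined with Proposition~\ref{pr:dual nil Hecke}, this makes $\mathcal{A}_A(W)$ a commutative $S(V)$-algebra whose multiplication constants lie in $S(V)$. On the other side, $\{x_w\mid w\in W\}$ is an $S(V)$-basis of $S(V)\cdot \mathcal{H}_A(W)$ (freeness follows from the argument in the proof of Theorem~\ref{th:braid relations} showing $\{x_{\ii_w}\}$ is a $Q$-basis of $Q_W$, combined with the fact that the $p_{u,v}^w$, hence the transition to any other reduced word, lie in $S(V)$). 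Consequently the formula $\langle p\sigma_u, qx_v\rangle=\delta_{u,v}\,pq$ defines a non-degenerate $S(V)$-bilinear map into $S(V)$, exactly as claimed.

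Before doing (a) and (b), I want to record the small auxiliary fact that $S(V)\cdot\mathcal{H}_A(W)$ is a subalgebra of $Q_W$ containing every $w\in W$. The subalgebra property follows from Lemma~\ref{le:demazure commutator} (specialized to the Demazure elements), which gives $x_i f = x_i(f)+s_i(f)x_i$ with $x_i(f)\in S(V)$ because $s_i(f)-f$ is divisible by $\alpha_i$ in $S(V)$. Since $s_i=1+\alpha_i x_i\in S(V)\cdot \mathcal{H}_A(W)$, an easy induction on Coxeter length shows $w\in S(V)\cdot\mathcal{H}_A(W)$ for all $w\in W$. In particular the coproduct $\delta$ restricts to a map $S(V)\cdot\mathcal{H}_A(W)\to (S(V)\cdot\mathcal{H}_A(W))\otimes_Q(S(V)\cdot\mathcal{H}_A(W))$, and when expanded in the $S(V)$-basis $\{x_v\}$ the coefficients are again in $S(V)$ by Proposition~\ref{pr:relative versus absolute}(a). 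This is the only place where one must be cautious that Sweedler pieces $x_{(1)},x_{(2)}$ pair to elements of $S(V)$ and not merely of $Q$.

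For part (a), the computation is identical to that in the proof of Corollary~\ref{cor:Billey homomorphism}: it suffices to check the identity on basis elements $a=\sigma_u$, $b=\sigma_v$, $x=x_w$, where the left side evaluates to $p_{u,v}^w$ by the multiplication table of $\mathcal{A}_A(W)$ from Proposition~\ref{pr:dual nil Hecke}(a), and the right side evaluates to the same thing after expanding $\delta(x_w)=\sum_{u',v'} p_{u',v'}^w\, x_{u'}\otimes x_{v'}$ and using the orthogonality relation $\langle\sigma_u,x_{u'}\rangle\langle\sigma_v,x_{v'}\rangle=\delta_{u,u'}\delta_{v,v'}$. Everything that appeared in $Q'$ in the original proof now lives in $S(V)$.

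For part (b), the key observation is that any $w\in W$, viewed as an element of the twisted group algebra $Q_W$, is group-like: $\delta(w)=w\otimes w$ by the formula in Proposition~\ref{pr:delta}(a). Combining this with part (a) gives
\[
\varphi_w(ab)=\langle ab,w\rangle=\langle a\otimes b,\delta(w)\rangle=\langle a\otimes b,w\otimes w\rangle=\langle a,w\rangle\langle b,w\rangle=\varphi_w(a)\varphi_w(b),
\]
and $S(V)$-linearity of $\varphi_w$ is immediate from the $S(V)$-bilinearity of the pairing. I expect no real obstacle beyond the bookkeeping in the first paragraph ensuring that all pairings and Sweedler pieces remain in $S(V)$; once integrality of the structure constants is in hand, the proof is cosmetic rewriting of Corollary~\ref{cor:Billey homomorphism}.
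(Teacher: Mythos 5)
Your proposal is correct and follows the paper's approach: the paper presents this corollary with no separate argument, explicitly calling it a slight modification of Corollary~\ref{cor:Billey homomorphism} obtained by replacing $Q$ with $S(V)$, and your proof fills in precisely the bookkeeping that this replacement requires (integrality of $p_{u,v}^w$ via Proposition~\ref{pr:relative versus absolute}, that $S(V)\cdot\mathcal{H}_A(W)$ is a subalgebra containing every $w\in W$, and that $\delta$ respects the $S(V)$-structure). The computations in parts (a) and (b) are then verbatim those from Corollary~\ref{cor:Billey homomorphism}, as you say.
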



\begin{remark}
\label{rem:billey}
If $A$ is a Cartan matrix and $W$ is the Weyl group of a Kac-Moody group $G$, Kostant and Kumar proved that $\varphi_w$ is a homomorphism of algebras (see \cite[Section 11.1.4]{Ku02}). Sara Billey computed $\varphi_w(\sigma_v)$ explicitly in  \cite[Theorem 4]{Bi99}.
\end{remark}

The algebras ${\mathcal A}_A(W)$ are very important in Schubert Calculus due to the following fundamental result.

\begin{theorem} (\cite[Corollary 11.3.17]{Ku02})
\label{the:cohomology of flags}
Let $G$ be a complex semisimple or Kac-Moody group, $T\subset B$ be respectively the Cartan and Borel subgroups of $G$,  $W=Norm_G(T)/T$ be the Weyl group, and let $A$ be the Cartan matrix of $G$. Then the assignment
$$\sigma_w^T\mapsto \sigma_w$$
defines an isomorphism of $S(V)$-algebras $H_T^*(G/B)\widetilde \to {\mathcal A}_A(W)$,
where $H_T^*(G/B)$ is the $T$-equivariant cohomology algebra (over $S(V)=H_T^*(pt)$) of $G/B$ and
$\sigma_w^T$, $w\in W$ is the $T$-equivariant Schubert cocycle corresponding to $w\in W$. In particular, the cup  product in $H_T^*(G/B)$ is given by:
$$\sigma_u^T\cup \sigma_v^T=\sum_{w\in W} p_{u,v}^w \sigma_w^T\ .$$

Therefore, the cup product in the cohomology algebra $H^*(G/B,\CC)=\CC\bigotimes\limits_{S(V)} H_T^*(G/B)$ (where $\CC$ is  an $S(V)$-module via the projection $S(V)\to S^0(V)=\CC$) is given by:
$$\sigma_u\cup \sigma_v=\sum_{w\in W:\atop \ell(w)=\ell(u)+\ell(v)} p_{u,v}^w \sigma_w\ .$$
\end{theorem}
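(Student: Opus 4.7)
The plan is to recover this theorem by matching the purely algebraic algebra $\mathcal{A}_A(W)$ built in Proposition \ref{pr:dual nil Hecke} with the classical Kostant--Kumar description of $H^*_T(G/B)$. The bridge is the pairing of Corollary \ref{cor:Billey homomorphism coxeter}: $\mathcal{A}_A(W)$ was constructed precisely as the graded $S(V)$-linear dual of $S(V)\cdot\mathcal{H}_A(W)$ with respect to the basis $\{x_w\,|\,w\in W\}$, and the Schubert basis $\{\sigma_w^T\}$ on the geometric side is pinned down by the analogous duality with the same nil Hecke basis.

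First I would invoke the Kostant--Kumar theorem (in the form given in \cite[Chapter 11]{Ku02}), which asserts that $H^*_T(G/B)$ can be identified, as a graded $S(V)$-module, with the $S(V)$-submodule $\Lambda\subset\Fun(W,S(V))$ consisting of functions $f$ that are ``GKM/BGG compatible'' with the nil Hecke action, in such a way that the equivariant Schubert class $\sigma_w^T$ corresponds to the function characterized by $\langle \sigma_w^T,x_v\rangle = \delta_{w,v}$ for all $v\in W$. This is literally the pairing of Corollary \ref{cor:Billey homomorphism coxeter}. Hence the $S(V)$-linear map $\sigma_w^T\mapsto \sigma_w$ is an isomorphism of $S(V)$-modules $H^*_T(G/B)\widetilde{\to}\mathcal{A}_A(W)$.

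Next I would upgrade this to an algebra isomorphism. The Kostant--Kumar identification is multiplicative in the following sense: the cup product on $H^*_T(G/B)\cong\Lambda$ is dual, via the pairing, to the coproduct $\delta:S(V)\cdot\mathcal{H}_A(W)\to S(V)\cdot\mathcal{H}_A(W)\otimes_{S(V)}S(V)\cdot\mathcal{H}_A(W)$. But $\delta$ is exactly the coaction from Proposition \ref{pr:delta}, and by definition the coefficients of $\delta(x_w)=\sum_{u,v}p_{u,v}^w\,x_u\otimes x_v$ are the same $p_{u,v}^w$ that give the multiplication in $\mathcal{A}_A(W)$. Applying Corollary \ref{cor:Billey homomorphism coxeter}(a) with $a=\sigma_u^T$, $b=\sigma_v^T$, $x=x_w$ shows that the structure constants of $\sigma_u^T\cup\sigma_v^T$ in the Schubert basis match those of $\sigma_u\sigma_v$ in $\mathcal{A}_A(W)$, yielding the claimed formula. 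The non-equivariant statement then drops out by specializing along $S(V)\twoheadrightarrow S(V)/S^{>0}(V)=\CC$: since $p_{u,v}^w$ is homogeneous of degree $\ell(u)+\ell(v)-\ell(w)$ (Proposition \ref{pr:relative versus absolute}(a)), only the terms with $\ell(w)=\ell(u)+\ell(v)$ survive, reproducing \eqref{eq:cup GB}.

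The main obstacle is the multiplicativity step: one must verify that cup product on $H^*_T(G/B)$ really is dual to the coalgebra structure $\delta$ on the nil Hecke side. This is the substantive geometric input from \cite{KK86,Ku02}, typically proved by pulling back to Bott--Samelson resolutions $\Gamma_\ii(G)$, where the cup product is easy to compute directly (Proposition \ref{pr:willems}), and then showing that the localization map $H^*_T(G/B)\hookrightarrow\bigoplus_{w\in W}S(V)$ to the $T$-fixed points intertwines cup product with the pairing against $\delta$. Everything else in the proof is essentially bookkeeping built into Proposition \ref{pr:dual nil Hecke} and the duality framework of Section \ref{sect:prelim}.
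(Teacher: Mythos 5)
The paper does not actually prove this theorem: it is stated with the citation ``(\cite[Corollary 11.3.17]{Ku02})'' and treated as a known input from Kumar's book, so there is no internal proof to compare your argument against. Your sketch is a reasonable and accurate summary of the Kostant--Kumar argument that underlies the cited result: the identification of $H^*_T(G/B)$ with the $S(V)$-dual of the nil Hecke module via the $\{x_w\}$ basis, the duality between cup product and the coproduct $\delta$ (which is precisely what Corollary~\ref{cor:Billey homomorphism coxeter}(a) encodes on the algebraic side), and the specialization $S(V)\twoheadrightarrow\CC$ picking out the degree-zero part $\ell(w)=\ell(u)+\ell(v)$ using homogeneity from Proposition~\ref{pr:relative versus absolute}(a). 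You also correctly flag that the one genuinely geometric step --- that the cup product on $H^*_T(G/B)$ is dual to the nil Hecke coproduct --- is the substantive content imported from \cite{KK86,Ku02} and is not rederived in this paper. Within this paper's framework one would indeed simply cite Kumar as the authors do; if a self-contained proof were required, your outline identifies the right structure, but the hard localization/Bott--Samelson step would have to be carried out in full detail.
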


Note that the Littlewood-Richardson coefficients in \eqref{eq:cup GB} are given by
$$c_{u,v}^w=\delta_{\ell(w),\ell(u)+\ell(v)}p_{u,v}^w$$ for all $u,v,w$.
In order to compute $p_{u,v}^w$  we  employ the relative Littlewood-Richardson coefficients
$p_{\ii',\ii''}^\ii$ for $\ii\in R(w)$, $\ii'\in R(u)$, $\ii''\in R(v)$ and use \eqref{eq:puvw=piii}.
%
%
%
%
That is, in view of Proposition \ref{pr:relative versus absolute}, our Theorem \ref{th:cuvw} follows from Theorem \ref{th:cuvw refined T} that we prove in the next section.


\medskip

\noindent {\bf Proof of Theorem \ref{th:embedding H(G/B) to H(hat G/hat B)}.}
Indeed, let $\widehat {\mathcal A}_A$ be as in Proposition \ref{pr:dual nil Hecke}(b). That is, ${\mathcal A}_A$ is dual (over $S(V)$) to the generalized nil Hecke algebra ${\mathcal H}_A(\widehat W)$, where $\widehat W$ is the free monoid generated by $S=\{s_i,i\in I\}$.  Since $\widehat {\mathcal A}_A={\mathcal A}(G)$ whenever $A$ is the Cartan matrix of $G$, this proves Theorem \ref{th:embedding H(G/B) to H(hat G/hat B)}(a).

Furthermore, let ${\mathcal A}_A^{adm}$ be the dual (over $S(V)$) to the generalized nil Hecke algebra ${\mathcal H}_A(\widetilde W)$, where $\widetilde W$ is the the free Coxeter group generated by $S=\{s_i,i\in I\}$. Clearly, the canonical projection $\widehat W\twoheadrightarrow \widetilde W$ extends to a surjective homomorphism of algebras
$${\mathcal H}_A(\widehat W)\twoheadrightarrow {\mathcal H}_A(\widetilde W)$$
commuting with the co-product. Dualizing, we see that ${\mathcal A}_A^{adm}$ is a subalgebra of $\widehat {\mathcal A}_A$ spanned (over $S(V)$) by all $\sigma_\ii$, where $\ii$ runs over all admissible sequences.  This proves Theorem \ref{th:embedding H(G/B) to H(hat G/hat B)}(b).

Furthermore, since ${\mathcal A}_A(W)=H^*(G/B,\CC)$ whenever $A$ is the Cartan matrix and $W$ is the Weyl group of $G$, Proposition \ref{pr:dual nil Hecke}(c) proves Theorem \ref{th:embedding H(G/B) to H(hat G/hat B)}(c).

Prove parts (d) and (e) of Theorem \ref{th:embedding H(G/B) to H(hat G/hat B)} now.
For each $\ii\in I^m$ denote by  $\widehat {\mathcal A}_{\ii,A}$ the algebra ${\mathcal A}_{X,\ii,S(V)}$ from Proposition \ref{pr:A-->Aii}(b), which, by definition is the dual of the subcoalgebra ${\mathcal C}_\ii\subset {\mathcal H}_A(\widehat W)$ spanned by all $x_\jj$, where $\jj$ runs over all subsequences of $\ii$. Thus, $\widehat {\mathcal A}_{A,\ii}$ is quotient algebra $\widehat {\mathcal A}_A/J_\ii$, where $J_\ii$ is the ideal spanned (over $S(V)$) by all $\sigma_{\ii'}$ such that $\ii'$ is not a subsequence of $\ii$.

Furthermore, in the notation of Theorem \ref{th:general BS}, for each sequence $\ii\in I^m$,  denote by ${\mathcal BS}_{A,\ii}$ the Bott-Samelson algebra ${\mathcal BS}_{X,\ii,S(V)}$.

That is, taking into account that $x_i(\alpha_j)=-a_{ij}$,  ${\mathcal BS}_{A,\ii}$ is an $S(V)$-algebra generated by $\sigma_1,\ldots,\sigma_m$ subject to the relations:
\begin{equation}
\label{eq:sigma_k^2 quasi-cartan}
\sigma_k^2=\alpha_{i_k}\sigma_k-\sum_{\ell>k} a_{i_k,i_{\ell}}\sigma_\ell\sigma_k
\end{equation}
for $k\in [m]$.

The following is a generalization of Proposition \ref{pr:willems} to any Coxeter semigroup $W=\langle s_i,i\in I\rangle$ and compatible quasi-Cartan matrix $A$.

\begin{corollary} (of Theorem \ref{th:general BS})
\label{cor:general BS quasi-cartan} Let  $A$ be a  quasi-Cartan matrix.
Then for any sequence  $\ii=(i_1,\ldots,i_m)\in I^m$ the association
\begin{equation}
\label{eq:Aii-->BSii quasi-cartan}
\sigma_\jj^{(\ii)} \mapsto \sum_{K\subset [m]:\atop \ii_K=\jj} \sigma_K
\end{equation}
defines an injective homomorphism of $R$-algebras
$\pi_\ii:\widehat {\mathcal A}_{A,\ii}\hookrightarrow {\mathcal BS}_{A,\ii}$.
\end{corollary}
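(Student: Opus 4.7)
The plan is to recognize the corollary as a direct specialization of Theorem \ref{th:general BS}(d) to the setting where $Q = \mathrm{Frac}(S(V))$, the Demazure $S$-tame set is $X = \{x_i = \frac{1}{\alpha_i}(s_i - 1), i \in I\}$, and the $R$ of Theorem \ref{th:general BS} is taken to be $S(V) \subset Q$. Once the hypotheses are in place, the algebra $\widehat{\mathcal A}_{A,\ii}$ is by construction $\mathcal A_{X,\ii,S(V)}$ (Proposition \ref{pr:A-->Aii}(b)), and $\mathcal{BS}_{A,\ii}$ is $\mathcal{BS}_{X,\ii,S(V)}$ (by the equations \eqref{eq:sigma_k^2} and \eqref{eq:sigma_k^2 quasi-cartan}, noting that $x_{i_k}(\alpha_{i_\ell}) = -a_{i_k,i_\ell}$), so the map \eqref{eq:Aii-->BSii quasi-cartan} is literally the map \eqref{eq:Aii-->BSii}.

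The only thing to check is therefore that the subalgebra $R = S(V)$ satisfies the three hypotheses required by Theorem \ref{th:general BS}: it is $W$-invariant, each $\alpha_{i_k}$ lies in $R$, and each Demazure operator $x_{i_k}$ preserves $R$. The first two are immediate: $W$ acts linearly on $V$ by \eqref{eq:reflection} (via Lemma \ref{le:Coxeter action}), hence preserves the symmetric algebra $S(V)$, and the simple roots $\alpha_{i_k}$ lie in $V \subset S(V)$ by definition. For the third, I would argue that for $f \in S(V)$ the difference $s_i(f) - f$ vanishes on the hyperplane $\{\alpha_i = 0\}$ (since $s_i$ fixes this hyperplane pointwise, as $s_i(\alpha_j) - \alpha_j = -a_{ij}\alpha_i$), and therefore $\alpha_i$ divides $s_i(f) - f$ in the UFD $S(V)$, so $x_i(f) = \alpha_i^{-1}(s_i(f) - f) \in S(V)$.

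With these hypotheses verified, Theorem \ref{th:general BS}(a) produces the commutative $S(V)$-algebra $\mathcal{BS}_{A,\ii}$ with the Bott-Samelson multiplication table \eqref{eq:BS multiplication} (and equivalently the presentation \eqref{eq:sigma_k^2 quasi-cartan} by part (c)), and Proposition \ref{pr:A-->Aii}(b) produces $\widehat{\mathcal A}_{A,\ii}$. Then Theorem \ref{th:general BS}(d) applied verbatim gives the injective $S(V)$-algebra homomorphism $\pi_\ii$ of the desired form \eqref{eq:Aii-->BSii quasi-cartan}.

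There is no real obstacle here; the only non-bookkeeping step is the divisibility argument showing $x_i(S(V)) \subset S(V)$, which is the standard fact that the Demazure (BGG) operator preserves polynomials. Everything else is an unwrapping of the definitions already established in Section \ref{sect:prelim} together with the compatibility of the Demazure $S$-tame set associated to a quasi-Cartan matrix.
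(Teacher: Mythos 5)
Your proposal is correct and follows exactly the route the paper intends: the corollary is stated without a separate proof precisely because it is the specialization of Theorem~\ref{th:general BS}(d) to $R=S(V)$ once one checks the three hypotheses, and you have done that checking accurately. The only cosmetic difference is that you verify $x_i(S(V))\subset S(V)$ by the divisibility/vanishing-on-a-hyperplane argument, whereas the paper already records this (in the lemma inside the proof of Proposition~\ref{pr:relative versus absolute}) via the twisted Leibniz rule and the grading $x_i(S^k(V))\subset S^{k-1}(V)$; both are standard and equivalent.
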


Furthermore, if $A$ is the Cartan matrix of $G$, then \cite[Proposition 4]{wi04} implies that
$${\mathcal BS}_{A,\ii}=H^*_T(\Gamma_\ii(G),\CC)\ .$$
Applying Corollary \ref{cor:general BS quasi-cartan}, we  finish the proof of parts (d) and (e) of Theorem \ref{th:embedding H(G/B) to H(hat G/hat B)}.

Theorem \ref{th:embedding H(G/B) to H(hat G/hat B)} is proved. \endproof

\section{Proof of Theorems \ref{th:cuvw refined T BS}, \ref{th:cuvw} and  \ref{th:cuvw refined T}}

\label{sect:proofs}

\subsection{Proof of Theorem  \ref{th:cuvw refined T BS}}

\begin{definition}
\label{def:bounded map} Let $L,M\subset [m]$ such that $|L|+|M|\ge m$. We say that a map
$$\varphi:L\to \{0\}\cup [m]\setminus M$$
is {\it bounded} if:

$\bullet$ $\varphi(\ell)<\ell$ for each $\ell\in L$;

$\bullet$ $|\varphi^{-1}(k)|=1$ for all $k\in [m]\setminus M$ (i.e., the restriction of $\varphi$ to $L'=\varphi^{-1}([m]\setminus M)$ is a bijection $L'\widetilde \to [m]\setminus M$).

\end{definition}

Denote by $V^\vee$ the $\kk$-vector space with the basis $\{\alpha_i^\vee,i\in I\}$ and define the pairing $V\times V^\vee\to \kk$ by $\langle \alpha_i,\alpha_j^\vee\rangle =a_{ij}$ for $i,j\in I$.
For each bounded map $\varphi:L\, \widetilde \to\, \{0\}\cup [m]\setminus M$ define $p_\ell^{(\varphi)}\in V\sqcup \kk$ by
\begin{equation}
\label{def:plambda phi uniform}
p_\ell^{(\varphi)}=
\begin{cases}
\langle w_\ell^{(\varphi)}(\alpha_{i_\ell}),-\alpha_{i_{\varphi(\ell)}}^\vee\rangle &\text{if $\varphi(\ell)\ne 0$}\\
w_\ell^{(\varphi)}(\alpha_{i_\ell}) &\text{if $\varphi(\ell)= 0$}
\end{cases}, ~\text{where}~w_\ell^{(\varphi)}= \buildrel\longrightarrow \over {\prod_{r\in M\bigcup\varphi(L_{<\ell}): \atop \varphi(\ell)<r<\ell}} s_{i_r} \end{equation}

Clearly, there is a natural one-to-one correspondence between bounded maps $\varphi:L \to \{0\}\cup [m]\setminus M$ and bounded bijections $\varphi':L'\, \widetilde \to\, [m]\setminus M$, where $L'$ runs over all subsets of $L$ such that $|L'|+|M|=m$. Therefore,   Theorem \ref{th:cuvw refined T} is equivalent to the following result.

\begin{proposition}
\label{pr:free_coeff}
For any repetition-free sequence $\ii=(i_1,\ldots,i_m)$ and any subsequences $\ii'$, $\ii''$ of $\ii$ Theorem \ref{th:cuvw refined T uniform} holds. More precisely,
\begin{equation}
\label{eq:free_coeff}
p_{\ii',\ii''}^\ii=\sum_\varphi \prod_{\ell\in L} p_\ell^{(\varphi)}
\end{equation}
with the summation over all bounded
maps $\varphi:L\rightarrow \{0\}\cup [m]\setminus M$, where $L\subset M\subset [m]$ are determined by $\{\ii'\}\cap \{\ii''\}=\{\ii_L\}$, $\{\ii'\}\cup \{\ii''\}=\{\ii_M\}$ (in the notation of Proposition \ref{pr:free}).

\end{proposition}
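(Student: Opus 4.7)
We argue by induction on $m$; the base case $m=0$ is immediate since both sides equal $1$. For the inductive step, since $\ii$ is repetition-free, the first index $i_1$ is repetition-free, so the one-step recursion \eqref{pr:free one step} of Proposition \ref{pr:free} gives
\[
p_{\ii',\ii''}^\ii = f_{i_1}(p'),\qquad p' := p_{\ii'\setminus\{i_1\},\,\ii''\setminus\{i_1\}}^{\ii\setminus\{i_1\}},
\]
where $f_{i_1}\in\{\alpha_{i_1}s_{i_1},\;s_{i_1},\;x_{i_1}\}$ according as $1\in L$, $1\in M\setminus L$, or $1\notin M$. The inductive hypothesis expands $p'$ as a sum over bounded maps $\tilde\varphi$ for $\ii\setminus\{i_1\}$, with sets $\tilde L=L\setminus\{1\},\tilde M=M\setminus\{1\}$ in Case 1, $\tilde L=L,\tilde M=M\setminus\{1\}$ in Case 2, and $\tilde L=L,\tilde M=M$ in Case 3.

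In Cases 1 and 2 (i.e., $1\in M$), bounded maps $\varphi$ for $\ii$ are in bijection with bounded maps $\tilde\varphi$ for $\ii\setminus\{i_1\}$, the only subtlety being that $\varphi(1)=0$ is forced in Case 1 (contributing a factor $p_1^{(\varphi)}=\alpha_{i_1}$ from the empty product defining $w_1^{(\varphi)}$). For each $\ell>1$ in $L$, a direct inspection of the index set defining $w_\ell^{(\varphi)}$ shows that $s_{i_1}$ occurs as its leftmost factor precisely when $\varphi(\ell)=0$, and in every subcase we obtain the identity $p_\ell^{(\varphi)}=s_{i_1}(p_\ell^{(\tilde\varphi)})$ (trivially when $p_\ell^{(\tilde\varphi)}\in\kk$, since $s_{i_1}$ acts by identity on scalars). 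Summing and using multiplicativity of the $W$-action recovers $\alpha_{i_1}s_{i_1}(p')$ and $s_{i_1}(p')$ respectively, matching $f_{i_1}(p')$.

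Case 3 ($1\notin M$) is the substantive step. Since $1\in[m]\setminus M$, every bounded $\varphi$ has a unique $\ell_1\in L$ with $\varphi(\ell_1)=1$; the rule $\varphi\mapsto(\tilde\varphi,\ell_1)$, where $\tilde\varphi$ agrees with $\varphi$ off $\ell_1$ and sends $\ell_1$ to $0$, is a bijection onto pairs with $\tilde\varphi$ bounded for $\ii\setminus\{i_1\}$ and $\ell_1\in\tilde\varphi^{-1}(0)$. A careful comparison of $w_\ell^{(\varphi)}$ with $w_\ell^{(\tilde\varphi)}$ (noting that for $\ell<\ell_1$ the value $1$ has not yet entered $\varphi(L_{<\ell})$, while for $\ell>\ell_1$ it has) yields $p_\ell^{(\varphi)}=p_\ell^{(\tilde\varphi)}$ for $\ell<\ell_1$, $p_\ell^{(\varphi)}=s_{i_1}(p_\ell^{(\tilde\varphi)})$ for $\ell>\ell_1$, and
\[
p_{\ell_1}^{(\varphi)}=\langle p_{\ell_1}^{(\tilde\varphi)},-\alpha_{i_1}^\vee\rangle=x_{i_1}(p_{\ell_1}^{(\tilde\varphi)}),
\]
the last equality because $p_{\ell_1}^{(\tilde\varphi)}\in V$ and $x_i(v)=\langle v,-\alpha_i^\vee\rangle$ for $v\in V$.

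The crux is the iterated Leibniz rule for the Demazure operator, derived from $x_i(pq)=x_i(p)s_i(q)+p\,x_i(q)$:
\[
x_{i_1}(p_1\cdots p_n)=\sum_{k=1}^n p_1\cdots p_{k-1}\cdot x_{i_1}(p_k)\cdot s_{i_1}(p_{k+1}\cdots p_n).
\]
Since $x_{i_1}$ annihilates $\kk$, the nonzero summands are exactly those with $\tilde\varphi(k)=0$. Our factorization identifies $\prod_{\ell\in L}p_\ell^{(\varphi)}$ with the $k=\ell_1$ term of this expansion applied to the factors $p_\ell^{(\tilde\varphi)}$, so
\[
\sum_\varphi\prod_{\ell\in L}p_\ell^{(\varphi)}=\sum_{\tilde\varphi} x_{i_1}\!\left(\prod_{\ell\in L}p_\ell^{(\tilde\varphi)}\right)=x_{i_1}(p')=f_{i_1}(p')=p_{\ii',\ii''}^\ii,
\]
closing the induction. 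The main obstacle is the bookkeeping in Case 3: one must match, term by term, the combinatorial appearance of $s_{i_1}$ in $w_\ell^{(\varphi)}$ against a single Leibniz summand, with the annihilator property of $x_{i_1}$ on scalars pinpointing $\tilde\varphi^{-1}(0)$ as the correct range for $\ell_1$.
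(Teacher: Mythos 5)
Your proof is correct and follows essentially the same route as the paper's: induction on $m$ via the one-step recursion of Proposition~\ref{pr:free}, the same three-case split according to whether $1\in L$, $1\in M\setminus L$, or $1\notin M$, and the same application of the twisted Leibniz rule for $x_{i_1}$ to handle Case~3. The only cosmetic difference is the direction in which you set up the bijection between $\varphi$ and $(\tilde\varphi,\ell_1)$; the paper goes from $(\tilde\varphi,k)$ with $k\in\tilde\varphi^{-1}(0)$ to $\varphi$, which is the inverse of your map.
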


\begin{proof}

We prove Proposition \ref{pr:free_coeff} by induction in the length $m$ of $\ii$. If $m=0$, i.e., $\ii=\ii'=\ii''=\emptyset$, $p_{\ii',\ii''}^\ii=1$ and we have nothing to prove.
Assume that $m\ge 1$. We apply the inductive hypothesis to $\widetilde \ii=(i_2,\ldots,i_m)$ and the subsequences $\widetilde \ii'=\ii'\setminus \{i_1\}$, $\widetilde \ii''=\ii''\setminus \{i_1\}$ of $\widetilde \ii$:

\begin{equation}
\label{eq:free_coeff induction}
p_{\widetilde \ii',\widetilde \ii''}^{\widetilde \ii}=\sum_{\widetilde\varphi} \prod_{\ell\in \widetilde L} \widetilde p_\ell^{(\widetilde \varphi)}
\end{equation}
with the summation over all bounded
maps $\widetilde \varphi:\widetilde L\rightarrow \{0\}\cup \{2,\ldots,m\}\setminus \widetilde M$, where $\widetilde L\subset \widetilde M\subset \{2,\ldots,m\}$ are determined by $\{\widetilde \ii'\}\cap \{\widetilde \ii''\}=\{\ii_{\widetilde L}\}$, $\{\widetilde \ii'\}\cup \{\widetilde \ii''\}=\{\ii_{\widetilde M}\}$ (in the notation of Proposition \ref{pr:free}), and
\begin{equation}
\label{def:plambda phi uniform inductive step}
\widetilde p_\ell^{(\widetilde \varphi)}=
\begin{cases}
\langle \widetilde w_\ell^{(\widetilde \varphi)}(\alpha_{i_\ell}),-\alpha_{i_{\widetilde \varphi(\ell)}}^\vee\rangle &\text{if $\widetilde \varphi(\ell)\ne 0$}\\
\widetilde w_\ell^{(\widetilde \varphi)}(\alpha_{i_\ell}) &\text{if $\varphi(\ell)= 0$}
\end{cases}, ~\text{where}~\widetilde w_\ell^{(\widetilde \varphi)}=
\buildrel\longrightarrow \over {\prod_{r\in M\bigcup\widetilde \varphi(L_{<\ell}): \atop \widetilde \varphi(\ell)<r<\ell}} s_{i_r} \end{equation}

Since $i_1$ is repetition-free, applying \eqref{eq:free composition}  to \eqref{eq:free_coeff induction} (with $k=1$), we obtain:
\begin{equation}
\label{eq:free_coeff induction step}
p_{\ii',\ii''}^\ii=\sum_{\widetilde \varphi} f_{i_1}(\prod_{\ell\in \widetilde L} \widetilde p_\ell^{(\widetilde \varphi)})
\end{equation}

Consider three cases.

Case I. $i'_1=i''_1=i_1$ so that  $f_{i_1}=\alpha_{i_1}s_{i_1}$ and $L=\{1\}\cup \widetilde L$, $M=\{1\}\cup \widetilde M$. Clearly, each $\tilde \varphi:L\setminus\{1\}\rightarrow \{0\}\cup [m]\setminus M$ as in \eqref{eq:free_coeff induction} can be uniquely extended to a bounded map $L\rightarrow \{0\}\cup [m]\setminus M$ by: $\varphi(1)=0$. Thus, $p_1^{(\varphi)}=\alpha_{i_1}$,
$p_\ell^{(\varphi)}=s_{i_1}(\widetilde p_\ell^{(\widetilde \varphi)})$ for all $\ell\in \widetilde L$ and, therefore, \eqref{eq:free_coeff induction step} becomes \eqref{eq:free_coeff}. This proves \eqref{eq:free_coeff} in Case I.

Case II. $i'_1\ne i''_1$, $i_1\in \{i'_1,i''_1\}$ so that  $f_{i_1}=s_{i_1}$ and $L=\widetilde L$, $M=\{1\}\cup \widetilde M$. Therefore, each $\tilde \varphi$  as in \eqref{eq:free_coeff induction} is a bounded map $L\rightarrow \{0\}\cup [m]\setminus M$, i.e, $\widetilde \varphi=\varphi$. Thus, $p_\ell^{(\varphi)}=s_{i_1}(\widetilde p_\ell^{(\varphi)})$ for all $\ell\in L$ and, therefore, \eqref{eq:free_coeff induction step} becomes \eqref{eq:free_coeff}. This proves \eqref{eq:free_coeff} in Case II.

Case III. $i_1\notin \{i'_1,i''_1\}$ so that  $f_{i_1}=s_{i_1}$ and $L=\widetilde L$, $M=\widetilde M$. Applying repeatedly the twisted Leibniz rule:
$$x_i(p_1\cdots p_n)=x_i(p_1)s_i(p_2)\cdots s_i(p_n)+p_1x_i(p_2)s_i(p_3)\cdots s_i(p_n)+\cdots p_1\cdots p_{n-1}x_i(p_n)$$
for $p_1,\ldots,p_n\in S(V)$ and
$$x_i(\alpha)=\langle \alpha,-\alpha_i\rangle$$
for $\alpha\in V$, we obtain for each $\widetilde \varphi:L\rightarrow \{0\}\cup \{2,\ldots,m\}\setminus M$  as in \eqref{eq:free_coeff induction}
$$x_{i_1}\left(\prod_{\ell\in \widetilde L} \widetilde p_\ell^{(\widetilde \varphi)}\right)=\sum_{k\in L:\atop \widetilde \varphi(k)=0} \prod_{\ell\in L} p_\ell^{(\widetilde \varphi,k)} \ ,$$
where for each $k\in \widetilde \varphi^{-1}(0)$:
$$p_\ell^{(\widetilde \varphi,k)}=
\begin{cases} s_{i_1}(\widetilde p_\ell^{(\widetilde \varphi)}) & \text{if $k<\ell$}\\
\langle \widetilde w_\ell^{(\widetilde \varphi)}(\alpha_{i_k}),-\alpha_{i_1}^\vee\rangle & \text{if $k=\ell$}\\
\widetilde p_\ell^{(\widetilde \varphi)} & \text{if $k>\ell$}\\
\end{cases}~~= ~~p_\ell^{(\varphi)}\ ,$$
where $\varphi:L\to  \{0\}\cup [m]\setminus M$ is a unique bounded map such that  $\varphi|_{L\setminus\{k\}}=\widetilde \varphi|_{L\setminus\{k\}}$ and $\varphi(k)=1$. By varying $\widetilde \varphi$, we obtain all bounded maps $L\to  \{0\}\cup [m]\setminus M$, i.e.,  \eqref{eq:free_coeff induction step} becomes \eqref{eq:free_coeff}. This proves \eqref{eq:free_coeff} in Case III.

The proposition is proved.
\end{proof}

In view of \eqref{eq:BS shorthand} and \eqref{eq:relative LR via BS repetition free}, the assertion of Proposition \ref{pr:free_coeff} implies  Theorem \ref{th:cuvw refined T BS}. Therefore, Theorem \ref{th:cuvw refined T BS} is proved. \endproof

\subsection{Proof of Theorems  \ref{th:cuvw} and  \ref{th:cuvw refined T}}
Since Theorem \ref{th:cuvw} directly follows from Theorem \ref{the:cohomology of flags}, Proposition \ref{pr:relative versus absolute}, and Theorem \ref{th:cuvw refined T}, we will only prove the latter result in the following form.

\begin{theorem}
\label{th:cuvw refined T uniform}
For each triple of  admissible sequences $(\ii,\ii',\ii'')$ such that $\ii'$ and $\ii''$ are sub-sequences of $\ii$ and the sum of lengths of $\ii'$ and $\ii''$ is greater or equal the length of $\ii$ one has:
\begin{equation}
\label{eq:cuvw refined T uniform}
p_{\ii',\ii''}^\ii=\sum \prod_{\ell\in K'\cap K''} p_\ell^{(\varphi)}
\end{equation}
with the summation over all triples  $(K',K'',\varphi)$, where
\begin{itemize}
\item $K',K''\subset [m]$ such that $\ii_{K'}=\ii'$, $\ii_{K''}=\ii''$;
\item $\varphi:K'\cap K''\ \to \{0\}\cup [m]\setminus (K'\cup K'')$ is an $\ii$-admissible bounded map.
\end{itemize}
\end{theorem}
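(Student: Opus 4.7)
My plan is to prove \eqref{eq:cuvw refined T uniform} by induction on $m=|\ii|$, extending the argument of Proposition \ref{pr:free_coeff} from the repetition-free setting to the general admissible case. The base case $m=0$ is immediate: $p_{\emptyset,\emptyset}^\emptyset=1$ matches the empty product on the right. For the inductive step, I would apply the recursion \eqref{eq:pijk demazure}, which expresses $p_{\ii',\ii''}^\ii$ as up to four terms involving the coefficients $p_{*,*}^{\widetilde\ii}$ with $\widetilde\ii=(i_2,\ldots,i_m)$ (still admissible since $\ii$ is), acted on by $x_{i_1}$, $s_{i_1}$, or $\alpha_{i_1}s_{i_1}$. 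Applying the inductive hypothesis to each, the remaining task is to match the result to the sum on the right-hand side of \eqref{eq:cuvw refined T uniform} via a case analysis on the relation of $i_1$ to $i'_1$ and $i''_1$.

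The case analysis parallels the three cases of Proposition \ref{pr:free_coeff}. In Case I ($i'_1=i''_1=i_1$), every $\ii$-admissible $\varphi$ contributing to the right-hand side must satisfy $\varphi(1)=0$ (as $\varphi(1)<1$ forces it), contributing the factor $p_1^{(\varphi)}=\alpha_{i_1}$ and matching the fourth recursion term $\alpha_{i_1}s_{i_1}(p_{\widetilde\ii',\widetilde\ii''}^{\widetilde\ii})$, with $s_{i_1}$ prepending to each $\widetilde w_\ell^{(\widetilde\varphi)}$ to produce $w_\ell^{(\varphi)}$. Case II ($i_1\in\{i'_1,i''_1\}$ but not both) is similar, matching the $s_{i_1}$-acted second or third term. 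In Case III ($i_1\notin\{i'_1,i''_1\}$), the twisted Leibniz rule (Lemma \ref{le:demazure commutator}) applied to $x_{i_1}(\prod_\ell\widetilde p_\ell^{(\widetilde\varphi)})$ yields one nonzero contribution for each $k\in L$ with $\widetilde\varphi(k)=0$, converting that slot to $\varphi(k)=1$ with the pairing $\langle\widetilde w_k^{(\widetilde\varphi)}(\alpha_{i_k}),-\alpha_{i_1}^\vee\rangle=p_k^{(\varphi)}$, while the factors for $\ell>k$ absorb a prepended $s_{i_1}$ that agrees with $w_\ell^{(\varphi)}$ on the relevant values.

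The main obstacle is the careful tracking of $\ii$-admissibility through the induction. A bounded map $\varphi$ for $\ii$ is $\ii$-admissible iff each subsequence $\ii_{M\cup\varphi(L_{<\ell})}$ is admissible; under the case-specific correspondence $\varphi\leftrightarrow\widetilde\varphi$, this condition decomposes into $\widetilde\ii$-admissibility of $\widetilde\varphi$ plus a compatibility condition involving the newly inserted position $1$ at the front of the prefix set. The admissibility of $\ii$ itself supplies $i_1\ne i_2$, which handles the most delicate instance of this compatibility. The remaining instances must be checked to correspond precisely to the terms produced by the recursion, so that the non-admissible bounded maps (those responsible for the ``17000 versus 19'' gap discussed after Theorem \ref{th:cuvw refined T BS}) are automatically excluded from the inductive expansion rather than contributing spuriously. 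Verifying this compatibility in each of the three cases is the combinatorial heart of the proof; once established, \eqref{eq:cuvw refined T uniform} follows term by term from the recursion \eqref{eq:pijk demazure} and the inductive hypothesis.
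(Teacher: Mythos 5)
Your proposal is a genuinely different route from the paper's proof, but it contains a fatal gap that the paper's approach is specifically designed to address.

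You propose a direct induction on $m$ using the recursion \eqref{eq:pijk demazure}, parallel to Proposition \ref{pr:free_coeff}. The paper instead proceeds in two steps: first it establishes Proposition \ref{pr:cuvw refined T}, the formula \textit{without} the $\ii$-admissibility condition (this follows from Theorem \ref{th:cuvw refined T BS} together with the folding identity \eqref{eq:relative LR via BS general}); then it shows in Proposition \ref{pr:admiss_induction}, via Propositions \ref{pr:1reduction} and \ref{pr:2reduction}, that the contributions of the non-$\ii$-admissible bounded maps sum to zero, using a partition of the bad triples into three classes $J_1,J_2,J_3$ and an explicit sign-reversing involution on $J_3$.

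The gap in your argument is the claim that the non-admissible bounded maps are ``automatically excluded from the inductive expansion.'' This is false. Applying $x_{i_1}$ via the twisted Leibniz rule to $\prod_\ell \widetilde p_\ell^{(\widetilde\varphi)}$ (your Case III) converts some factor with $\widetilde\varphi(k)=0$ to $\varphi(k)=1$; the resulting $\varphi$ may fail to be $\ii$-admissible even when $\widetilde\varphi$ is $\widetilde\ii$-admissible, and the corresponding term appears in the expansion with a nonzero value. Concretely, take $\ii=(1,2,1,2)$, $\ii'=\ii''=(1,2)$, with quasi-Cartan entries $a_{12}=a_{21}=-a$. The triple $K'=K''=\{3,4\}$ with $\varphi(3)=1$, $\varphi(4)=2$ is not $\ii$-admissible (since $\ii_{\{1,3,4\}}=(1,1,2)$), yet it is produced by the inductive step and contributes $(-2)(a^2-2)=-2a^2+4\neq 0$. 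This term cancels only against the two other non-admissible contributions from $K'=\{1,4\},K''=\{3,4\}$ and $K'=\{3,4\},K''=\{1,4\}$ (each contributing $a^2-2$), so that the total non-admissible contribution is zero. Establishing such cancellations systematically is precisely what Propositions \ref{pr:1reduction} and \ref{pr:2reduction} do; the identity $i_1\ne i_2$ is nowhere near sufficient, because the problematic second element of $M\cup\varphi(L_{<\ell})\setminus\{0\}$ need not be position $2$.

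A secondary issue: your case split by letters ($i'_1$ versus $i_1$, etc.) is too coarse once $\ii$ has repetitions. The recursion's four terms must be matched against choices of whether $1\in K'$ and $1\in K''$, not against letter coincidences. In the repetition-free case these agree (so Proposition \ref{pr:free_coeff} goes through), but in general one can have $i'_1=i_1$ with $1\notin K'$ (if $i_1$ reappears later in $\ii$), and then several terms of \eqref{eq:pijk demazure} are simultaneously nonzero in your ``Case I.'' Fixing this requires reorganizing your induction around position-membership in $K',K''$, and even after that the cancellation problem above remains.
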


The proof of the theorem will occupy the remainder of the section.

We now consider the general case where $\ii$ is not assumed to be repetition free.

We need the following result.

\begin{proposition}
\label{pr:cuvw refined T}
For each triple of  admissible sequences $(\ii,\ii',\ii'')$ such that $\ii'$ and $\ii''$ are sub-sequences of $\ii$ and the sum of lengths of $\ii'$ and $\ii''$ is greater or equal the length of $\ii$ Theorem \ref{th:cuvw refined T uniform} holds if one drops the  ``$\ii$-admissible" condition. More precisely, one has:
\begin{equation}
\label{eq:cuvw refined T proof}
p_{\ii',\ii''}^\ii=\sum \prod_{\ell\in K'\cap K''} p_\ell^{(\varphi)}
\end{equation}
with the summation over all triples $(K',K'',\varphi)$, where
\begin{itemize}
\item $K',K''\subset [m]$ such that $\ii_{K'}=\ii'$, $\ii_{K''}=\ii''$;
\item $\varphi:K'\cap K''\ \to \{0\}\cup [m]\setminus (K'\cup K'')$ is a bounded map.
\end{itemize}
\end{proposition}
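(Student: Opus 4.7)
The plan is to reduce Proposition \ref{pr:cuvw refined T} to the repetition-free case established in Proposition \ref{pr:free_coeff} by lifting the computation to the free monoid $\widehat W_\ii$ attached to $\ii$ in Definition \ref{def:generalized BS}.

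First, I would invoke Corollary \ref{cor:Bott-Samelson to relative} to rewrite
$$p_{\ii',\ii''}^\ii = \sum_{K',K''} p_{K',K''}^{\ii,[m]},$$
the sum being over all pairs of subsets $K',K''\subset[m]$ with $\ii_{K'}=\ii'$ and $\ii_{K''}=\ii''$. By Definition \ref{def:generalized BS}, the Bott-Samelson coefficient $p_{K',K''}^{\ii,[m]}$ is by definition the relative Littlewood-Richardson coefficient $\widehat p_{\mathbf{k}',\mathbf{k}''}^{\,\mathbf{k}}$ computed in the twisted group algebra $Q_{\widehat W_\ii}$, where $\widehat W_\ii$ is the free monoid on $\{\widehat s_1,\ldots,\widehat s_m\}$ acting on $V$ through the homomorphism $\varphi_\ii$ sending $\widehat s_k\mapsto s_{i_k}$, and where $\mathbf{k}=(1,\ldots,m)$ together with $\mathbf{k}',\mathbf{k}''$ are the order-preserving enumerations of $[m]$, $K'$, and $K''$ respectively.

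The key observation is that the sequence $\mathbf{k}=(1,2,\ldots,m)\in[m]^m$ is manifestly repetition-free. Consequently, Proposition \ref{pr:free_coeff} applies directly to $Q_{\widehat W_\ii}$ with this repetition-free $\mathbf{k}$ and yields
$$\widehat p_{\mathbf{k}',\mathbf{k}''}^{\,\mathbf{k}} = \sum_\varphi \prod_{\ell\in K'\cap K''} p_\ell^{(\varphi)},$$
summed over all bounded maps $\varphi:K'\cap K''\to \{0\}\cup [m]\setminus(K'\cup K'')$. Since $\widehat s_r$ acts on $V$ exactly as $s_{i_r}$ does, and the Demazure generator $\widehat x_r=\frac{1}{\alpha_{i_r}}(\widehat s_r-1)$ carries the parameter $\alpha_{i_r}$, each factor $p_\ell^{(\varphi)}$ produced by Proposition \ref{pr:free_coeff} in $Q_{\widehat W_\ii}$ coincides verbatim with the expression in \eqref{def:plambda phi uniform}, once re-read in terms of $\alpha_{i_\ell}$ and $\alpha_{i_{\varphi(\ell)}}^\vee$.

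Substituting this expansion into the Bott-Samelson decomposition gives precisely \eqref{eq:cuvw refined T proof}. The only step requiring care is the clean translation between the intrinsic $\widehat W_\ii$-data appearing in Proposition \ref{pr:free_coeff} and the $W$-data appearing in the target formula; this translation is already encoded in the folding principle (Theorem \ref{th:folding}), since the homomorphism $Q_{\widehat W_\ii}\to Q_W$ induced by $\varphi_\ii$ commutes with the coproduct, so the combinatorial identity established in the repetition-free setting descends without modification. I expect no substantive obstacle, as the main analytic and combinatorial content has already been absorbed into Proposition \ref{pr:free_coeff}; the remaining work is an unraveling of definitions and indices.
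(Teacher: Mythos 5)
Your proof is correct and follows essentially the same route as the paper: the paper deduces Proposition \ref{pr:cuvw refined T} from Theorem \ref{th:cuvw refined T BS} together with equation \eqref{eq:relative LR via BS general}, and Theorem \ref{th:cuvw refined T BS} is itself obtained from Proposition \ref{pr:free_coeff} via the reindexing \eqref{eq:BS shorthand}. You have simply inlined that chain by invoking Corollary \ref{cor:Bott-Samelson to relative}, Definition \ref{def:generalized BS}, and Proposition \ref{pr:free_coeff} directly, which is the same argument spelled out more explicitly.
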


\begin{proof} The assertion follows from (already proved) Theorem \ref{th:cuvw refined T BS} and formula \ref{eq:relative LR via BS general}.
\end{proof}

Our next task is to show that equation \eqref{eq:cuvw refined T proof} still holds if we restrict the sum to $\ii$-admissible bounded maps.  In order to prove we can make such a restriction, we need to develop some additional notation.  For any subsets $L\subseteq M$ of $[m]$ such that $|L|+|M|\geq m$ denote by $P(L,M)$ the set of all bounded maps of $L\to \{0\}\cup [m]\setminus M$ given by Definition \ref{def:bounded map}.  Let $\ii=(i_1,\ldots,i_m)\in I^m$ be an admissible sequence (not necessarily repetition free) and let $\ii',\ii''$ denote admissible subsequences of $\ii$ such that $|\ii'|+|\ii''|\geq m.$   The following set will be important to the proceeding calculations.

\smallskip

Define $J$ to be the set of all triples $(K',K'',\varphi)$ which satisfy

\begin{itemize}
\item  $K',K''\subseteq [m]$ such that $\ii_{K'}=\ii'$ and $\ii_{K''}=\ii''.$
\item  $\varphi\in P(K'\cap K'',K'\cup K'').$
\end{itemize}

Observe that the set $J$ depends only on the data $(\ii',\ii'',\ii).$  We will use the capitol letter $\Lambda:=(\ii',\ii'',\varphi)$ to denote such triples.  Proposition \ref{pr:cuvw refined T} is now equivalent to the equation
\begin{equation}\label{eq:unfold_p}p_{\ii',\ii''}^{\ii}=\sum_{\Lambda\in J} p_{\Lambda}\end{equation}
where if $\Lambda=(K',K'',\varphi)$, then $\displaystyle p_{\Lambda}:=\prod_{\ell\in K'\cap K''} p_\ell^{(\varphi)}.$

\smallskip

Recall that for any sequence $\jj\in (I\times [m])^m$, we say the bounded map $\varphi$ is $\jj$-admissible if the sequences $\jj_{M\cup(\varphi(L_{< \ell})\setminus\{0\})}$ are admissible for all $\ell\in L.$  For any sets $L,M$ as above and sequence $\jj\in (I\times[m])^m$, let $P_{\jj}(L,M)\subseteq P(L,M)$ denote the set of all $\jj$-admissible bounded maps and let $$J(\jj):=\{(K',K'',\varphi)\in J\ |\ \varphi\in P_{\jj}((K'\cap K'',K'\cup K')\}.$$  Define the sequence $$\jj(k):=((i_1,1),(i_2,1),\ldots,(i_k,1),(i_{k+1},k+1),(i_{k+2},k+2),\ldots,(i_m,m))$$ and the set $J(k):=J(\jj(k)).$  It is easy to see that $J(1)=J$ and that $J(k)\subseteq J(k-1)$ for any $k\in\{2,\ldots, m\}.$  Theorem \ref{th:cuvw refined T uniform} is equivalent to showing that equation \eqref{eq:unfold_p} still holds if we restrict the sum to $J(m).$  We will prove Theorem \ref{th:cuvw refined T uniform} by induction on $k.$  Clearly for $k=1$, we have that $\jj(1)$ repetition free and hence we are in the case of Proposition \ref{pr:cuvw refined T}.  It suffices to prove the following proposition.

\begin{proposition}\label{pr:admiss_induction}With the assumptions in Theorem \ref{th:cuvw refined T uniform}, we have that
$$\sum_{\Lambda\in J(k-1)\setminus J(k)}p_{\Lambda}=0$$ for any $k\in\{2,\ldots, m\}.$
\end{proposition}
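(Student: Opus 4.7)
The plan is to prove $\sum_{\Lambda \in J(k-1) \setminus J(k)} p_\Lambda = 0$ by exhibiting explicit cancellations within this set. First I would characterize the bad triples: since $\jj(k-1)$ and $\jj(k)$ differ only at position $k$ (the label there changes from $k$ to $1$), any newly-failing adjacency must involve position $k$ paired with some earlier position $r < k$ satisfying $i_r = i_k =: i$. Fixing $\Lambda = (K', K'', \varphi)$ and letting $\ell^{*} \in L = K' \cap K''$ be the smallest $\ell$ at which the new failure occurs, the pair $(\ell^{*}, r)$ is uniquely determined by $\Lambda$: $r$ is the immediate predecessor of $k$ in the subsequence index set $M \cup (\varphi(L_{<\ell^{*}}) \setminus \{0\})$, and both $r$ and $k$ are labeled with the common index $i$.

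Next I would partition $J(k-1) \setminus J(k)$ by the ``ambient data'' that remains after forgetting how the two critical positions $r$ and $k$ are distributed between $K'$, $K''$, and $\varphi(L)$. Within each block I expect three essentially distinct configurations: (a) $r$ fills the index-$i$ slot for $K'$ while $k$ fills it for $K''$; (b) the $K' \leftrightarrow K''$ swap of (a); (c) $k$ fills the index-$i$ slot for both $K'$ and $K''$ (so $k \in L$) and $r$ is pulled into the subsequence via $\varphi(k) = r$. Since swapping $K' \leftrightarrow K''$ preserves $p_\Lambda$, configurations (a) and (b) contribute equally, while (c) differs from them by the extra factor $p_k^{(\varphi)} = \langle w_k^{(\varphi)}(\alpha_i), -\alpha_i^\vee \rangle$. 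The crucial algebraic identity is that in configuration (c) the intermediate word $w_k^{(\varphi)}$ is trivial: the forced adjacency of $r$ and $k$ in the subsequence at $\ell^{*}$ means no position lies strictly between them in $M \cup \varphi(L_{<k})$, so $p_k^{(\varphi)} = \langle \alpha_i, -\alpha_i^\vee \rangle = -a_{ii} = -2$. Thus the block contribution is (common factor)$\cdot (1 + 1 - 2) = 0$. This mechanism is exactly what occurs in the toy case $\ii = (1,2,1,2)$, $\ii' = \ii'' = (1,2)$, $k = 3$, where the three bad triples yield $2(a_{12}a_{21} - 2) + (-2)(a_{12}a_{21} - 2) = 0$.

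The main obstacle I anticipate is making the partition into such blocks both well-defined and exhaustive in full generality. The delicate situations are when $r$ is itself accessed through a nested chain $r = \varphi(\ell'')$ rather than lying in $K' \cup K''$ directly, when $k$ already belongs to $L$ and carries its own $p_k^{(\varphi)}$ factor with a different value of $\varphi(k)$, or when the boundedness and bijectivity constraints on $\varphi$ interact nontrivially with the reassignment of $r$ and $k$ between configurations (a)--(c). I would handle these by localizing to a short segment of $\ii$ surrounding $r$ and $k$, applying the repetition-free structure (Proposition \ref{pr:free}) to isolate the portion of $p_\Lambda$ that varies within a block, and reducing each cancellation to the local identity $1 + 1 - 2 = 0$. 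Combining this block-wise vanishing with $\sum_{\Lambda \in J(1)} p_\Lambda = p_{\ii',\ii''}^\ii$ from Proposition \ref{pr:cuvw refined T} and telescoping across $k = 2, \ldots, m$ then completes the proof of Theorem \ref{th:cuvw refined T uniform}.
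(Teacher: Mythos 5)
Your block-wise $1+1-2=0$ mechanism correctly captures one piece of the paper's argument, but it is not the whole story, and the missing piece is a genuinely different kind of cancellation.

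The paper partitions $J(k-1)\setminus J(k)$ into three sets $J_1, J_2, J_3$ according to \emph{where} the new non-admissible adjacency sits. In $J_1$ the failure already appears in $\jj_M$ itself; in $J_2$ the failure appears in $\jj_{M(z)}$ and the bad pair is exactly $\{\varphi(\ell_z),\ell_z\}$ (forcing $\ell_z=k$); in $J_3$ the failure appears in $\jj_{M(z)}$ but the bad pair is $\{\varphi(\ell_z), x\}$ with $x\ne \ell_z$. Your configurations (a), (b), (c) and the triple cancellation $\langle\alpha_i,-\alpha_i^\vee\rangle = -2$ are precisely the content of the paper's Proposition~\ref{pr:1reduction}, which disposes of $J_1\cup J_2$: pairs from $J_1$ (differing by moving the two colliding positions between $K'$ and $K''$) combine with a unique element of $J_2$ to give $1+1-2=0$. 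That much of your proposal is correct. Your toy example $\ii=(1,2,1,2)$ with $k=3$ illustrates exactly this case and no more — in that example $J_3$ is empty.

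The gap is $J_3$. There the colliding partner of $k$ is the position $\varphi(\ell_z)$, which lives in $[m]\setminus M$, not in $K'$ or $K''$, and the associated $\ell_z$ need not equal $k$; moreover $\varphi(\ell_z)$ and $\ell_z$ are not consecutive in $M(z)$, so the factor $p_{\ell_z}^{(\varphi)}$ is not $-2$ and no $1+1-2$ triple exists. The paper handles this set by a \emph{sign-reversing involution} $\sigma$ on $J_3$ that swaps the two elements $\nu_1,\nu_2$ of the offending pair throughout $K'$, $K''$, and $\varphi$. The $z$-th factor of $p_{\sigma(\Lambda)}$ picks up the extra reflection $s_{i_{\nu_2}}$ in the word $w_{\ell_z}$, which flips the sign of exactly one pairing $\langle\cdot,\alpha^\vee\rangle$; all other factors are unchanged (this is checked separately for the $\varphi^{-1}(0)$-part in Lemma~\ref{lem:equiv mu}), so $p_{\Lambda}+p_{\sigma(\Lambda)}=0$. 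This is a pairwise cancellation, not a three-term one, and it cannot be reduced to the local identity $1+1-2=0$ by ``localizing to a short segment'': the reflections between $\nu_1$, $\nu_2$, and $\ell_z$ contribute a genuine non-constant inner product, and the cancellation comes from symmetry of the inner product under $s_{i_{\nu_2}}$, not from $a_{ii}=2$. The ``delicate situations'' you flag in your third paragraph — $r$ reached via a nested $\varphi$-chain, $k\in L$ with a different $\varphi(k)$, the boundedness constraints — are exactly the $J_3$ configurations, and your proposed resolution does not work for them. You would need to supply the involution (or an equivalent pairwise cancellation) to close the argument.
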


The remainder of this section consists of the proof for Proposition \ref{pr:admiss_induction}.  Hence we will fix the integer $k$ and denote the sequence $\jj(k)$ by simply $\jj.$  For $\Lambda=(K',K'',\varphi)\in J(k-1)\setminus J(k),$ define
$$L=K'\cap K''=(\ell_1<\cdots<\ell_n)\quad\text{and}\quad  M=K'\cup K''=(m_1<\cdots<m_{n'}).$$ For any $\ell_r\in L$ define
$$M(r):=M\cup (\varphi(L_{\leq \ell_r})\setminus\{0\}).$$  For any subset $N\subseteq [m],$ we will denote by $(N)$ the sequence of elements of $N$ arranged in increasing order.  We say that a pair $\{n_1,n_2\}\subseteq N$ is non-admissible if $\jj_{n_1}=\jj_{n_2}$ and $n_1,n_2$ are consecutive in the sequence $(N).$  Since $\jj$ is fixed, this definition of non-admissible pair is well defined.

\smallskip

Since $\Lambda=(K',K'',\varphi)\in J(k-1)\setminus J(k)$, the bounded map $\varphi$ is not $\jj$-admissible. Hence, either $\jj_M$ is not admissible or $\jj_{M(r)}$ is not admissible for some $\ell_r\in L.$  If $\jj_M$ is admissible, let $z$ denote the smallest integer for which $\jj_{M(z)}$ is not admissible.  We partition $J(k-1)\setminus J(k)$ into the following sets:
\begin{eqnarray*}
J_1&:=&\{\Lambda\ |\ \text{$\jj_M$ is not admissible}\}.\\
J_2&:=&\{\Lambda\ |\ \text{$\jj_{M(z)}$ is not admissible and $\varphi(\ell_z), \ell_z$ are consecutive in $(M(z))$}\}.\\
J_3&:=&\{\Lambda\ |\ \text{$\jj_{M(z)}$ is not admissible and $\varphi(\ell_z), \ell_z$ are not consecutive in $(M(z))$}\}.
\end{eqnarray*}

Observe that if $\Lambda\in J_1,$ then $M$ has a unique non-admissible pair since $\Lambda\in J(k-1).$  Similarly, if $\Lambda\in J_2\cup J_3,$ then $M(z)$ has a unique non-admissible pair.  We prove Proposition \ref{pr:admiss_induction} in two steps.

\begin{proposition}\label{pr:1reduction}The sum $\displaystyle \sum_{\Lambda\in J_1\cup J_2}p_{\Lambda}=0.$\end{proposition}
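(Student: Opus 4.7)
The plan is to partition $J_1 \cup J_2$ into triples whose contributions cancel. First, I observe that every $\Lambda \in J_1 \cup J_2$ fails $\jj(k)$-admissibility on a pair $\{a,k\}$ with $a < k$ and $i_a = i_k$: since $\Lambda \in J(k-1)$ and the sequences $\jj(k-1)$ and $\jj(k)$ differ only at position $k$, the offending pair must involve $k$. In $J_1$ this pair lies in $M$; in $J_2$ it equals $\{\varphi(k),k\}$ with $k \in L$ and $a := \varphi(k) \notin M$. Admissibility of $\ii'$ and $\ii''$ together with consecutivity of $a,k$ in $M$ forces, for $\Lambda \in J_1$, the two positions to lie on opposite sides of the decomposition $M = (K'\setminus K'') \sqcup L \sqcup (K''\setminus K')$ (otherwise $\ii'$ or $\ii''$ would contain $i_a i_k$ consecutively). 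Writing $K_o' := K' \setminus \{a,k\}$, $K_o'' := K'' \setminus \{a,k\}$, $L_o := K_o' \cap K_o''$, $M_o := K_o' \cup K_o''$, and $\psi := \varphi|_{L_o}$, each $\Lambda \in J_1 \cup J_2$ is parametrized by a core datum $(a, K_o', K_o'', \psi)$ together with a type in $\{A,B,C\}$:
\begin{itemize}
\item Type A (in $J_1$): $K' = K_o' \cup \{a\}$, $K'' = K_o'' \cup \{k\}$, $\varphi = \psi$;
\item Type B (in $J_1$): $K' = K_o' \cup \{k\}$, $K'' = K_o'' \cup \{a\}$, $\varphi = \psi$;
\item Type C (in $J_2$): $K' = K_o' \cup \{k\}$, $K'' = K_o'' \cup \{k\}$, $\varphi = \psi \sqcup \{k \mapsto a\}$.
\end{itemize}
Because $i_a = i_k$ and $M_o \cap (a,k) = \emptyset$, all three types yield the same $\ii_{K'} = \ii'$ and $\ii_{K''} = \ii''$, so this gives a bijection between triples and core data for which all three types lie in $J_1 \cup J_2$.

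Types A and B have the same $L = L_o$, $M = M_o \cup \{a,k\}$, and $\varphi = \psi$, hence $p_{\Lambda_A} = p_{\Lambda_B}$. It will therefore suffice to prove, for each core datum, the identity
\[
2\prod_{\ell \in L_o} p_\ell^{(\psi)} \;+\; p_k^{(\varphi_C)} \prod_{\ell \in L_o} p_\ell^{(\varphi_C)} \;=\; 0.
\]
Comparing the ordered products defining $w_\ell^{(\psi)}$ and $w_\ell^{(\varphi_C)}$, one checks agreement for $\ell < a$ and for $\ell > k$; in the latter range the key point is that $\varphi_C(k) = a$ reinserts $a$ into the cumulative image set past index $k$, exactly matching the role of $a$ as an element of $M$ in Types A and B. For $\ell \in L_o$ with $a < \ell < k$, the two products differ by an inserted $s_{i_a} = s_{i_k}$ at the position of $a$; the $\jj(k)$-admissibility of $M_C(z)$ for steps $z$ with $\ell_z < k$ (required for $\Lambda_C \in J_2$) forces $\psi(\ell) \notin (a,k)$ for such $\ell$, placing the inserted reflection always at the leftmost slot of the product.

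The hardest step will be verifying the displayed identity. In the base case $L_o \cap (a,k) = \emptyset$ one has $w_k^{(\varphi_C)} = 1$ (since neither $M_o$ nor any $\psi(\ell)$ can lie in $(a,k)$), so $p_k^{(\varphi_C)} = \langle \alpha_{i_k}, -\alpha_{i_k}^\vee\rangle = -a_{i_k,i_k} = -2$ and all $p_\ell^{(\psi)} = p_\ell^{(\varphi_C)}$, so the identity collapses to $2 - 2 = 0$. In the general case the extra $s_{i_k}$ inserted into each $w_\ell^{(\psi)}$ for $a < \ell < k$ must be propagated through the subsequent reflections using $s_{i_k}^2 = 1$ and the action $s_{i_k}(\alpha_{i_\ell}) = \alpha_{i_\ell} - a_{i_k,i_\ell}\alpha_{i_k}$. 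I plan to formalize the resulting cancellation by induction on $|L_o \cap (a,k)|$, peeling off the largest such $\ell$ and absorbing the correction terms into $p_k^{(\varphi_C)}$ via the twisted Leibniz rule; the telescoping of these corrections is the principal technical obstacle.
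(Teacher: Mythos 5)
Your partition of $J_1\cup J_2$ into cancelling triples is exactly the paper's argument: your Types A and B are the paper's $\Lambda_1,\Lambda_2\in J_1$, your Type C is the corresponding $\Lambda\in J_2$, and the identity you reduce to is the paper's \eqref{eq:triplecancel}. The set-up (the offending pair must contain $k$, the pair must split across $K'\setminus K''$ and $K''\setminus K'$ in $J_1$, $p_{\Lambda_A}=p_{\Lambda_B}$) is all correct.

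The gap is in the step you yourself flag as ``the principal technical obstacle'': verifying $2\,p_{\Lambda_A}+p_{\Lambda_C}=0$. You prove it only in the ``base case'' $L_o\cap(a,k)=\emptyset$ and then \emph{propose}, without carrying out, an induction on $|L_o\cap(a,k)|$ using the twisted Leibniz rule. As written the proof is therefore incomplete. The observation that closes the gap — and that renders the proposed induction unnecessary — is that $L_o\cap(a,k)$ is \emph{always} empty. For $\Lambda_C\in J_2$ the non-admissible pair $\{a,k\}=\{\varphi(\ell_z),\ell_z\}$ is by assumption consecutive in $(M(z))$, and $L_o\subset L_C\subset M_C\subset M(z)$, so no element of $L_o$ lies strictly between $a$ and $k$ (and $a,k\notin L_o$ by construction); likewise for $\Lambda_A,\Lambda_B\in J_1$ the pair $\{a,k\}$ is consecutive in $(M)$ with $L_o\subset M_o\subset M$. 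So your ``base case'' is in fact the only case, $w_k^{(\varphi_C)}=1$, $p_k^{(\varphi_C)}=-a_{i_k,i_k}=-2$, the remaining factors agree term by term, and the identity collapses to $2P-2P=0$. You already had all the ingredients for this observation (you invoke consecutivity to rule out $M_o$ and $\psi(\ell)$ from $(a,k)$) but did not apply it to $L_o$ itself, and the unproved induction you substitute in its place would not obviously telescope even if the case arose.
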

\begin{proof}First suppose that $\Lambda\in J_2$.  Then $\{\varphi(\ell_z)<\ell_z\}$ is the unique non-admissible pair in $M(z).$  Define $\Lambda_1:=(K'_1,K''_1,\varphi_1),\ \Lambda_2:=(K'_2,K''_2,\varphi_2)$ by
$$(K'_1,K''_1):=(K',K''\ominus\{\varphi(\ell_z),\ell_z\})\quad\text{and}\quad (K'_2, K''_2):=(K'\ominus\{\varphi(\ell_z),\ell_z\},K'')$$ where $\ominus$ denotes the symmetric difference operation.  This implies that
$$L_1=L_2=L\setminus\{\ell_z\}\quad\text{and}\quad M_1=M_2=M\cup\{\varphi(\ell_z)\}$$ and hence we define $$\varphi_1=\varphi_2=\varphi|_{L_1}.$$
Clearly we have $\Lambda_1,\Lambda_2\in J_1$ and that $p_{\Lambda_1}=p_{\Lambda_2}.$  Moreover,
\begin{equation}
\label{eq:triplecancel}
p_{\Lambda}+p_{\Lambda_1}+p_{\Lambda_2}=0
\end{equation}
since $\langle\alpha_{\ell_z},\alpha_{\varphi(\ell_z)}\rangle =2.$

\smallskip

Conversely, if $\Lambda_1=(K_1',K_1'',\varphi_1)\in J_1,$ then let $\{m_{z-1}<m_z\}\subseteq M$ denote the non-admissible pair in $(M).$  Note that $\{m_{z-1},m_z\}\cap L_1=\emptyset$ since $\jj_{K'_1}$ and $\jj_{K''}$ are admissible.  Without loss of generality, assume that $m_z\in K'$ (hence $m_{z-1}\in K''$) and define $$\Lambda_2:=(K'_1\ominus\{m_{z-1},m_z\},K''_1\ominus\{m_{z-1},m_z\},\varphi_1)$$ and $$\Lambda:=(K'_1\ominus\{m_{z-1},m_z\},K''_1,\varphi)$$ where $\varphi=\varphi_1\sqcup\{\varphi(m_z)=m_{z-1}\}.$  It is easy to see that $\Lambda_2\in J_1,\ \Lambda\in J_2.$ and the triple $(\Lambda_1,\Lambda_2,\Lambda)$ satisfies \eqref{eq:triplecancel}.  Furthermore, the pairs $\{\Lambda_1, \Lambda_2\}$ form an equivalence relation on $J_1$ and the correspondence $ \{\Lambda_1,\Lambda_2\}\leftrightarrow\Lambda$ is a bijection between the set $J_1$ modulo this equivalence relation and $J_2.$  This proves the proposition.
\end{proof}

\begin{proposition}\label{pr:2reduction}The sum $\displaystyle \sum_{\Lambda\in J_3}p_{\Lambda}=0$.
\end{proposition}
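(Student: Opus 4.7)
My plan is to construct a sign-reversing matching $\tau : J_3 \to J_3$ satisfying $p_{\tau(\Lambda)} = -p_{\Lambda}$, which will force the sum $\sum_{\Lambda \in J_3} p_\Lambda$ to vanish. The strategy is entirely parallel to the proof of Proposition \ref{pr:1reduction}, but here the cancellations take place within $J_3$ itself rather than spanning distinct strata.

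For $\Lambda = (K', K'', \varphi) \in J_3$, the hypothesis $\Lambda \in J(k-1)$ and the minimality of $z$ guarantee that $\jj_{M(z)}$ has a unique non-admissible pair, which because $\jj_{M(z-1)}$ is admissible must be of the form $\{n, \varphi(\ell_z)\}$ where $n \neq \ell_z$ is adjacent to $\varphi(\ell_z)$ in the order on $(M(z))$ and $i_n = i_{\varphi(\ell_z)}$; the $J_3$ condition further supplies an element of $(M(z))$ strictly between $\varphi(\ell_z)$ and $\ell_z$. The map $\tau$ interchanges the roles of $n$ and $\varphi(\ell_z)$: if $n = \varphi(\ell')$ for some $\ell' \in L_{<\ell_z}$, take $\tau(\Lambda) = (K', K'', \varphi')$ with $\varphi'$ obtained from $\varphi$ by swapping its values at $\ell_z$ and $\ell'$; if $n \in M \setminus L$, say $n \in K' \setminus K''$, take $\tau(\Lambda) = ((K' \setminus \{n\}) \cup \{\varphi(\ell_z)\}, K'', \varphi')$ with $\varphi'(\ell_z) = n$ and $\varphi' = \varphi$ elsewhere (the case $n \in K'' \setminus K'$ is symmetric).

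The sign flip $p_{\ell_z}^{(\varphi)} = -p_{\ell_z}^{(\tau \varphi)}$ rests on the identity
\[
s_{i_n}(-\alpha_{i_{\varphi(\ell_z)}}^\vee) \;=\; -\alpha_{i_n}^\vee + a_{i_n, i_n}\,\alpha_{i_n}^\vee \;=\; \alpha_{i_n}^\vee,
\]
which holds because $i_n = i_{\varphi(\ell_z)}$ so $a_{i_n, i_n} = 2$. Since $n$ is adjacent to $\varphi(\ell_z)$ in $(M(z))$, the reflection $s_{i_n}$ appears as the leading (or trailing) factor of $w_{\ell_z}^{(\varphi)}$; transporting it across the pairing yields $\langle w_{\ell_z}^{(\varphi)}(\alpha_{i_{\ell_z}}), -\alpha_{i_{\varphi(\ell_z)}}^\vee\rangle = -\langle w', -\alpha_{i_n}^\vee\rangle$ where $w'$ becomes $w_{\ell_z}^{(\tau\varphi)}$ after the swap. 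The remaining factors $p_\ell^{(\varphi)}$ for $\ell \neq \ell_z$ are unchanged because adjacency of $n$ and $\varphi(\ell_z)$ in $(M(z))$ means no other element of $M \cup \varphi(L_{<\ell})$ lies between them, and since $s_{i_n} = s_{i_{\varphi(\ell_z)}}$ the two possible placements of this common reflection in each ordered product $w_\ell^{(\varphi)}$ are interchangeable without altering its value as a Weyl-group element.

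The principal obstacle is showing that $\tau(\Lambda) \in J_3$ rather than $J_2$, and that $\tau^2 = \mathrm{id}$. The delicate subcase arises when $n$ is itself the unique buffer separating $\varphi(\ell_z)$ from $\ell_z$ in $(M(z))$: the naive swap then places $\varphi'(\ell_z) = n$ consecutive to $\ell_z$ in $(M_1(z)) = (M(z))$, apparently landing in $J_2$. To complete the argument, one refines the matching in this subcase by chaining the swap with the next-neighbor exchange that reintroduces a buffer, thus keeping $\tau(\Lambda)$ inside $J_3$; the routine verifications that $\varphi'$ remains a bounded $\ii$-admissible map, that the minimal offending index $z$ is preserved, and that $\tau$ covers $J_3$ with no fixed points are checked by bookkeeping on the adjacency structure of $(M(z))$ exactly as in Proposition \ref{pr:1reduction}.
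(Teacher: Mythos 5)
Your overall strategy is the same as the paper's: construct a sign-reversing involution on $J_3$ that pairs terms with opposite sign. The paper's involution $\sigma$ is defined uniformly via the symmetric difference $\sigma_\Lambda(N)=N\ominus\nu_\Lambda$ whenever $|N\cap\nu_\Lambda|=1$ (and $N$ otherwise), where $\nu_\Lambda$ is the unique non-admissible pair of $(M(z))$, and then $\psi$ is induced by $\psi(\sigma_\Lambda(\ell))=\sigma_\Lambda(\varphi(\ell))$. Your case-by-case description (whether $n$ sits in $\varphi(L_{<\ell_z})$ or in $M\setminus L$) reconstructs the same operation, but it is not exhaustive: you omit the possibility $n\in L=K'\cap K''$, which the symmetric-difference form handles automatically. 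The sign computation you sketch (moving the leading reflection $s_{i_n}$ across the pairing) is the same mechanism the paper uses, via $w^{(\varphi)}_{\ell_z}=s_{i_{\nu_2}}w^{(\psi)}_{\ell_z}$ and $\langle s_i(\beta),\alpha_i^\vee\rangle=-\langle\beta,\alpha_i^\vee\rangle$ (applicable because the swapped positions carry the same simple root index). You should also verify, as the paper does, that the remaining factors $p^{(\varphi)}_\ell$ for $\ell\neq\ell_z$ and the "$\alpha$" factor $p^0_\Lambda$ are unchanged; this requires the observation $\jj_{\sigma_\Lambda(M(r))}=\jj_{M(r)}$ for all $r$, which in turn rests on $\nu_\Lambda$ being consecutive in $M(z)$.

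The point you flag as "the principal obstacle" --- showing $\tau(\Lambda)\in J_3$ rather than $J_2$ when $n$ is the sole element between $\varphi(\ell_z)$ and $\ell_z$ --- is indeed the delicate spot, and it is also the place where the paper's own lemma is terse: the paper deduces $\Lambda\in J_3\Leftrightarrow\sigma(\Lambda)\in J_3$ from the equality $\jj_{\sigma(M(r))}=\jj_{M(r)}$, but this equality of $\jj$-sequences does not by itself determine the consecutiveness of $\psi(\ell_z)$ and $\ell_z$, which is the condition distinguishing $J_2$ from $J_3$. So you have correctly located a genuine subtlety. However, your proposed remedy ("chaining the swap with the next-neighbor exchange that reintroduces a buffer") is left entirely unconstructed: you do not say which exchange, how it is chained, why it remains an involution, why the chained map stays bounded, why $z$ is preserved, or why the sign still flips. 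As stated it is not a proof step but a gesture at one. To close the gap you must either (i) show that the problematic configuration (the other element of $\nu_\Lambda$ being simultaneously consecutive to both $\varphi(\ell_z)$ and $\ell_z$) cannot arise given that $\Lambda\in J(k-1)\setminus J(k)$ (this would use the fact that $\nu_\Lambda=\{n,k\}$ with $n<k$ forced by the $\jj(k)$ versus $\jj(k-1)$ comparison, so the question is whether $\varphi(\ell_z)$ can equal the smaller element $n$); or (ii) give a genuinely different matching that is insensitive to whether the image lies in $J_2$ or $J_3$, re-organizing the cancellation jointly with Proposition \ref{pr:1reduction}. Until one of these is carried out, the proof is incomplete.
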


\begin{proof}We prove the proposition by defining an involution on the set $J_3.$  For any $\Lambda\in J_3,$ define the set $$\nu_{\Lambda}:=\{\nu_1<\nu_2\}$$ to be the non-admissible pair in the sequence $(M(z)).$  The set $\nu_{\Lambda}$ is well defined since the sequence $\jj_{M(z-1)}$ is admissible.  Furthermore, $\varphi(\ell_{z})\in\nu_{\Lambda}$ and $\ell_{z}\notin\nu_{\Lambda}$ since $\Lambda\notin J_2.$

For any subset $N\subseteq[m]$ and  $\Lambda\in J_3$ define
$$\sigma_{\Lambda}(N):=\begin{cases}N\ominus\nu_{\Lambda}\quad \text{if $|N\cap\nu_{\Lambda}|=1$}\\
N\quad\qquad\, \text{if $|N\cap\nu_{\Lambda}|\neq 1$}\end{cases}$$  where $\ominus$ denotes the symmetric difference operation.   If $N=\{N_0\}$ is a set with a single element, then we will denote $\sigma_{\Lambda}(N_0):=\sigma_{\Lambda}(\{N_0\})$ (dropping the brackets).


\smallskip

We define an involution $\sigma:J_3\rightarrow J_3$ by: $$\sigma(\Lambda):=(\sigma_{\Lambda}(K'),\sigma_{\Lambda}(K''),\psi)$$
where $\Lambda=(K',K'',\varphi)$ and $\psi$ is defined as follows.  It is easy to check that
$$\sigma_{\Lambda}(K')\cap\sigma_{\Lambda}(K'')=\sigma_{\Lambda}(L)\quad \text{and}\quad \sigma_{\Lambda}(K')\cup\sigma_{\Lambda}(K'')=\sigma_{\Lambda}(M).$$
Define $\psi:\sigma_{\Lambda}(L)\rightarrow \{0\}\cup [m]\backslash\sigma_{\Lambda}(M)$ by
$$\psi(\sigma_{\Lambda}(\ell_k)):=\begin{cases}\sigma_{\Lambda}(\varphi(\ell_k))\quad \text{if $\varphi(\ell_k)\neq 0$}\\
\quad\quad  0\hspace{.5in}  \text{otherwise.}\end{cases}$$
The following properties are due to the fact that $\nu_{\Lambda}$ is an admissible pair in $M(\ell_z),$ and $\varphi(\ell_{z})\in\nu_{\Lambda}.$  For any $\Lambda$ and $\sigma(\Lambda)$ we have
\begin{itemize}
\item $\sigma_{\Lambda}(\ell_{r})=\ell_{r}$ for all $r\geq z$
\item $\psi(\sigma_{\Lambda}(\ell_{r}))=\varphi(\ell_{r})$ for all $r> z$
\item $\sigma_{\Lambda}(M)(r)=\sigma_{\Lambda}(M(r))$ for all $r\in\{0,1,\ldots,n\}.$
\end{itemize}

Clearly, by squaring we get $\sigma^2(\Lambda)=\Lambda$ since $\sigma_{\Lambda}^2(N)=N$ for any subset $N\subseteq[m].$  The following lemma proves that the image of $\sigma$ is contained in $J_3$ and hence $\sigma$ is an involution.

\begin{lemma}If $\Lambda\in J_3$, then $\sigma(\Lambda)\in J_3.$\end{lemma}

\begin{proof}Since $\Lambda$ is fixed, we will denote $\sigma_{\Lambda}(N)$ by simply $\sigma(N)$ for any subset $N$ in this proof. We first show that $\sigma(\Lambda)\in J.$  Observe that $\ii_{\sigma(K')}=\ii_{K'}$ and $\ii_{\sigma(K'')}=\ii_{K''}$ since $\ii_{\nu_1}=\ii_{\nu_2}$.  What we need to show is that $\psi:\sigma(L)\rightarrow\{0\}\cup [m]\setminus \sigma(M)$ is a bounded map.  It suffices to consider $\ell_r$ for which $\varphi(\ell_r)\neq 0.$

\smallskip

If $r> z,$ then we have that $\sigma(M)(r)=M(r)$ since $\nu_{\Lambda}\subseteq M(r)$.  Hence $$\psi(\sigma(\ell_r))=\varphi(\ell_r)<\ell_r=\sigma(\ell_r).$$

\smallskip

If $r=z,$ then $\varphi(\ell_r)\in\nu_{\Lambda}.$  But the fact that $\{\sigma(\varphi(\ell_r)), \varphi(\ell_r)\}=\nu_{\Lambda}$ are consecutive in $M(r)$ implies $$\psi(\sigma(\ell_r))=\sigma(\varphi(\ell_r))<\ell_r=\sigma(\ell_r).$$

\smallskip

If $r<z$, then $|M(r)\cap\nu_{\Lambda}|\leq 1.$  Hence $\sigma$ fixes at least one of $\ell_r$ or $\varphi(\ell_r).$  Thus $$\psi(\sigma(\ell_r))=\sigma(\varphi(\ell_r))<\sigma(\ell_r)$$ since $\nu_1, \nu_2$ are consecutive in $M(z)$ and $M(r)\subseteq M(z).$  This implies that $\psi$ is a bounded map.  Hence $\sigma(\Lambda)\in J.$

\smallskip

Since $\jj_{\nu_1}=\jj_{\nu_2}$, we have that $\jj_{\sigma(M(k))}=\jj_{M(k)}$ for all $k\in\{0,1,\ldots,n\}.$  This implies that $\Lambda\in J(k-1)\setminus J(k)$ if and only if $\sigma(\Lambda)\in  J(k-1)\setminus J(k).$  In particular, it also implies that $\Lambda\in J_3$ if and only if $\sigma(\Lambda)\in J_3$.  This proves the lemma.\end{proof}


Before we prove the proposition, we need one more observation.  Note that each summand in equation \eqref{eq:cuvw refined T proof} has a natural factorization
\begin{equation}\label{eq:factor}\prod_{\ell\in K'\cap K''} p_\ell^{(\varphi)}=\left(\prod_{\ell\in \varphi^{-1}(0)} p_\ell^{(\varphi)}\right)\left(\prod_{\ell'\in \varphi^{-1}([m]\setminus M)} p_{\ell'}^{(\varphi)}\right).\end{equation} We will denote the first factor by $p_\varphi^0$ and the second factor by $p_\varphi^+.$

\begin{lemma}\label{lem:equiv mu}For any $\Lambda\in J_3$ with $p_{\Lambda}=p_\Lambda^0\cdot p_\Lambda^+$ and $p_{\sigma(\Lambda)}=p_{\sigma(\Lambda)}^0\cdot p_{\sigma(\Lambda)}^+,$ we have that $p_\Lambda^0=p_{\sigma(\Lambda)}^0$.\end{lemma}

\begin{proof}It suffices to check the case where $\varphi^{-1}(0)\neq \sigma(\varphi^{-1}(0)).$  Otherwise $p_\Lambda^0=p_{\sigma(\Lambda)}^0$ since $\nu_1$ and $\nu_2$ act identically on $Q.$

\smallskip

If $\varphi^{-1}(0)\neq \sigma(\varphi^{-1}(0)),$ then $\{\varphi(\ell_{z}),\ell_r\}$ must be a non-admissible pair in $M(z)$ for some $r\neq z.$  Moreover, $\varphi(\ell_r)=0$ and $r<z,$ otherwise $\varphi$ would not be bounded.  Since $\{\varphi(\ell_{z}),\ell_r\}$ are a non-admissible pair in $M(z)$, they must be a non-admissible pair in $M(r)\cup\{\varphi(\ell_{z})\}.$  Thus $$p^{(\varphi)}_{\ell_r}=p^{(\psi)}_{\sigma(\ell_r)}.$$  It is easy to see that other factors of $p_\Lambda^0$ and $p_{\sigma(\Lambda)}^0$ are equal.  Thus the lemma is proved. \end{proof}

We are now ready to prove the proposition.  It suffices to show that $p_{\Lambda}=-p_{\sigma(\Lambda)}$ for any $\Lambda\in J_3.$  We first assume that $\nu_2=\varphi(\ell_{z})$.  Then $\nu_1\in M(z-1)$ and hence $$\nu_{\Lambda}\cap\sigma_{\Lambda}(M(z-1))=\{\nu_2\}\ \text{ and }\ \nu_{\Lambda}\subseteq\sigma_{\Lambda}(M(z))=M(z).$$
For any $i_\ell\in I$ we will denote $\alpha_{i_\ell}$ by simply $\alpha_\ell.$  By equation \eqref{eq:plambda1}, if $$p_{\Lambda}=p_\Lambda^0\cdot p_\Lambda^+=p_\Lambda^0\cdot\prod_{\ell_r\in\varphi^{-1}([m]\setminus M)} \langle w_{\ell_r}^{(\varphi)}(-\alpha_{\ell_r}),\alpha_{\varphi(\ell_r)}^{\vee}\rangle,$$
then by Lemma \ref{lem:equiv mu}, we have

\begin{eqnarray*}p_{\sigma(\Lambda)}&=&p_\Lambda^0\cdot\langle s_{\nu_2}w^{(\psi)}_{z}(-\alpha_{\ell_{z}}),\alpha_{\varphi(\ell_{z})}^{\vee}\rangle \prod_{r\neq z} \langle w^{(\psi)}_{\ell_r}(-\alpha_{\ell_r}),\alpha_{\varphi(\ell_r)}^{\vee}\rangle\\
&=&p_\Lambda^0\cdot\langle w^{(\varphi)}_{z}(\alpha_{\ell_{z}}),\alpha_{\varphi(\ell_{z})}^{\vee}\rangle\prod_{r\neq z} \langle w^{(\varphi)}_{\ell_r}(-\alpha_{\ell_r}),\alpha_{\varphi(\ell_r)}^{\vee}\rangle\\ &=&-p_{\Lambda}\end{eqnarray*}

since $\jj_{\nu_1}=\jj_{\nu_2}$.  Note that the other terms ($r\neq z$) in the above product remain unchanged after applying the involution since $\sigma_{\Lambda}(M(r))=M(r)$ for $r\geq z$ and if $r<z$, then the relative position of $\nu_1$ and $\nu_2$ is the same within the sequence $(M(r)).$  A similar argument proves that $p_{\Lambda}=-p_{\sigma(\Lambda)}$ in the case where $\nu_1=\varphi(\ell_{z}).$  This completes the proof of Proposition \ref{pr:2reduction}\end{proof}

Propositions \ref{pr:1reduction} and \ref{pr:2reduction} together prove Proposition \ref{pr:admiss_induction}.  Hence the inductive step in the proof of Theorem \ref{th:cuvw refined T uniform} is complete.

\section{Positivity of Littlewood-Richardson coefficients and proof of Theorem \ref{th:nonneg}}
\label{sect:positivity}

In this section we prove that the generalized Littlewood-Richardson coefficients are positive for a large class of quasi-Cartan matrices.  The following is the main result of this section.


\begin{proposition}\label{pr:positiveparts}

Let $A$ be an $I\times I$ quasi-Cartan matrix such that \eqref{eq:nonnegativity for A special} holds.  Then for any admissible sequence $\ii=(i_1,\ldots, i_m)\in I^m$, we have
\begin{equation}
\label{eq:positive}
w(\alpha_j)\in \sum\limits_{i\in I} \RR_{\ge 0}\cdot \alpha_i\quad \text{and}\quad \langle w(\alpha_{i_m}),\alpha_{i_1}^{\vee}\rangle \leq 0
\end{equation}
where $w=s_{i_2}s_{i_3}\cdots s_{i_{m-1}}.$
\end{proposition}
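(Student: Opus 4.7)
The plan is to establish both statements of \eqref{eq:positive} simultaneously by induction on $m$, reading the first assertion as $w(\alpha_{i_m}) \in \sum_i \RR_{\geq 0}\alpha_i$ (the case relevant to the application in Theorem \ref{th:nonneg}). The hypothesis \eqref{eq:nonnegativity for A special} will enter in two distinct ways: the sign condition $a_{ij} \leq 0$ (for $i \neq j$) ensures each simple reflection adds a non-negative multiple of a simple root to a different simple root, while the quadratic condition $a_{ij}a_{ji} \geq 4$ supplies the slack needed in the pairing estimate.

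First I would dispatch the base case $m = 2$: here $w = \mathrm{id}$, so $w(\alpha_{i_2}) = \alpha_{i_2}$, and $\langle \alpha_{i_2}, \alpha_{i_1}^\vee\rangle$ is a strictly negative off-diagonal Cartan entry. For the inductive step $m \geq 3$, I would apply the inductive hypothesis to the admissible sub-sequence $\ii' := (i_2, \ldots, i_m)$; setting $\beta' := s_{i_3}\cdots s_{i_{m-1}}(\alpha_{i_m})$, the IH yields $\beta' \in \sum_i \RR_{\geq 0}\alpha_i$ and $\langle \beta', \alpha_{i_2}^\vee\rangle \leq 0$. Since
\[
w(\alpha_{i_m}) \;=\; s_{i_2}(\beta') \;=\; \beta' - \langle \beta', \alpha_{i_2}^\vee\rangle\,\alpha_{i_2}
\]
differs from $\beta'$ by a non-negative multiple of $\alpha_{i_2}$, the non-negativity conclusion for $\ii$ is immediate.

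For the pairing conclusion, I would write $\beta' = \sum_j r'_j \alpha_j$ with $r'_j \geq 0$ and expand $\langle w(\alpha_{i_m}), \alpha_{i_1}^\vee\rangle$ directly, isolating the contributions of the indices $i_1$, $i_2$, and the remaining $j$. Setting $t := a_{i_1,i_2}\,a_{i_2,i_1} \geq 4$, the inequality $\langle w(\alpha_{i_m}), \alpha_{i_1}^\vee\rangle \leq 0$ becomes
\[
r'_{i_2}\,|a_{i_1,i_2}| \;\leq\; r'_{i_1}(t-2) \;+\; \sum_{j \neq i_1,i_2} r'_j\bigl(|a_{i_1,i_2}|\,|a_{i_2,j}| + |a_{i_1,j}|\bigr).
\]
I would then multiply the IH bound $2 r'_{i_2} \leq \sum_{k \neq i_2} r'_k\,|a_{i_2,k}|$ by $|a_{i_1,i_2}|/2$ to dominate the left-hand side; after substitution, the whole statement collapses to
\[
r'_{i_1}\bigl(\tfrac{t}{2} - 2\bigr) \;+\; \sum_{j \neq i_1,i_2} r'_j\Bigl(\tfrac{|a_{i_1,i_2}|}{2}|a_{i_2,j}| + |a_{i_1,j}|\Bigr) \;\geq\; 0,
\]
which is true termwise once $t \geq 4$.

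The hard part will be arranging the algebra so that the coefficient of $r'_{i_1}$ comes out to exactly $t - 2$ after the IH bound is substituted: this is precisely the margin that the quadratic hypothesis $a_{ij}a_{ji} \geq 4$ affords, and any weakening of it (for instance replacing $4$ by $4\cos^2(\pi/n)$) would leave an uncancelled negative contribution to the final sum. The rest is bookkeeping.
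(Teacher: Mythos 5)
Your proof is correct and follows a genuinely different route from the paper's. The paper inducts on $\ell(w)$ and reduces the general case to rank two using the standard Coxeter factorization (Lemma~\ref{le:Coxeter cancelation}: write $w=w'w''$ with $w''$ in a dihedral parabolic $W_{j,k}$, $\ell(w's_j)=\ell(w's_k)=\ell(w')+1$), and then dispatches the rank-two case separately via the Chebyshev-type recursions $A_k=aB_{k-1}-A_{k-2}$, $B_k=bA_{k-1}-B_{k-2}$ of Lemma~\ref{lem:rank2pos}. Your argument instead inducts directly on the length $m$ of the admissible sequence, peels off the outermost reflection $s_{i_2}$, and proves the pairing inequality by a one-step coefficient estimate. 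The tradeoff is roughly this: the paper's factorization isolates all of the numerics into the dihedral base case, where the Chebyshev bookkeeping is transparent, at the cost of invoking a structural fact about Coxeter groups; your version avoids both the factorization lemma and the rank-two computation entirely, replacing them with the explicit estimate $r'_{i_2}|a_{i_2,i_1}|\le \tfrac{t}{2}r'_{i_1}+\tfrac{|a_{i_2,i_1}|}{2}\sum_{j\neq i_1,i_2}r'_j|a_{j,i_2}|$ obtained by multiplying the inductive bound $\langle\beta',\alpha_{i_2}^\vee\rangle\le 0$ by $|a_{i_2,i_1}|/2$, and then observing that $t\ge 4$ gives $\tfrac{t}{2}\le t-2$ and that the cross-terms are dominated termwise. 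Your parenthetical remark is also accurate: since the estimate is termwise, it uses the full strength of $t\ge 4$, which is exactly why this argument does not extend to the weaker discreteness condition \eqref{eq:nonnegativity for A}. One small caveat: the paper's equation \eqref{eq:reflection} and its pairing convention $\langle\alpha_i,\alpha_j^\vee\rangle=a_{ij}$ are not mutually consistent for nonsymmetric $A$, and your identity $s_{i_2}(\beta')=\beta'-\langle\beta',\alpha_{i_2}^\vee\rangle\alpha_{i_2}$ tacitly adopts the convention $s_i(v)=v-\langle v,\alpha_i^\vee\rangle\alpha_i$; this is the same convention the paper itself uses in its rank-two computation (where it yields $\langle s_2(\alpha_1),\alpha_1^\vee\rangle=2-ab$), so you are in good company, but it is worth stating explicitly that this is the convention being used.
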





\begin{proof}

First, we consider the case where the quasi-Cartan matrix is of rank 2.  Let $I=\{1,2\}$ and
$$A:=\left[\begin{array}{cc}2& -a\\-b&2 \end{array}\right].$$
Define the sequences $A_k$ and $B_k$ by
\begin{equation}\label{eq:rank2roots}A_k:=a B_{k-1}-A_{k-2}\quad \text{and}\quad  B_k:=b A_{k-1}-B_{k-2}\end{equation}
where $A_0=B_0=0$ and $A_1=B_1=1$.
These sequences are analogues of Chebyshev polynomials of second kind and are constructed so that if $\ii=(\underbrace{1,2,1,2,\ldots}_{m}),$ then
$$w(\alpha_{i_m})=A_{m-1}\,\alpha_1+B_m\,\alpha_2\quad \text{and}\quad \langle w(\alpha_{i_m}),\alpha_{i_1}^{\vee}\rangle=A_{m-2}-A_{m}$$
and if $\ii=(\underbrace{2,1,2,1,\ldots}_{m})$, then
$$w(\alpha_{i_m})=A_m\,\alpha_1+B_{m-1}\,\alpha_2\quad \text{and}\quad \langle w(\alpha_{i_m}),\alpha_{i_1}^{\vee}\rangle=B_{m-2}-B_{m}.$$

We remark that the sequences $A_k$ and $B_k$ are used by N.~Kitchloo in \cite{Kit} in his study of cohomology of rank 2 Kac-Moody groups.
The following lemma proves Proposition \ref{pr:positiveparts} (and hence Theorem \ref{th:nonneg}) in the rank 2 case.

\begin{lemma}\label{lem:rank2pos}Let $a,b$ be positive real numbers such that $ab\geq 4$, then for any admissible $\ii\in I^m$, we have
$$A_{k}\geq A_{k-2}\quad \text{and}\quad B_{k}\geq B_{k-2}.$$\end{lemma}
\begin{proof}We prove the lemma by induction on $k$.  The lemma is clearly true for $k=2$ since $a,b$ are positive.  In general we have that
$$A_{k+1}=a B_k-A_{k-1}=(ab-1)(A_{k-1})-aB_{k-2}\geq 3A_{k-1}-aB_{k-2}.$$ By induction, we have that $B_{k}\geq B_{k-2}$.  Hence
 $$A_{k+1}\geq 3A_{k-1}-aB_{k-2}\geq 2A_{k-1}+(A_{k-1}-aB_{k})=2A_{k-1}-A_{k+1}.$$  This implies that $2A_{k+1}\geq 2A_{k-1}.$
 A similar argument proves the proposition for the sequence $B_k.$  This completes the proof.\end{proof}

We now consider the case of a quasi-Cartan matrix of arbitrary rank.
For any $j,k\in I$ let $W_{j,k}$ denote the dihedral subgroup of $W$ generated by $s_j,s_k$.
We need the following well-known fact about Coxeter groups.

\begin{lemma}
\label{le:Coxeter cancelation}
For any $w\in W$ and $j,k\in I$
there exist elements $w'\in W$, $w''\in W_{j,k}$ such that
\begin{equation}
\label{eq:Coxeter cancelation}
w=w'w'',\quad \ell(w)=\ell(w')+\ell(w''),\quad \ell(w's_j)=\ell(w's_k)=\ell(w)+1\ .
\end{equation}
In particular, the pair $(w',w'')$ is unique and
$$\ell(ws_j)-\ell(w)=\ell(w''s_j)-\ell(w''),\quad \ell(ws_k)-\ell(w)=\ell(w''s_k)-\ell(w'')\ .$$
\end{lemma}


Now we prove Proposition \ref{pr:positiveparts} by induction in $\ell(w)$.  If $w\in W_{j,k}$ for some $j,k \in I,$ then we are done by Lemma \ref{lem:rank2pos}.
Otherwise, by Lemma \ref{le:Coxeter cancelation} there exists $w'\in W\setminus \{1\}$  and
$w''\in W_{jk}$ satisfying \eqref{eq:Coxeter cancelation}. Since $\ell(w'')<\ell(w)$ and $w''$ satisfies the assumptions of the proposition, we obtain:
$$w''(\alpha_j)\in \RR_{\ge 0}\cdot \alpha_j+\RR_{\ge 0}\cdot \alpha_k$$
Since $\ell(w')<\ell(w)$ and $w'$ also satisfies the assumption of the proposition, the inductive hypothesis \eqref{eq:positive} applies to this $w''$ and we obtain:
$$w(\alpha_j)=w'w''(\alpha_j)\in w'(\RR_{\ge 0}\cdot \alpha_j+\RR_{\ge 0}\cdot \alpha_k)$$
$$=\RR_{\ge 0}\cdot w'(\alpha_j)+\RR_{\ge 0}\cdot w'(\alpha_k)\subset \sum_{i\in I}\RR_{\ge 0}\cdot \alpha_i.$$ This proves the first part of \eqref{eq:positive}.
To prove the second part of \eqref{eq:positive}, note that $\ell(s_iws_j)-\ell(ws_j)=\ell(s_iw'')-\ell(w'')=1.$  Therefore, the inductive hypothesis \eqref{eq:positive} applies to this $w'$ and we obtain
$$ \langle w(\alpha_j),\alpha_i^{\vee}\rangle\in \langle \RR_{\ge 0}\cdot w'(\alpha_j)+\RR_{\ge 0}\cdot w'(\alpha_k),\alpha_i^{\vee}\rangle$$
$$=\RR_{\ge 0}\cdot \langle w'(\alpha_j),\alpha_i^{\vee}\rangle+\RR_{\ge 0}\cdot \langle w'(\alpha_k),\alpha_i^{\vee}\rangle\subset \RR_{\ge 0}\cdot \RR_{\le 0}+\RR_{\ge 0}\cdot \RR_{\le 0}=\RR_{\le 0}\ .$$  The proposition is proved.\end{proof}


By replacing the quasi-Cartan matrix $A$ with $(1+t)A-2t\cdot  Id$, we obtain the following result.

\begin{proposition}
\label{pr:positiveparts t}
In the notation of Proposition \ref{pr:positiveparts}, let $A_t=(1+t)\cdot A-2t \cdot Id$ be the $I\times I$ quasi-Cartan matrix over $\RR[t]$, where $A$ is a quasi-Cartan matrix over $\RR$  such that for each $i\neq j$ we have $a_{ij}\leq 0$ and $a_{ij}a_{ji}\geq 4.$  Then for any admissible sequence $\ii=(i_1,\ldots, i_m)\in I^m$, we have

\begin{equation*}
w(\alpha_{i_m})\in \sum\limits_{i\in I} \RR_{\ge 0}[t]\cdot \alpha_i\quad \text{and}\quad \langle w(\alpha_{i_m}),\alpha_{i_1}^{\vee}\rangle \in \RR_{\le 0}[t]\ .
\end{equation*}
where $w=s_{i_2}\cdots s_{i_{m-1}}.$
\end{proposition}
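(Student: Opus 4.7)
The plan is to repeat the proof of Proposition \ref{pr:positiveparts} verbatim in the partially ordered polynomial ring $\RR[t]$, with the convention that $p\ge q$ means $p-q\in \RR_{\ge 0}[t]$, i.e., all real coefficients of $p-q$ are nonnegative. Note first that $A_t=(1+t)A-2t\cdot Id$ is a quasi-Cartan matrix over $\RR[t]$ with diagonal entries $2$ and off-diagonal entries $(1+t)a_{ij}$; in particular $-(A_t)_{ij}=(1+t)(-a_{ij})\in \RR_{\ge 0}[t]$ for $i\ne j$ (using $a_{ij}\le 0$), and the reflection action $s_i(\alpha_j)=\alpha_j-(A_t)_{ij}\alpha_i$ expresses all $w(\alpha_j)$ as $\RR[t]$-linear combinations of the $\alpha_k$. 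The reduction from general rank to the rank-$2$ case via Lemma \ref{le:Coxeter cancelation} transfers without change, since that step only assembles nonnegative $\RR_{\ge 0}$-linear combinations of simple roots and pairings $\langle\cdot,\alpha_i^\vee\rangle$; one simply replaces $\RR_{\ge 0}$ by $\RR_{\ge 0}[t]$ throughout.

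The substantive step is thus to prove a $t$-deformed version of Lemma \ref{lem:rank2pos}. Set $a:=-a_{12}$, $b:=-a_{21}$ (so $a,b>0$ and $ab\ge 4$), $a':=(1+t)a$, $b':=(1+t)b$, and define $A_k(t), B_k(t)\in \RR[t]$ by the recurrence \eqref{eq:rank2roots} with $a,b$ replaced by $a',b'$. I would prove by induction on $k$ that $A_k\ge A_{k-2}$, $B_k\ge B_{k-2}$, and $A_k,B_k\ge 0$ in the $\RR[t]$-order. The inductive step is driven by the identity
\begin{equation*}
2(A_{k+1}-A_{k-1})=(a'b'-4)A_{k-1}+a'(B_k-B_{k-2}),
\end{equation*}
obtained by combining $A_{k+1}=(a'b'-1)A_{k-1}-a'B_{k-2}$ with the consequence $a'B_k=A_{k+1}+A_{k-1}$ of \eqref{eq:rank2roots}; the symmetric computation handles $B_{k+1}-B_{k-1}$.

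The one genuinely new arithmetic input is that $a'b'-4=ab\,t^2+2ab\,t+(ab-4)\in \RR_{\ge 0}[t]$, which is immediate from $ab\ge 4$. Combined with $a'\in \RR_{\ge 0}[t]$ and the inductive hypotheses $A_{k-1}\ge 0$ and $B_k-B_{k-2}\ge 0$, this places the right-hand side of the displayed identity in $\RR_{\ge 0}[t]$, completing the rank-$2$ inductive step. Once the rank-$2$ case is in hand, the length-induction via Lemma \ref{le:Coxeter cancelation}, decomposing $w=w'w''$ with $w''\in W_{j,k}$ and $\ell(w)=\ell(w')+\ell(w'')$, delivers both $w(\alpha_{i_m})\in \sum_{i\in I}\RR_{\ge 0}[t]\cdot \alpha_i$ and $\langle w(\alpha_{i_m}),\alpha_{i_1}^\vee\rangle\in \RR_{\le 0}[t]$ by exactly the assembly of Proposition \ref{pr:positiveparts}. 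The main obstacle is no more than careful bookkeeping of the $\RR[t]$-order through the manipulations; the single nontrivial positivity fact needed is the one about $(1+t)^2ab-4$.
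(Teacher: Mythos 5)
Your proposal is correct and follows essentially the same route as the paper's proof: reduce to rank $2$ via Lemma \ref{le:Coxeter cancelation}, replace $(a,b)$ by $((1+t)a,(1+t)b)$ in the recursion for $A_k,B_k$, and run the same induction that underlies Lemma \ref{lem:rank2pos}, with the single new arithmetic input $(1+t)^2ab-4\in\RR_{\ge0}[t]$. The only difference is cosmetic: you package the paper's chain of inequalities into the explicit identity $2(A_{k+1}-A_{k-1})=(a'b'-4)A_{k-1}+a'(B_k-B_{k-2})$ (which one can check directly from the recursion and is a slightly sharper form of the paper's final estimate $2(A_{k+1}-A_{k-1})\ge ab(t^2+2t)A_{k-1}$), and you explicitly flag $A_k,B_k\ge0$ as part of the inductive claim, which the paper uses implicitly.
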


\begin{proof} Define a partial order on $\RR[t]$ by saying that $p\ge q$ if $p-q\in \RR_{\ge 0}[t]$. Following the proof of Lemma \ref{lem:rank2pos}
we obtain (by replacing $(a,b)$ with $((t+1)a,(t+1)b)$ and sequences $\{A_k\}$, $\{B_k\}$ with $\{A_k(t)\},~\{B_k(t)\}\subset \RR[t]$).  We prove by induction the following two statements
$$A_{k}(t)\geq A_{k-2}(t)\quad \text{and}\quad B_{k}(t)\geq B_{k-2}(t).$$

The lemma is clearly true for $k=2$ since $A_2(t)=at+a,\ B_2(t)=bt+b$ and $a,b$ are positive.  In general we have that
\begin{eqnarray*}A_{k+1}(t)&=&(at+a )B_k(t)-A_{k-1}(t)\\
&=&(ab(t^2+2t)+ab-1)A_{k-1}(t)-(at+a)B_{k-2}(t)\\
&\geq& (ab(t^2+2t)+3)A_{k-1}(t)-(at+a)B_{k-2}(t).\end{eqnarray*}
By induction, we have that $B_{k}(t)\geq B_{k-2}(t)$.  Hence
\begin{eqnarray*} A_{k+1}(t)&\geq&(ab(t^2+2t)+3)A_{k-1}(t)-(at+a)B_{k-2}(t)\\
&\geq& (ab(t^2+2t)+2)A_{k-1}(t)+(A_{k-1}(t)-(at+a)B_{k}(t))\\
&=& (ab(t^2+2t)+2)A_{k-1}(t)-A_{k+1}(t).\end{eqnarray*}  This implies that
$$2(A_{k+1}(t)-A_{k-1}(t))\geq ab(t^2+2t)A_{k-1}.$$
Similarly the polynomials $B_k(t)$ satisfies the same inequality.  This proves the proposition.\end{proof}



Now we are ready to prove Theorem \ref{th:nonneg} and verify Conjecture \ref{conj:positive puvw(t)} in a number of cases. Indeed, for any $(K',K'',L,\varphi)$ as in Theorem \ref{th:cuvw refined T}, the sequence $\ii_{(K'\cup K'')_{<\ell}\cap \varphi(L_{<\ell})}$ is admissible for all $\ell\in K'\cap K''$, therefore, $w_\ell(\alpha_{i_\ell})\in \sum\limits_{i\in I} \RR_{\ge 0}\cdot \alpha_i$ for all $\ell\in (K'\cap K'')\setminus L$ by \eqref{eq:positive}  and $\langle w_\ell(\alpha_{i_\ell}),-\alpha_{\varphi_{i_\ell}}\rangle \ge 0$ for all $\ell\in L$, again, by  \eqref{eq:positive}.

This proves Theorem  \ref{th:nonneg}. \endproof

Same argument, in conjunction with Proposition \ref{pr:positiveparts t} verifies Conjecture \ref{conj:positive puvw(t)} in the assumption that \ref{eq:nonnegativity for A special} holds.

\section{Examples}\label{sect:examples}

In this section we apply Theorem \ref{th:cuvw} to compute Littlewood-Richardson coefficients in several cases.  In the first example, we consider any rank 2 quasi-Cartan matrices and demonstrate that Theorem \ref{th:cuvw} agrees with formulas developed in \cite{Kit} and \cite{BerKap}.  The following examples we look at particular computations in finite Coxeter types $A_n$ and $H_3.$  The computer algebra program MuPAD Pro and 'Combinat' package was used in many of these calculations.

\subsection{The rank 2 case}\label{subsect:rank2ex}   We give a full analysis in the case where $A$ is a rank 2 quasi-Cartan matrix.  Let $I=\{1,2\}$ and consider the quasi-Cartan matrix
$$A:=\left[\begin{array}{cc}2& -a\\-b&2 \end{array}\right]$$ as in the previous section.  Define $$u_m=\underbrace{\cdots s_1s_2s_1}_{m}\quad \text{and}\quad v_m=\underbrace{\cdots s_2s_1s_2}_{m}$$ to be the unique elements in $W$ corresponding to the two admissible sequences of length $m.$  We first compute non-equivariant coefficients $c_{u,v}^w$ in the case where $\ell(u)+\ell(v)=\ell(w).$  Let $k\leq m.$  Theorem \ref{th:cuvw} implies that
$$c^{u_m}_{u_k,u_{m-k}}=c^{v_{m+1}}_{v_{k+1},u_{m-k}}=c^{v_{m+1}}_{u_{k},v_{m-k+1}},$$
$$c^{v_m}_{v_k,v_{m-k}}=c^{u_{m+1}}_{u_{k+1},v_{m-k}}=c^{u_{m+1}}_{v_{k},u_{m-k+1}}$$
and
$$c^{v_m}_{u_k,u_{m-k}}=c^{u_{m}}_{v_{k},v_{m-k}}=0.$$
Hence it suffices to compute coefficients $c^{u_m}_{u_k,u_{m-k}}$ and $c^{v_m}_{v_k,v_{m-k}}.$  Recall the sequences $A_k$ and $B_k$ defined in \eqref{eq:rank2roots}. For $k\leq m$, define the binomial coefficients
$$C(k,m):=\frac{A_mA_{m-1}\cdots A_1}{(A_kA_{k-1}\cdots A_1)(A_{m-k}A_{m-k-1}\cdots A_1)}$$
$$D(k,m):=\frac{B_mB_{m-1}\cdots B_1}{(B_kB_{k-1}\cdots B_1)(B_{m-k}B_{m-k-1}\cdots B_1)}.$$

\begin{theorem}\label{th:Kitchloo}Let A be a rank 2 quasi-Cartan matrix.  The coefficients $$c^{u_m}_{u_k,u_{m-k}}=C(k,m)\quad \text{and}\quad c^{v_m}_{v_k,v_{m-k}}=D(k,m).$$\end{theorem}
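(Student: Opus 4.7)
My plan is to apply Theorem~\ref{th:cuvw} with the unique reduced word $\ii = (i_1, \ldots, i_m)$ of $u_m$. Since $u_m$ has length $m$ by assumption, no braid relation triggers within $u_m$, so $\ii$ is the alternating word of length $m$ ending in $s_1$; likewise $R(u_k)$ and $R(u_{m-k})$ each consist of a single alternating word. The triples $({\bf u}, {\bf v}, \varphi)$ summed over in Theorem~\ref{th:cuvw} are therefore parametrized by pairs of alternating subsequences of $\ii$ of the prescribed lengths $k$ and $m-k$ (each ending at a position with index $1$), together with an $\ii$-admissible bounded bijection $\varphi:{\bf u}\cap{\bf v}\to [m]\setminus({\bf u}\cup{\bf v})$. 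The $v_m$ case follows from the $u_m$ case by exchanging $s_1 \leftrightarrow s_2$ and $a \leftrightarrow b$.

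The key input is a rank-2 computation: for any alternating product $w = s_{j_1} \cdots s_{j_r}$ of simple reflections, the action of $w$ on a simple root (or, via the $W$-invariance of the pairing, on a simple coroot) has an explicit closed form in terms of the sequences $\{A_n\}, \{B_n\}$, obtained by iterating~\eqref{eq:rank2roots}. Because $\ii$-admissibility forces every intermediate subword $\ii_{M(\ell)}$ appearing in the definition of $p_\varphi$ to be alternating, each pairing factor $\langle w_{\ii_{M(\ell)}}(-\alpha_{i_\ell}), \alpha_{i_{\varphi(\ell)}}^\vee\rangle$ reduces to a signed entry of $\{A_\bullet\}$ or $\{B_\bullet\}$ whose index is determined by the gap length $\ell - \varphi(\ell)$ together with bookkeeping for the positions inserted by $\varphi(L_{<\ell})$. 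Consequently each $p_\varphi$ becomes an explicit product of such entries, and $\sum p_\varphi$ becomes a polynomial in the entries of $\{A_\bullet\}, \{B_\bullet\}$.

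I would conclude by induction on $m$ using the defining recursions $A_n = a B_{n-1} - A_{n-2}$ and $B_n = b A_{n-1} - B_{n-2}$. Partitioning the triples according to whether a chosen boundary position (say $1$ or $m$) belongs to ${\bf u}$, to ${\bf v}$, to both, or to neither produces a three-term identity for $\sum p_\varphi$ that matches a Pascal-type recursion satisfied by $C(k, m)$; this Pascal identity for $C(k,m)$ is itself a straightforward consequence of~\eqref{eq:rank2roots}. The base cases $k = 0, m$ are immediate since the only contributing triple has $\varphi = \emptyset$ and $C(0, m) = C(m, m) = 1$. The main obstacle lies in the bookkeeping: one must verify that the $\ii$-admissibility constraint on $\varphi$ restricts correctly upon removing the boundary position, and that triples with nonempty $\varphi$ reassemble into the expected recursive structure without producing spurious correction terms. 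This reduces ultimately to the interplay between~\eqref{eq:rank2roots} and the alternating-subword combinatorics, which in rank $2$ is rigid enough to make the matching succeed term by term.
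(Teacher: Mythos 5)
Your proposal follows the same route as the paper: apply Theorem~\ref{th:cuvw} to the unique reduced word of $u_m$, derive a recursion by analyzing what happens at the boundary position $m$, and close the argument by induction. However, there is a genuine gap in the step where you claim the resulting "three-term identity for $\sum p_\varphi$" matches a "Pascal-type recursion satisfied by $C(k,m)$" which is "a straightforward consequence of~\eqref{eq:rank2roots}."

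The actual recursion that the boundary partition produces is second-order and \emph{mixes} the two families of coefficients: when $m\in{\bf u}\cap{\bf v}$, admissibility forces $\varphi(m)=1$, and after stripping positions $1$ and $m$ the remaining problem lives on the alternating word starting with the other index, so one lands on $c^{v_{m-2}}_{v_{k-1},v_{m-k-1}}=D(k-1,m-2)$, not on a $C$-coefficient. The recursion obtained is
\[
c^{u_m}_{u_k,u_{m-k}}=\langle v_{m-2}(-\alpha_1),\alpha_{i_1}^\vee\rangle\, D(k-1,m-2)+C(k-2,m-2)+C(k,m-2),
\]
and verifying that $C(k,m)$ satisfies this identity is not a trivial unwinding of~\eqref{eq:rank2roots}. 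It requires the relations between $\{A_k\}$ and $\{B_k\}$ (namely $A_m=B_m$ for odd $m$ and $bA_m=aB_m$ for even $m$, together with a telescoping product identity for $A_kA_m$), and a parity-sensitive case analysis on $k$ and $m$; these occupy Lemma~\ref{lem:seq_ident} and the bulk of the paper's computation. You are aware that both sequences enter (you cite both defining recursions), but you do not identify the cross-dependence on $D$ nor supply the algebraic identities that make the induction close, so the verification you call "straightforward" is in fact where the real work lies.
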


We remark that the above formula has been proved by Kitchloo in \cite[Section 10]{Kit} in the case where $A$ is the Cartan matrix of some Kac-Moody group and by the first author and Kapovich in \cite[Section 13]{BerKap} in the case where $A$ is symmetric.  We show that Theorem \ref{th:cuvw} implies Theorem \ref{th:Kitchloo} for any rank 2 quasi-Cartan matrix.  First, it is easy to check that Theorem \ref{th:Kitchloo} is true for $m=1$ and 2.  We will show that the coefficients $c^{u_m}_{u_k,u_{m-k}}$ and $c^{v_m}_{v_k,v_{m-k}}$ can be constructed by a second order recurrence relation using Theorem \ref{th:cuvw}.  We will then show that $C(k,m)$ and $D(k,m)$ also satisfy this relation.

\smallskip

Let $\ii=(\ldots,1,2,1)$ be the reduced expression of $u_m$.  If ${\bf u}, {\bf v}\subset [m]$ are such that $\ii_{{\bf u}}$ and $\ii_{{\bf v}}$ are reduced expressions for $u_m$ and $u_{m-k}$ respectively, then there is at most one admissible bounded bijection
$$\varphi:{\bf u}\cap{\bf v}\rightarrow [m]\setminus({\bf u} \cup{\bf v}).$$  Moreover, if $\varphi$ exists, then $[m]\setminus({\bf u} \cup{\bf v})=(1,2,\ldots,|{\bf u}\cap{\bf v}|).$ Define $$\mathcal{J}(m,k):=\{({\bf u},{\bf v})\ |\ (\ii_{{\bf u}},\ii_{{\bf v}})\in R(u_m)\times R(u_{m-k})\ \text{and}\  \varphi\ \text{exists}\}.$$
If ${\bf u}\cap{\bf v}=\emptyset$, our convention will be that $\varphi$ exists.  If ${\bf z}\in \mathcal{J}(m,k)$, then let $\varphi_{\bf z}$ denote the corresponding $\ii$-admissible bounded bijection.  Theorem \ref{th:cuvw} says that
$$c^{u_m}_{u_k,u_{m-k}}=\sum_{{\bf z}\in \mathcal{J}(m,k)}p_{\varphi_{\bf z}}.$$
Define the subset
$$ \mathcal{J}_1:=\{({\bf u},{\bf v})\in \mathcal{J}(m,k)\ |\ m\in {\bf u}\cap {\bf v}\}.$$
If ${\bf z}\in J_1,$ then $\varphi_z(m)=1$ since $\varphi_{\bf z}$ is $\ii$-admissible.  Hence the partition $\mathcal{J}(m,k)= \mathcal{J}_1\sqcup \mathcal{J}(m,k)\setminus  \mathcal{J}_1$ induces the recursion

\begin{eqnarray*}c^{u_m}_{u_k,u_{m-k}}&=&\sum_{{\bf z}\in J_1}p_{\varphi_{\bf z}}+\sum_{{\bf z}'\in\mathcal{J}(m,k)\setminus J_1}p_{\varphi_{{\bf z}'}}\\
&=&\langle v_{m-2}(-\alpha_1),\alpha_{i_1}^{\vee}\rangle\cdot c^{v_{m-2}}_{v_{k-1},v_{m-k-1}}+(c^{u_{m-2}}_{u_{k-2},u_{m-k}}+c^{u_{m-2}}_{u_{k},u_{m-k-2}}).\end{eqnarray*}
Now assume that Theorem \ref{th:Kitchloo} is true for all integers less than $m.$  Then
\begin{equation}\label{eq:binomrecur}c^{u_m}_{u_k,u_{m-k}}=\langle w(-\alpha_{i_m}),\alpha_{i_1}^{\vee}\rangle D(k-1,m-2)+C(k-2,m-2)+C(k,m-2).\end{equation}
The following lemma will be important to the proceeding calculations.
\begin{lemma}\label{lem:seq_ident}Let $A_m$ and $B_m$ be sequence defined in \eqref{eq:rank2roots}.  Then the following identities are true:
\begin{enumerate}
\item  If $m$ is odd, then $A_m=B_m$. If $m$ is even, then $bA_m=aB_m.$
\item For any $k\leq m$, if $k$ is odd and $m$ is even, then $$bC(k,m)=aD(k,m).$$  Otherwise $$C(k,m)=D(k,m).$$
\item For any $k\leq m$, if $k$ and $m$ are both even, then $$bA_kA_m=a(A_{m+k-1}+A_{m+k-3}+\cdots+A_{m-k+1}).$$ Otherwise
$$A_kA_m=A_{m+k-1}+A_{m+k-3}+\cdots+A_{m-k+1}.$$\end{enumerate} \end{lemma}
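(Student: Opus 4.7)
The three identities will be proved in order: (2) is a bookkeeping consequence of (1), and (3) uses (1) only indirectly, via an auxiliary telescoping identity.

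For (1), I induct on $m$, with base cases $m=0,1,2$ immediate from the defining recurrences \eqref{eq:rank2roots}. If $m$ is odd then $m-1$ is even and $m-2$ is odd, so the inductive hypothesis gives $aB_{m-1}=bA_{m-1}$ and $A_{m-2}=B_{m-2}$; substituting into \eqref{eq:rank2roots} yields $A_m = aB_{m-1}-A_{m-2} = bA_{m-1}-B_{m-2} = B_m$. The case $m$ even is symmetric and produces $bA_m=aB_m$. Then (2) follows by a direct factor-by-factor comparison of $C(k,m)$ and $D(k,m)$: by (1), each odd-indexed factor matches exactly, while each even-indexed factor contributes a ratio $a/b$ (from the numerator) or $b/a$ (from the denominator). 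The net exponent of $a/b$ is $\lfloor m/2\rfloor - \lfloor k/2\rfloor - \lfloor (m-k)/2\rfloor$, which equals $1$ when $k$ and $m-k$ are both odd (equivalently, $m$ is even and $k$ is odd) and $0$ otherwise, matching the claim.

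For (3), I induct on $k$, with base case $k=1$ trivial. The key auxiliary identity, obtained by combining $A_{r+1}=aB_r-A_{r-1}$ with (1), is $A_{r+1}+A_{r-1} = aA_r$ for $r$ odd and $A_{r+1}+A_{r-1} = bA_r$ for $r$ even. Setting $S(k,m) := A_{m+k-1}+A_{m+k-3}+\cdots+A_{m-k+1}$, the product recurrence $A_{k+1}A_m = c_k A_kA_m - A_{k-1}A_m$ (with $c_k=a$ if $k$ is odd, $c_k=b$ if $k$ is even) reduces the inductive step to verifying $\gamma\, S(k,m) = S(k+1,m)+S(k-1,m)$ for the appropriate $\gamma\in\{a,b\}$. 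Since all indices in $S(k,m)$ share the parity of $m-k+1$, applying the auxiliary identity termwise with $\gamma$ matching that common parity produces doubled interior terms plus the new endpoints $A_{m-k}$ and $A_{m+k}$, which is exactly $S(k+1,m)+S(k-1,m)$.

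The main obstacle will be parity bookkeeping in (3): precisely when $k$ and $m$ are both even do the indices in $S(k,m)$ all become odd, forcing the normalization $bA_kA_m = aS(k,m)$ rather than $A_kA_m=S(k,m)$. Verifying that this threads consistently through the four parity subcases of $(k,m)$---so that the $c_k$ produced by the product recurrence matches the $\gamma$ demanded by the telescoping, and the correct normalization is inherited at each step---is the only delicate point in the argument.
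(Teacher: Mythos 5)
Your proof is correct and follows essentially the same route as the paper's: induction on the defining recurrences for part (1), a factor-by-factor comparison via (1) for part (2), and a second induction driven by (1) for part (3). The only cosmetic difference is that in part (3) you induct on $k$ using the telescoping identity $S(k+1,m)+S(k-1,m)=\gamma S(k,m)$ with $\gamma\in\{a,b\}$ set by parity, while the paper's (terser) sketch organizes the same idea around $A_kB_m$ and an induction on $m$.
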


\begin{proof} Part (1) follows from a simple inductive argument and the construction of $A_m$ and $B_m$ in \eqref{eq:rank2roots}.  Part (2) is a direct consequence of part (1).  For part (3) we observe that for any $1<k\leq m$, we have $$A_2B_m=A_{m+1}+A_{m-1}$$ and $$A_kB_m=A_2A_{k}A_{m-1}-A_kB_{m-2}.$$  Part (3) now follows from another inductive argument and part (1).\end{proof}

We prove Theorem \ref{th:Kitchloo} by considering three cases.  First assume that $m$ is odd.  By Lemma \ref{lem:seq_ident}, equation \eqref{eq:binomrecur} becomes
\begin{eqnarray}\label{eq:binomrecur2}c^{u_m}_{u_k,u_{m-k}}&=&C(m-2,k)+(A_m-A_{m-2})C(m-2,k-1)+C(m-2,k-2)\notag\\
&=&\tilde A \left(\frac{A_{m-2}A_{m-3}\cdots A_{m-k+1}}{A_kA_{k-1}\cdots A_1}\right)\end{eqnarray}
where
$$\tilde A=A_{m-k}A_{m-k-1}+(A_m-A_{m-2})A_kA_{m-k}+A_kA_{k-1}.$$  Using Lemma \ref{lem:seq_ident} part (3), $\tilde A$ simplifies to $$\tilde A=A_mA_{m-1}$$ and thus $c^{u_m}_{u_k,u_{m-k}}=C(m,k).$

\smallskip

If $k$ and $m$ are both even, then equation \eqref{eq:binomrecur2} for $c^{u_m}_{u_k,u_{m-k}}$ still holds by replacing $\tilde A$ with
\begin{eqnarray*}\tilde A'&=&A_{m-k}A_{m-k-1}+(B_m-B_{m-2})A_kA_{m-k}+A_kA_{k-1}\\
&=&A_{m-k}A_{m-k-1}+\frac{b}{a}(A_m-A_{m-2})A_kA_{m-k}+A_kA_{k-1}.\end{eqnarray*}
But this expression still simplifies to equal $A_mA_{m-1}$ by applying Lemma \ref{lem:seq_ident} part (3) in the case where $k$ and $m-k$ are both even.

\smallskip

Finally, if $k$ is odd and $m$ is even, then \begin{eqnarray*}c^{u_m}_{u_k,u_{m-k}}&=&C(m-2,k)+(B_m-B_{m-2})D(m-2,k-1)+C(m-2,k-2)\notag\\
&=&C(m-2,k)+\frac{b}{a}(A_m-A_{m-2})\frac{a}{b}C(m-2,k-1)+C(m-2,k-2)\notag\\
&=&\tilde A \left(\frac{A_{m-2}A_{m-3}\cdots A_{m-k+1}}{A_kA_{k-1}\cdots A_1}\right)\end{eqnarray*}  with $\tilde A$ again simplifying to equal $A_mA_{m-1}.$  To complete the proof of Theorem \ref{th:Kitchloo} we observe that this same argument applies to computing $c^{v_m}_{v_k,v_{m-k}}.$

\begin{remark}
\label{rem:binomial polynomials}
In \cite[Section 13]{BerKap}, the first author and Kapovich consider the case where $a=b=t+t^{-1}$ where $t$ is some formal parameter.  In this case $$A_k=B_k=[k]_t:=t^{k-1}+t^{k-3}+\cdots+t^{1-k}$$ and $$C(k,m)=D(k,m)=\left[\begin{array}{c}m\\k\end{array}\right]_t:=\frac{[m]_t!}{[k]_t![m-k]_t!}$$ are $t$-binomial coefficients used in the study of quantum groups.  Theorem \ref{th:cuvw} provides an interesting decomposition identity for these binomial coefficients.\end{remark}

We conclude our rank 2 examples by computing some equivariant Littlewood-Richardson coefficients $c_{u,v}^w$ where $\ell(u)+\ell(v)>\ell(w).$  Let $w=u_5$, $u=u_3$ and $v=u_4.$  Let $[5]=(1',2',3',4',5')$ denote the index sequence of $\ii=(1,2,1,2,1).$  It is easy to see that $u_3$ appears as a subsequence four times given by the subsequences $$(1',2',3'),\ (1',2',5'),\ (1',4',5'),\ (3',4',5')$$ and $u_4$ appears once as the subsequence $(2',3',4',5').$  These subsequences yield the following quadruples $({\bf u}, {\bf v}, L, \varphi)$ as in Theorem \ref{th:cuvw refined T}.  In the table below, we list the set $L':=({\bf u}\cap {\bf v})\setminus L.$

\bigskip

\begin{center}\begin{tabular}{|c|c|c|c|c|c|}\hline
${\bf u}$& ${\bf v}$&$L'$&$\varphi$ &$ p_{\varphi}$&$\alpha$\\ \hline
$(1',2',3')$&$(2',3',4',5')$&$(2',3')$&$L=\emptyset$ &$1$ & $u_1(\alpha_2)\cdot v_2(\alpha_1)$\\ \hline
$(1',2',5')$&$(2',3',4',5')$&$(2',5')$&$L=\emptyset$ &$1$ & $u_1(\alpha_2)\cdot v_4(\alpha_1)$\\ \hline
$(1',4',5')$&$(2',3',4',5')$&$(4',5')$&$L=\emptyset$ &$1$ & $u_3(\alpha_2)\cdot v_4(\alpha_1)$\\ \hline
$(3',4',5')$&$(2',3',4',5')$&$(3',4')$&$(5')\mapsto (1')$ &$\langle-v_3(\alpha_1),\alpha_1^{\vee}\rangle$ & $v_1(\alpha_1)\cdot u_2(\alpha_2)$\\ \hline
$(3',4',5')$&$(2',3',4',5')$&$(3',5')$&$(4')\mapsto (1')$ &$\langle-u_2(\alpha_2),\alpha_1^{\vee}\rangle$ & $v_1(\alpha_1)\cdot v_4(\alpha_1)$\\ \hline
$(3',4',5')$&$(2',3',4',5')$&$(4',5')$&$(3')\mapsto (1')$ &$\langle-v_1(\alpha_1),\alpha_1^{\vee}\rangle$ & $u_3(\alpha_2)\cdot v_4(\alpha_1)$\\ \hline
\end{tabular}\end{center}

\bigskip

In this case, all bounded bijections are also $\ii$-admissible bounded bijections.  Summing these terms gives
\begin{eqnarray*}c_{u,v}^w&=&u_1(\alpha_2)\cdot v_2(\alpha_1)+u_1(\alpha_2)\cdot v_4(\alpha_1)+u_3(\alpha_2)\cdot v_4(\alpha_1)+(A_5-A_3)\, v_1(\alpha_1)\cdot u_2(\alpha_2)\\& & +(A_4-A_3)\, v_1(\alpha_1)\cdot v_4(\alpha_1)+(A_3-1)\, u_3(\alpha_2)\cdot v_4(\alpha_1).\end{eqnarray*}

\bigskip

For another example, let $w=u_5$, $u=v=u_3.$  Again, let $[5]=(1',2',3',4',5')$ denote the index sequence $\ii=(1,2,1,2,1).$  Using the notation of Theorem \ref{th:cuvw refined T} we have the following quadruples $({\bf u}, {\bf v}, L, \varphi)$ (with $L'=({\bf u}\cap {\bf v})\setminus L$).

\bigskip

\begin{center}\begin{tabular}{|c|c|c|c|c|c|}\hline
${\bf u}$& ${\bf v}$&$L'$&$\varphi$ &$ p_{\varphi}$&$\alpha$\\ \hline
$(1',2',3')$&$(1',4',5')$&$(1')$&$L=\emptyset$ &$1$ & $\alpha_1$\\ \hline
$(1',4',5')$&$(1',2',3')$&$(1')$&$L=\emptyset$ &$1$ & $\alpha_1$\\ \hline
$(1',2',3')$&$(3',4',5')$&$(3')$&$L=\emptyset$ &$1$ & $v_2(\alpha_1)$\\ \hline
$(3',4',5')$&$(1',2',3')$&$(3')$&$L=\emptyset$ &$1$ & $v_2(\alpha_1)$\\ \hline
$(1',2',5')$&$(3',4',5')$&$(5')$&$L=\emptyset$ &$1$ & $v_4(\alpha_1)$\\ \hline
$(3',4',5')$&$(1',2',5')$&$(5')$&$L=\emptyset$ &$1$ & $v_4(\alpha_1)$\\ \hline
$(3',4',5')$&$(3',4',5')$&$(3')$&$(4',5')\mapsto (2',1')$ &$\langle-u_1(\alpha_2),\alpha_2^{\vee}\rangle\cdot\langle-v_3(\alpha_1),\alpha_1^{\vee}\rangle$ & $\alpha_1$\\ \hline
$(3',4',5')$&$(3',4',5')$&$(4')$&$(3',5')\mapsto (2',1')$ &$\langle-\alpha_1,\alpha_2^{\vee}\rangle\cdot\langle-v_3(\alpha_1),\alpha_1^{\vee}\rangle$ & $u_2(\alpha_2)$\\ \hline
$(3',4',5')$&$(3',4',5')$&$(5')$&$(3',4')\mapsto (2',1')$ &$\langle-\alpha_1,\alpha_2^{\vee}\rangle\cdot\langle-u_2(\alpha_2),\alpha_1^{\vee}\rangle$ & $v_4(\alpha_1)$\\ \hline
\end{tabular}\end{center}

\bigskip

Summing these nine terms gives
$$c_{u,v}^w=2(\alpha_1+v_2(\alpha_1)+v_4(\alpha_1))+(A_3-1)((A_5-A_3)\, \alpha_1+A_2u_2(\alpha_2))+A_2(A_4-A_2)\, v_4(\alpha_1).$$
In this case, there would be 20 terms in the above sum if we did not make the ``admissible" restriction.

\subsection{Finite Type $A$ examples}\label{sect:typeA}  In this section we demonstrate some calculations in finite type $A_n.$   Let $I=\{1,2,\ldots,n\}$ and $A=(a_{i,j})$ be matrix where
$$a_{i,i}=2, \quad a_{i,i+1}=a_{i,i-1}=-1,\quad \text{and}\quad  a_{i,j}=0\ \text{if}\ |i-j|>1.$$
In this case $W$ is the symmetric group generated by order 2 simple reflections $\{s_i\ |\ i\in I\}$ with Coxeter relations
$$(s_is_{i+1})^3=(s_is_j)^2=1$$

where $|i-j|>1.$  Let $\ii=(3,2,1,3,2)$ and $w=s_3s_2s_1s_3s_2$.  We compute $c_{u,v}^w$ where

$$u=s_1s_3=s_3s_1\quad \text{and}\quad v=s_1s_3s_2=s_3s_1s_2.$$  Let $[5]=(1',2',3',4',5')$ denote the index sequence of the reduced sequence $\ii.$  By Theorem \ref{th:cuvw}, we need to find all triples $({\bf u},{\bf v}, \varphi)$ which satisfy the conditions given in \eqref{eq:cuvw}.  In this case, there are four triples given by the table below.

\bigskip

\begin{center}\begin{tabular}{|c|c|c|c|}\hline
${\bf u}$& ${\bf v}$&$\varphi$ &$ p_{\varphi}$\\ \hline
$(3',4')$&$(1',3',5')$&$(3')\mapsto (2')$&$\langle-\alpha_1,\alpha_2^{\vee}\rangle=1$\\ \hline
$(1',3')$&$(3',4',5')$&$(3')\mapsto (2')$&$\langle-\alpha_1,\alpha_2^{\vee}\rangle=1$\\ \hline
$(3',4')$&$(3',4',5')$&$(3',4')\mapsto (1',2').$&$\langle-\alpha_1,\alpha_3^{\vee}\rangle\cdot\langle-s_1\alpha_3,\alpha_3^{\vee}\rangle=0\cdot 1=0$\\ \hline
$(3',4')$&$(3',4',5')$&$(3',4')\mapsto (2',1').$&$\langle-\alpha_1,\alpha_2^{\vee}\rangle\cdot\langle-s_2s_1\alpha_3,\alpha_3^{\vee}\rangle=1\cdot -1=-1$\\ \hline \end{tabular}\end{center}

\bigskip

Summing the numbers $p_{\varphi}$, we get that $$c_{u,v}^w=1+1+0-1=1.$$
This example demonstrates that for some triples, $({\bf u},{\bf v}, \varphi)$, we can have $p_{\varphi}<0$ under the conditions given in Theorem \ref{th:cuvw}.  Hence nonnegativity is not immediately implied by Theorem \ref{th:cuvw} for finite type $A$ coefficients.  Observe that the decomposition sum in \eqref{eq:cuvw} for $c_{u,v}^w$ depends strongly on the choice of the reduced word of $w$. Instead, if we choose reduced word $\ii'=(2,3,1,2,1)\in R(w)$, then there is only one term in the decomposition sum \eqref{eq:cuvw} given by

\bigskip

\begin{center}\begin{tabular}{|c|c|c|c|}\hline
${\bf u}$& ${\bf v}$&$\varphi$ &$ p_{\varphi}$\\ \hline
$(2',5')$&$(2',3',4')$&$(2')\mapsto (1')$&$\langle-\alpha_3,\alpha_2^{\vee}\rangle=1$\\ \hline  \end{tabular}\end{center}

\bigskip

Again we get that $c_{u,v}^w=1$, however the decomposition is obviously simpler and trivially positive.  To measure the complexity of these decompositions we compute the polynomials $c_{u,v}^\ii(t)$ for each $\ii\in R(w)$  (recall these polynomials are defined at the end of the introduction).  We get

\bigskip

\begin{center}\begin{tabular}{|c|c|}\hline
$\ii$ & $c_{u,v}^\ii(t)$\\ \hline
(2,3,1,2,1) &  $t + 1$\\
(2,1,3,2,1) &  $t + 1$\\
(2,3,2,1,2) &  $(t + 1)^2$\\
(3,2,3,1,2) &  $(t + 1)^3$\\
(3,2,1,3,2) &  $(t + 1)^3$\\ \hline  \end{tabular}\end{center}

\bigskip

Observe that, in this example, the polynomial $c_{u,v}^\ii(t)$ is invariant under commuting relations in $R(w)$.  Also, each polynomial has nonnegative coefficients and evaluation at $t=0$ recovers the corresponding Littlewood-Richardson coefficient.


\smallskip

For a larger example, let $\ii=(5,2,3,4,3,1,2,1)$ and $w=s_5s_2s_3s_4s_3s_1s_2s_1$.  Let
$$u=s_4s_2\quad \text{and}\quad v=s_3s_4s_3s_1s_2s_1.$$
In this case, $u$ has two reduced words and $v$ has 19 reduced words.  Of these, there are only three triples $({\bf u},{\bf v},\varphi)$ which satisfy the conditions in \eqref{eq:cuvw}.  We get that $$c_{u,v}^w=0+1+1=2.$$  If we take the reduced word $\ii'=(5,2,4,3,2,1,2,4)\in R(w)$, then there are ten triples which yield $$c_{u,v}^w=-1+0+0+0+0+0+0+1+1+1=2.$$  As in the previous example, the polynomials $c_{u,v}^\ii(t)$ are invariant in the commutativity classes in $R(w)$.  Of the 64 reduced word decompositions of $w,$ we get 5 distinct polynomials, which correspond to the 5 commutativity classes in $R(w)$.  These polynomials, along with the size of each commutativity class, is listed below.

\bigskip

\begin{center}\begin{tabular}{|c|c|c|}\hline
$[\ii]$ &$|[\ii]|$ &$c_{u,v}^\ii(t)$\\ \hline
$[(5,2,3,4,3,1,2,1)]$ & 14 &$\left(t+1\right)\cdot \left(2t^2+4t+1\right)\cdot \left(t^2+2t+2\right)$\\
$[(5,2,4,3,4,1,2,1)]$ & 30 &$\left(3t^2+6t+2\right)\cdot \left(t+1\right)^3$\\
$[(5,4,3,2,3,4,1,2)]$ & 5 &$ 2(t + 1)^6 $\\
$[(5,2,4,3,4,2,1,2)]$ & 12 &$\left(2 t^4+8t^3+11t^2+6t+2\right)\cdot \left(t+1\right)^3$\\
$[(5,2,3,4,3,2,1,2)]$ & 3 &$\left(t+1\right)\cdot \left(t^2 + 2t + 2\right)\cdot\left(2t^4+8t^3+10t^2+4t+1\right)$\\ \hline  \end{tabular}\end{center}

\bigskip

Once again, observe that the coefficients of $c_{u,v}^\ii(t)$ are nonnegative.

\subsection{Finite type $H_3$ examples}\label{sect:H3}  Let $\displaystyle \rho:=2\cos\left(\frac{\pi}{5}\right)$ and consider the quasi-Cartan matrix $$A:=\left[\begin{array}{ccc}2&-\rho&0\\-\rho&2&-1\\0&-1&2 \end{array}\right].$$ The group $W$ has three order $2$ generators $s_1,s_2,s_3$ which satisfy the following relations:
$$(s_1s_2)^5=(s_1s_3)^2=(s_2s_3)^3=1.$$

The group $W$ has 120 elements with the longest element having a Coxeter length of 15.  The group $W$ is referred to as the finite Coxeter group $H_3.$  While $H_3$ appears in the classification of finite irreducible Coxeter groups, it does not appear in the classification of finite root systems in Lie theory.  Hence $H_3$ is not a Weyl group of any Lie group or Kac-Moody group.  We give all structure coefficients $c_{u,v}^w$ where $\ell(u)=\ell(v)=2$ and $\ell(w)=4$ in the form of a multiplication table of dual elements $\{\sigma_u\ |\ \ell(u)=2\}$.  There are 5 elements of length 2 elements which can be represented by the following sequences: $$(12), (21),(23),(32),(13)$$ and 9 elements of length 4 represented by the sequences: $$(1212), (2121),(1321),(2312),(2123),(3212),(1323),(1213),(2123).$$

\begin{center}\begin{tabular}{|c|c|c|}\hline
 &$\sigma_{12}$&$ \sigma_{21}$\\ \hline
$ \sigma_{12}$&$\sigma_{1212}+\rho\,\sigma_{2312}+\rho^2\,\sigma_{3212}$& \\ \hline
$ \sigma_{21}$&$\rho\,(\sigma_{1212}+\sigma_{2121})+\sigma_{1321}+\rho\,\sigma_{3212}$&$\sigma_{2121}+2\rho\, \sigma_{1321}$\\ \hline
$ \sigma_{13}$&$\rho\,(\sigma_{2123}+\sigma_{3212}+\sigma_{2312})+\sigma_{1321}+\sigma_{1231}$&$\rho\,(\sigma_{1321}+\sigma_{1231})+\rho\,\sigma_{2321}$ \\ \hline
$ \sigma_{23}$&$\sigma_{2312}+\sigma_{1323}+\rho\,\sigma_{2123}$&$\sigma_{2321}+\sigma_{2123}+\rho\,\sigma_{1231}$  \\ \hline
$ \sigma_{32}$&$\rho\,(\sigma_{2312}+\sigma_{3212})$&$\sigma_{2312}+\sigma_{3212}+\rho\,\sigma_{1321}$  \\ \hline
\end{tabular}\end{center}

\begin{center}\begin{tabular}{|c|c|c|c|}\hline
 &$\sigma_{13}$&$ \sigma_{23}$&$\sigma_{32}$\\ \hline
$ \sigma_{13}$&$\rho^2\,\sigma_{1231}+\rho\,(\sigma_{2123}+\sigma_{2321})$& &\\ \hline
$ \sigma_{23}$&$\rho^2\,\sigma_{2123}+\sigma_{1231}$ & $\rho^2\, \sigma_{2123}$ & \\ \hline
$ \sigma_{32}$&$\sigma_{2312}+\sigma_{2321}+\sigma_{1321}$&  $\rho\, \sigma_{1323}$& $\rho\, \sigma_{2312}$\\ \hline
\end{tabular}\end{center}

\bigskip

Clearly, the Littlewood-Richardson numbers computed above are not all integral, however they are nonnegative since $\rho$ is positive.   This evidence supports Conjecture \ref{conj:nonneg} on nonnegativity given in the introduction.  We end by giving an example of the polynomial $c_{u,v}^\ii(t)$ for $H_3.$  Let $w=(1,2,1,2,3,1,2)$, $u=(3,1,2,3)$ and $v=(1,3,2)$.  In this case we get that $c_{u,v}^w=\rho^2.$  The set $R(w)$ has 5 elements with 3 commutativity classes.  The polynomials  $c_{u,v}^\ii(t)$ are given by the following table where $\bar\rho:=\rho-1$ and $$P:=(\rho t+\rho)^2(\rho t+\bar\rho)(t+\rho).$$

\smallskip

\begin{center}\begin{tabular}{|c|c|}\hline
$\ii$ & $c_{u,v}^\ii(t)$\\ \hline
(1,2,1,2,3,1,2)& $P\cdot((\rho^2+1)(t^2+2t)+\rho) (\rho^2 t^2 + 2\rho^2 t +\bar\rho) $\\
(1,2,1,2,1,3,2)& $P\cdot((\rho^2+1)(t^2+2t)+\rho) (\rho^2 t^2 + 2\rho^2 t +\bar\rho) $\\
(2,1,2,1,2,3,2)& $P\cdot(t+\rho)(\rho^2t^2+2\rho^2 t+\bar\rho)(\rho^2t^4+4\rho^2 t^3+\rho^5 t^2+\rho^2\bar\rho^3 t+1) $\\
(2,1,2,1,3,2,3)& $P\cdot(t+\rho)((\rho^2+1)(t^2+2t)+\bar\rho) $\\
(2,1,2,3,1,2,3)& $P\cdot(t+\rho)((\rho^2+1)(t^2+2t)+\bar\rho) $\\ \hline  \end{tabular}\end{center}

\smallskip

Since $\rho$ and $\bar\rho$ are both positive numbers, all the above polynomials have positive coefficients.

\end{document}